\newcommand{\esssup}{\mathop{\mathrm{ess\,sup}}} 
\newcommand{\essinf}{\mathop{\rm ess\,inf}}
\newtheoremstyle{mystyle}
  {}
  {}
  {\itshape}
  {}
  {\bfseries}
  {.}
  { }
  {\thmname{#1}\thmnumber{ #2}\thmnote{ (#3)}}
\theoremstyle{mystyle}
\newtheorem{definition}{Definition}[chapter]
\newtheorem{remark}{Remark}[chapter]
\newtheorem{theorem}{Theorem}[chapter]
\newtheorem{lemma}{Lemma}[chapter]
\begin{document}

\begin{titlepage}
    \begin{center}
        \vspace*{0.5cm}

        \huge {Adaptive discontinuous Galerkin methods \\ for nonlinear parabolic problems}
                
        \vspace{4cm}

        \large {Thesis submitted for the degree of \\ \vspace{0.2cm} Doctor of Philosophy \\ \vspace{0.2cm} at the University of Leicester}
       
        \vspace{1.2cm}
                
        by

\vspace{1.2cm}

        Stephen Arthur Metcalfe MMath \\ \vspace{0.2cm} Department of Mathematics \\ \vspace{0.2cm} University of Leicester

        \vspace{3cm}

2014

    \end{center}
\end{titlepage}

\begin{center}
\LARGE Adaptive discontinuous Galerkin methods \\  for nonlinear parabolic problems
\end{center}
\begin{center}
Stephen Arthur Metcalfe
\end{center}

\bigskip
\bigskip

\noindent This work is devoted to the study of a posteriori error estimation and adaptivity in parabolic problems with a particular focus on spatial discontinuous Galerkin (dG) discretisations.

We begin by deriving an a posteriori error estimator for a linear non-stationary convection-diffusion problem that is discretised with a backward Euler dG method. An adaptive algorithm is then proposed to utilise the error estimator. The effectiveness of both the error estimator and the proposed algorithm is shown through a series of numerical experiments.

Moving on to nonlinear problems, we investigate the numerical approximation of blow-up. To begin this study, we first look at the numerical approximation of blow-up in nonlinear ODEs through standard time stepping schemes. We then derive an a posteriori error estimator for an implicit-explicit (IMEX) dG discretisation of a semilinear parabolic PDE with quadratic nonlinearity. An adaptive algorithm is proposed that uses the error estimator to approach the blow-up time. The adaptive algorithm is then applied in a series of test cases to gauge the effectiveness of the error estimator.

Finally, we consider the adaptive numerical approximation of a nonlinear interface problem that is used to model the mass transfer of solutes through semi-permiable membranes. An a posteriori error estimator is proposed for the IMEX dG discretisation of the model and its effectiveness tested through a series of numerical experiments.

\chapter*{Acknowledgements}

Foremost, I wish to thank my supervisors Dr. Andrea Cangiani and Dr. Emmanuil Georgoulis for their invaluable advice and guidance throughout the last four years. Additionally, I would like to thank Dr. Irene Kyza for inviting me to collaborate with her in Crete and for her insights into blow-up problems. I would also like to thank everyone in the mathematics department at the University of Leicester for the many fruitful discussions that helped contribute to this work. Finally, I acknowledge the funding of the EPSRC; without their support this work would not have been possible.

\tableofcontents

\listoftables
 
\listoffigures

\onehalfspacing

\chapter{Introduction}

Partial differential equations are key in the modelling of various physical and biological phenomena. The solutions to PDEs are usually unavailable through analytical means, so numerical methods are employed in order to approximate the solution. Furthermore, a number of nonlinear PDE problems exhibit local multiscale behaviour such as boundary or interior layers, interfaces or even local space-time blow-up. Such local multiscale features require high local resolution of the numerical methods employed in their approximation. Hence, the use of numerical methods which can automatically detect and resolve such multiscale features is of interest.

Adaptive algorithms that are driven by a posteriori error estimators lie at the heart of finite element analysis. For elliptic problems, there are a wide variety of different error estimators available \cite{AO00,V96}. Moreover, adaptive algorithms for elliptic problems are relatively well understood; at least for simple elliptic problems {\bf --} see \cite{CKNS08, D96, MNS00} for the standard conforming finite element method and \cite{BN10, HKW09} for the interior penalty dG method.

For linear parabolic problems, there are many error estimators available in the literature for popular discretisations (typically a standard time stepping scheme paired with a spatial finite element discretisation). These error estimators are usually composed of an initial condition estimator, a space estimator and a time estimator.  However, generally speaking, it is unclear how to utilise each of these individual estimators to drive adaptivity. Furthermore, while mesh change is crucial for the efficient numerical approximation of mobile solutions, it is well known that careless mesh refinement and/or coarsening can lead to destabilisation of the finite element solution \cite{BKM13,D82}. While some progress has been made on the construction of adaptive algorithms for parabolic problems \cite{CF04,EJ91,EJ95a,EJ95b,EJ95c,EJL98,JNT90,M98,PWW,SS05}, none of the algorithms in the literature have been shown to reduce the error estimator that they utilise at the correct rate of convergence with respect to the average number of degrees of freedom and the total number of time steps.

This work will investigate adaptive algorithms for spatial discontinuous \mbox{ }\mbox{ } Galerkin discretisations of parabolic problems with a focus on nonlinear problems. To achieve this, we shall look at adaptivity in the context of three different problems, to be detailed below. 

In Chapter 2, we introduce notation and state some approximation results and general theorems that shall be used throughout this work. We also discuss the issue of robustness that arises in the a posteriori error estimation of discontinuous Galerkin discretisations of  stationary convection-diffusion equations. We then introduce a robust error bound for the stationary problem, taken from \cite{SZ09}, that shall be used extensively in this work.

Chapter 3 deals with linear non-stationary convection-diffusion equations.  In particular, we derive an a posteriori error estimator for a backward Euler dG discretisation of the problem. The primary challenge in such a derivation is the robustness of the error estimator with respect to the diffusion parameter $\varepsilon$. We address this through the elliptic reconstruction framework of Makridakis and Nochetto \cite{MN03} paired with a robust error estimator for the stationary problem \cite{SZ09} together with a robust treatment of the temporal residual. The proposed error estimator can be viewed as the analogue of that given in \cite{V05b} by Verf\"{u}rth but with a discontinuous Galerkin spatial discretisation instead of a streamline upwind Petrov-Galerkin spatial discretisation. As well as \cite{V05b}, there are other error estimators available  in the literature for different discretisations of linear non-stationary convection-diffusion equations. In particular, we mention \cite{DEV13} wherein the authors produce a robust error estimator through a flux reconstruction approach and the work of Picasso and Prachittham \cite{PP09} wherein they develop an error estimator for a Crank-Nicolson temporal discretisation; their discretisation and error estimator also provide for the use of anisotropic elements. There are also a variety of different a posteriori error estimators available for the $L^2(L^2)$ norm, \emph{cf.} \cite{EP05,HS01,S06}. In addition to the error estimator we also propose an adaptive algorithm, based on that given in \cite{CF04}, that utilises different parts of the error estimator to control space and time adaptivity. The effectiveness of both the error estimator and the proposed algorithm is then tested in four numerical experiments. It is also worth noting that adaptive algorithms designed specifically for non-stationary convection-diffusion problems are explored in \cite{PP09,S06}.

In Chapter 4, we investigate the numerical approximation of blow-up in ODEs. More specifically, we derive an a posteriori error estimator for an ODE with polynomial nonlinearity that is discretised using standard time stepping schemes. The biggest difficulty in the construction of such an error estimator is having to deal with a nonlinear error equation {\bf --} this can be handled through a local \emph{continuation argument}. A \emph{continuation argument} is a special type of proof by contradiction that is often used to prove existence results for nonlinear parabolic PDEs; such arguments have been instrumental in the derivation of a posteriori bounds for a variety of nonlinear parabolic problems \cite{B05,GM14,KNS04}. A posteriori error estimators produced by a continuation argument are \emph{conditional} in the sense that they only hold providing that the estimators involved are sufficiently small. In order to use the proposed error estimator to approximate the blow-up time, we investigate the design of suitable adaptive algorithms. To that end, two adaptive algorithms are proposed and then applied in two test cases under different time stepping schemes to compare their effectiveness. Although we choose to investigate the numerical approximation of blow-up through a posteriori error estimation, there are other ways of approaching this problem that have been published in the literature. In \cite{JW14}, the authors prove existence results for numerical approximations to a nonlinear ODE with a  polynomial growth condition provided that the time step lengths are sufficiently small. For the particular case of a polynomial nonlinearity, they show that selecting the time step lengths in a certain way yields approach to the blow-up time. In \cite{H06}, the authors transform an ODE with polynomial nonlinearity through an arc length transformation.  They then use a forward Euler method to approximate the transformed equation and they show that their adaptive algorithm, which is based on their transformation plus a tolerance controlled ODE integrator, converges towards the blow-up time linearly with respect to the total number of time steps. Finally, in \cite{SF90} the authors approximate a nonlinear ODE using a $\theta$-method along with a temporal rescaling of the ODE and they show that their numerical solution has the same asymptotic behaviour as the exact solution.

In Chapter 5, we build on the results of the previous chapter by investigating blow-up in semilinear parabolic PDEs. In particular, we study blow-up in nonlinear non-stationary convection-diffusion equations that, based on the results of the previous chapter, are discretised using an IMEX dG method. An a posteriori error estimator is then proposed for this discretisation of the problem. In order to produce a viable error estimator for this problem, there are three major obstacles that need to be overcome: the lack of symmetry of the problem, the nonlinear error equation and the error due to non-conformity. A posteriori error estimation for blow-up in symmetric problems has been considered in \cite{K09,K01} by Kyza and Makridakis, however, such results are not easily generalised to non-symmetric problems; the lack of symmetry in the problem can, however, be dealt with through the Gagliardo-Nirenberg inequality. The nonlinear error equation is dealt with in a similar way to in Chapter 4 {\bf --} through a continuation argument and the error due to non-conformity can be dealt with via localised bounds for the non-conforming part of the error \cite{DG01,KP03,KP07}. The proposed error estimator is utilised to approximate the blow-up time of the problem through an adaptive algorithm that is based on those given in Chapter 3 and Chapter 4. The adaptive algorithm is then applied to some test problems and the results are compared to those given in Chapter 4. It is worth noting that solution profiles close to the blow-up time can also be obtained through the rescaling algorithm of Berger and Kohn \cite{BK88,NZ14} or the MMPDE method \cite{BHR96,HMR08}. There is also work looking at the numerical approximation of blow-up in the  nonlinear Schr\"{o}dinger equation and its generalisations \cite{ADKM03,CS02,FI03,KMR11,TS92}. Other numerical methods for approximating blow-up in a variety of different nonlinear PDEs can be found in \cite{ABN09,DPMF05,DKKV98,FGR02,NB11}.

In Chapter 6, we consider an IMEX dG discretisation of a nonlinear interface problem that was introduced in \cite{CGJ13,CGJ14}, based on the works \cite{F08,KK58,P05,T53,Z02}, to model the mass transfer of solutes through semi-permiable membranes. There are a variety of a posteriori error estimators of both residual and recovery type in the literature for different discretisations of interface problems. In particular, for the conforming finite element method \cite{CZ09,CZ12}, the discontinuous Galerkin method \cite{CYZ11} and the finite volume method \cite{EPTW11,MJ14}. Based on these works, and the techniques used in previous chapters, we seek to derive a residual-based a posteriori error estimator for this discretisation of the model. The effectiveness of the error estimator is then tested through a series of numerical experiments utilising the adaptive algorithm that was developed in Chapter 3.

Finally, in Chapter 7 we summarise the results of this work and discuss ways in which this work could be extended.

\chapter{Preliminaries}

\begin{section}{Sobolev spaces}

Let $\omega \subset \mathbb{R}^2$ be a bounded Lipschitz domain with boundary $\partial \omega$. For $1 \leq p \leq +\infty$, we define the $L^p$ norms by
\begin{equation}
\begin{aligned}
\notag
\|v\|_{L^p(\omega)}&:=\left(\int_{\omega} \! |v|^p \, dx \right)^{1/p} \qquad && \text{for } 1\le p< +\infty, \\
\|v\|_{L^{p}(\omega)}&:=\esssup_{x \in \omega} |v(x)| & & \text{for } p = +\infty,
\end{aligned}
\end{equation} 
and the respective $L^p$ spaces by 
\begin{equation}
\notag
L^p(\omega) := \left \{u \mbox{ }\big |\mbox{ } ||u||_{L^p(\omega)} < \infty \right \}.
\end{equation} 
Note that $L^2(\omega)$ is a Hilbert space with an inner product given by 
\begin{equation}
\notag
\left(u,v\right)_{\omega} := \int_{\omega} \! uv \, dx.
\end{equation} 
When $\omega$ is the computational domain $\Omega$ (to be defined later) then because both the $L^2$ norm and $L^2$ inner product over $\Omega$ are used frequently in this thesis, the relevant subscripts are omitted. Given a multi-index $\alpha \in \mathbb{N}^2$, the weak derivative $D^{\alpha}$ of order $|\alpha|$ is given by
\begin{equation}
\notag
D^{\alpha} := \frac{\partial^{|\alpha|}}{\partial x_1^{\alpha_1} \partial x_2^{\alpha_2} }.
\end{equation} 

For $k \in \mathbb{N}$ and $1 \leq p \leq \infty$, the Sobolev space $W^{k,p}(\omega)$ is given by
\begin{equation}
\notag
W^{k,p}(\omega) := \left\{u \in L^p(\omega) \mbox{ } \big| \mbox{ } D^{\alpha}u \in L^p(\omega),  \mbox{ } |\alpha| \leq  k \right \}.
\end{equation} 
The spaces $W^{k,p}(\omega)$ are equipped with the norms
\begin{equation}
\begin{aligned}
\notag
\|v\|_{W^{k,p}(\omega)} &:=\left(\sum_{|\alpha| \leq k} ||D^{\alpha}v||^p_{L^p(\omega)}\right)^{1/p} \qquad && \text{for } 1\le p< +\infty, \\
\|v\|_{W^{k,p}(\omega)}&:=\sum_{|\alpha| \leq k} ||D^{\alpha}v||_{L^{\infty}(\omega)}& & \text{for }  p = +\infty.
\end{aligned}
\end{equation} 
The space $W^{k,2}(\omega)$ together with the standard inner product is a Hilbert space which we shall denote by $H^k(\omega) := W^{k,2}(\omega)$. Fractional Sobolev spaces ($H^{1/2}(\omega)$ in particular) are also of significant use when trying to make sense of boundary values, in the sense of traces, in Sobolev spaces; we refer to \cite{A03} for details. Whenever boundary values are used in this thesis, they are to be understood in the sense of traces. We define the space $H^1_D(\omega)$, which is the prototypical PDE solution space, by
\begin{equation}
\begin{aligned}
\notag
H^1_D(\omega) := \big\{u \in H^1(\omega) \mbox{ } \big| \mbox{ } u |_{\Gamma_D} = 0 \big\},
\end{aligned}
\end{equation}
where $\Gamma_D$ is some subset of $\partial\omega$ with positive one-dimensional Hausdorff measure; if $\Gamma_D = \partial\omega$, we denote this space by $H^1_0(\omega)$.
 Finally, we let $C^k(\omega)$  denote the space of all functions $u$ for which $D^{\alpha}u$ is continuous for all multi-indices $\alpha$ with $|\alpha| \leq k$.

For $T>0$, the spaces $L^p(0,T;X)$ (where $X$ is a real Banach space with norm $\|\cdot\|_X$) consist of all measurable functions $v: [0,T]\to X$ for which
\begin{equation}
\begin{aligned}
\notag
\|v\|_{L^p(0,T;X)}&:=\left(\int_0^T \! \|v(t)\|_{X}^p \, dt\right)^{1/p}<\infty \qquad && \text{for } 1\le p< +\infty, \\
\|v\|_{L^{p}(0,T;X)}&:=\esssup_{0\le t\le T}\|v(t)\|_{X}<\infty& & \text{for }  p = +\infty.
\end{aligned}
\end{equation} 
We also define $H^1(0,T;X):=\big\{u\in L^2(0,T;X) \mbox{ } \big |  \mbox{ } u_t\in  L^2(0,T;X)\big\}$. 
Finally, we denote by $C(0,T;X)$ and $C^{0,1}(0,T;X)$, respectively, the spaces of continuous and Lipschitz continuous functions $v:[0,T] \rightarrow X$ such that
\begin{equation}
\begin{aligned}
\notag
\displaystyle ||v||_{C(0,T;X)} & :=\max_{0 \leq t \leq T}{||v(t)||_X} < \infty \mbox{,} \\ ||v||_{C^{0,1}(0,T;X)}& :=\max \left\{  ||v||_{C(0,T;X)}, \bigg|\bigg|\frac{\partial{v}}{\partial{t}}\bigg|\bigg|_{L^{\infty}(0,T;X)} \right\} < \infty \mbox{.}
\end{aligned}
\end{equation}
\end{section}

\begin{section}{Stationary convection-diffusion equation}\label{prelim}

Let the \emph{computational domain} $\Omega \subset \mathbb{R}^2$ be a polygon with boundary $\partial\Omega$, this assumption will be used throughout the rest of this thesis. We consider the model problem of finding $u:\Omega\to\mathbb{R}$ such that
\begin{equation}\label{ellipticmodel_strong}
\begin{aligned}
 - \varepsilon\Delta{u}+ {\bf a} \cdot \nabla{u}+bu &= f  \qquad  && \text{in } \Omega, \\ u &=0 \qquad && \mbox{on } \partial\Omega \mbox{.}
\end{aligned}
\end{equation}
The variable functions are collectively referred to as the \emph{data} of the problem. Problem \eqref{ellipticmodel_strong} is the prototypical convection-diffusion equation. If the convection is constant then the scale of the solution to  \eqref{ellipticmodel_strong} can be characterised through the ratio of convection to diffusion as described by the P\'{e}clet number
\begin{equation}
\begin{aligned}
\notag
Pe := \frac{|{\bf a}||\Omega|}{\varepsilon}.
\end{aligned}
\end{equation}
When $Pe \gg 1$, \eqref{ellipticmodel_strong} is advection dominated and can exhibit some or all of the following features (see \cite{J05,RST08} for a detailed analysis):

\begin{itemize}
\item The presence of \emph{ordinary layers}, spatial areas containing steep gradients of the solution $u$ of width $\mathcal{O}(\varepsilon)$ that usually occur near the outflow boundary as \emph{boundary layers}.

\item The presence of \emph{parabolic layers}, spatial areas containing moderate gradients of the solution $u$ of width $\mathcal{O}\left(\sqrt{\varepsilon} \right)$ that typically occur on the inflow boundary as boundary layers or as \emph{interior layers}.

\end{itemize}
These complex spatial features can render the numerical approximation of the solution difficult as discussed in the next section.

In order to analyse \eqref{ellipticmodel_strong} and its parabolic counterparts, we must make some assumptions on the data. We assume that: $0<\varepsilon \leq1$, $f \in L^2(\Omega)$, ${\bf a} \in \left[W^{1, \infty}(\Omega)\right]^2$ and $b \in L^{\infty}(\Omega)$. Furthermore, we require some additional assumptions in order to state standard coercivity and continuity results (see, e.g., \cite{SZ09,V05}). To that end, we assume that there are constants $\beta \geq 0$ and $c_* \geq 0$ such that
\begin{equation}
\label{coefficientconditions}
b - \frac{1}{2}\nabla \cdot {\bf a} \geq \beta \quad \text{a.e. in }  \Omega,\qquad ||b-\nabla\cdot {\bf a}  | |_{L^{\infty}(\Omega)} \leq c_*\beta\mbox{.}
\end{equation}
The weak form of \eqref{ellipticmodel_strong} reads: find $u \in H^1_0(\Omega)$ such that
\begin{equation}\label{ellipticmodel_weak}
B\left(u,v \right) =\left(f,v \right) \qquad \forall v \in H^1_0(\Omega),
\end{equation}
where
\begin{equation}
B(u,v)=\int_{\Omega} \! \left(\varepsilon \nabla{u} \cdot \nabla{v}+{\bf a}\cdot \nabla{u}v + buv \right) \, dx.
\end{equation}

\end{section}

\begin{section}{Discontinuous Galerkin method}\label{dg_section}

A \emph{finite element method} is a numerical technique for finding approximate solutions to the weak formulation of PDEs characterised by the use of a subdivision of $\Omega$ referred to as the \emph{mesh} or \emph{triangulation}. The mesh is a collection of \emph{elements} with $K$ denoting a generic element. A finite element space is then constructed over the mesh and the weak form of the PDE is discretised; different choices of finite element space and different discretisations give rise to different finite element methods. The \emph{discretisation parameters} are quantities related to convergence of the method, specifically, the diameters of elements in the mesh (and the lengths of time steps in parabolic problems).

When it comes to the finite element approximation of \eqref{ellipticmodel_weak}, the presence of layers introduces a certain amount of difficulty. In particular, the standard conforming finite element method performs poorly if an insufficient number of elements are placed in the vicinity of the layers resulting in unphysical oscillations. This issue can be solved with layer-adapted meshes such as Shishkin meshes (see \cite{K10} for an overview of this subject) but these special meshes require a priori knowledge of where the layer will occur. These problems led to the development of stabilised finite element methods for convection-diffusion equations \cite{RST08} the most popular of which are the streamline upwind Petrov-Galerkin (SUPG) method \cite{BH82} and the discontinuous Galerkin (dG) method. In this work, we focus upon a dG discretisation of \eqref{ellipticmodel_weak} taken from \cite{HSS02} which is based upon a classical interior penalty discretisation of the diffusive term originally introduced in \cite{AD82,B77a, NZ71} and an upwind discretisation of the transport term first discussed in  \cite{LS74,RD73}. 

In order to state the dG discretisation of \eqref{ellipticmodel_weak}, we need some additional notation. The mesh $\zeta$ is assumed to be constructed via affine mappings \mbox{ }\mbox{ }\mbox{ } $F_{K}:\hat{K}\to K$ with non-singular Jacobian where $\hat{K}$ is the reference triangle or the reference square. The mesh is allowed to contain a uniformly fixed number of regular hanging nodes per edge. We define the finite element space
\begin{equation}\label{eq:FEspace}
V_h\equiv V_h(\zeta) := \big\{v \in L^2(\Omega) \mbox{ }\big | \mbox{ }v|_K\circ F_K \in \mathcal{P}^p\big(\hat{K} \big), \mbox{ } K \in \zeta \big\},
\end{equation}
 where $\mathcal{P}^p(K)$ is the space of polynomials of total degree $p$ if $\hat{K}$ is the reference triangle, or the space of polynomials of degree $p$ in each variable if $\hat{K}$ is the reference square. Let $\mathcal{E}(\zeta)$ denote the set of all edges in the mesh $\zeta$ and $\mathcal{E}^{int}(\zeta)$ the set of all interior edges. We also denote the diameter of an element $K \in \zeta$ by $h_K$ and the length of an edge $E \in \mathcal{E}(\zeta)$ by $h_E$. The outward unit normal to the boundary of an element $K$ is denoted by ${\bf n}_K$. We assume that the mesh $\zeta$ is \emph{shape-regular}, that is, there exists $C>0$ such that for all $K \in \zeta$ we have
\begin{equation}
\begin{aligned}
\notag
\frac{h_K}{d_K} \leq C,
 \end{aligned}
\end{equation}
where $d_K$ denotes the diameter of the largest ball that can be completely contained in $K$.

In what follows, it will be useful to associate \emph{patches} with each element $K \in \zeta$. Specifically, we have the \emph{(elemental) patch} $\tilde{K}$ which is the union of all elements that ``neighbour" $K$ and the \emph{edge patch} $\tilde{K}_E$ which is the union of all edges that intersect the boundary of $K$. Formally, these are defined, respectively, by

\begin{equation}
\begin{aligned}
\notag
\tilde{K} & := \left\{\bigcup K',\mbox{ } K' \in \zeta \mbox{ } \Big | \mbox{ } \partial K \cap \partial K' \neq \emptyset \right \}, \\
\tilde{K}_E & := \left\{\bigcup E,\mbox{ }  E \in \mathcal{E}(\zeta) \mbox{ }\Big | \mbox{ } \partial K \cap \bar{E} \neq \emptyset \right\}.
 \end{aligned}
\end{equation}
We also associate an \emph{(elemental) patch} $\tilde{E}$ with each edge $E \in \mathcal{E}(\zeta)$ which is the union of all elements whose boundary intersects $\bar{E}$, viz.,
\begin{equation}
\begin{aligned}
\notag
\tilde{E} & := \left\{\bigcup K,\mbox{ }  K \in \zeta \mbox{ }\Big | \mbox{ } \bar{E} \cap \partial K \neq \emptyset \right\}.
 \end{aligned}
\end{equation}

Given an edge $E\in \mathcal{E}^{int}(\zeta)$ shared by two elements $K$ and $K'$, a vector field ${\bf v}\in \left[H^{1/2}(\Omega) \right]^2$ and a scalar field $v\in H^{1/2}(\Omega)$, we define jumps $[\cdot]$ and averages $ \{ \cdot \}$ of ${\bf v}$ and $v$ across $E$ by
\begin{equation}
\begin{aligned}
\notag
 \{{\bf v}\} & := \frac{1}{2}({\bf v}|_{\bar{K}}+{\bf v}|_{\bar{K}'}), \qquad &
   [{\bf v}]  & := {\bf v}|_{\bar{K}} \cdot {\bf n}_K+ {\bf v}|_{\bar{K}'} \cdot {\bf n}_{K'},  \\
  \{v\} & := \frac{1}{2}( v|_{\bar{K}}+ v|_{\bar{K}'}), \qquad &
   [v]   & :=  v|_{\bar{K}}  {\bf n}_K+  v|_{\bar{K}'}  {\bf n}_{K'}.
 \end{aligned}
\end{equation}
If $E\subset \partial\Omega$, we set $\{{\bf v}\}:={\bf v}$, $[{\bf v}]:={\bf v} \cdot {\bf n}$, $\{v\}:= v$ and $[ v]:= v {\bf n}$, with ${\bf n}$ denoting the outward unit normal to the boundary $\partial\Omega$. 

We define the inflow and outflow parts of the boundary $\partial\Omega$, respectively, by
\begin{equation}
\notag
 \partial\Omega_{in} := \{x \in \partial\Omega \mbox{ }|\mbox{ } {\bf a}(x) \cdot {\bf n}(x) < 0 \}, \qquad \partial\Omega_{out} := \{x \in \partial\Omega \mbox{ } | \mbox{ } {\bf a}(x) \cdot {\bf n}(x) \geq 0 \}.
\end{equation}
Similarly, the inflow and outflow parts of an element $K$ are defined as
\begin{equation}
\notag
\partial K_{in} := \{x \in \partial K \mbox{ } | \mbox { } {\bf a}(x) \cdot {\bf n}_K(x) < 0 \}, \qquad \partial K_{out} := \{x \in \partial K \mbox{ } | \mbox{ } {\bf a}(x) \cdot {\bf n}_K(x) \geq 0 \}.
\end{equation}

With all the above notation at hand, the dG approximation to \eqref{ellipticmodel_weak} reads as follows: find $u_h \in V_h$ such that
\begin{equation}\label{dgelliptic}
B(u_h,v_h)+K_h(u_h,v_h)=(f,v_h) \qquad  \forall v_h \in V_h \mbox{,}
\end{equation} 
where
\begin{equation}
\begin{aligned}
\label{bilinearformB}
B(u_h,v_h) & := \sum_{K \in \zeta}{\int_{K} \! 
(\varepsilon \nabla u_h  -{\bf{a}}u_h ) \cdot \nabla v_h+(b- \nabla \cdot {\bf a} )u_hv_h\, dx}
\\&+\sum_{E \in \mathcal{E}(\zeta)}\frac{ \gamma \varepsilon}{h_E}\int_{E} \! [u_h] \cdot [v_h] \, ds 
+ \sum_{K \in \zeta}\int_{\partial{K}_{out}} \! u_h[{\bf a}v_h] \, ds, \\ 
K_h(u_h,v_h) & := -\sum_{E \in \mathcal{E}(\zeta)}\int_{E} \! \{\varepsilon \nabla u_h \} \cdot [v_h] + \{\varepsilon \nabla v_h \} \cdot [u_h] \, ds.
\end{aligned}
\end{equation}
The \emph{penalty parameter}, $\gamma$, is set to $\gamma = 2p^2$ in light of \cite{HSS02} so that the operator $B+K_h$ is coercive on $V_h$ (see below).

We note that the bilinear form $K_h$ is not well-defined for arguments in $H^1_0(\Omega)$, but the bilinear form $B$ is and is equal to that appearing in \eqref{ellipticmodel_weak}. To analyse the dG discretisation, we introduce the quantities
\begin{equation}
\begin{aligned}
\notag
|||u||| & := \left(\sum_{K \in \zeta} \Big(\varepsilon||\nabla{u}||^2_{L^2(K)}+\beta||u||^2_{L^2(K)} \Big)+\sum_{E \in \mathcal{E}(\zeta)} \bigg({\frac{\gamma \varepsilon}{h_E} + \beta h_E \bigg )||[u]||^2_{L^2(E)}}\right)^{1/2}, \\ 
|u|_{A} & := \left(\Bigg(\sup_{v \in H^1_0(\Omega) \setminus \{0\}}{\frac{\int_{\Omega} \! {{\bf a}u \cdot \nabla{v}} \, dx}{|||v|||}}\Bigg)^2 + \sum_{E \in \mathcal{E}(\zeta)}{\frac{h_E}{\varepsilon}||[{\bf a}u]||^2_{L^2(E)}} \right)^{1/2}.
\end{aligned}
\end{equation}
These quantities define norms on $H^1_0(\Omega) + V_h$. In the literature, $|||\cdot|||$ is referred to as the \emph{energy norm} while the quantity $|\cdot|_A$ is referred to as a \emph{dual norm}. It is easy to see that the bilinear form $B$ is \emph{coercive} on $H^1_0(\Omega)$, viz.,
\begin{equation}\label{coercivity}
B(v,v) \geq |||v|||^2,
\end{equation}
for all  $v \in H^1_0(\Omega)$, and is \emph{continuous} in the following sense
\begin{equation}\label{continuity}
B(u,v) \lesssim (|||u|||+|u|_{A})|||v|||,
\end{equation}
for all $u \in H^1_0(\Omega) + V_h$ and  $v \in H^1_0(\Omega)$. Moreover, the discrete bilinear form is \emph{coercive} for $v_h \in V_h$ with respect to the energy norm, viz.,
\begin{equation}
\label{FEcoercive}
B(v_h,v_h) + K_h(v_h,v_h) \gtrsim |||v_h|||^2.
\end{equation}
The symbols $\lesssim$ and $\gtrsim$ used above and throughout the rest of the thesis are used to describe inequalities that are true up to an unspecified positive constant that is independent of the data, the discretisation parameters, the exact solution and the dG solution.

\end{section}

\begin{section}{Error bounds for the stationary problem}\label{ell_bounds_sec}

Let $u$ be the exact solution of a PDE and $u_h$ be some finite element approximation; an \emph{a posteriori error estimator}, $\eta$, is an approximation of the error, $e := u - u_h$, in a certain norm $\displaystyle||\cdot||$ such that $||e|| \approx \eta$. The error estimator must be computable and thus is allowed to depend upon the data, the discretisation parameters and the finite element solution $u_h$ but not the unknown solution $u$. In order to discuss how good $\eta$ is at approximating $||e||$, it is useful to introduce the notion of \emph{reliability} and \emph{efficiency} of an estimator. An a posteriori estimator, $\eta$, is said to be \emph{reliable} if there is $C>0$, independent of the exact solution $u$, such that
\begin{equation}
\begin{aligned}
\label{reliability}
||e|| & \leq C\eta,
\end{aligned}
\end{equation} 
while $\eta$ is said to be \emph{efficient} if there is $c>0$, independent of the exact solution $u$, such that
\begin{equation}
\begin{aligned}
\label{efficiency}
c\eta & \leq ||e||.
\end{aligned}
\end{equation} 
The constants appearing in \eqref{reliability}-\eqref{efficiency} are often impossible to calculate explicitly which leads us to the useful notion of the \emph{effectivity index}. If $u$ is known for particular data, $||e||$ may be calculated explicitly for different realisations of $u_h$. Thus, we can compute the \emph{effectivity index} - the ratio of $\eta$ to $||e||$:
\begin{equation}
\begin{aligned}
\notag
\text{effectivity index} & := \frac{\eta}{||e||}.
\end{aligned}
\end{equation} 

The effectivity index naturally leads to the notion of \emph{robustness}. An error estimator, $\eta$, is said to be \emph{robust} (with respect to $||\cdot||$) if the constants in \eqref{reliability}-\eqref{efficiency} are always independent of the data, the discretisation parameters and the finite element solution $u_h$. There is also the weaker notion of \emph{asymptotic robustness}: $\eta$ is said to be \emph{asymptotically robust} if it is robust once the discretisation parameters are sufficiently small. If $\eta$ is asymptotically robust then it successfully reproduces the convergence rate of $||e||$ with respect to the discretisation parameters.

\begin{figure}
\centering
\includegraphics[scale=0.25]{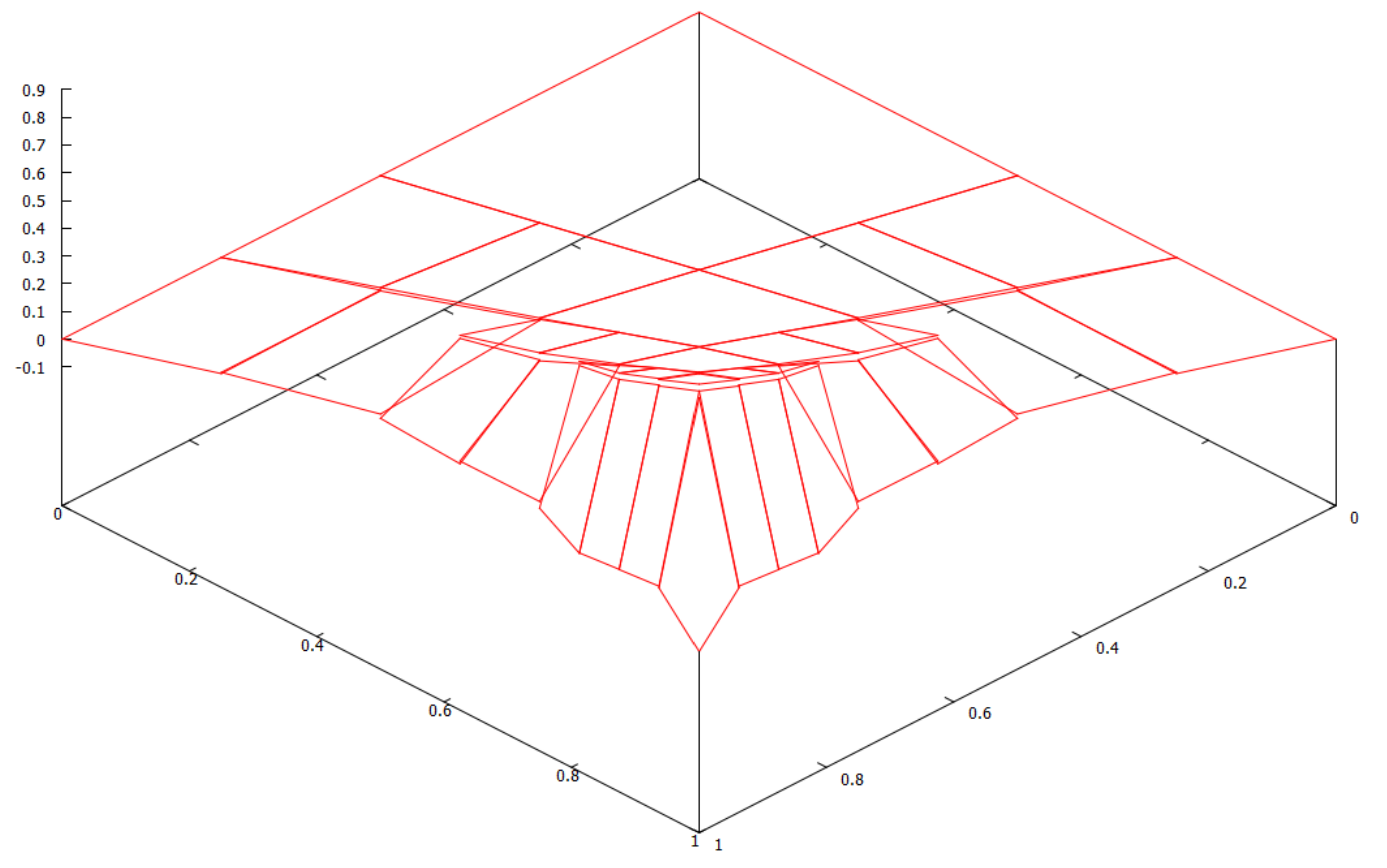}\includegraphics[scale=0.25]{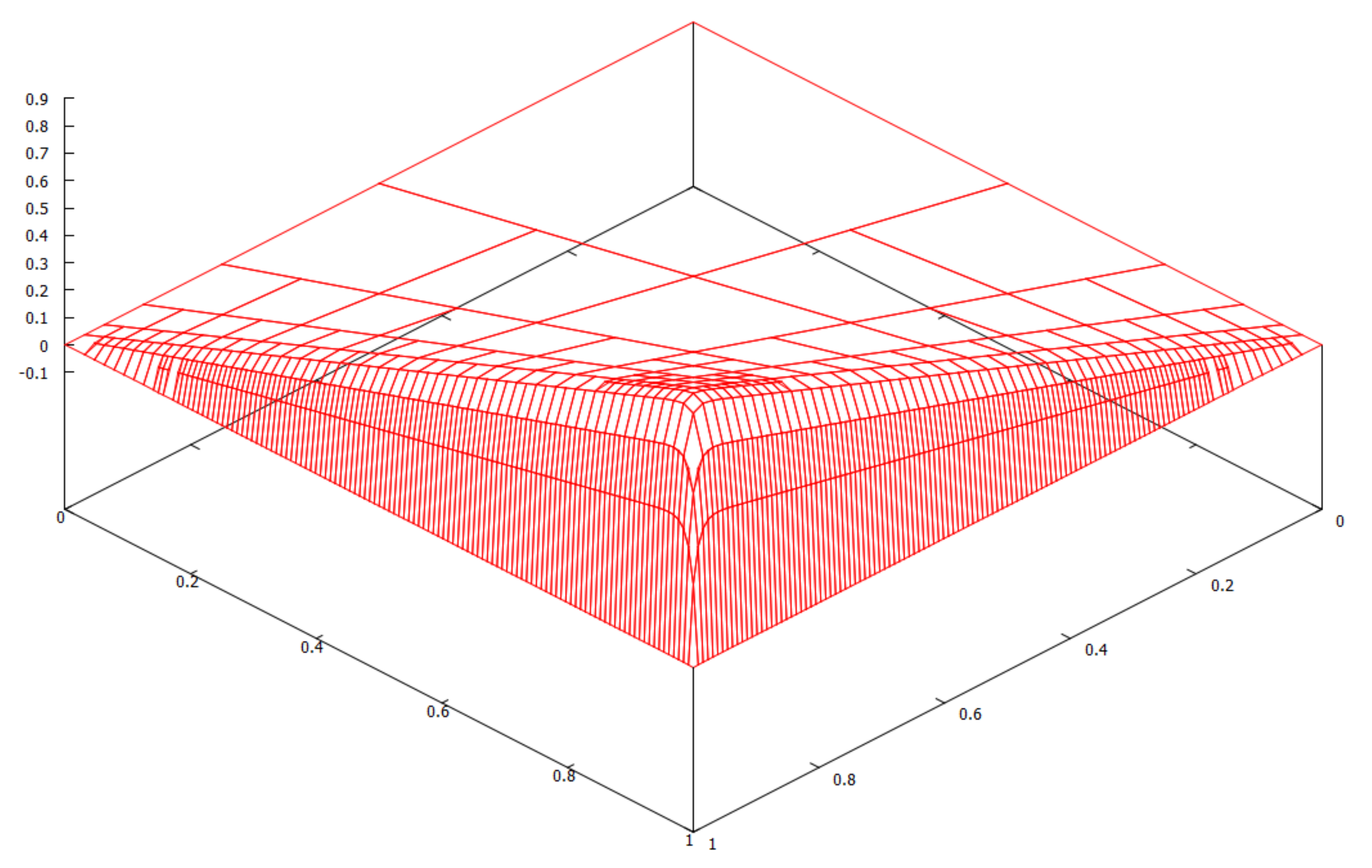}
\caption{Numerical approximation of a boundary layer in the pre-asymptotic regime (left) and the asymptotic regime (right).}
\end{figure}

Regarding the a posteriori error estimation of \eqref{ellipticmodel_weak}, many estimators exist for stabilised finite element schemes \cite{ABR05, APS05, BC04, E10, GHH07, JN13,  K03,S08, SZ09, V98,V05, SZ11} and the primary issue is robustness with respect to the small parameter $\varepsilon$. Before the layers have been covered by a sufficient number of elements, the \emph{pre-asymptotic regime}, the error in the energy norm is relatively constant. When a sufficient number of elements have been placed within the layers, the \emph{asymptotic regime}, the error in the energy norm starts to display the correct convergence rate with respect to the discretisation parameters. In order for an error estimator to be robust with respect to $\varepsilon$ in the energy norm, it must accurately capture the behaviour of the energy norm in both the pre-asymptotic and asymptotic regimes. Standard, classical estimators in the literature significantly overestimate the error in the energy norm in the pre-asymptotic regime to allow the layers to be detected and refined but this means that they are not robust with respect to $\varepsilon$ in the energy norm until the layers have been sufficiently resolved. One way out of this difficulty is to add an additional term, the dual norm, to the energy norm to account for the pre-asymptotic regime \cite{S05,S08}; robustness is then recovered in all regimes for the full norm \cite{SZ09,V05}. The question of whether a robust error estimator exists for the energy norm in all regimes is still open, however, recent results in this direction seem promising \cite{GHM14}.

An a posteriori estimator for the stationary problem, inspired by \cite{SZ09}, will be utilised in our analysis. More specifically, we have the following result whose proof is completely analogous to  Theorem 3.2 in \cite{SZ09} and is therefore omitted for brevity.

\begin{theorem}\label{elliptic_apost}For $f \in L^2(\Omega)$, let $u^s \in H^1_0(\Omega)$ be such that
\[
B(u^s,v)=(f,v)\qquad \forall v\in H^1_0(\Omega),
\]
and consider $u^s_h\in V_h$ such that
\[
B(u^s_h,v_h)+K_h(u^s_h,v_h)=(f,v_h) \qquad\forall v_h\in V_h.
\]
Then the following a posteriori bound holds for any $0 \neq v \in H_0^1(\Omega)$:
\begin{equation}\label{apost_stationary}
\begin{aligned}
\notag
\frac{B(u^s-u_h^s,v)}{|||v|||} & \lesssim  \left ( \sum_{K\in \zeta}\frac{h^2_K}{\varepsilon}||f+ \varepsilon \Delta u^s_h-{\bf a} \cdot \nabla u^s_h - bu_h^s||_{L^2(K)}^2 + \sum_{E\in \mathcal{E}(\zeta)} \frac{\gamma\varepsilon}{h_E}|| [ u^s_h] ||_{L^2(E)}^2 \right. \\  & \left.  +\sum_{E\in \mathcal{E}^{int}(\zeta)} \frac{h_E}{\varepsilon}|| [{\bf a} u^s_h] ||_{L^2(E)}^2
+\sum_{E\in \mathcal{E}^{int}(\zeta)} \varepsilon h_E|| [\nabla u^s_h] ||_{L^2(E)}^2 \right)^{1/2} .
\end{aligned}
\end{equation}
\end{theorem}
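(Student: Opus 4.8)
The plan is to bound the residual functional $v\mapsto B(u^s-u^s_h,v)$ in the dual of the energy norm by producing an explicit residual representation and then testing it against the interpolation error of a conforming quasi-interpolant. First I would use the exact weak equation to write $B(u^s-u^s_h,v)=(f,v)-B(u^s_h,v)$, where $B(u^s_h,\cdot)$ is read as the broken (element-wise) form; since $v\in H^1_0(\Omega)$ has vanishing jumps, the penalty and upwind edge contributions of the dG form drop out and the broken form agrees with the continuous one used in the continuity bound \eqref{continuity}. Let $v_h=I_hv\in V_h\cap H^1_0(\Omega)$ be a conforming quasi-interpolant of Scott--Zhang/Cl\'ement type (adapted to the permitted hanging nodes). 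Adding and subtracting $v_h$ and invoking the discrete equation $B(u^s_h,v_h)+K_h(u^s_h,v_h)=(f,v_h)$ --- in which $[v_h]=0$ annihilates one half of $K_h$ --- yields
\[
B(u^s-u^s_h,v)=\big[(f,v-v_h)-B(u^s_h,v-v_h)\big]-\sum_{E\in\mathcal{E}^{int}(\zeta)}\int_E\{\varepsilon\nabla v_h\}\cdot[u^s_h]\,ds .
\]

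Next I would integrate by parts element-wise in the bracketed term. Writing $w:=v-v_h\in H^1_0(\Omega)$ and collecting the element-boundary integrals into interior-edge jumps (legitimate because $w$ is single-valued across each edge and vanishes on $\partial\Omega$) gives
\[
(f,w)-B(u^s_h,w)=\sum_{K\in\zeta}\int_K R_K\,w\,dx-\sum_{E\in\mathcal{E}^{int}(\zeta)}\int_E\big([\varepsilon\nabla u^s_h]-[{\bf a}u^s_h]\big)w\,ds ,
\]
where $R_K:=f+\varepsilon\Delta u^s_h-{\bf a}\cdot\nabla u^s_h-bu^s_h$ is the element-wise strong residual. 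This exposes exactly the four quantities of the claimed bound: the interior residual $R_K$, the diffusive flux jump $[\nabla u^s_h]$, the convective jump $[{\bf a}u^s_h]$, and --- from the leftover $K_h$ term --- the solution jump $[u^s_h]$.

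The final step is to apply Cauchy--Schwarz element- and edge-wise, using the standard local approximation and stability of $I_h$ to convert $w$-weights into $|||v|||$. Concretely, $\|w\|_{L^2(K)}\lesssim h_K\|\nabla v\|_{L^2(\tilde K)}$ and $\|w\|_{L^2(E)}\lesssim h_E^{1/2}\|\nabla v\|_{L^2(\tilde E)}$ control the interior and jump contributions, while a discrete trace inequality $\|\nabla v_h\|_{L^2(E)}\lesssim h_E^{-1/2}\|\nabla v_h\|_{L^2(\tilde E)}$ together with $H^1$-stability of $I_h$ controls the $\{\varepsilon\nabla v_h\}\cdot[u^s_h]$ term. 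The crucial bookkeeping is to split the $\varepsilon$-powers so that each product pairs a genuine estimator contribution against a factor bounded by $\sqrt{\varepsilon}\,\|\nabla v\|_{L^2(\cdot)}\le|||v|||$; this produces the weights $h_K^2/\varepsilon$, $\varepsilon h_E$, $h_E/\varepsilon$ and $\gamma\varepsilon/h_E$, respectively. Summing and invoking the finite overlap of the patches $\tilde K,\tilde E$ then yields the stated bound.

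I expect the main obstacle to be precisely this $\varepsilon$-robust bookkeeping rather than any single hard inequality. The delicate piece is the non-conforming consistency term $\sum_E\int_E\{\varepsilon\nabla v_h\}\cdot[u^s_h]$, which is special to the dG setting: the factor $\varepsilon\nabla v_h$ must be traded, via the discrete trace inequality, for $\sqrt{\varepsilon}\,\|\nabla v\|$ at the price of $h_E^{-1/2}$, and this is exactly what makes it match the $\gamma\varepsilon/h_E$ penalty scaling while keeping the constant independent of $\varepsilon$. Care must also be taken that the quasi-interpolant is well defined and locally stable on meshes with the allowed hanging nodes, and that the convective jumps are weighted by $h_E/\varepsilon$ (the edge part of the dual norm $|\cdot|_A$) rather than by an energy-norm scaling --- this is what secures robustness uniformly across the pre-asymptotic and asymptotic regimes.
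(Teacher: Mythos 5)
Your argument is correct and is precisely the residual/quasi-interpolation argument that the paper defers to (it omits its own proof, citing Theorem 3.2 of \cite{SZ09}, whose proof proceeds exactly as you describe: conforming quasi-interpolant, elementwise integration by parts, the leftover $K_h$ consistency term handled by a discrete trace inequality, and $\varepsilon$-weighted Cauchy--Schwarz). The only cosmetic points are that your interpolant is the conforming $I_X$ of Theorem \ref{hodor63} rather than the $L^2$ projection $I_h$ of \eqref{L2project}, and that the local $H^1$-stability $\|\nabla I_X v\|_{L^2(K)}\lesssim\|\nabla v\|_{L^2(\tilde K)}$ you invoke for the $\{\varepsilon\nabla v_h\}\cdot[u^s_h]$ term is standard for such operators but should be cited alongside the approximation bounds.
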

  
\end{section}

\begin{section}{Finite element approximation results}

Error bounds for the approximation of functions in $H^1_D(\Omega)$ have been constructed in the finite element literature and will be utilised below.

\begin{theorem}
\label{hodor63}
Given $u \in H^1_D(\Omega)$, there is a finite element quasi-interpolant $I_X u \in H^1_D(\Omega) \cap V_h$ such that
\begin{equation}
\begin{aligned}
\notag
h^{-1}_K||u-I_Xu||_{L^2(K)} & \lesssim ||\nabla u||_{L^2(\tilde{K})} &  \qquad & \forall K \in \zeta, \\ 
h^{-\frac{1}{2}}_E||u-I_Xu||_{L^2(E)} & \lesssim ||\nabla u||_{L^2(\tilde{E})} & & \forall E \in \mathcal{E}(\zeta).
\end{aligned}
\end{equation}
\end{theorem}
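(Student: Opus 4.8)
The plan is to construct $I_X$ as a Scott--Zhang type quasi-interpolant and to deduce the two bounds from local $L^2$-stability together with the Bramble--Hilbert lemma. The reason a plain Lagrange interpolant will not serve is that point values of $H^1(\Omega)$ functions are not controlled in two dimensions; averaging over lower-dimensional facets repairs this while still allowing the homogeneous condition on $\Gamma_D$ to be inherited exactly, so that $I_Xu$ lands in the \emph{conforming} (continuous) piecewise-polynomial subspace of $V_h$, which is contained in $H^1_D(\Omega)\cap V_h$.

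First I would set up the degrees of freedom. To each Lagrange node $z$ I associate a facet $\sigma_z$ (an edge, or for interior nodes an element) containing $z$, and on $\sigma_z$ a dual function $\psi_z$ satisfying $\int_{\sigma_z}\psi_z\phi_{z'}=\delta_{zz'}$, where $\{\phi_{z'}\}$ are the nodal basis functions. I then set $I_Xu:=\sum_z\big(\int_{\sigma_z}\psi_z u\big)\phi_z$. For every node $z$ lying on $\Gamma_D$ I choose $\sigma_z\subset\Gamma_D$; since $u|_{\Gamma_D}=0$ in the sense of traces, the corresponding coefficient vanishes and hence $I_Xu\in H^1_D(\Omega)\cap V_h$. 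The uniformly bounded number of hanging nodes per edge guarantees that each patch $\tilde K$, $\tilde E$ contains only a fixed number of elements, so every sum involved is genuinely local.

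Second, I would establish the two workhorse facts on the reference configuration and transport them by the affine maps $F_K$. (i) \emph{Local polynomial reproduction}: $I_Xq=q$ on $K$ for every $q\in\mathcal{P}^p(\tilde K)$, immediate from $\int_{\sigma_z}\psi_z\phi_{z'}=\delta_{zz'}$. (ii) \emph{Local $L^2$-stability}: $\|I_Xv\|_{L^2(K)}\lesssim\|v\|_{L^2(\tilde K)}$, obtained by estimating the functionals $\int_{\sigma_z}\psi_z v$ after scaling to the reference element (using a trace inequality for the edge-based functionals), with shape-regularity making the constants uniform. The element bound then follows by subtraction: for any $q\in\mathcal{P}^p(\tilde K)$,
\[
\|u-I_Xu\|_{L^2(K)}=\|(u-q)-I_X(u-q)\|_{L^2(K)}\lesssim\|u-q\|_{L^2(\tilde K)},
\]
and choosing $q$ to be the mean (an averaged Taylor polynomial) of $u$ on $\tilde K$, the Bramble--Hilbert lemma yields $\|u-q\|_{L^2(\tilde K)}\lesssim h_K\|\nabla u\|_{L^2(\tilde K)}$, which is the first estimate.

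For the edge bound I would pass from element to edge with the scaled trace inequality $\|w\|^2_{L^2(E)}\lesssim h_E^{-1}\|w\|^2_{L^2(\tilde E)}+h_E\|\nabla w\|^2_{L^2(\tilde E)}$ applied to $w=u-I_Xu$, then control both terms by the element-type estimates on the elements constituting $\tilde E$. The main obstacle is the stability estimate (ii) with the correct powers of $h$: it hinges on selecting the dual functions $\psi_z$ so that their scaled norms remain bounded and on tracking the Jacobian scaling under $F_K$ carefully, so that the constants are uniform across the mesh. Once stability and reproduction are in hand, the subtraction trick plus Bramble--Hilbert make the remainder routine.
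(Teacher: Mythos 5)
The paper does not actually prove this result; it delegates it entirely to the cited reference (Verf\"{u}rth), and what you have written is a reconstruction of precisely that standard Scott--Zhang-type argument, so in spirit you are on the same route the paper relies on. Two steps in your write-up are, however, not correct as stated, though both are repairable by standard means. First, once you assign to every node $z$ on $\Gamma_D$ a facet $\sigma_z\subset\Gamma_D$ (which you must do to land in $H^1_D(\Omega)\cap V_h$), the reproduction property $I_Xq=q$ fails on elements touching $\Gamma_D$ for any polynomial $q$ that does not vanish there --- in particular for the patch average of $u$ --- so the subtraction identity $u-I_Xu=(u-q)-I_X(u-q)$ breaks down exactly on boundary elements. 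The standard repair is to treat those elements separately: since $u$ vanishes on a portion of $\partial\tilde{K}\cap\Gamma_D$ of positive one-dimensional measure, a scaled Friedrichs inequality gives $\|u\|_{L^2(\tilde{K})}\lesssim h_K\|\nabla u\|_{L^2(\tilde{K})}$ directly, and combining this with the stability of $I_X$ yields the first estimate on boundary elements without invoking reproduction at all. Second, pure local $L^2$-stability $\|I_Xv\|_{L^2(K)}\lesssim\|v\|_{L^2(\tilde{K})}$ cannot hold for edge-averaged functionals, because an $L^2$ function has no edge trace; what the scaled trace inequality actually delivers is $\|I_Xv\|_{L^2(K)}\lesssim\|v\|_{L^2(\tilde{K})}+h_K\|\nabla v\|_{L^2(\tilde{K})}$, which still suffices since the extra term evaluated at $v=u-q$ is just $h_K\|\nabla u\|_{L^2(\tilde{K})}$. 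Finally, your edge bound also requires the companion gradient estimate $\|\nabla(u-I_Xu)\|_{L^2(K)}\lesssim\|\nabla u\|_{L^2(\tilde{K})}$, which is not among your two workhorse facts but follows by the same reproduction-plus-stability (or Friedrichs) argument one derivative down; with that in hand the rest of your outline goes through.
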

\begin{proof}
See \cite{V98b}.
\end{proof}

A useful tool in a posteriori error estimation involving dG finite element spaces is the approximation of functions in the dG finite element space, $V_h$, by functions in the conforming finite element space, $H^1_D(\Omega) \cap V_h$. The next theorem gives bounds on such approximation errors.

\begin{theorem} \label{ncbounds} Given $u_h \in V_h$, there exists a decomposition $u_h = u_{h,c} + u_{h,d}$ with $u_{h,c} \in H^1_D(\Omega) \cap V_h$ and $u_{h,d} \in V_h$ such that the following bounds hold for each element $K \in \zeta$:
\begin{equation}
\begin{aligned}
\notag
||\nabla u_{h,d}||^2_{L^2(K)} & \lesssim \sum_{E \subset \tilde{K}_E \setminus \partial\Omega} h_E^{-1}||[u_h]||^2_{L^2(E)} + \sum_{E\subset \tilde{K}_E \cap \Gamma_D} h_E^{-1}||u_h||^2_{L^2(E)}, \\
||u_{h,d}||^2_{L^2(K)}  & \lesssim \sum_{E \subset \tilde{K}_E
 \setminus \partial\Omega} h_E||[u_h]||^2_{L^2(E)} + \sum_{E \subset \tilde{K}_E \cap \Gamma_D} h_E||u_h||^2_{L^2(E)}, \\
||u_{h,d}||_{L^{\infty}(K)} & \lesssim ||[u_h]||_{L^{\infty}(\tilde{K}_E \setminus \partial\Omega)} + ||u_h||_{L^{\infty}(\tilde{K}_E \cap \Gamma_D)}.
\end{aligned}
\end{equation}
\end{theorem}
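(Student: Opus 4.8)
The plan is to build the conforming component $u_{h,c}$ by means of an averaging (Oswald-type) interpolation operator and then to control the remainder $u_{h,d} := u_h - u_{h,c}$ elementwise through the interelement jumps. First I would fix a nodal (Lagrange) basis $\{\phi_\nu\}$ for $V_h$, associated with a set of nodal points $\{\nu\}$ whose restriction to the conforming subspace $H^1_D(\Omega)\cap V_h$ determines a function uniquely; since the mesh $\zeta$ carries only a uniformly bounded number of hanging nodes per edge, such a conforming nodal set is available (see \cite{KP03,KP07}). For a node $\nu$ shared by the elements $K_1,\dots,K_m$ I would set the value of $u_{h,c}$ at $\nu$ equal to the arithmetic average $\tfrac{1}{m}\sum_{i} u_h|_{K_i}(\nu)$, while for $\nu \in \Gamma_D$ I would set the value to zero. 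Averaging at shared nodes forces continuity across element boundaries, so $u_{h,c}\in H^1(\Omega)\cap V_h$, and the vanishing on $\Gamma_D$ yields $u_{h,c}\in H^1_D(\Omega)$, as required.

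Next I would observe that, on a fixed element $K$, the nodal coefficients of $u_{h,d}$ are exactly the deviations $c_\nu := u_h|_K(\nu) - u_{h,c}(\nu)$. For an interior node $\nu$ this deviation equals $\tfrac{1}{m}\sum_i\big(u_h|_K(\nu) - u_h|_{K_i}(\nu)\big)$, which is a bounded linear combination of the values of the jump $[u_h]$ on the edges of $\tilde{K}_E$ meeting at $\nu$; for a node on $\Gamma_D$ it is simply $u_h|_K(\nu)$. Hence each coefficient $c_\nu$ is bounded by the pointwise size of the relevant jump (or boundary trace) along an edge belonging to the edge patch $\tilde{K}_E$.

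The quantitative estimates then follow from standard scaling and norm-equivalence arguments on the finite-dimensional polynomial spaces. Passing to the reference element via the affine map $F_K$ and invoking shape-regularity, I would use the upper bounds $\|u_{h,d}\|_{L^2(K)}^2 \lesssim h_K^2 \sum_\nu |c_\nu|^2$, $\|\nabla u_{h,d}\|_{L^2(K)}^2 \lesssim \sum_\nu |c_\nu|^2$ and $\|u_{h,d}\|_{L^\infty(K)} \lesssim \max_\nu |c_\nu|$, where the sums run over the nodes of $K$. For a node $\nu$ lying on an edge $E$, an edgewise norm equivalence gives $|c_\nu|^2 \lesssim h_E^{-1}\|[u_h]\|_{L^2(E)}^2$ (respectively $h_E^{-1}\|u_h\|_{L^2(E)}^2$ when $E\subset\Gamma_D$), while $|c_\nu|\lesssim \|[u_h]\|_{L^\infty(E)}$ serves the $L^\infty$ estimate. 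Summing over the nodes of $K$, each contributing edges drawn from $\tilde{K}_E$, and absorbing the factors $h_K \lesssim h_E \lesssim h_K$ permitted by shape-regularity, yields the three asserted bounds.

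The main obstacle is the treatment of the hanging nodes: one must select the conforming nodal set so that the averaged function is genuinely continuous across hanging-node edges, and so that the deviation $c_\nu$ at a constrained node remains expressible through the jumps on the edges of $\tilde{K}_E$. The uniform bound on the number of hanging nodes per edge is precisely what keeps the number of terms entering each average — and hence the constants in the equivalences above — independent of the mesh, so that the hidden constants in $\lesssim$ depend only on the shape-regularity parameter and the polynomial degree $p$.
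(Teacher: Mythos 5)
Your proposal is correct and follows essentially the same route as the paper, which does not write out the argument but simply attributes the three bounds to the weighted nodal-averaging (Oswald-type) construction of \cite{KP03,KP07} and \cite{DG01} — exactly the operator you build. Your sketch of the key steps (averaging at shared nodes, zero values on $\Gamma_D$, expressing the nodal deviations through jumps on edges of $\tilde{K}_E$, and the scaling/norm-equivalence estimates, with the hanging-node constraint handled via the uniform bound on their number) matches the cited construction and is sound.
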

\begin{proof}
The proof is based upon a weighted averaging of the dG solution around the nodes; see \cite{KP03,KP07} for the first two estimates and \cite{DG01} for the final estimate.
\end{proof}

Given a function $u \in L^2(\Omega)$, the $L^2$ projection of $u$ onto $V_h$, denoted by $I_{h}u \in V_h$, is the unique solution of 
\begin{equation}
\label{L2project}
(u-I_hu,v_h)=0 \qquad \forall v_h \in V_h.
\end{equation}
\begin{remark}
For the dG finite element space, $I_h u$ can also be constructed by considering \eqref{L2project} elementwise instead of globally and then piecing together the resultant local functions.
\end{remark}

The final theorem in this section gives bounds on the approximation error involved in the $L^2$ projection.
\begin{theorem}
\label{projectionbounds}
Given a function $u \in H^1_D(\Omega)$ and its $L^2$ projection $I_h u \in V_h$, the following bound holds for any element $K \in \zeta$:
\begin{equation}
\notag
h^{-1}_K||u-I_hu||_{L^2(K)} \lesssim ||\nabla u||_{L^2(\tilde{K})}.
\end{equation}
\end{theorem}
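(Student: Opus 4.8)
The plan is to exploit the fact, noted in the Remark immediately above, that the $L^2$ projection onto the dG space $V_h$ decouples completely across elements: on each $K \in \zeta$ the restriction $I_h u|_K$ is precisely the $L^2(K)$-orthogonal projection of $u|_K$ onto the local polynomial space $\mathcal{P}^p(\hat{K}) \circ F_K^{-1}$. This locality is the key simplification — unlike the quasi-interpolant of Theorem \ref{hodor63}, no averaging across element boundaries is required, so in fact I expect the estimate to hold with $\|\nabla u\|_{L^2(K)}$ on the right-hand side, and the stated patch bound will then follow immediately from $K \subset \tilde{K}$, i.e.\ from $\|\nabla u\|_{L^2(K)} \le \|\nabla u\|_{L^2(\tilde{K})}$.

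First I would invoke the best-approximation property of the orthogonal projection in $L^2(K)$: since constants belong to $\mathcal{P}^p$, for any constant $c$ we have $\|u - I_h u\|_{L^2(K)} \le \|u - c\|_{L^2(K)}$. Choosing $c = \bar{u}_K := |K|^{-1}\int_K u \, dx$, the mean value of $u$ over $K$, reduces the problem to estimating $\|u - \bar{u}_K\|_{L^2(K)}$.

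The second step is a Poincar\'{e}--Wirtinger inequality on $K$, namely $\|u - \bar{u}_K\|_{L^2(K)} \lesssim h_K \|\nabla u\|_{L^2(K)}$. Dividing by $h_K$ and enlarging the domain of integration from $K$ to $\tilde{K}$ then yields the claim. The only point requiring genuine care — and the main obstacle — is establishing that the constant in this Poincar\'{e} inequality is independent of $K$ and scales correctly in $h_K$. I would obtain this through the standard scaling argument: pull $K$ back to the reference element $\hat{K}$ via the affine map $F_K$, apply the fixed Poincar\'{e}--Wirtinger constant available on $\hat{K}$ there, and push forward, tracking the powers of the Jacobian. Shape-regularity of $\zeta$ controls the ratio of the Jacobian determinant to $h_K^2$ and bounds $\|DF_K\|$ and $\|DF_K^{-1}\|$ uniformly, so the resulting constant depends only on the shape-regularity parameter and not on $K$. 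The fact that $\hat{K}$ may be either the reference triangle or the reference square is immaterial, since both are fixed Lipschitz domains on which Poincar\'{e}--Wirtinger holds. Finally, I note that the hypothesis $u \in H^1_D(\Omega)$ plays no role beyond $u \in H^1$ locally, as the boundary condition is invisible to the elementwise projection.
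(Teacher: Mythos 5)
Your proof is correct, and it diverges from the paper's argument at the key approximation step. Both proofs begin identically: the elementwise orthogonality of the $L^2$ projection on the broken space $V_h$ (the content of the Remark preceding the statement) gives the local best-approximation property $\|u-I_hu\|_{L^2(K)}\le\|u-v_h\|_{L^2(K)}$ for any $v_h\in V_h$. The paper then simply substitutes $v_h=I_Xu$, the quasi-interpolant of Theorem \ref{hodor63}, and is done in one line --- at the cost of inheriting the patch $\tilde{K}$ on the right-hand side, since that is what Theorem \ref{hodor63} delivers. You instead substitute the elementwise mean $\bar{u}_K$ and prove the required estimate $\|u-\bar{u}_K\|_{L^2(K)}\lesssim h_K\|\nabla u\|_{L^2(K)}$ from scratch via Poincar\'e--Wirtinger on the reference element and a scaling argument; this is legitimate here because the maps $F_K$ are assumed affine and the mesh is shape-regular, so the Jacobian bookkeeping goes through uniformly. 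What your route buys is a strictly sharper, purely local bound with $\|\nabla u\|_{L^2(K)}$ in place of $\|\nabla u\|_{L^2(\tilde{K})}$ (the stated form then being a trivial weakening), together with independence from the quasi-interpolation machinery; what the paper's route buys is brevity, since Theorem \ref{hodor63} is already in hand. Your closing observations --- that constants lie in $\mathcal{P}^p$ so the best-approximation step is valid, and that the boundary condition in $H^1_D(\Omega)$ is irrelevant to an elementwise estimate --- are both accurate.
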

\begin{proof} We use the definition of the $L^2$ projection along with the Cauchy-Schwarz inequality to conclude that for any $v_h \in V_h$:
\begin{equation}
\begin{aligned}
\notag
||u-I_hu||^2_{L^2(K)} & = (u-I_hu,u-I_hu)_K \\ 
& = (u-I_hu,u)_K \\ 
&= (u - I_hu, u-v_h)_K \\
&\leq ||u - I_hu||_{L^2(K)}|| u-v_h||_{L^2(K)}.
\end{aligned}
\end{equation}
Therefore,
\begin{equation}
\begin{aligned}
\notag
||u-I_hu||_{L^2(K)} &\leq || u-v_h||_{L^2(K)}.
\end{aligned}
\end{equation}
All that remains is to choose $v_h$ to give the bound presented in the theorem. In particular, the finite element interpolant in Theorem \ref{hodor63} suffices.
\end{proof}

\end{section}

\begin{section}{Useful inequalities}

In this section, we introduce general theorems that will be used throughout the rest of this thesis.
\begin{theorem}[Young's inequality]
Given $a,b \in \mathbb{R}$, then for any $\delta>0$ we have
\begin{equation}
\begin{aligned}
\notag
ab \leq \frac{a^2\delta}{2} + \frac{b^2}{2\delta}.
\end{aligned}
\end{equation}
\end{theorem}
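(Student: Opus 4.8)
The plan is to derive the inequality from the nonnegativity of a suitably chosen square, which is the standard route for Young-type bounds. First I would note that since $\delta > 0$ the quantity $\sqrt{\delta}$ is well-defined and strictly positive, so that both $a\sqrt{\delta}$ and $b/\sqrt{\delta}$ are real for any $a,b \in \mathbb{R}$. The single ingredient I would exploit is that the square of any real number is nonnegative, applied to the difference $a\sqrt{\delta} - b/\sqrt{\delta}$.

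Expanding this square is the crux of the argument:
\begin{equation}
0 \leq \left(a\sqrt{\delta} - \frac{b}{\sqrt{\delta}}\right)^2 = a^2\delta - 2ab + \frac{b^2}{\delta},
\notag
\end{equation}
where the cross term $-2ab$ comes out free of $\delta$ precisely because the factors $\sqrt{\delta}$ and $1/\sqrt{\delta}$ cancel. Rearranging to isolate $2ab$ and then dividing by two yields exactly the claimed bound $ab \leq \frac{a^2\delta}{2} + \frac{b^2}{2\delta}$.

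Because this manipulation uses only the nonnegativity of a real square and imposes no restriction on the signs of $a$ and $b$, it treats all real inputs uniformly, so no case distinction is required; in particular, when $ab \leq 0$ the right-hand side is automatically nonnegative and the inequality is immediate. The only hypothesis that carries any weight is the standing assumption $\delta > 0$, which simultaneously guarantees that $\sqrt{\delta}$ exists and that the right-hand side is finite and nonnegative. I do not anticipate any genuine obstacle, as the entire proof collapses to this one algebraic step; the only thing to be careful about is recording that the choice of the substitution $a\sqrt{\delta}$, $b/\sqrt{\delta}$ is what produces the asymmetric weighting by $\delta$ in the final estimate.
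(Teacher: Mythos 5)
Your proof is correct and follows exactly the same route as the paper, which simply expands the inequality $\big(a\delta^{1/2}-b\delta^{-1/2}\big)^2 \geq 0$ and rearranges. No further comment is needed.
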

\begin{proof} Follows by expanding the inequality $\displaystyle \big(a\delta^{1/2}-b\delta^{-1/2} \big)^2 \geq 0$.
\end{proof}
\begin{theorem}[Power mean inequality]
Given $a,b \geq0$, then for any $\mu\geq 0$ we have
\begin{equation}
\begin{aligned}
\notag
\big(a+b\big)^{\mu} \leq \max\big\{1,2^{\mu-1}\big\}\big(a^{\mu}+b^{\mu}\big).
\end{aligned}
\end{equation}
\end{theorem}
\begin{proof} Follows from Jensen's inequality.
\end{proof}
\begin{theorem}[Multidimensional integration by parts]
Given $u \in H^1(\Omega)$ and $ \displaystyle {\bf v} \in H(div,\Omega) := \Big\{{\bf v} \in \big [ L^2(\Omega) \big]^2 \text { } \Big | \text{ }  \nabla \cdot {\bf v} \in L^2(\Omega) \Big\}$  we have
\begin{equation}
\notag
\int_{\Omega} \! {\bf v} \cdot {\nabla u} \, dx + \int_{\Omega} \! u \nabla \cdot {\bf v} \, dx = \int_{\partial \Omega} \! (u {\bf v}) \cdot {\bf n} \, ds.
\end{equation}
\end{theorem}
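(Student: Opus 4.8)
The plan is to establish the identity first for smooth functions, where it reduces to the classical divergence theorem, and then to remove the smoothness by a density argument.

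First I would recall the elementary product rule $\nabla \cdot (u{\bf v}) = {\bf v}\cdot\nabla u + u\,\nabla\cdot{\bf v}$, which holds pointwise when $u$ and ${\bf v}$ are smooth. Applying the divergence (Gauss) theorem to the vector field $u{\bf v}$ on the polygonal, hence Lipschitz, domain $\Omega$ then gives
\[
\int_\Omega \big({\bf v}\cdot\nabla u + u\,\nabla\cdot{\bf v}\big)\, dx = \int_\Omega \nabla\cdot(u{\bf v})\, dx = \int_{\partial\Omega}(u{\bf v})\cdot{\bf n}\, ds,
\]
which is precisely the claimed identity for smooth $u$ and ${\bf v}$.

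The remaining task is to pass from smooth functions to the stated regularity. The key structural facts are the density of $C^\infty(\bar\Omega)$ in $H^1(\Omega)$ and of $\big[C^\infty(\bar\Omega)\big]^2$ in $H(\mathrm{div},\Omega)$, both of which hold on Lipschitz domains. I would choose sequences $u_n \to u$ in $H^1(\Omega)$ and ${\bf v}_n \to {\bf v}$ in $H(\mathrm{div},\Omega)$, write the smooth identity for each pair $(u_n,{\bf v}_n)$, and take the limit. The two volume integrals converge at once: by the Cauchy--Schwarz inequality one has ${\bf v}_n\cdot\nabla u_n \to {\bf v}\cdot\nabla u$ and $u_n\,\nabla\cdot{\bf v}_n \to u\,\nabla\cdot{\bf v}$ in $L^1(\Omega)$, since in each product one factor converges in $L^2(\Omega)$ while the other stays bounded there.

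The main obstacle is the boundary term, and it is chiefly a matter of interpreting it correctly. For $u\in H^1(\Omega)$ the trace $u|_{\partial\Omega}$ lies in $H^{1/2}(\partial\Omega)$, whereas for ${\bf v}\in H(\mathrm{div},\Omega)$ only the normal trace ${\bf v}\cdot{\bf n}$ is well defined, as an element of $H^{-1/2}(\partial\Omega)$; hence the boundary integral must be read as the duality pairing $\langle u,\,{\bf v}\cdot{\bf n}\rangle_{H^{1/2}(\partial\Omega),\,H^{-1/2}(\partial\Omega)}$. The crucial estimates are the continuity of this pairing together with the continuity of the two trace maps, which together guarantee that $\int_{\partial\Omega}(u_n{\bf v}_n)\cdot{\bf n}\, ds \to \langle u,\,{\bf v}\cdot{\bf n}\rangle$. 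With this convergence in hand, passing to the limit on both sides of the smooth identity yields the result. As this argument is entirely standard, I would invoke \cite{A03} for the requisite trace and density results rather than reproduce them here.
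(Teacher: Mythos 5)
Your argument is correct and follows the same route as the paper, whose entire proof is the one-line remark that the identity follows from the divergence theorem; you have simply filled in the standard details, namely the reduction to smooth functions via the product rule and Gauss's theorem, the density of smooth functions in $H^1(\Omega)$ and $H(\mathrm{div},\Omega)$ on a Lipschitz domain, and the correct reading of the boundary term as the $H^{1/2}(\partial\Omega)$--$H^{-1/2}(\partial\Omega)$ duality pairing. No gaps.
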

\begin{proof}
Follows from the divergence theorem.
\end{proof}
\begin{theorem}[Poincar\'{e}-Friedrichs inequality]
For $u \in H^1_D(\Omega)$, we have the following bound:
\begin{equation}
\begin{aligned}
\notag
||u||^2 \lesssim ||\nabla u||^2.
\end{aligned}
\end{equation}
\end{theorem}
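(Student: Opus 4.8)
The plan is to argue by contradiction using the compact embedding of $H^1(\Omega)$ into $L^2(\Omega)$ (the Rellich--Kondrachov theorem), since for a general Dirichlet portion $\Gamma_D \subsetneq \partial\Omega$ no direct integration argument is available. Suppose the asserted bound fails with every constant; then for each $n \in \mathbb{N}$ there exists $u_n \in H^1_D(\Omega)$ with $\|u_n\| = 1$ but $\|\nabla u_n\|^2 < 1/n$. In particular $\|\nabla u_n\| \to 0$ and the sequence $(u_n)$ is bounded in $H^1(\Omega)$.

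Since $\Omega$ is a bounded Lipschitz (indeed polygonal) domain, $H^1(\Omega)$ embeds compactly into $L^2(\Omega)$ and, $H^1(\Omega)$ being reflexive, we may pass to a subsequence (not relabelled) with $u_n \rightharpoonup u$ weakly in $H^1(\Omega)$ and $u_n \to u$ strongly in $L^2(\Omega)$. Strong $L^2$ convergence forces $\|u\| = 1$. Weak convergence of the gradients together with weak lower semicontinuity of the norm gives $\|\nabla u\| \le \liminf_n \|\nabla u_n\| = 0$, so $\nabla u = 0$ and hence $u$ is constant on the (connected) domain $\Omega$. Because each $u_n$ lies in the closed subspace $H^1_D(\Omega)$, which is therefore weakly closed, the weak limit $u$ again satisfies $u|_{\Gamma_D} = 0$; as $\Gamma_D$ has positive measure, the only admissible constant is zero, i.e. $u = 0$. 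This contradicts $\|u\| = 1$ and establishes the inequality.

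The step I expect to require the most care is the preservation of the homogeneous boundary condition in the limit: the trace is controlled only in the $H^1$ topology, so one must use that the convergence is weak in $H^1$ (not merely strong in $L^2$) and that $H^1_D(\Omega) = \{v \in H^1(\Omega) : v|_{\Gamma_D} = 0\}$ is norm-closed, hence weakly closed, being the kernel of the bounded trace operator restricted to $\Gamma_D$. I would also remark that in the special case $\Gamma_D = \partial\Omega$ (the space $H^1_0(\Omega)$) a wholly elementary proof is available: extending $u$ by zero across $\partial\Omega$ and integrating $\partial_{x_1} u$ along horizontal segments yields $\|u\| \le \operatorname{diam}(\Omega)\,\|\nabla u\|$ via the Cauchy--Schwarz inequality, with no appeal to compactness. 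In either case the implicit constant depends only on $\Omega$ and $\Gamma_D$ and is independent of $u$, as required by the notation $\lesssim$.
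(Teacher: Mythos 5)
Your proof is correct. Note, though, that the paper does not actually prove this statement: it simply cites a reference (\cite{ZF05}), so there is no in-paper argument to compare against. What you have written is the standard compactness proof, and it is sound: the contradiction sequence is bounded in $H^1(\Omega)$, Rellich--Kondrachov applies on a bounded Lipschitz (polygonal) domain, weak lower semicontinuity kills the gradient, and the weak closedness of the closed convex subspace $H^1_D(\Omega)$ correctly transfers the vanishing trace to the limit --- this last point is exactly the step that is most often glossed over, and you handle it properly. Two small remarks. First, you need $\Omega$ connected to conclude that $\nabla u=0$ forces $u$ to be constant; you flag this parenthetically, and it is covered by the paper's standing assumption that $\Omega$ is a polygon, but it is worth stating explicitly since the same inequality is later invoked in Chapter~6 on a disconnected union $\Omega_1\cup\Omega_2$, where one must apply the argument componentwise (the paper's assumption that $\Gamma_D$ meets each subdomain boundary in a set of positive measure exists precisely for this reason). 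Second, the compactness route is purely qualitative: it yields existence of the constant but no control over it, in contrast to the direct integration argument you sketch for the special case $\Gamma_D=\partial\Omega$, which gives the explicit bound $\|u\|\le\operatorname{diam}(\Omega)\|\nabla u\|$. For the purposes of this thesis, where the constant is absorbed into the symbol $\lesssim$ and is only required to be independent of $u$ and the discretisation, the qualitative statement suffices.
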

\begin{proof} See \cite{ZF05}.
\end{proof}
\begin{theorem}[Gagliardo-Nirenberg inequality]
For any $u \in H^1_D(\Omega)$ and $\mu \geq 0$, we have the following bound:
\begin{equation}
\begin{aligned}
\notag
||u||^{2 + \mu}_{L^{2+\mu}(\Omega)} & \lesssim ||u||^2||\nabla u||^{\mu}. \\
\end{aligned}
\end{equation}
\end{theorem}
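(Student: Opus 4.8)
The inequality is the two–dimensional (critical) Gagliardo–Nirenberg interpolation inequality, and a quick scaling check confirms it is dimensionally consistent: under $u(x)\mapsto u(\lambda x)$ both sides scale like $\lambda^{-2}$, so one may hope for a constant depending only on $\Omega$. The plan is to prove the estimate first on the whole plane for smooth, compactly supported functions, and then transfer it to $H^1_D(\Omega)$ by a bounded extension together with the Poincar\'{e}--Friedrichs inequality already established above. By density of smooth functions it suffices to treat $u \in C_c^{\infty}(\mathbb{R}^2)$ in the whole-space step.

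For the whole-space estimate I would first establish the endpoint case $\mu=2$ (Ladyzhenskaya's inequality) by the elementary one-variable argument: writing $u(x_1,x_2)^2 = 2\int_{-\infty}^{x_1} u\,\partial_1 u\,dt$ and similarly in $x_2$, bounding each by the full line integral, multiplying the two pointwise bounds and integrating via Fubini followed by Cauchy--Schwarz yields
\begin{equation}
\notag
\|u\|_{L^4(\mathbb{R}^2)}^4 \lesssim \|u\|_{L^2(\mathbb{R}^2)}^2\,\|\nabla u\|_{L^2(\mathbb{R}^2)}^2 .
\end{equation}
For general $\mu$ I would bootstrap from this endpoint. When $0\le\mu\le 2$ the exponent $p:=2+\mu$ lies in $[2,4]$, so log-convexity of $L^p$ norms gives $\|u\|_{L^p}\le\|u\|_{L^2}^{1-\theta}\|u\|_{L^4}^{\theta}$ with $\theta=2-4/p$; inserting the $\mu=2$ bound and checking that the exponents collapse to exactly $\|u\|_{L^2}^2\|\nabla u\|^{\mu}$ finishes this range. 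When $\mu\ge2$ I would instead substitute $v=|u|^{\beta}$ with $\beta=1+\mu/2\ge2$ into the base Sobolev inequality $\|v\|_{L^2(\mathbb{R}^2)}\lesssim\|\nabla v\|_{L^1(\mathbb{R}^2)}$ (which itself follows from the same fundamental-theorem-of-calculus argument), so that $\|u\|_{L^{2+\mu}}^{\beta}\lesssim\int|u|^{\beta-1}|\nabla u|\,dx$, and then split $|u|^{\beta-1}=|u|^{a}|u|^{b}$ and apply a three-factor H\"{o}lder inequality with the exponents chosen (the bookkeeping gives $a=1/(\beta-1)$, $b=\beta(\beta-2)/(\beta-1)$) so that the two powers of $u$ become an $\|u\|_{L^2}$ factor and an $\|u\|_{L^{2+\mu}}$ factor; absorbing the latter into the left-hand side and raising to the appropriate power produces the claim.

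The transfer to $H^1_D(\Omega)$ is where the main care is needed. Because $u$ vanishes only on $\Gamma_D$, one cannot extend by zero; instead I would use a bounded Sobolev extension $E:H^1(\Omega)\to H^1(\mathbb{R}^2)$ with compact support, apply the whole-space inequality to $Eu$, and restrict the left-hand side back to $\Omega$. The delicate point is preserving the sharp homogeneous form: a crude bound would replace $\|u\|_{L^2}^2\|\nabla u\|^{\mu}$ by the weaker $\|\nabla u\|^{2+\mu}$. To avoid this I would use that the extension is bounded on $L^2$ separately, so that $\|Eu\|_{L^2(\mathbb{R}^2)}\lesssim\|u\|_{L^2(\Omega)}$ controls the $L^2$ factor directly, while for the gradient factor $\|\nabla Eu\|_{L^2(\mathbb{R}^2)}\lesssim\|u\|_{H^1(\Omega)}\lesssim\|\nabla u\|_{L^2(\Omega)}$, the last step being exactly the Poincar\'{e}--Friedrichs inequality (valid since $u\in H^1_D(\Omega)$). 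This keeps the $L^2$ factor and the gradient factor separate and recovers the stated bound. The main obstacle is therefore not the interpolation algebra but this final reduction of the partial-Dirichlet bounded-domain statement to the whole-space one without degrading the critical $\|u\|_{L^2}^2\|\nabla u\|^{\mu}$ scaling, together with verifying that the H\"{o}lder exponents in the $\mu\ge2$ case close up correctly.
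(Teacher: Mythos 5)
Your proposal is mathematically sound, but note that the paper does not prove this theorem at all: its ``proof'' is a citation to the literature (\cite{AA08}, \cite{N59}), so there is no argument to compare against and your sketch is strictly more than what the thesis provides. The whole-space part of your argument is the standard one and the bookkeeping checks out: the Ladyzhenskaya endpoint via the one-variable fundamental-theorem-of-calculus trick, the interpolation exponent $\theta=2-4/p$ for $2\le p\le 4$ (for which $p\theta/2=p-2=\mu$ and $p(1-\theta/2)=2$, so the exponents do collapse to $\|u\|^2\|\nabla u\|^{\mu}$), and for $\mu\ge 2$ the substitution $v=|u|^{\beta}$ with $\beta=1+\mu/2$ into $\|v\|_{L^2}\lesssim\|\nabla v\|_{L^1}$ followed by the three-factor H\"older split with $a(\beta-1)=1$, which gives $\|u\|_{L^{2\beta}}^{\beta/(\beta-1)}\lesssim\|u\|^{1/(\beta-1)}\|\nabla u\|$ and hence the claim after raising to the power $2(\beta-1)=\mu$ (the absorption of $\|u\|_{L^{2\beta}}^{b}$ is legitimate for compactly supported smooth $u$, and the passage to general $H^1$ functions follows by density and Fatou). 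Your handling of the transfer to $H^1_D(\Omega)$ is also the right one and correctly identifies the trap: a Stein-type extension for the Lipschitz polygon $\Omega$ is bounded on $L^2$ and on $H^1$ separately, so the $\|u\|_{L^2}^2$ factor survives intact, and the Poincar\'e--Friedrichs inequality (valid because $\Gamma_D$ has positive measure) converts $\|Eu\|_{H^1}$ into $\|\nabla u\|_{L^2(\Omega)}$ without degrading the critical scaling. The resulting constant depends on $\Omega$, $\Gamma_D$ and $\mu$, which is consistent with the paper's convention for $\lesssim$. In short: correct, complete in outline, and self-contained where the paper merely defers to a reference.
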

\begin{proof} See \cite{AA08} specifically or \cite{N59} for a larger class of inequalities.
\end{proof}
\begin{theorem}[Trace inequality]
\label{scaledtrace}
Given $u \in H^1(\Omega)$, the following bound holds for any $\delta \in (0,1)$:
\begin{equation}
\begin{aligned}
\notag
||u||^2_{L^2(\partial\Omega)} & \lesssim \delta||\nabla u||^2 + \big(1+\delta^{-1} \big)||u||^2.
\end{aligned}
\end{equation}
\end{theorem}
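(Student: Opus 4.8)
The plan is to convert the boundary norm into a volume integral by means of the multidimensional integration by parts theorem and then to expose the parameter $\delta$ through Young's inequality.

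First I would fix a vector field ${\bf m} \in \left[W^{1,\infty}(\Omega)\right]^2$ that is transversal to the boundary, meaning ${\bf m} \cdot {\bf n} \geq c > 0$ almost everywhere on $\partial\Omega$ for a constant $c$ depending only on $\Omega$. Since $\Omega$ is a fixed polygon such a field always exists: on a convex domain one may take ${\bf m}(x) = x - x_0$ with $x_0$ interior, and in the general case one patches together such radial fields using a partition of unity subordinate to the Lipschitz graph charts of $\partial\Omega$. Because $\Omega$ is fixed, the quantities $||{\bf m}||_{L^{\infty}(\Omega)}$ and $||\nabla \cdot {\bf m}||_{L^{\infty}(\Omega)}$ are constants that will be absorbed into the $\lesssim$ symbol.

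Next, I would note that $u{\bf m} \in H(div,\Omega)$, since $u{\bf m} \in \left[L^2(\Omega)\right]^2$ and $\nabla \cdot (u{\bf m}) = {\bf m} \cdot \nabla u + u \nabla \cdot {\bf m} \in L^2(\Omega)$. Applying the multidimensional integration by parts theorem with the scalar $u$ and the vector field $u{\bf m}$ then gives
\begin{equation}
\notag
\int_{\partial\Omega} \! u^2 ({\bf m} \cdot {\bf n}) \, ds = \int_{\Omega} \! \big(2 u \, {\bf m} \cdot \nabla u + u^2 \nabla \cdot {\bf m}\big) \, dx.
\end{equation}
Using ${\bf m} \cdot {\bf n} \geq c$ on the left-hand side, together with the pointwise bounds $|{\bf m}| \lesssim 1$ and $|\nabla \cdot {\bf m}| \lesssim 1$ on the right, this reduces to $||u||^2_{L^2(\partial\Omega)} \lesssim \int_{\Omega} |u||\nabla u| \, dx + ||u||^2$ after dividing by $c$.

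Finally, to the cross term I would apply Young's inequality in the form $2|u||\nabla u| \leq \delta|\nabla u|^2 + \delta^{-1}|u|^2$, yielding $\int_{\Omega} |u||\nabla u| \, dx \lesssim \delta||\nabla u||^2 + \delta^{-1}||u||^2$; collecting the two $||u||^2$ contributions produces the stated factor $(1+\delta^{-1})$. The one genuinely delicate ingredient is the construction of the transversal field ${\bf m}$ with a uniformly positive normal component over the whole of $\partial\Omega$, which is where the geometry of the domain enters; once ${\bf m}$ is in hand the remaining steps are a direct computation. The restriction $\delta \in (0,1)$ is not essential to the argument and merely guarantees that the $\delta||\nabla u||^2$ term is the small one, as is needed when the inequality is later used to absorb boundary terms.
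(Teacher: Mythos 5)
Your argument is correct: the transversal vector field ${\bf m}$ with ${\bf m}\cdot{\bf n}\geq c>0$, the identity $\int_{\partial\Omega}u^2({\bf m}\cdot{\bf n})\,ds=\int_{\Omega}\bigl(2u\,{\bf m}\cdot\nabla u+u^2\nabla\cdot{\bf m}\bigr)\,dx$, and Young's inequality yield exactly the stated bound, and your remark that $\delta\in(0,1)$ is inessential is also right. The paper itself gives no proof, deferring to Theorem 1.5.1.10 of the cited reference, and the vector-field argument you present is precisely the standard proof found there, so this is essentially the same approach.
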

\begin{proof}
See Theorem 1.5.1.10. in \cite{PG01}.
\end{proof}
\begin{theorem}[Inverse estimate] Given $v_h \in V_h$, the following bound holds for any $K \in \zeta$:
\begin{equation}
\begin{aligned}
\notag
||\nabla v_h||^2_{L^2(\partial K)} \lesssim h^{-1}_K||\nabla v_h||^2_{L^2(K)}.
\end{aligned}
\end{equation}
\end{theorem}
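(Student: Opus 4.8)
The plan is to reduce the estimate to the fixed reference element $\hat K$, where equivalence of norms on a finite-dimensional polynomial space trivialises the bound, and then to transfer back to $K$ by the standard affine change of variables, using shape-regularity to produce the correct power of $h_K$.

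First I would reduce to the scalar \emph{discrete trace inequality}
\begin{equation}
\notag
\|w\|^2_{L^2(\partial K)} \lesssim h_K^{-1}\|w\|^2_{L^2(K)},
\end{equation}
asserted for every function $w$ on $K$ whose pullback $w \circ F_K$ lies in $\mathcal{P}^p(\hat K)$. This suffices: because $F_K$ is affine, differentiating $v_h|_K = (v_h|_K \circ F_K)\circ F_K^{-1}$ shows that each partial derivative $\partial_{x_j} v_h|_K$ is a polynomial whose pullback again lies in $\mathcal{P}^p(\hat K)$, so applying the scalar inequality to $w = \partial_{x_1} v_h$ and $w = \partial_{x_2} v_h$ and summing gives the stated bound for $\|\nabla v_h\|^2_{L^2(\partial K)}$.

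To prove the scalar inequality I would set $\hat w := w \circ F_K \in \mathcal{P}^p(\hat K)$. On the fixed finite-dimensional space $\mathcal{P}^p(\hat K)$ the functional $\hat w \mapsto \|\hat w\|_{L^2(\partial \hat K)}$ is continuous with respect to the norm $\|\cdot\|_{L^2(\hat K)}$, so equivalence of norms furnishes a constant $\hat C$, depending only on $p$ and the reference element, with $\|\hat w\|_{L^2(\partial \hat K)} \leq \hat C\,\|\hat w\|_{L^2(\hat K)}$. The affine change of variables gives the exact identities $\|w\|^2_{L^2(K)} = |\det DF_K|\,\|\hat w\|^2_{L^2(\hat K)}$ and $\|w\|^2_{L^2(E)} = (|E|/|\hat E|)\,\|\hat w\|^2_{L^2(\hat E)}$ for each edge $E \subset \partial K$, whence, using the shape-regularity consequences $|\det DF_K| \sim h_K^2$ and $|E| \sim h_K$,
\begin{equation}
\notag
\|w\|^2_{L^2(\partial K)} \lesssim h_K\,\|\hat w\|^2_{L^2(\partial \hat K)} \leq \hat C^2 h_K\,\|\hat w\|^2_{L^2(\hat K)} \sim h_K\,|\det DF_K|^{-1}\|w\|^2_{L^2(K)} \lesssim h_K^{-1}\|w\|^2_{L^2(K)}.
\end{equation}

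I expect the only genuine obstacle to be the geometric bookkeeping in this last step: one must verify that the scalings $|\det DF_K| \sim h_K^2$ and $|E| \sim h_K$ hold \emph{uniformly} over all admissible elements. As the mesh mixes triangles with affinely mapped quadrilaterals and allows a bounded number of hanging nodes, this uniformity is not automatic, but it is precisely what the shape-regularity assumption $h_K/d_K \leq C$ secures, keeping the constants hidden in $\lesssim$ independent of $K$.
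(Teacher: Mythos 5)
Your argument is correct and complete: the reduction to a scalar discrete trace inequality for $w=\partial_{x_j}v_h$, the norm-equivalence step on the finite-dimensional space $\mathcal{P}^p(\hat K)$, and the affine scaling $|\det DF_K|\sim h_K^2$, $|E|\sim h_K$ under shape-regularity together give exactly the stated bound, and your closing caveat about uniformity of these scalings is precisely where the shape-regularity hypothesis is consumed. The paper does not prove this theorem itself but merely cites \cite{WH03}; your proposal supplies the standard scaling argument that such references use, so there is no divergence of approach to report.
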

\begin{proof}
See \cite{WH03}.
\end{proof}
The final two theorems in this section relate to a specific group of integral inequalities and provide an upper bound on the functions that satisfy such \mbox{ } inequalities. The first theorem is (classical) Gronwall's inequality while the second theorem is a variant of the first theorem that makes use of a lower order term.
\begin{theorem}[Gronwall's inequality]
\label{Gronwall1}
Let $T>0$ and suppose that \mbox{ }\mbox{ }\mbox{ }\mbox{ } $c_0, c_1 \in L^1(0,T)$ and $u \in W^{1,1}(0,T)$. If for almost every $t \in (0,T]$ we have

\begin{equation}
\begin{aligned}
\notag
u'(t) \leq c_0(t) + c_1(t)u(t),
\end{aligned}
\end{equation}
then
\begin{equation}
\begin{aligned}
\notag
u(T) \leq G(0,T)u(0) + \int_0^T \! G(s,T)c_0(s) \, ds,
\end{aligned}
\end{equation}
where $\displaystyle G(s,t) := \exp\bigg(\int_s^t \! c_1(\xi) \, d \xi \bigg)$. Additionally, if $c_0$ and $c_1$ are non-negative a.e. then

\vspace{0.2em}

\begin{equation}
\begin{aligned}
\notag
u(T) \leq G(0,T)\bigg(u(0) +  \int_0^T \! c_0(s) \, ds \bigg).
\end{aligned}
\end{equation}
\end{theorem}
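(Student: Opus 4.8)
The plan is to prove this by the classical integrating-factor argument, exploiting that $u \in W^{1,1}(0,T)$ is absolutely continuous so that the fundamental theorem of calculus applies. First I would introduce the auxiliary function
\[
v(t) := u(t)\,\exp\bigg(-\int_0^t \! c_1(\xi) \, d\xi\bigg) = \frac{u(t)}{G(0,t)},
\]
whose role is to absorb the lower-order term $c_1 u$ on the right-hand side of the differential inequality. Differentiating the product and using $u'(t) - c_1(t)u(t) \leq c_0(t)$ (valid a.e.\ by hypothesis) gives, for almost every $t$,
\[
v'(t) = \frac{u'(t) - c_1(t)u(t)}{G(0,t)} \leq \frac{c_0(t)}{G(0,t)},
\]
so the troublesome $c_1 u$ contribution has been removed and only an explicit integrand remains.

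Next I would integrate this inequality over $[0,T]$. Since $v(0) = u(0)$ and $v(T) = u(T)/G(0,T)$, this yields
\[
\frac{u(T)}{G(0,T)} \leq u(0) + \int_0^T \! \frac{c_0(s)}{G(0,s)} \, ds.
\]
Multiplying through by $G(0,T)$ and invoking the semigroup identity $G(0,T)/G(0,s) = \exp\big(\int_s^T c_1\big) = G(s,T)$, which follows directly from the additivity of the integral in the exponent, produces exactly the first claimed bound
\[
u(T) \leq G(0,T)u(0) + \int_0^T \! G(s,T)c_0(s) \, ds.
\]
For the refined estimate, I would use the extra sign hypotheses: when $c_1 \geq 0$ a.e.\ one has $\int_s^T c_1 \leq \int_0^T c_1$ for every $s \in [0,T]$, hence $G(s,T) \leq G(0,T)$; combining this with $c_0 \geq 0$ allows $G(s,T)$ to be pulled out of the integral as $G(0,T)$, giving the second conclusion.

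The main obstacle is not the algebra but the regularity bookkeeping needed to justify the product-rule computation of $v'$ and the subsequent integration. Specifically, I would need to argue that $G(0,t) = \exp\big(\int_0^t c_1\big)$ is absolutely continuous on $[0,T]$ — which holds because $t \mapsto \int_0^t c_1$ is absolutely continuous (as $c_1 \in L^1$) and bounded on $[0,T]$, and $\exp$ is Lipschitz on the bounded range — so that the product $v = u \cdot (1/G(0,\cdot))$ is again absolutely continuous and the Leibniz rule holds pointwise almost everywhere. Once this is in place, the fundamental theorem of calculus may be applied to $v$, and everything else reduces to the elementary manipulations above.
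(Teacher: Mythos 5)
Your proposal is correct and follows essentially the same route as the paper's own proof: multiplying by the integrating factor $G^{-1}(0,t)$, applying the product rule and the fundamental theorem of calculus, integrating over $[0,T]$, and using the identity $G(s,T)=G(0,T)G^{-1}(0,s)$. Your additional remarks on absolute continuity and on deriving the second bound from $c_0,c_1\geq 0$ simply make explicit steps the paper leaves implicit.
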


\begin{proof}
Multiplying the inequality by $G^{-1}(0,t)$ yields
\begin{equation}
\begin{aligned}
\notag
G^{-1}(0,t)u'(t) \leq G^{-1}(0,t)c_0(t) + G^{-1}(0,t)c_1(t)u(t).
\end{aligned}
\end{equation}
The product rule and the fundamental theorem of calculus imply that
\begin{equation}
\begin{aligned}
\notag
\frac{d}{dt}\left(G^{-1}(0,t)u(t) \right) = G^{-1}(0,t)u'(t) - G^{-1}(0,t)c_1(t)u(t).
\end{aligned}
\end{equation}
Thus,
\begin{equation}
\begin{aligned}
\notag
\frac{d}{dt}\big(G^{-1}(0,t)u(t) \big) \leq G^{-1}(0,t)c_0(t).
\end{aligned}
\end{equation}
The primary result then follows by integrating over $[0,T]$ and noting that \mbox{ } $G(s,T) = G(0,T)G^{-1}(0,s)$.
\end{proof}
\begin{theorem}
\label{Gronwall}
Let $T>0$ and suppose that $c_0$ is a constant, $c_1$ and $c_2$ are non-negative $L^1$ functions and that $u$ is a non-negative $W^{1,1}$ function that satisfies
\begin{equation}
\begin{aligned}
\notag
u^2(T) \leq c^2_0 + \int_0^T \! c_1(s)u(s) \, ds + \int_0^T \! c_2(s)u^2(s) \, ds,
\end{aligned}
\end{equation}
then
\begin{equation}
\begin{aligned}
\notag
u(T) \leq \bigg (|c_0|+\frac{1}{2}\int_0^T \! c_1(s) \, ds  \bigg ) \exp \bigg (\frac{1}{2}\int_0^T \! c_2(s) \, ds \bigg ).
\end{aligned}
\end{equation}
\end{theorem}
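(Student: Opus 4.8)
The plan is to reduce the quadratic integral inequality to a \emph{linear} differential inequality to which the classical Gronwall inequality (Theorem~\ref{Gronwall1}) applies directly. First I would introduce the auxiliary function
\begin{equation}
\notag
\phi(t) := c_0^2 + \int_0^t \! c_1(s) u(s) \, ds + \int_0^t \! c_2(s) u^2(s) \, ds,
\end{equation}
so that the hypothesis reads $u^2(t) \le \phi(t)$ and, since $u \ge 0$, also $u(t) \le \sqrt{\phi(t)}$. As a $W^{1,1}(0,T)$ function $u$ has an absolutely continuous (hence bounded) representative on $[0,T]$, and with $c_1, c_2 \in L^1(0,T)$ the two integrands lie in $L^1$; therefore $\phi$ is absolutely continuous with $\phi'(t) = c_1(t) u(t) + c_2(t) u^2(t)$ for almost every $t$.

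The key step is to pass to $\psi := \sqrt{\phi}$, which satisfies $u \le \psi$. Differentiating and inserting the bound $u \le \psi$ into $\phi'$, I would obtain
\begin{equation}
\notag
\psi'(t) = \frac{\phi'(t)}{2\psi(t)} = \frac{c_1(t) u(t) + c_2(t) u^2(t)}{2\psi(t)} \le \frac{c_1(t)\psi(t) + c_2(t)\psi^2(t)}{2\psi(t)} = \tfrac{1}{2} c_1(t) + \tfrac{1}{2} c_2(t)\psi(t)
\end{equation}
almost everywhere. This is exactly the linear differential inequality of Theorem~\ref{Gronwall1} with source term $\tfrac12 c_1$ and coefficient $\tfrac12 c_2$, both non-negative. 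Applying the non-negative case of that theorem, together with $\psi(0) = \sqrt{\phi(0)} = |c_0|$, yields
\begin{equation}
\notag
\psi(T) \le \exp\!\bigg(\tfrac12 \int_0^T \! c_2(s)\, ds\bigg)\bigg(|c_0| + \tfrac12 \int_0^T \! c_1(s)\, ds\bigg),
\end{equation}
and since $u(T) \le \psi(T)$, this is precisely the asserted bound.

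The one point demanding care — and the main (if minor) obstacle — is the division by $\psi$ in the computation of $\psi'$, which is legitimate only where $\phi > 0$; this can fail at $t = 0$ exactly when $c_0 = 0$. The standard remedy is to replace $c_0^2$ by $c_0^2 + \delta$ for $\delta > 0$, so that the corresponding $\phi_\delta$ is strictly positive throughout $[0,T]$ and the chain rule for $\sqrt{\phi_\delta}$ is unambiguously valid. Carrying out the argument above for $\phi_\delta$ and then letting $\delta \to 0^+$ in the final inequality, using $\sqrt{c_0^2 + \delta} \to |c_0|$, recovers the stated result in full generality.
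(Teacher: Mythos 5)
Your argument is correct. Note first that the paper does not actually prove this statement: its ``proof'' is a citation of Theorem 21 in \cite{D01}, so your self-contained derivation is a genuine addition rather than a reproduction of the paper's argument. The mechanism you use --- set $\phi(t) := c_0^2 + \int_0^t c_1(s)u(s)\,ds + \int_0^t c_2(s)u^2(s)\,ds$, pass to $\psi := \sqrt{\phi}$, and observe that $\psi' = \phi'/(2\psi) \le \tfrac12 c_1 + \tfrac12 c_2 \psi$ almost everywhere since $u \le \psi$ --- is exactly the standard reduction behind quadratic Gronwall variants, and it lands precisely on the non-negative case of Theorem~\ref{Gronwall1} with $\psi(0)=|c_0|$; the $\delta$-regularisation disposes cleanly of the only delicate point, namely the division by $\psi$ when $c_0=0$. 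What your route buys is transparency (the lemma is seen to be nothing more than classical Gronwall applied to $\sqrt{\phi}$) and it adapts immediately to other exponents. One caveat concerns the statement rather than your proof: as literally written the integral inequality is imposed only at the single time $T$, under which the conclusion is false (take $c_0=1$, $c_1=0$, $c_2\equiv 1$, $u\equiv 10$ on $[0,1]$: the hypothesis $100\le 101$ holds at $T=1$ but $u(1)=10>e^{1/2}$). Your proof, like the cited source and like the way the lemma is invoked in Chapter 5, tacitly and correctly reads the hypothesis as holding for every $t\in[0,T]$; it would be worth making that reading explicit.
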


\medskip
\begin{proof}
See Theorem 21 in \cite{D01}.
\end{proof}
\end{section}

\chapter{A posteriori error estimation and adaptivity for non-stationary convection-diffusion problems}
\begin{section}{Non-stationary convection-diffusion equation}\label{prelim}

For $T>0$, we consider the model problem of finding $u:\Omega\times(0,T]\to\mathbb{R}$ such that
\begin{equation}\label{model_strong}
\begin{aligned}
\frac{\partial{u}}{\partial{t}} - \varepsilon\Delta{u}+ {\bf a} \cdot \nabla{u}+bu &= f  \qquad & & \text{in } \Omega\times(0,T],
\\ u &=0 \mbox{ } &&\text{on }  \partial\Omega\times (0,T], 
\\ u(\cdot,0) &=u_0 \mbox{ } &&\text{in } {\Omega}.
\end{aligned}
\end{equation}
The model problem \eqref{model_strong} can display the same spatial features as \eqref{ellipticmodel_strong}, namely, boundary and interior layers; the addition of the time domain has the potential to make things more complicated, however. Indeed, for suitable data the solution may exhibit moving boundary or interior layers. Additionally, the temporal behaviour of \eqref{model_strong} can also be strongly influenced by $\varepsilon$ \cite{RST08}.

The standard weak formulation (see, e.g., \cite{E98}) of \eqref{model_strong} reads: find $u\in L^2 \big(0,T;H^1_0(\Omega) \big)\cap H^1 \big(0,T;L^2(\Omega) \big)$ such that for almost every $t \in (0,T]$ we have
\begin{equation}
\label{model_weak}
\bigg(\frac{\partial{u}}{\partial{t}},v\bigg)+B(t;u,v)=(f,v) \qquad \forall v \in H^1_0(\Omega),
\end{equation}
where $B(t;\cdot, \cdot)$ denotes the bilinear form \eqref{bilinearformB} with the data evaluated at the time $t$. We also make some assumptions on the data: $u_0 \in H^1_0(\Omega)$, $0 < \varepsilon \leq 1$, ${\bf a} \in \left[C(0,T;W^{1, \infty}(\Omega)) \right ]^2$, $b \in C (0,T;L^{\infty}(\Omega))$ and $f \in C(0,T;L^2(\Omega))$.

In order to ensure coercivity and continuity of $B(t;\cdot,\cdot)$ for any $t \in [0,T]$, we must extend \eqref{coefficientconditions}. To that end, we assume that there are constants $\beta \geq 0$ and $c_* \geq 0$ such that
\begin{equation}
b - \frac{1}{2}\nabla \cdot {\bf a} \geq \beta \quad \text{a.e. in }  \Omega \times [0,T],\qquad ||b-\nabla\cdot {\bf a} ||_{C(0,T;L^{\infty}(\Omega))} \leq c_*\beta\mbox{.}
\end{equation}

\end{section}

\begin{section}{Space-time discretisation}

The semi-discrete discontinuous Galerkin approximation to \eqref{model_weak} then reads as follows. For $t=0$, set $ u_h(0) \in V_h$ to be some projection of $u_0$ onto $V_h$. Then, seek  $u_h \in C^{0,1}(0,T;V_h)$ such that for almost every $t \in (0,T]$ we have
\begin{equation}\label{dg_sd}
\bigg(\frac{\partial{u_h}}{\partial{t}},v_h\bigg)+B(t;u_h,v_h)+K_h(u_h,v_h)=(f,v_h) \qquad  \forall v_h \in V_h.
\end{equation} 
We shall also consider a full discretisation of problem \eqref{model_weak} by using a backward Euler method to approximate the time derivative. 

To this end, consider a subdivision of $[0,T]$ into time intervals of lengths \mbox{ } \mbox{ }\mbox{ } $\tau_1$, ..., $\tau_n$ such that $\displaystyle \sum_{j=1}^n{\tau_j}=T$ for some $n \geq 1$ then set $t^0 := 0$ and $\displaystyle t^k := \sum_{j=1}^{k}\tau_{j}$. Denote an initial triangulation by $\zeta^0$ and then associate a triangulation $\zeta^k$ to each time step $k>0$ which is assumed to have been obtained from $\zeta^{k-1}$ by locally refining and coarsening $\zeta^{k-1}$. This restriction upon mesh change is made to avoid altering the mesh too much between time steps in an attempt to prevent degradation of the finite element solution, {\em cf.}~\cite{BKM13,D82}. To each mesh $\zeta^{k}$, we assign the finite element space $V_h^k := V_h\big(\zeta^k\big)$ given by~\eqref{eq:FEspace}. We also set $f^k:=f\big(.,t^k\big)$, ${\bf a}^k:={\bf a}\big(.,t^k \big)$ and $b^k:=b\big(.,t^k \big)$ for brevity. 

The fully-discrete dG method then reads as follows. Set $u_h^0$ to be a projection of $u_0$ onto $V_h^0$. For $k=0$, ..., $n-1$, find $u_h^{k+1} \in V_h^{k+1}$ such that
\begin{equation}
\label{dg_fd}
\bigg(\frac{u_h^{k+1}-u_h^k}{\tau_{k+1}},v_h^{k+1}\bigg)+B\big(t^{k+1};u_h^{k+1},v_h^{k+1} \big)+K_h \big(u_h^{k+1},v_h^{k+1} \big)=\big(f^{k+1},v_h^{k+1} \big),
\end{equation}
for all $v^{k+1}_h \in V_h^{k+1}$. We shall take $u_h^0$ to be the orthogonal $L^2$ projection of $u_0$ onto $V_h^0$ although other projections onto $V_h^0$ can also be used.

\end{section}

\begin{section}{Error bounds for the non-stationary problem}

Here, we will present a posteriori error bounds for both the semi-discrete and fully-discrete schemes which can be found in \cite{CGM01}; these results are extended by the authors in \cite{KU14} to convection-diffusion problems with nonlinear reaction term. Other a posteriori error estimators for different space-time discretisations of \eqref{model_weak} can be found in  \cite{DEV13, EP05, HS01, PP09, S06,V05b}.

To devise our error bounds, we use the elliptic reconstruction approach originally introduced by Makridakis and Nochetto for the conforming finite element method \cite{MN03} and extended to dG methods in \cite{GL10,GLV11}; this effectively allows decomposition of the error into separate parabolic and elliptic parts and can be viewed as the a posteriori counterpart to Wheeler's elliptic projection \cite{WF73} from a priori analysis. Elliptic reconstruction allows us to bound the elliptic part of the error with any error estimator for the stationary problem  \eqref{ellipticmodel_weak} that currently exists in the literature; we shall use the bound in Theorem \ref{elliptic_apost} taken from \cite{SZ09}. Given that we already have a reasonable spatial estimator to use, it is clear that the main challenge in the a posteriori estimation of \eqref{model_weak} is thus related to the parabolic part of the error. In particular, the primary challenge is obtaining an error estimator that is either robust or at least asymptotically robust with respect to $\varepsilon$ in the $L^2 (H^1) + L^{\infty}(L^2)$ type norm 

\begin{equation}
\begin{aligned}
\notag
||u||_* := \left(||u||^2_{L^{\infty}(0,T;L^2(\Omega))}+\int_0^T \! |||u|||^2 \, dt\right)^{1/2}.
\end{aligned}
\end{equation}
This requires careful treatment of the temporal part of the residual and we introduce a novel way of dealing with this difficulty.

 \begin{subsection}{An a posteriori bound for the semi-discrete method}\label{sd_apost_sec}
 To highlight the main ideas, we begin by deriving an a posteriori bound for the semi-discrete method.

 \begin{definition}
\label{reconstrdef1}
For almost every $t\in(0,T]$, we define the elliptic reconstruction $w \in H^1_0(\Omega)$ to be the (unique) solution of the problem
\begin{equation}\label{reconstruction_sd}
\notag
B(t;w,v)=\bigg(f-\frac{\partial{u_h}}{\partial{t}},v \bigg) \qquad \forall v \in H^1_0(\Omega).
\end{equation}
\end{definition}

\begin{remark}\label{reconstruction_remark}
The dG discretisation of the above equation is to find a function $w_h \in C^{0,1}(0,T,V_h)$ such that for almost every $t \in (0,T]$ we have
\[
 B(t;w_h,v_h) + K_h(w_h,v_h)=\bigg(f-\frac{\partial{u_h}}{\partial{t}},v_h \bigg) \qquad \forall v_h \in V_h.
 \]
In conjunction with \eqref{FEcoercive} and \eqref{dg_sd}, this implies that $w_h=u_h$. Therefore, $B(t;w-u_h,v)$ can be estimated using Theorem \ref{elliptic_apost}. 
\end{remark}
 \begin{theorem}
\label{nonconforming_bound}
The dG solution, $u_h$, admits a decomposition into a conforming part $u_{h,c} \in H^1_0(\Omega) \cap V_h$ and a non-conforming part $u_{h,d} \in V_h$ with $u_h=u_{h,c}+u_{h,d}$ such that
\begin{equation}
\begin{aligned}
\notag
(|||u_{h,d}|||+|u_{h,d}|_A)^2 & \lesssim \sum_{E \in \mathcal{E}(\zeta)}{\bigg(\frac{\gamma\varepsilon}{h_E} + \beta{h_E}\bigg)||[u_h]||^2_{L^2(E)}} + \sum_{E \in \mathcal{E}(\zeta)} \frac{h_E}{\varepsilon}||[{\bf a}u_h]||^2_{L^2(E)},
\\
\bigg | \bigg | \frac{\partial u_{h,d}}{\partial t} \bigg | \bigg |^2 & \lesssim \sum_{E \in \mathcal{E}(\zeta)}{h_E\bigg|\bigg|\bigg[\frac{\partial{u_h}}{\partial{t}}\bigg]\bigg|\bigg|^2_{L^2(E)}}.
\end{aligned}
\end{equation} 
\end{theorem}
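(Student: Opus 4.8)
The plan is to read off the decomposition $u_h = u_{h,c} + u_{h,d}$ directly from Theorem \ref{ncbounds} with $\Gamma_D = \partial\Omega$ (so that $H^1_D(\Omega)\cap V_h = H^1_0(\Omega)\cap V_h$), and then to convert the three estimates supplied there into the two claimed bounds. Throughout I would use two elementary facts. First, since $u_{h,c}$ is conforming its jumps vanish, so $[u_{h,d}] = [u_h]$, and, because $\mathbf{a}$ is single-valued across interior edges, $[\mathbf{a}u_{h,d}] = [\mathbf{a}u_h]$; this matches the jump contributions in $|||\cdot|||$ and in $|\cdot|_A$ to the right-hand side verbatim. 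Second, on a boundary edge the convention $[u_h] = u_h\mathbf{n}$ lets me fold the $\Gamma_D$-terms of Theorem \ref{ncbounds} into $\sum_{E\in\mathcal{E}(\zeta)}\|[u_h]\|_{L^2(E)}^2$ taken over all edges.

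For the energy-norm part of the first estimate I would bound the two volume contributions to $|||u_{h,d}|||^2$ separately: multiply the gradient estimate of Theorem \ref{ncbounds} by $\varepsilon$ and the $L^2$ estimate by $\beta$, then sum over $K\in\zeta$. Shape-regularity guarantees that each edge $E$ lies in only a uniformly bounded number of edge-patches $\tilde K_E$, so the double sum collapses to a single sum over edges; using $\gamma = 2p^2 \gtrsim 1$, the term $\varepsilon h_E^{-1}\|[u_h]\|_{L^2(E)}^2$ is dominated by $\tfrac{\gamma\varepsilon}{h_E}\|[u_h]\|_{L^2(E)}^2$, while the $L^2$ estimate produces exactly $\beta h_E\|[u_h]\|_{L^2(E)}^2$. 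Together with the exact jump term this yields the energy-norm part, $|||u_{h,d}|||^2 \lesssim \sum_E(\tfrac{\gamma\varepsilon}{h_E}+\beta h_E)\|[u_h]\|_{L^2(E)}^2$. The time-derivative estimate is cleanest: the averaging operator underlying Theorem \ref{ncbounds} is linear and time-independent and acts only on the spatial nodal values on the fixed semi-discrete mesh, so it commutes with $\partial_t$, and $\partial_t u_{h,d}$ is precisely the non-conforming part of $\partial_t u_h$. Applying the second estimate of Theorem \ref{ncbounds} to $\partial_t u_h$ and summing over $K$ gives $\|\partial_t u_{h,d}\|^2 \lesssim \sum_E h_E\|[\partial_t u_h]\|_{L^2(E)}^2$.

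The genuine difficulty is the dual norm $|u_{h,d}|_A$, and within it the supremum term $S := \sup_{0\neq v\in H^1_0(\Omega)} \int_\Omega \mathbf{a}u_{h,d}\cdot\nabla v\,dx / |||v|||$; the flux-jump part of $|u_{h,d}|_A$ is again immediate from $[\mathbf{a}u_{h,d}] = [\mathbf{a}u_h]$. A naive Cauchy--Schwarz, $\int_\Omega\mathbf{a}u_{h,d}\cdot\nabla v \le \|\mathbf{a}u_{h,d}\|\,\|\nabla v\| \le \varepsilon^{-1/2}\|\mathbf{a}u_{h,d}\|\,|||v|||$, is \emph{not} admissible: it sees only $|\mathbf{a}|$ and not the normal component $\mathbf{a}\cdot\mathbf{n}$, so it cannot reproduce $\tfrac{h_E}{\varepsilon}\|[\mathbf{a}u_h]\|_{L^2(E)}^2 = \tfrac{h_E}{\varepsilon}\|(\mathbf{a}\cdot\mathbf{n})[u_h]\|_{L^2(E)}^2$ robustly, the two differing precisely when $\mathbf{a}$ is tangential to the skeleton. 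Instead I would integrate by parts element-wise, using that $v\in H^1_0(\Omega)$ is single-valued and vanishes on $\partial\Omega$, to obtain $\int_\Omega\mathbf{a}u_{h,d}\cdot\nabla v = -\sum_K\int_K\nabla\cdot(\mathbf{a}u_{h,d})\,v\,dx + \sum_{E\in\mathcal{E}^{int}(\zeta)}\int_E[\mathbf{a}u_h]\,v\,ds$, which correctly exposes the normal flux jumps.

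From here, the interior-edge terms I would estimate by $\|[\mathbf{a}u_h]\|_{L^2(E)}\|v\|_{L^2(E)}$, distributing the weight as $(\tfrac{h_E}{\varepsilon})^{1/2}$ against $(\tfrac{\varepsilon}{h_E})^{1/2}$ and controlling $\|v\|_{L^2(E)}$ by a scaled trace inequality, while the volume terms $\int_K(\nabla\cdot\mathbf{a})u_{h,d}v$ and $\int_K(\mathbf{a}\cdot\nabla u_{h,d})v$ I would treat with the inverse estimate on the discrete remainder $u_{h,d}$ and the bounds of Theorem \ref{ncbounds}, closing with Young's inequality. The hard part is exactly this last step: bounding both the volume contribution and the edge sum $\sum_E \tfrac{\varepsilon}{h_E}\|v\|_{L^2(E)}^2$ against $|||v|||^2$ \emph{uniformly in} $\varepsilon$ and in the data, so that no spurious $\|\mathbf{a}\|_{L^\infty(\Omega)}$ factor and no negative power of $\varepsilon$ survives. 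This forces a delicate balancing of the $\varepsilon$-weights against the two constituents of $|||v|||$, namely its $\varepsilon\|\nabla v\|^2$ and $\beta\|v\|^2$ parts, and is the point where I would expect to spend most of the work.
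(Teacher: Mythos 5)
Most of your proposal tracks the paper's proof: the decomposition and every bound except the supremum part of $|u_{h,d}|_A$ are obtained exactly as you describe, by combining the definitions of the norms with Theorem \ref{ncbounds} and the identities $[u_{h,d}]=[u_h]$, $[\mathbf{a}u_{h,d}]=[\mathbf{a}u_h]$; the time-derivative bound is indeed immediate from the second estimate of Theorem \ref{ncbounds} applied to $\partial_t u_h$ on the fixed semi-discrete mesh. The divergence, and the gap, is in the term $S:=\sup_{0\neq v\in H^1_0(\Omega)}\int_\Omega\mathbf{a}u_{h,d}\cdot\nabla v\,dx\,/\,|||v|||$. The paper does precisely the ``naive'' thing you reject: global Cauchy--Schwarz together with $\varepsilon\|\nabla v\|^2\le|||v|||^2$ gives $S^2\le\varepsilon^{-1}\|\mathbf{a}u_{h,d}\|^2$, and this volume contribution is then controlled by the $L^2$ estimate of Theorem \ref{ncbounds}; this is exactly the $\varepsilon^{-1}\|\mathbf{a}u_{h,d}\|^2_{L^2(K)}$ term appearing in the first line of the paper's displayed chain. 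Your observation that this step sees $|\mathbf{a}|$ rather than $\mathbf{a}\cdot\mathbf{n}$, and hence is not literally dominated by $\sum_E\frac{h_E}{\varepsilon}\|[\mathbf{a}u_h]\|^2_{L^2(E)}$ with a fully data-independent constant, is a fair remark about sharpness, but it is the route the paper takes, with the dependence on $\mathbf{a}$ absorbed into the hidden constant.

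The replacement you sketch does not close. After elementwise integration by parts you must control $\sum_{E}\frac{\varepsilon}{h_E}\|v\|^2_{L^2(E)}$ by $|||v|||^2$ for arbitrary $v\in H^1_0(\Omega)$, and this is false uniformly in the mesh: the scaled trace inequality gives $\frac{\varepsilon}{h_E}\|v\|^2_{L^2(E)}\lesssim\varepsilon\|\nabla v\|^2_{L^2(K)}+\frac{\varepsilon}{h_K^2}\|v\|^2_{L^2(K)}$, and the factor $h_K^{-2}$ in the second term can be absorbed by neither $\varepsilon\|\nabla v\|^2$ nor $\beta\|v\|^2$ (on a quasi-uniform mesh of size $h$ the edge sum scales like $\varepsilon h^{-2}\|v\|^2$). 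The weights $h_E/\varepsilon$ versus $\varepsilon/h_E$ are calibrated for $v-I_Xv$, with $I_X$ the quasi-interpolant of Theorem \ref{hodor63}, as in the proof of Theorem \ref{elliptic_apost}; they are not calibrated for $v$ itself, and inserting $I_Xv$ here reintroduces the very volume term you are trying to avoid. Since you explicitly defer this step as ``the hard part'' without supplying an argument, the proof of the first estimate is incomplete at its only nontrivial point; the remedy is simply to accept the Cauchy--Schwarz bound $S\le\varepsilon^{-1/2}\|\mathbf{a}u_{h,d}\|$ and then invoke Theorem \ref{ncbounds}.
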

\begin{proof}
The first estimate follows from the definition of the norms, Theorem \ref{ncbounds} and the Cauchy-Schwarz inequality, viz.,
\begin{equation}
\begin{aligned}
\notag
(|||u_{h,d}|||+|u_{h,d}|_A)^2 & \lesssim \sum_{K \in \zeta} \Big( \varepsilon ||\nabla u_{h,d}||^2_{L^2(K)} + \beta ||u_{h,d}||^2_{L^2(K)} + \varepsilon^{-1}||{\bf a}u_{h,d}||^2_{L^2(K)} \Big ) \\
& + \sum_{E \in \mathcal{E}(\zeta)}{\bigg(\frac{\gamma\varepsilon}{h_E} + \beta{h_E}\bigg)||[u_h]||^2_{L^2(E)}} + \sum_{E \in \mathcal{E}(\zeta)} \frac{h_E}{\varepsilon}||[{\bf a}u_h]||^2_{L^2(E)} \\
& \lesssim \sum_{E \in \mathcal{E}(\zeta)}{\bigg(\frac{\gamma\varepsilon}{h_E} + \beta{h_E}\bigg)||[u_h]||^2_{L^2(E)}} + \sum_{E \in \mathcal{E}(\zeta)} \frac{h_E}{\varepsilon}||[{\bf a}u_h]||^2_{L^2(E)}.
\end{aligned}
\end{equation} 
The second estimate follows directly from Theorem \ref{ncbounds}.
\end{proof}
The error, $e := u-u_h$, is decomposed into a parabolic part $\rho$ and an elliptic part $\epsilon$, viz.,
 \begin{equation}
\notag
  e=\rho+\epsilon,\ \text{ with }\  \rho := u-w \  \text{ and }\ \epsilon := w-u_h.
 \end{equation}We further define $e_c:=u-u_{h,c}$ and $\epsilon_c := w - u_{h,c}$ to help facilitate the error analysis. We are now ready to state our a posteriori error estimator, $\eta$, given by
\begin{equation}
\notag
{\eta}:= \left [ ||e(0)||^2+\int_0^T \!{\eta}_{S_1}^2\, dt+\min\left\{\left(\int_0^T \! {\eta}_{S_2} \, dt \right)^2,{\alpha}^2_T\int_0^T \! {\eta}_{S_2}^2 \, dt\right\}+ \max_{0 \leq t \leq T} {\eta}^2_{S_3} \right ]^{1/2},
\end{equation}
where $\alpha_{T} := \min \left\{\varepsilon^{-1/2},\beta^{-1/2} \right \} $ with
\begin{equation}
\begin{aligned}
\notag
{\eta}_{S_1}^2 & := \sum_{K \in \zeta}\frac{h^2_K}{\varepsilon} \bigg|\bigg|f-\frac{\partial{u_h}}{\partial{t}}+\varepsilon\Delta{u_h}-{\bf a}\cdot \nabla{u_h}-bu_h \bigg|\bigg|^2_{L^2(K)}
+\sum_{E \in \mathcal{E}^{int}(\zeta)}{{\varepsilon}h_E||[\nabla{u_h}]||^2_{L^2(E)}}\\
&+\sum_{E \in \mathcal{E}(\zeta)}{\bigg(\frac{ \gamma \varepsilon}{h_E} + \beta{h_E}\bigg)||[u_h]||^2_{L^2(E)}} + \sum_{E \in \mathcal{E}(\zeta)} \frac{h_E}{\varepsilon} ||[{\bf a}u_h]||^2_{L^2(E)},
\\
 {\eta}_{S_2}^2 & :=  \sum_{E \in \mathcal{E}(\zeta)}{h_E\bigg|\bigg|\bigg[\frac{\partial{u_h}}{\partial{t}}\bigg]\bigg|\bigg|^2_{L^2(E)}},
\\
 {\eta}_{S_3}^2 & := \sum_{E \in \mathcal{E}(\zeta)}{h_E||[u_h]||^2_{L^2(E)}}.
\end{aligned}
\end{equation}
\medskip
\begin{theorem}\label{apost_sd_thm}
 The error the semi-discrete dG method~\eqref{dg_sd} satisfies the bound
\begin{equation}
\notag
||e||_* \lesssim {\eta}.
\end{equation}
\end{theorem}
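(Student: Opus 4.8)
The plan is to use the elliptic reconstruction of Definition \ref{reconstrdef1} and split the error as $e = \rho + \epsilon$ with $\rho = u - w$ and $\epsilon = w - u_h$. The point is that, by Remark \ref{reconstruction_remark}, $B(t;\epsilon,v) = B(t;w-u_h,v)$ is exactly the stationary residual associated with right-hand side $f - \partial_t u_h$, so Theorem \ref{elliptic_apost} controls $\sup_v B(t;\epsilon,v)/|||v|||$ by $\eta_{S_1}$. First I would derive the parabolic error equation by subtracting the reconstruction identity $(\partial_t u_h,v) + B(t;w,v) = (f,v)$ from the weak form \eqref{model_weak}; since $u - w = \rho$ and $\partial_t e = \partial_t\rho + \partial_t\epsilon$, this gives $(\partial_t\rho,v) + B(t;\rho,v) = -(\partial_t\epsilon,v)$ for all $v \in H^1_0(\Omega)$.

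Next I would run the standard energy argument: test with the admissible choice $v = \rho \in H^1_0(\Omega)$, use $(\partial_t\rho,\rho) = \tfrac12\tfrac{d}{dt}\|\rho\|^2$ and coercivity \eqref{coercivity} to obtain $\tfrac12\tfrac{d}{dt}\|\rho\|^2 + |||\rho|||^2 \le -(\partial_t\epsilon,\rho)$. Integrating in time controls simultaneously $\|\rho\|_{L^\infty(L^2)}$ and $\int_0^T |||\rho|||^2\,dt$, that is, the whole of $\|\rho\|_*$, in terms of $\|\rho(0)\|$ (bounded through $\|e(0)\|$) and the accumulated temporal residual $\int_0^T(\partial_t\epsilon,\rho)\,dt$.

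The main obstacle is the robust, computable treatment of this temporal residual, because $\partial_t\epsilon = \partial_t w - \partial_t u_h$ contains the time derivative of the reconstruction, which I do not want to estimate by differentiating the reconstruction of Definition \ref{reconstrdef1} (that would require second time derivatives and a non-robust $L^2$ elliptic estimate). Instead I would apply the splitting of Theorem \ref{ncbounds} to $\partial_t u_h$, so that the genuinely non-conforming contribution is $\|\partial_t u_{h,d}\| \lesssim \eta_{S_2}$ by Theorem \ref{nonconforming_bound}, and then bound $(\partial_t\epsilon,\rho)$ in two complementary ways. The first uses Cauchy--Schwarz and pulls $\|\rho\|$ out in $L^\infty(L^2)$, producing $(\int_0^T\eta_{S_2}\,dt)^2$; the second uses $\|\rho\| \lesssim \alpha_T|||\rho|||$ --- which follows from $\beta\|\rho\|^2 \le |||\rho|||^2$ and, via the Poincar\'e--Friedrichs inequality, from $\varepsilon\|\nabla\rho\|^2 \le |||\rho|||^2$ --- together with Young's inequality to absorb $\tfrac12\int_0^T|||\rho|||^2$ into the left-hand side, producing $\alpha_T^2\int_0^T\eta_{S_2}^2\,dt$. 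Retaining whichever bound is smaller is precisely what yields the $\min\{\cdot,\cdot\}$ term in $\eta$ and is where $\varepsilon$-robustness is won; I expect this to be the delicate heart of the proof.

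Finally I would assemble $\|e\|_* \le \|\rho\|_* + \|\epsilon\|_*$. For the energy part of $\|\epsilon\|_*$ I would write $\epsilon = \epsilon_c - u_{h,d}$ with $\epsilon_c = w - u_{h,c} \in H^1_0(\Omega)$, bound $|||\epsilon_c|||$ by coercivity together with continuity \eqref{continuity} and Theorem \ref{elliptic_apost} (giving $\int_0^T\eta_{S_1}^2\,dt$), and bound $|||u_{h,d}|||$ by Theorem \ref{nonconforming_bound} (whose edge terms also sit inside $\eta_{S_1}$); the non-conforming $L^\infty(L^2)$ contribution is $\|u_{h,d}\|_{L^\infty(L^2)} \lesssim \max_t\eta_{S_3}$ by the second estimate of Theorem \ref{ncbounds}. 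A Gronwall argument (Theorem \ref{Gronwall1} or \ref{Gronwall}) would then absorb the lower-order products arising from the time-dependence of the data, after which collecting all contributions gives $\|e\|_* \lesssim \eta$. The subtle point to watch is that there is deliberately no stand-alone $L^2$ elliptic estimator for $\epsilon$, so the $L^\infty(L^2)$ control of the reconstruction error must be routed through the $\rho$-equation and the $\alpha_T$/$\min$ mechanism rather than estimated directly.
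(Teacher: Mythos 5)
Your overall architecture (elliptic reconstruction, the stationary bound for $B(t;\epsilon,\cdot)$, the three complementary integrations producing the $\min$ with $\alpha_T$, and the non-conforming splitting) matches the paper, but there is a genuine gap at the heart of your energy argument. Writing the error equation as $(\partial_t\rho,v)+B(t;\rho,v)=-(\partial_t\epsilon,v)$ and testing with $v=\rho$ leaves you with the temporal residual $(\partial_t\epsilon,\rho)$, where $\partial_t\epsilon=\partial_t w-\partial_t u_h=(\partial_t w-\partial_t u_{h,c})-\partial_t u_{h,d}$. Your proposed fix handles only the piece $\partial_t u_{h,d}$ (via Theorems \ref{ncbounds} and \ref{nonconforming_bound}, giving $\eta_{S_2}$), but says nothing about $\partial_t\epsilon_c=\partial_t w-\partial_t u_{h,c}$, the time derivative of the conforming part of the elliptic reconstruction error. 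This term is not computable, is not bounded by any component of $\eta$, and cannot be absorbed; controlling it would force exactly the route you say you want to avoid (differentiating the reconstruction in time and invoking an $L^2$ duality estimate, which is neither robust in $\varepsilon$ nor compatible with the stated $\eta$). The paper sidesteps this entirely by \emph{not} splitting $e$ into $\rho+\epsilon$ before testing: it keeps the full equation $(\partial_t e,v)+B(t;\rho,v)=0$, chooses $v=e_c=u-u_{h,c}$, and rewrites it as
\begin{equation}
\notag
\bigg(\frac{\partial e_c}{\partial t},e_c\bigg)+B(t;e_c,e_c)=\bigg(\frac{\partial u_{h,d}}{\partial t},e_c\bigg)+B(t;\epsilon,e_c)+B(t;u_{h,d},e_c),
\end{equation}
so that the only time derivative ever appearing on the right is $\partial_t u_{h,d}$, while $\epsilon$ enters only through $B(t;\epsilon,\cdot)$, which Theorem \ref{elliptic_apost} controls. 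You should restructure your argument around $e_c$ rather than $\rho$.

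A secondary consequence of the same structural choice: your final assembly $\|e\|_*\le\|\rho\|_*+\|\epsilon\|_*$ requires a pointwise-in-time $L^2$ bound on $\epsilon_c=w-u_{h,c}$, which from coercivity can only be extracted as $\beta^{-1/2}|||\epsilon_c|||$ or $(\varepsilon^{-1/2}$ via Poincar\'e--Friedrichs$)\,|||\epsilon_c|||$; either way you would need a $\max_{0\le t\le T}\eta_{S_1}^2$-type term weighted by $\alpha_T^2$, which does not appear in $\eta$ (the estimator only contains $\int_0^T\eta_{S_1}^2\,dt$ and $\max_t\eta_{S_3}^2$), and which fails outright when $\beta=0$. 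With the paper's decomposition $e=e_c-u_{h,d}$ the Gronwall-type argument bounds $\|e_c\|_*$ directly (elliptic and parabolic conforming parts together), and only $\|u_{h,d}\|_*$ and $\|e_c(0)\|$ remain, both covered by Theorems \ref{ncbounds} and \ref{nonconforming_bound}. Your treatment of the $\min\{\cdot,\cdot\}$ mechanism via the two complementary estimates of $\eta_{S_2}\|\cdot\|$ is correct and is indeed the same device the paper uses.
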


\begin{proof}  We know that the exact solution satisfies
\begin{equation}
\bigg(\frac{\partial{u}}{\partial{t}},v \bigg)+B(t;u,v)=(f,v) \qquad \forall v \in H^1_0(\Omega).
\end{equation}
Using Definition \ref{reconstrdef1} we obtain 
\begin{equation}
\bigg(\frac{\partial{e}}{\partial{t}},v \bigg)+B(t;\rho,v)=0 \qquad \forall v \in H^1_0(\Omega).
\end{equation}
Setting $v=e_c$ in the above equation gives
\begin{equation}
\bigg(\frac{\partial{e_c}}{\partial{t}},e_c \bigg)+B(t;e_c,e_c)=\bigg(\frac{\partial{u_{h,d}}}{\partial{t}},e_c \bigg)+B(t;\epsilon,e_c)+B(t;u_{h.d},e_c).
\end{equation}
Using the elliptic reconstruction property of $\epsilon$ along with Theorem \ref{elliptic_apost} yields
\begin{equation}
B(t;\epsilon,e_c) \lesssim \eta_{S_1}|||e_c|||,
\end{equation}
while using the continuity of the bilinear form $B$  with Theorem \ref{nonconforming_bound} gives 
\begin{equation}
B(t;u_{h.d},e_c) \lesssim (|||u_{h,d}||| + |u_{h,d}|_A)|||e_c||| \lesssim \eta_{S_1}|||e_c|||.
\end{equation}
Combining these results and using the coercivity of the bilinear form $B$, the Cauchy-Schwarz inequality, Young's inequality and Theorem \ref{nonconforming_bound} gives
\begin{equation}\label{first_sd_bound}
 \frac{d}{dt}\big(||e_c||^2 \big)+|||e_c|||^2 \lesssim \eta^2_{S_1}+\eta_{S_2}||e_c||.
\end{equation}
Let $T_0\in[0,T]$ be such that $ E_c:=||e_c(T_0)||=||e_c||_{L^{\infty}(0,T;L^2(\Omega))}$ then integrating \eqref{first_sd_bound} over $[0,T_0$] and using Young's inequality gives
\begin{equation}
\label{qq1}
E^2_c \lesssim ||e_c(0)||^2+\int_0^T \! \eta^2_{S_1} \, dt+\Bigg ( \int_0^T \! \eta_{S_2} \, dt \Bigg )^2.
\end{equation}
Going back to \eqref{first_sd_bound} and integrating over $[0,T]$ yields
\begin{equation}
\label{qq2}
\int_0^T \! |||e_c|||^2 \, dt \lesssim ||e_c(0)||^2+\int_0^T \! \eta^2_{S_1} \, dt+E_c \Bigg (\int_0^T \! \eta_{S_2} \, dt \Bigg ).
\end{equation}
Adding \eqref{qq1} and \eqref{qq2} then using Young's inequality we obtain
\begin{equation}
\label{pp1}
||e_c||^2_* \, \lesssim ||e_c(0)||^2+\int_0^T \! \eta^2_{S_1} \, dt+\Bigg (\int_0^T \! \eta_{S_2} \, dt \Bigg )^2.
\end{equation}
Going back to \eqref{first_sd_bound}, integrating over $[0,T_0]$ and $[0,T]$ then summing the results yields
\begin{equation}
\label{second_sd_bound}
||e_c||^2_* \, \lesssim ||e_c(0)||^2+\int_0^T \! \eta^2_{S_1} \, dt+\int_0^T \! \eta_{S_2}||e_c|| \, dt.
\end{equation}
Observing that $||e_c||$ is contained in $|||e_c|||$ up to the coefficient $\beta$, then using the Cauchy-Schwarz inequality and Young's inequality gives
\begin{equation}
\label{pp2}
||e_c||^2_* \, \lesssim ||e_c(0)||^2+\int_0^T \! \eta^2_{S_1} \, dt+\beta^{-1}\int_0^T \! \eta^2_{S_2} \, dt.
\end{equation}
Going back to \eqref{second_sd_bound}, we can instead use the Poincar\'{e}-Friedrichs inequality to bound $||e_c||$ by $||\nabla e_c||$ then observe that $||\nabla e_c||$ is also part of $|||e_c|||$ up to the coefficient $\varepsilon$, thus using the Cauchy-Schwarz inequality and Young's inequality gives
\begin{equation}
\label{pp3}
||e_c||^2_* \, \lesssim ||e_c(0)||^2+\int_0^T \! \eta^2_{S_1} \, dt+\varepsilon^{-1}\int_0^T \! \eta^2_{S_2} \, dt.
\end{equation}
Putting \eqref{pp1}, \eqref{pp2} and \eqref{pp3} together we obtain
\begin{equation}
||e_c||^2_* \, \lesssim ||e_c(0)||^2+\int_0^T \! \eta^2_{S_1} \, dt + \min\left\{\left(\int_0^T \! {\eta}_{S_2} \, dt \right)^2,{\alpha}^2_T\int_0^T \! {\eta}_{S_2}^2 \, dt\right\}.
\end{equation}
Obviously from the triangle inequality we have
\begin{equation}
||e||_*^2 \lesssim ||e_c||^2_* + ||u_{h,d}||^2_*.
\end{equation}
Thus, all we need to do to complete the proof is to bound $||u_{h,d}||^2_*$ and $||e_c(0)||^2$. To bound $\displaystyle ||u_{h,d}||^2_*$, we use the definition of the norm along with Theorem \ref{ncbounds} and Theorem \ref{nonconforming_bound}, viz.,
\begin{equation}
||u_{h,d}||_*^2 \lesssim \int_0^T \! |||u_{h,d}|||^2 \, dt + \max_{0 \leq t \leq T}||u_{h,d}||^2 \lesssim \int_0^T \! \eta_{S_1} \, dt + \max_{0 \leq t \leq T} \eta^2_{S_3}.
\end{equation}
Finally, $||e_c(0)||^2$ is bounded using the triangle inequality and Theorem \ref{ncbounds}, viz.,
\begin{equation}
||e_c(0)||^2 \lesssim ||e(0)||^2 + \max_{0 \leq t \leq T}||u_{h,d}||^2 \lesssim ||e(0)||^2 + \max_{0 \leq t \leq T} \eta^2_{S_3}.
\end{equation}
These bounds complete the proof.
\end{proof}

\end{subsection}

\begin{subsection}{An a posteriori bound for the fully-discrete method}\label{fd_apost_sec}
We now continue by applying to the fully-discrete setting the general framework presented in the previous subsection.

\begin{definition}\label{ellipticreconstr2} We define $A^k \in V_h^k$ to be the unique solution of the elliptic problem
\begin{equation}\label{ell_rec_fd}
\notag
B\big(t^{k};u_h^k,v_h^k \big)+K_h \big(u_h^k,v_h^k \big)=\big(A^k,v_h^k \big) \qquad
\forall v_h^k \in V_h^k.
\end{equation}
\end{definition}
\begin{remark}
\label{Akplusone}
For $k \geq 1$ we obtain from \eqref{dg_fd} that $\displaystyle A^{k+1} = I_h^{k+1}f^{k+1} - \frac{u_h^{k+1}-I_h^{k+1}u_h^k}{\tau_{k+1}}$ where $I_h^{k+1}$ is the $L^2$ projection operator onto $V_h^{k+1}$.
\end{remark} 

\begin{definition}\label{ellipticreconstr2} We define the elliptic reconstruction $w^{k} \in H^1_0(\Omega)$ to be the unique solution of the elliptic problem
\begin{equation}
\notag
B\big(t^{k};w^{k},v \big)=\big(A^k,v \big) \qquad
\forall v \in H^1_0(\Omega).
\end{equation}
\end{definition}
\begin{remark}
The dG discretisation of the equation in Definition \ref{ellipticreconstr2} is to find $w_h^k \in V_h^k$ such that
\begin{equation}
\notag
B\big(t^{k};w_h^k,v_h^k \big)+K_h \big(w_h^k,v_h^k \big)=\big(A^k,v_h^k \big) \qquad \forall v_h^k \in V_h^k.
\end{equation}
Using the definition of $A^k$, we obtain the equality
\begin{equation}
\notag
B\big(t^{k};w_h^k,v_h^k \big)+K_h \big(w_h^k,v_h^k \big)=B\big(t^{k};u_h^k,v_h^k \big)+K_h \big(u_h^k,v_h^k \big) \qquad \forall v_h^k \in V_h^k.
\end{equation}
Therefore $w_h^k = u_h^k$ and so $\displaystyle B\big(t^k;w^k - u_h^k,v\big)$ can be estimated using Theorem \ref{elliptic_apost}.
\end{remark}

At each time step $k$, we decompose the dG solution $u_h^k$ into a conforming part $u_{h,c}^k \in H^1_0(\Omega) \cap V_h^k$ and a non-conforming part $u_{h,d}^k \in V_h^k$ such that $u_h^k = u_{h,c}^k + u_{h,d}^k$. Given $t \in \big(t^k,t^{k+1} \big]$, we (re)define $u_h(t)$ to be the linear interpolant with respect to $t$ of the values $u_h^k$ and $u_h^{k+1}$, viz.,
\begin{equation}
\notag
u_h(t):=l_k(t)u_h^k+l_{k+1}(t)u_h^{k+1},
\end{equation}
where $\{l_k, l_{k+1}\}$ denotes the standard linear Lagrange interpolation basis defined on the interval $\big[t^k,t^{k+1}\big]$.
 We define $u_{h,c}(t)$ and $u_{h,d}(t)$ analogously. We can then decompose the error $e:=u-u_h=e_c-u_{h,d}$ where $e_c:=u-u_{h,c}$. It will also be useful to define $\epsilon^{k} := w^{k}-u_h^{k}$. 

\begin{remark}
Aside from the numerical method itself and the bilinear forms appearing in Definition \ref{ellipticreconstr2} then given $t \in \big(t^k,t^{k+1} \big]$ all bilinear forms appearing in the error analysis are assumed to take place over the union triangulation $\zeta^k \cup \zeta^{k+1}$. The norms $|||\cdot|||$ and $|\cdot|_A$ (and thus by extension $||\cdot||_*$) are all evaluated over the union triangulation.
\end{remark}

\begin{lemma}\label{error_eq_simple_fd}
For almost any $t \in \big(t^k,t^{k+1}\big]$ we have 
\begin{equation}\label{error_relation_fd}
\notag
\bigg(\frac{\partial{e}}{\partial{t}},v\bigg)+B\big(t;e,v \big)=\big(f-f^{k+1},v\big)+\bigg(f^{k+1}-\frac{\partial{u_h}}{\partial{t}},v\bigg)-B\big(t;u_h,v \big) \qquad \forall v \in H^1_0(\Omega).
\end{equation} 
\end{lemma}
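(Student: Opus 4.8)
The plan is to obtain the identity directly from the weak formulation \eqref{model_weak} satisfied by the exact solution, subtracting the analogous expression built from the time-interpolated discrete solution $u_h$. Since the statement is an exact identity rather than an estimate, the argument is purely algebraic: no inequalities, coercivity, or continuity bounds are needed, and the only care required is in tracking which quantities are evaluated at the running time $t$ and which at the node $t^{k+1}$.

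First I would record that, for almost every $t \in (0,T]$, the exact solution satisfies
\begin{equation}
\notag
\bigg(\frac{\partial{u}}{\partial{t}},v\bigg)+B(t;u,v)=(f,v) \qquad \forall v \in H^1_0(\Omega).
\end{equation}
Because $e := u - u_h$ and both the time derivative and $B(t;\cdot,v)$ are linear in their first argument, I would subtract the terms formed from $u_h$ to get
\begin{equation}
\notag
\bigg(\frac{\partial{e}}{\partial{t}},v\bigg)+B(t;e,v)=(f,v)-\bigg(\frac{\partial{u_h}}{\partial{t}},v\bigg)-B(t;u_h,v) \qquad \forall v \in H^1_0(\Omega).
\end{equation}
At this point it is worth flagging the one admissibility issue: here $u_h(t)$ is the piecewise-linear-in-$t$ interpolant of $u_h^k$ and $u_h^{k+1}$, so its time derivative is the (piecewise constant) slope on the slab $\big(t^k,t^{k+1}\big]$ and the identity holds for almost every $t$ therein; moreover $B(t;u_h,v)$ is meaningful even though $u_h \in V_h$ is nonconforming, precisely because $B$ (unlike $K_h$) extends to arguments in $H^1_0(\Omega)+V_h$ tested against $v \in H^1_0(\Omega)$.

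The only remaining step is to split the source term by inserting $\pm\,f^{k+1}$, writing $(f,v)=\big(f-f^{k+1},v\big)+\big(f^{k+1},v\big)$ and then grouping $\big(f^{k+1},v\big)-\big(\tfrac{\partial u_h}{\partial t},v\big)=\big(f^{k+1}-\tfrac{\partial u_h}{\partial t},v\big)$; collecting terms gives exactly the claimed relation. This regrouping is not forced by any identity — it is chosen to isolate the two contributions that will be estimated separately downstream, namely the temporal data oscillation $f-f^{k+1}$ and the discrete residual $f^{k+1}-\tfrac{\partial u_h}{\partial t}$, the latter being the quantity that connects to the $L^2$-projected residual of Remark \ref{Akplusone} and hence to the elliptic reconstruction of Definition \ref{ellipticreconstr2}. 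Consequently there is no genuine obstacle in proving the lemma itself; the substance lies entirely in anticipating the later decomposition, and the only points to verify are the well-definedness of $\tfrac{\partial u_h}{\partial t}$ on each time slab and the admissibility of the nonconforming $u_h$ inside $B(t;\cdot,v)$, both noted above.
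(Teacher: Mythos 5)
Your proposal is correct and matches the paper's intent exactly: the paper's proof consists solely of the remark ``This follows from \eqref{model_weak}'', and your derivation (subtract the $u_h$ terms from the weak formulation, then insert $\pm f^{k+1}$ and regroup) is precisely the omitted calculation. Your additional observations on the admissibility of the nonconforming $u_h$ in $B(t;\cdot,v)$ and the piecewise definition of $\partial u_h/\partial t$ are sound and consistent with the paper's setup.
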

\begin{proof}
This follows from \eqref{model_weak}. 
\end{proof}

\noindent Before proving the a posteriori bounds for the fully-discrete method, we introduce the error estimators. We begin by defining the \emph{spatial estimator}, $\eta_S$, by
\begin{equation}
\begin{aligned}
\notag
\eta^2_S & := ||e(0)||^2+\frac{1}{3}\sum_{j=0}^{n-1}\tau_{j+1}\big(\eta_{S_1,j}^2+\eta_{S_1,j+1}^2 \big) + \sum_{j=0}^{n-1} \tau_{j+1}\eta^2_{S_2,j+1}+ \max_{0 \leq j \leq n} \eta^2_{S_3,j}
\\&+\min\left\{\left(\sum_{j=0}^{n-1}\tau_{j+1} \eta_{S_4,j+1} \right)^2, {\alpha}_T^2 \sum_{j=0}^{n-1} \tau_{j+1} \eta_{S_4,j+1}^2 \right\},
\end{aligned}
\end{equation}
where
\begin{equation}
\begin{aligned}
\notag
 \eta_{S_1,j}^2 & :=  \sum_{K \in\zeta^{j}}{\frac{h^2_K}{\varepsilon}\big|\big|A^j+\varepsilon\Delta u_h^{j}}-{\bf a}^{j} \cdot \nabla u_h^{j}-b^{j}u_h^{j}\big|\big|_{L^2(K)}^2+\sum_{E \in \mathcal{E}(\zeta^{j})}\frac{h_E }{\varepsilon}\big|\big|\big[{\bf a}^j u_h^j \big]\big|\big|^2_{L^2(E)}\\&+ \sum_{E \in \mathcal{E}(\zeta^{j})}\bigg(\frac{\gamma\varepsilon}{h_E}+\beta h_E\bigg)\big|\big|\big[u_h^j \big]\big|\big|^2_{L^2(E)}   + \sum_{E \in \mathcal{E}^{int}(\zeta^{j})}{\varepsilon}h_E \big|\big|\big[\nabla u_h^{j}\big]\big|\big|^2_{L^2(E)},
\\
\eta_{S_2,j+1}^2 & := \sum_{K \in \zeta^j \cup \zeta^{j+1}} \frac{h^2_K}{\varepsilon}\bigg|\bigg|f^{j+1}-I_h^{j+1}f^{j+1}+\frac{u_h^j - I_h^{j+1}u_h^j}{\tau_{j+1}}\bigg|\bigg|^2_{L^2(K)},
\\
 \eta_{S_3,j}^2 & := \sum_{E \in \mathcal{E}(\zeta^j)}h_E\big|\big|\big[u_h^j\big]\big|\big|^2_{L^2(E)},
\\
 \eta_{S_4,j+1}^2 & := \sum_{E \in \mathcal{E}(\zeta^j \cup \zeta^{j+1})}h_E\bigg|\bigg|\bigg[\frac{u_h^{j+1}-u_h^j}{\tau_{j+1}}\bigg]\bigg|\bigg|^2_{L^2(E)}.
\end{aligned}
\end{equation}
The \emph{time (or temporal) estimator}, $\eta_T$, is given by 
\begin{equation}
\notag
\eta^2_T := \sum_{j=0}^{n-1} \int_{t^j}^{t^{j+1}} \! \eta_{T_1,j+1}^2 \, dt+ \min\left\{\left(\sum_{j=0}^{n-1} \int_{t^j}^{t^{j+1}} \! \eta_{T_2,j+1} \, dt \right)^2, {\alpha}_T^2 \sum_{j=0}^{n-1} \int_{t^j}^{t^{j+1}} \! \eta_{T_2,j+1}^2\, dt \right\},
\end{equation}
where
\begin{equation}
\begin{aligned}
\notag
 \eta_{T_1,j+1}^2 & :=  \sum_{K \in \zeta^j \cup \zeta^{j+1}} {\varepsilon}^{-1}\big|\big|l_{j+1}\big({\bf a}^{j+1}-{\bf a} \big)u_h^{j+1}+l_j \big({\bf a}^j-{\bf a} \big)u_h^j\big|\big|^2_{L^2(K)},
\\
 \eta_{T_2,j+1}^2 & :=   \sum_{K \in \zeta^j \cup \zeta^{j+1}}\big|\big|f-f^{j+1}+l_j\big(A^{j+1}-A^j \big)+l_{j}\big(b^{j}-b-\nabla \cdot {\bf a}^{j}+\nabla \cdot {\bf a} \big)u_h^{j}\\&+l_{j+1}\big(b^{j+1}-b-\nabla \cdot {\bf a}^{j+1}+\nabla \cdot {\bf a}\big)u_h^{j+1}\big|\big|^2_{L^2(K)}.
\end{aligned}
\end{equation}

\begin{theorem}\label{apost_fd_thm}
 The error of the fully-discrete method \eqref{dg_fd} satisfies the bound
\begin{equation}
\notag
||e||_* \lesssim \sqrt{\eta^2_S+\eta^2_T}.
\end{equation}
\end{theorem}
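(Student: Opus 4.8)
The plan is to extend the semi-discrete argument of Theorem~\ref{apost_sd_thm} to the fully-discrete setting, the genuinely new ingredient being the robust treatment of the temporal residual through the estimators $\eta_{T_1}$ and $\eta_{T_2}$. First I would introduce the time-interpolated elliptic reconstruction $w(t):=l_k(t)w^k+l_{k+1}(t)w^{k+1}$ on each interval $(t^k,t^{k+1}]$, decompose the error as $e=\rho+\epsilon$ with $\rho:=u-w$ and $\epsilon:=w-u_h$, and retain the conforming/non-conforming splitting $e=e_c-u_{h,d}$ with $e_c:=u-u_{h,c}$, exactly mirroring the semi-discrete proof. I would also keep in mind that $\epsilon=l_k\epsilon^k+l_{k+1}\epsilon^{k+1}$, so that time integrals of $|||\epsilon|||^2$ produce trapezoidal-type sums.

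Starting from the error equation of Lemma~\ref{error_eq_simple_fd}, I would rewrite the right-hand side in terms of the reconstruction. By Remark~\ref{Akplusone} the discrete residual satisfies $f^{k+1}-\partial_t u_h=A^{k+1}+(f^{k+1}-I_h^{k+1}f^{k+1})+\tau_{k+1}^{-1}(u_h^k-I_h^{k+1}u_h^k)$, whose last two terms are precisely what $\eta_{S_2}$ controls (after testing against $e_c$ and using the $L^2$-orthogonality of $I_h^{k+1}$ together with the approximation estimates of Theorem~\ref{projectionbounds} to recover the $h_K^2/\varepsilon$ weight). Replacing the data and bilinear forms evaluated at $t$ by their values at the nodes $t^k,t^{k+1}$ and invoking the defining property of the reconstruction turns the leading contributions into $B(t^j;\epsilon^j,\cdot)$, each bounded via Theorem~\ref{elliptic_apost} by $\eta_{S_1,j}|||\cdot|||$; the node-versus-$t$ discrepancies collect into the temporal terms. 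Here the convective discrepancy $(\mathbf{a}-\mathbf{a}^{j+1})u_h$ must be paired with $\nabla e_c$ so as to be compatible with the energy norm, which forces the $\varepsilon^{-1}$ scaling appearing in $\eta_{T_1}$, while the source and reaction discrepancies $f-f^{j+1}$ and the $b$- and $\nabla\cdot\mathbf{a}$-differences give $\eta_{T_2}$.

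Testing with $v=e_c$ and using coercivity~\eqref{coercivity}, continuity~\eqref{continuity}, and Theorem~\ref{nonconforming_bound} (to absorb the non-conforming bilinear contributions into $\eta_{S_1}$) and Theorem~\ref{ncbounds} (to bound $(\partial_t u_{h,d},e_c)$ by $\eta_{S_4}\|e_c\|$) yields a differential inequality of the schematic form $\frac{d}{dt}(\|e_c\|^2)+|||e_c|||^2\lesssim \eta_{S_1}^2+\eta_{S_2}^2+\eta_{T_1}^2+(\eta_{S_4}+\eta_{T_2})\|e_c\|$, with the discrete estimators interpolated in time so that their integrals reproduce the (trapezoidal) sums in $\eta_S$ and $\eta_T$. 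Integrating over $[0,T_0]$, with $T_0$ realising the $L^\infty(L^2)$ norm of $e_c$, and over $[0,T]$, then summing and applying Young's inequality as in~\eqref{qq1}--\eqref{pp1}, bounds $\|e_c\|_*$. As in the semi-discrete case, the factor $\|e_c\|$ multiplying $\eta_{S_4}+\eta_{T_2}$ can be controlled either through $\beta^{-1/2}$ (since $\|e_c\|$ sits inside $|||e_c|||$ up to $\beta$) or through $\varepsilon^{-1/2}$ (via Poincar\'e--Friedrichs, bounding $\|e_c\|$ by $\|\nabla e_c\|$), producing the two alternatives in the minimum and the factor $\alpha_T$. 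Finally I would pass from $e_c$ to $e$ by the triangle inequality, bounding $\|u_{h,d}\|_*$ and $\|e_c(0)\|$ via Theorem~\ref{ncbounds} to produce the $\eta_{S_3}$ and $\|e(0)\|$ contributions.

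The main obstacle I expect is the robust bookkeeping of the temporal residual: one must split $f-f^{j+1}$ and the node-versus-$t$ data mismatches so that each piece lands in the correct estimator with the correct $\varepsilon$-weight --- the convective part through the $\varepsilon^{-1}$-scaled $\eta_{T_1}$, which is bundled with $\eta_{S_1}^2$, and the remaining parts through $\eta_{T_2}$, which enters the minimum alongside $\eta_{S_4}$ --- and then decide, term by term, whether to use the crude $(\int\eta)^2$ bound or the sharper $\alpha_T^2\int\eta^2$ bound so that robustness in $\varepsilon$ holds uniformly in both the pre-asymptotic and asymptotic regimes. Orchestrating this minimum consistently for $\eta_{S_4}$ and $\eta_{T_2}$ while simultaneously accommodating mesh change through the union triangulation $\zeta^k\cup\zeta^{k+1}$ and the $L^2$-projection mismatch captured by $\eta_{S_2}$ is the delicate part of the argument.
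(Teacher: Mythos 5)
Your proposal is correct and follows essentially the same route as the paper: nodal elliptic reconstructions combined with linear-in-time interpolation, the error equation of Lemma~\ref{error_eq_simple_fd} rearranged via Remark~\ref{Akplusone} so that the $L^2$-projection mismatch lands in $\eta_{S_2}$, the reconstruction terms bounded by Theorem~\ref{elliptic_apost}, the nodal-versus-$t$ data discrepancies collected into $\eta_{T_1}$ and $\eta_{T_2}$, and the resulting differential inequality closed by the same integration-over-$[0,T_0]$-and-$[0,T]$ strategy (with the $\beta^{-1/2}$/$\varepsilon^{-1/2}$ alternatives giving the minimum and $\alpha_T$) as in Theorem~\ref{apost_sd_thm}. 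The only cosmetic difference is that you name the time-interpolated reconstruction $w(t)$ explicitly, whereas the paper works directly with the weighted nodal terms $l_k B(t^k;\epsilon^k,\cdot)+l_{k+1}B(t^{k+1};\epsilon^{k+1},\cdot)$; the substance is identical.
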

\begin{proof} From Lemma \ref{error_eq_simple_fd} and Definition \ref{ellipticreconstr2} we have
\begin{equation}
\begin{aligned}
\bigg(\frac{\partial{e}}{\partial{t}},v \bigg)+B\big(t;e,v \big) &=\big(f-f^{k+1},v \big)+B\big(t^{k+1};u_h^{k+1},v\big)-B\big(t;u_h,v\big)\\&+B\big(t^{k+1};\epsilon^{k+1},v\big)+\bigg(f^{k+1}-\frac{\partial{u_h}}{\partial{t}}-A^{k+1},v \bigg),
\end{aligned}
\end{equation}
which upon adding and subtracting  $\big(l_k\big(A^{k+1}-A^k \big),v \big)$ and using Remark \ref{Akplusone} gives
\begin{equation}
\begin{aligned}
& \bigg(\frac{\partial{e}}{\partial{t}},v \bigg)+B\big(t;e,v\big)=\big(f-f^{k+1}+l_k\big(A^{k+1}-A^k\big),v\big)-B\big(t;u_h,v \big)\\&+B\big(t^{k+1};u_h^{k+1},v \big)
+B\big(t^{k+1};\epsilon^{k+1},v\big)  -l_k B\big(t^{k+1};w^{k+1},v \big)\\&+l_kB\big(t^k,w^k,v \big)+\bigg(f^{k+1}-\frac{\partial{u_h}}{\partial{t}}-A^{k+1},v\bigg).
\end{aligned}
\end{equation}
Finally, we add and subtract $l_k B\big(t^{k+1};u_h^{k+1},v\big)$ and $l_k B\big(t^k;u_h^k,v\big)$ to obtain the primary error equation, viz.,
\begin{equation}
\begin{aligned}
\label{equation1}
&\bigg(\frac{\partial{e}}{\partial{t}},v \bigg)+B\big(t;e,v \big) =\big(f-f^{k+1}+l_k\big(A^{k+1}-A^k\big),v\big)-B\big(t;u_h,v \big)\\&+l_{k+1}B\big(t^{k+1};u_h^{k+1},v\big)+l_kB\big(t^k;u_h^k,v\big)+l_{k+1}B\big(t^{k+1};\epsilon^{k+1},v\big)\\&+l_kB\big(t^k;\epsilon^k,v\big)+\bigg(f^{k+1}-\frac{\partial{u_h}}{\partial{t}}-A^{k+1},v \bigg).
\end{aligned}
\end{equation}
By combining terms, using the definition of the bilinear form $B$ and the \mbox{ }\mbox{ }\mbox{ } Cauchy-Schwarz inequality; the first four terms give rise to the time estimator:
\begin{equation}
\begin{aligned}
&\big(f-f^{k+1}+l_k\big(A^{k+1}-A^k\big),v\big)+l_{k+1}B\big(t^{k+1};u_h^{k+1},v\big)+l_kB\big(t^k;u_h^k,v\big) \\ &-B\big(t;u_h,v \big) \leq \eta_{T_1,k+1}|||v|||+\eta_{T_2,k+1}||v||.
\end{aligned}
\end{equation}
The final term can be rewritten using Remark \ref{Akplusone} then bounded using Theorem \ref{projectionbounds} and the Cauchy-Schwarz inequality, viz.,
\begin{equation}
\begin{aligned}
\bigg(f^{k+1}-\frac{\partial{u_h}}{\partial{t}}-A^{k+1},v \bigg) & =\bigg(f^{k+1}-\frac{\partial{u_h}}{\partial{t}}-A^{k+1},v-I_h^{k+1}v \bigg) \\ &\lesssim \eta_{S_2,k+1}|||v|||.
\end{aligned}
\end{equation}
For the remaining terms, we use the elliptic reconstruction property together with Theorem \ref{elliptic_apost} to conclude that
\begin{equation}
\begin{aligned}
l_{k+1}B\big(t^{k+1};\epsilon^{k+1},v \big)+l_kB\big(t^k;\epsilon^k,v \big) \lesssim ( l_{k+1}\eta_{S_1,k+1} + l_k \eta_{S_1,k})|||v|||.
\end{aligned}
\end{equation}
Combining the above, setting $v=e_c$ and using \eqref{coercivity}, \eqref{continuity}, the Cauchy-Schwarz inequality and Young's inequality yields
\begin{equation}
\begin{aligned}
 \frac{d}{dt}\big(||e_c||^2 \big)+|||e_c|||^2  & \lesssim l^2_{k+1} \eta_{S_1,k+1}^2+l_k^2\eta_{S_1,k}^2+ \eta^2_{S_2,k+1}+|||u_{h,d}|||^2+|u_{h,d}|_{A}^2 \\&+\eta_{S_4,k+1}||e_c||+\eta_{T_1,k+1}^2 + \eta_{T_2,k+1}||e_c||.
\end{aligned}
\end{equation}
The proof then follows from Theorem \ref{nonconforming_bound} and by employing a bounding strategy identical to that used in Theorem \ref{apost_sd_thm}.
 \end{proof}

\begin{remark}
The spatial estimator is expected to be asymptotically robust with respect to $\varepsilon$ as the predominant terms are the same as in the elliptic case. For the pre-asymptotic case, one would need to work in stronger norms to achieve theoretical robustness. We note, however, that in all the numerical experiments below, the adaptive algorithm, implementing the estimators presented here, was able to arrive to quasi-optimal space-time mesh modifications. The temporal error estimator is also expected to be asymptotically robust with respect to $\varepsilon$ as the temporal data approximation error terms are all order two in time and the only order one temporal term is a difference of derivatives (from Remark \ref{Akplusone}) which is anticipated to be independent of $\varepsilon$ in the asymptotic regime. 
\end{remark}

\begin{remark}
The use of elliptic reconstruction is not essential to the proof of Theorem \ref{apost_sd_thm} and Theorem \ref{apost_fd_thm}; it is possible to derive the residual based a posteriori bounds directly albeit at the cost of a lengthier calculation. The advantage of using elliptic reconstruction in the proof lies in the fact that the space estimator can be easily modified to accommodate non-residual based elliptic error estimators. This, in turn, may offer improvements in robustness with respect to the P\'eclet number cf.~\cite{S08}.
\end{remark}
 
\end{subsection}
\end{section}

\begin{section}{An adaptive algorithm}\label{adaptive_sec}

An \emph{adaptive algorithm} is a computational procedure that seeks to use an error estimator, $\eta$, to try and minimise the error in some norm by appropriately reducing the discretisation parameters. For elliptic problems, such adaptive algorithms are relatively straightforward: a finite element solution is calculated on an initial mesh and its estimator evaluated then the regions of the mesh where the estimator is largest are targeted for refinement by the adaptive algorithm and the finite element solution is recalculated on this new mesh; the algorithm continues in this fashion until the error estimator is below a given \emph{tolerance}. For parabolic problems, the design of adaptive algorithms is far more challenging because it is unclear how the spatial and temporal components of the error estimator should be utilised. 

The algorithms currently in the literature \cite{CF04,EJ91,EJ95a,EJ95b,EJ95c,EJL98,JNT90,M98,PWW,SS05} consist of an \emph{initial condition tolerance} to control refinement of the coarse input mesh, a \emph{spatial refinement tolerance} to control mesh refinement, a \emph{spatial coarsening tolerance} to control mesh coarsening and a \emph{temporal tolerance} to control the length of each time interval. Typically, these algorithms focus on the use of these individual tolerances to force the error estimator below a given \emph{global tolerance}. However, it is not necessarily clear that this is the correct choice. Indeed, proving that an adaptive algorithm will terminate with the total estimator below a tolerance is not the same as showing that it produces a quasi-optimal distribution of time steps and mesh parameters. 

We shall introduce a new adaptive algorithm, based on that given in \cite{CF04}, with a different emphasis on the use of tolerances and we will show numerically that our adaptive algorithm reduces the error estimator that it utilises at the optimal rate with respect to the mesh parameters and the total number of time steps. The pseudocode for our algorithm is given in Algorithm 3.1 and is based on using different parts of the a posteriori estimator from Theorem \ref{apost_fd_thm}  to drive space-time adaptivity. It is useful to provide heuristic justifications for the approaches taken in our adaptive algorithm and to compare our adaptive algorithm to similar algorithms already in the literature:

\begin{algorithm} \label{qqmore}
  \begin{algorithmic}[1]
     \State {\bf Input:} $\varepsilon$, ${\bf a}$, $b$, $f$, $u_0$, $T$, $\Omega$, $n$, $\zeta^0$, $\gamma$, ${\tt ttol}$, ${\tt stol^+}$, ${\tt stol^-}$.
     \State  Set $\tau_1$, ..., $\tau_n = T/n$.
     \State Calculate $u_h^0$.
     \State Calculate $u_h^1$ from $u_h^0$.
    \While {$\displaystyle {\hat{\eta}}^2_{T,1} > {\tt ttol} \text{ OR } \max_K \eta^2_{S_1,1} |_K > {\tt stol^+} $}
    \State Modify $\zeta^0$ by refining all elements such that $\eta^2_{S_1,1} |_K > {\tt stol^+}$ and coarsening all elements such that $\eta^2_{S_1,1} |_K < {\tt stol^-}$.
\If   {$\hat{\eta}^2_{T,1}> {\tt ttol}$}
    
 \State $n \leftarrow n+1$.

\State $\tau_{n} = \tau_{n-1}$, ..., $\tau_{3} = \tau_{2}$.

 \State $\tau_{2} = \tau_{1}/2$.

 \State $\tau_{1} \leftarrow \tau_{1}/2$.

    \EndIf

     \State Calculate $u_h^0$.
\State Calculate $u_h^1$ from $u_h^0$.
    \EndWhile

\State Set $j = 1$,  $\zeta^1 = \zeta^0$, $time = \tau_{1}$.
    \While {  $time <  T$}
    \State Calculate $u_h^{j+1}$ from $u_h^j$.
    \While
     {$\hat{\eta}^2_{T,j+1}> {\tt ttol} $}
\If {$\hat{\eta}^2_{T,j+1}> {\tt ttol}$}
     
\State $n \leftarrow n+1$.

\State $\tau_{n} = \tau_{n-1}$, ..., $\tau_{j+3} = \tau_{j+2}$.

 \State $\tau_{j+2} = \tau_{j+1}/2$.

 \State $\tau_{j+1} \leftarrow \tau_{j+1}/2$.

\EndIf

\State Calculate $u_h^{j+1}$ from $u_h^j$.

    \EndWhile

    \State Create $\zeta^{j+1}$ from $\zeta^j$ by refining all elements such that $\eta^2_{S_1,j+1} |_K > {\tt stol^+}$ and coarsening all elements such that $\eta^2_{S_1,j+1} |_K < {\tt stol^-}$.
    \State Calculate $u_h^{j+1}$ from $u_h^j$.
   \State $time \leftarrow time + \tau_{j+1}$.
    \State $j\leftarrow j+1$.
    \EndWhile
  \end{algorithmic}
  \caption{Space-time adaptivity}
\end{algorithm}

\begin{itemize}

\item As in \cite{CF04}, our adaptive algorithm uses the dominant term in the space estimator, $\eta_{S_1,j+1}$, to control mesh refinement. All elements on which $\eta^2_{S_1,j+1}$ is larger than the \emph{spatial refinement threshold} ${\tt stol^+}$ are targeted for \mbox{ } refinement by the adaptive algorithm.

\item Most algorithms in the literature conduct mesh coarsening through the term $\eta_{S_2,j+1}$ (or equivalent) which is often referred to as a \emph{mesh-change indicator} {\bf --} this is because such a term is non-zero only on elements that have been subject to coarsening. This approach, however, comes with a serious disadvantage {\bf --} $\eta_{S_2,j+1}$ is spatially one order higher than $\eta_{S_1,j+1}$ which means such algorithms tend to be too conservative with regards to mesh coarsening. By contrast, our adaptive algorithm uses the dominant term in the space estimator, $\eta_{S_1,j+1}$, to control mesh coarsening as well as mesh refinement. In particular, all elements on which $\eta^2_{S_1,j+1}$ is smaller than the \emph{spatial coarsening threshold} ${\tt stol^-}$ are flagged for coarsening by the adaptive algorithm.

\item The nature of the time estimator, $\eta_T$, makes it inconvenient to use as a temporal refinement indicator so we define $\hat{\eta}_{T,j+1}$ given by
\begin{equation}
\label{jol5}
\hat{\eta}_{T,j+1}^2 := \int_{t^j}^{t^{j+1}} \! \eta_{T_1,j+1}^2 \, dt+ \min\{\alpha_T,T\}\int_{t^j}^{t^{j+1}} \! \eta^2_{T_2,j+1} \, dt,
\end{equation}
the sum of which bounds $\eta^2_T$. Our temporal strategy consists of continually halfing the time step length until $\hat{\eta}_{T,j+1}^2$ is smaller than the \emph{temporal threshold} ${\tt ttol}$ which is in contrast to the approach taken in \cite{CF04} where they instead seek to force the integrand in \eqref{jol5} below a given tolerance. We will show numerically that our approach yields a quasi-optimal distribution of time steps.

\item The final major difference between our proposed algorithm and those in the literature lies in how the coarse input mesh is dealt with. Typically, mesh refinement is carried out on the coarse mesh until the \emph{initial condition estimator}, $||e(0)||$, is smaller than a given tolerance. Such a term is, however,  spatially one order higher than $\eta_{S_1,1}$ which means using it for configuration of the input mesh leads to mass mesh refinement during the first time step and such a large amount of mesh change during one time step can destabilise the numerical scheme. Therefore, our adaptive algorithm continues to refine the coarse input mesh until $\eta^2 _{S_1,1}$ is smaller than the \emph{spatial refinement threshold} ${\tt stol^+}$ on every element.

\end{itemize}

\begin{remark}
Mesh modification must be done very carefully to ensure that the numerical solution does not degrade \cite{BKM13,D82}. Specifically, the spatial refinement threshold ${\tt stol^+}$ needs to be chosen sufficiently small in comparison to the temporal threshold ${\tt ttol}$. The spatial coarsening threshold ${\tt stol^-}$ also needs to be chosen sufficiently small in comparison to the spatial refinement threshold ${\tt stol^+}$ in order to avoid unnecessary refine and coarsen loops.
\end{remark}

\begin{remark}
We stress that the algorithm does not necessarily produce a \mbox{ }\mbox{ } monotonically decreasing time step distribution from $0$ to $T$. Indeed, the algorithm starts with an initial equispaced subdivision of $[0,T]$ into $n$ time intervals, which is then, possibly, locally bisected based on ${\tt ttol}$. For instance, if the solution reaches a smoothly varying steady state, the algorithm will retain the original (coarse) time step length of $T/n$ during the final stages of the computation.
\end{remark}

\end{section}

\begin{section}{Numerical experiments}\label{numerics_sec}

We shall numerically investigate the presented a posteriori bounds and the \mbox{ } performance of the adaptive algorithm through an implementation based on the {\tt deal.II} finite element library \cite{BHK07}. All the numerical experiments have been performed using the  high performance computing facility ALICE at the University of Leicester. 

In order to discuss the numerical results, we need some additional definitions. We shall begin by extending our notion of the effectivity index. Let the maximum meshsize be given by $\displaystyle h := \max_{0 \leq k \leq n} \max_{K \in \zeta^k} h_K$ and the time largest step length be given by $\displaystyle \Delta t :=  \max_{1 \leq k \leq n} \tau_{k}$. We then define the \emph{spatial effectivity index} by
\begin{equation}
\begin{aligned}
\notag
\text{spatial effectivity index} & := \lim_{\Delta t \rightarrow 0} \frac{\eta}{||e||},
\end{aligned}
\end{equation} 
whereas the \emph{temporal effectivity index} is given by
\begin{equation}
\begin{aligned}
\notag
\text{temporal effectivity index} & := \lim_{h \rightarrow 0} \frac{\eta}{||e||}.
\end{aligned}
\end{equation} 
These notions give us a way of measuring the contribution of the spatial and temporal estimators to the constants in \eqref{reliability}-\eqref{efficiency}. In particular, we can assess whether specific parts of the estimator are robust or not. The spatial effectivity indices can be observed in practise by choosing a very small temporal threshold while the temporal effectivity indices can be observed through use of a high polynomial degree and/or sufficiently fine spatial mesh.

We also need a notion of the average number of degrees of freedom so we can discuss spatial convergence rates. If the total number of degrees of freedom on the union mesh $\zeta^k \cup \zeta^{k+1}$ is denoted by $\lambda_k$ then the weighted degrees of freedom of the problem is given by
\begin{equation}
\notag
\text{Weighted Average DoFs} := \frac{1}{T}\sum_{j=0}^{n-1} \tau_{j+1}\lambda_j.
\end{equation}
In all examples presented below, unless otherwise stated, we use polynomials of degree two and an initial $4\times4$ uniform quadrilateral mesh. We also set the spatial coarsening parameter to ${\tt stol^-} = 0.001*{\tt stol^+}$. Finally, unmarked lines in convergence plots represent the theoretically expected optimal rate of convergence for reference purposes.

\begin{subsection}{Example 1}

\noindent Let $\Omega = (0,1)^2$, ${\bf a}=(1,1)^T$, $b=0$, $u_0=0$, $T=10$ and select the function $f$ so that the exact solution to problem \eqref{model_weak} is given by
\begin{equation}
\notag
u(x,y,t)=\big(1-e^{-t}\big)\bigg(\frac{e^{(x-1)/\varepsilon}-1}{e^{-1 / \varepsilon}-1}+x-1 \bigg)\bigg(\frac{e^{(y-1)/ \varepsilon}-1}{e^{- 1/ \varepsilon}-1}+y-1 \bigg).
\end{equation}
The solution exhibits boundary layers at the outflow boundary of the domain of width $\mathcal{O}(\varepsilon)$ as well as a temporal boundary layer.

We begin by fixing a temporal threshold that produces enough time steps so that the temporal contribution to the error is very small in comparison to the spatial contribution. The spatial threshold is then gradually reduced to observe the spatial effectivity indices for this problem which are given in Figure \ref{ex1rates}.
Optimal rates of convergence are observed with respect to the weighted average degrees of freedom for both the estimator and the error but are omitted in this example. As shown, the effectivity indices are bounded asymptotically and remain between five and ten for the different values of $\varepsilon$; these are directly comparable to those observed in \cite{SZ09} for the stationary problem.

In order to study the temporal effectivity indices for this problem any boundary layers must be fully resolved so that the spatial error is dominated by the temporal one. To this end, we use a high polynomial degree and  specially constructed anisotropic meshes in order to ensure that the spatial error is sufficiently small. The temporal threshold is then reduced to observe the temporal effectivity indices of the problem which are given in Figure \ref{ex1rates}. Optimal order is observed in both the estimator and the error and the effectivity indices remain bounded between four and seven for all values of $\varepsilon$.

\begin{figure}[t]
\centering
\includegraphics[scale=0.48]{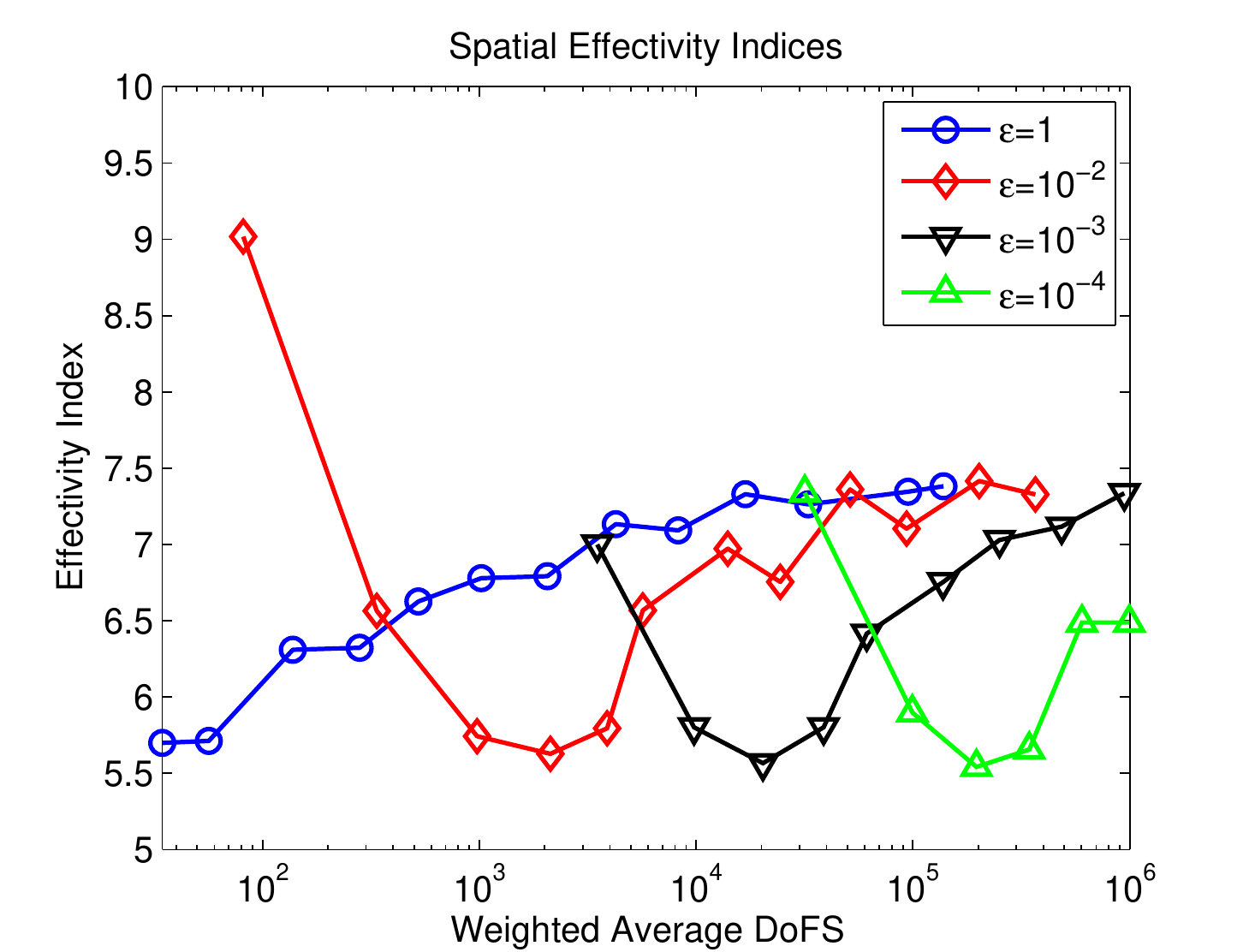} \includegraphics[scale=0.48]{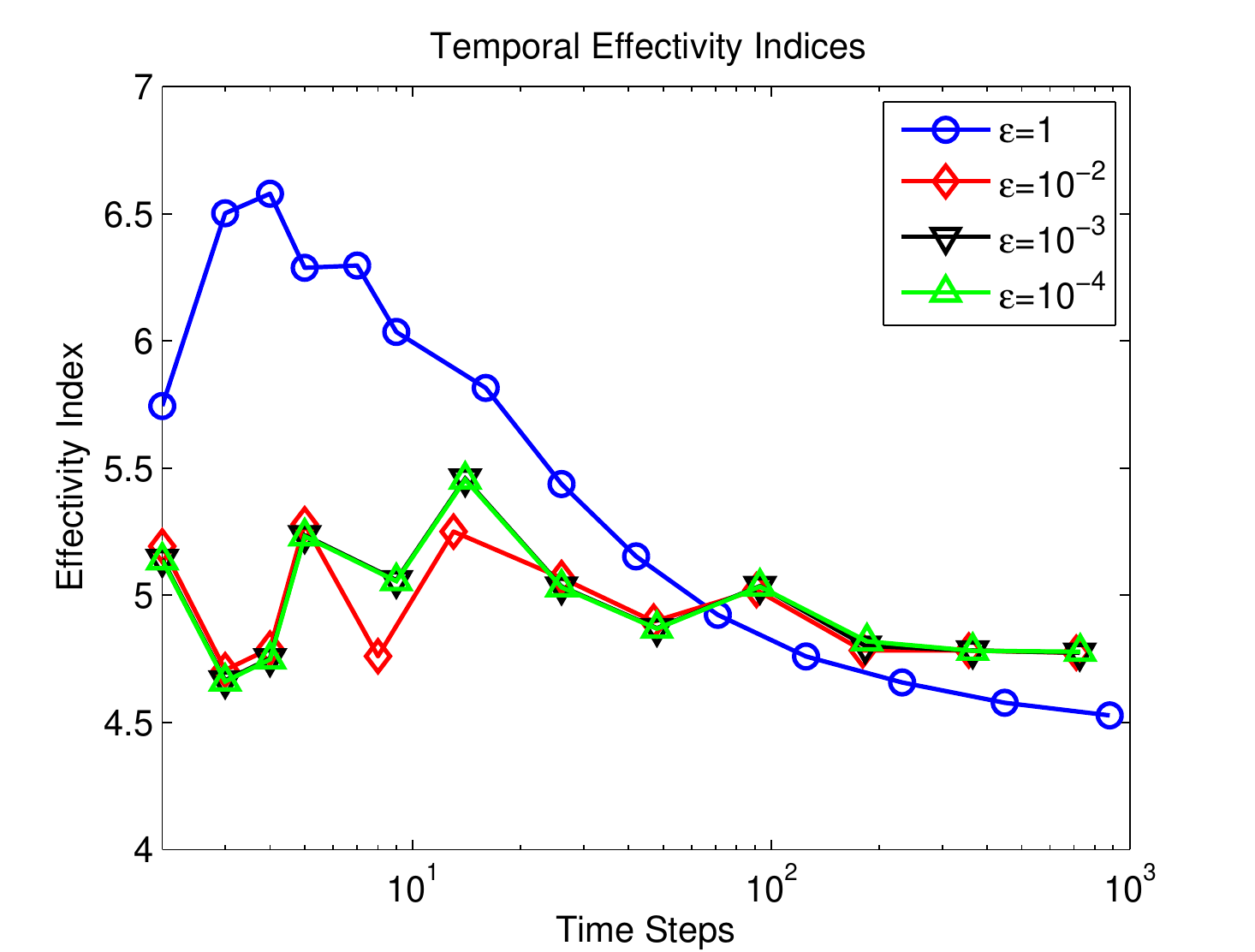}
\caption{Example 1: Spatial and temporal effectivity indices.}
\label{ex1rates}
\end{figure}
\begin{figure}
\centering
\includegraphics[scale=0.48]{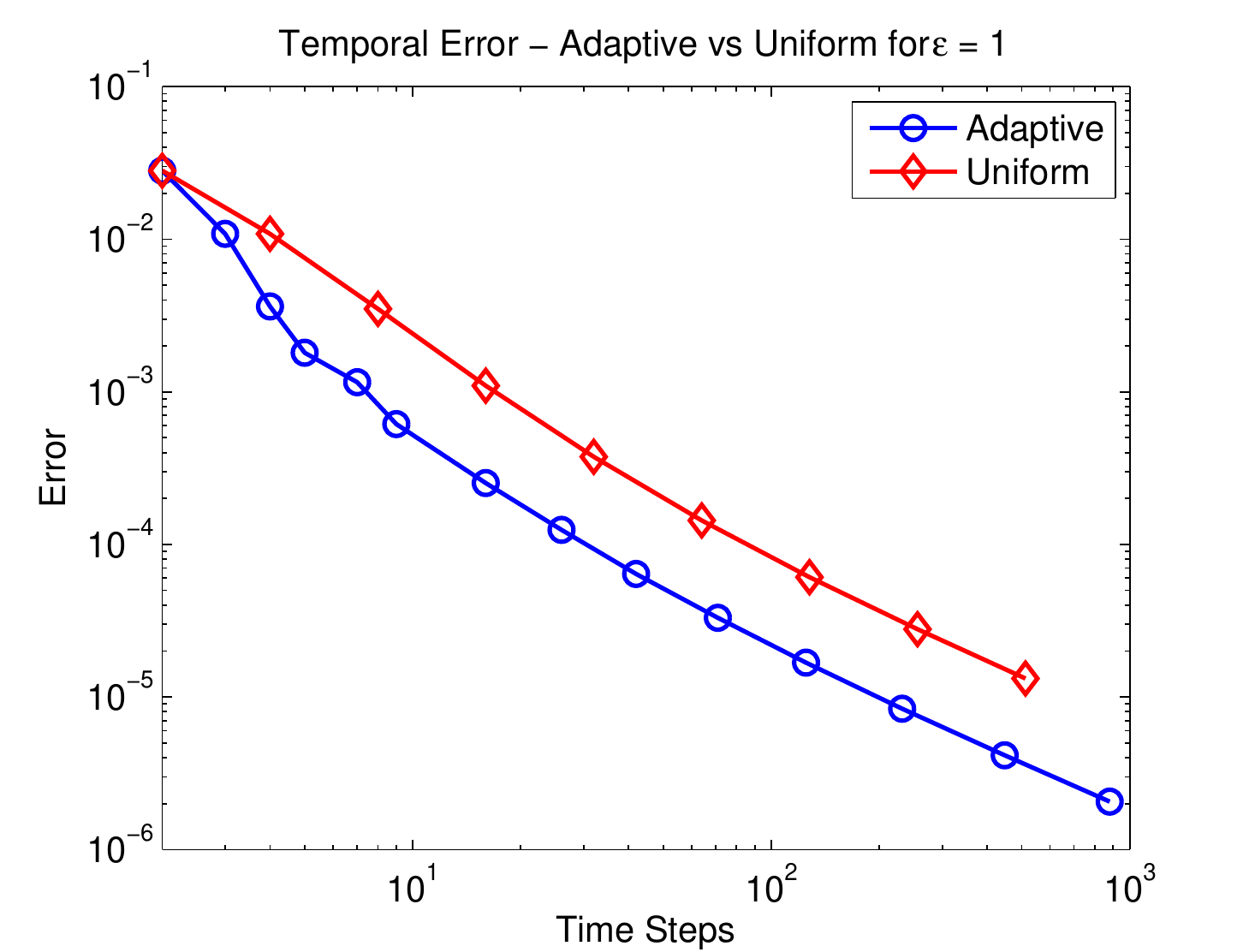} \includegraphics[scale=0.48]{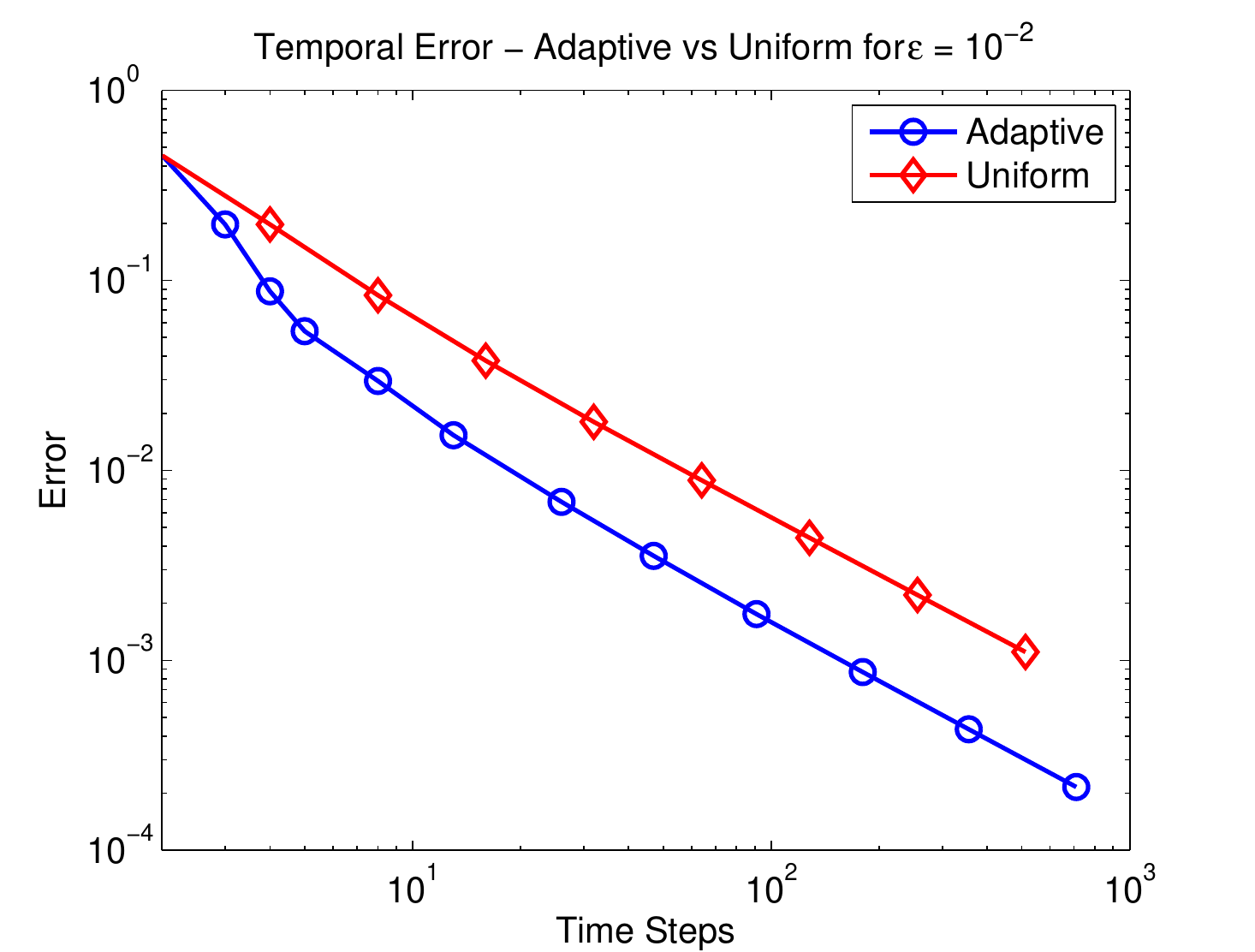}
\caption{Example 1: Temporal error comparison under adaptive and uniform time-stepping for $\varepsilon=1$ and $\varepsilon=10^{-2}$.}
\label{ex1errorcomparison}
\end{figure}

The presence of a temporal boundary layer in the solution motivates a \mbox{ } comparison between adaptive and uniform time-stepping. To this end, a sufficiently small spatial threshold is chosen so that the spatial contribution to the error is small and then the temporal threshold is decreased and the results are compared to just using uniform time-stepping. The results given in Figure \ref{ex1errorcomparison} show that the temporal strategy of the adaptive algorithm minimises the temporal portion of the error better than just using uniform time stepping.

\end{subsection}

\begin{subsection}{Example 2}

\begin{figure}[t]
\centering
\includegraphics[scale=0.48]{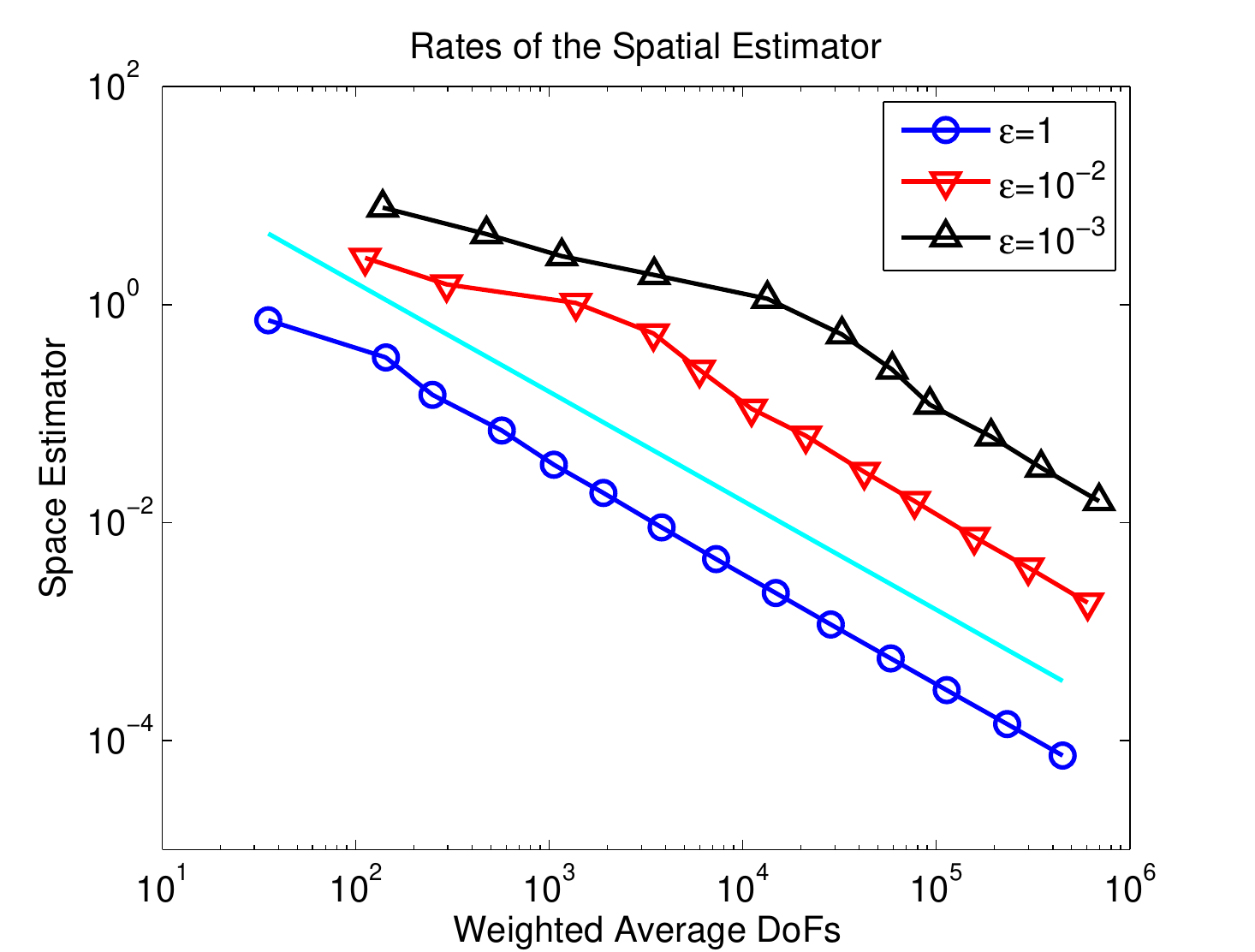} \includegraphics[scale=0.48]{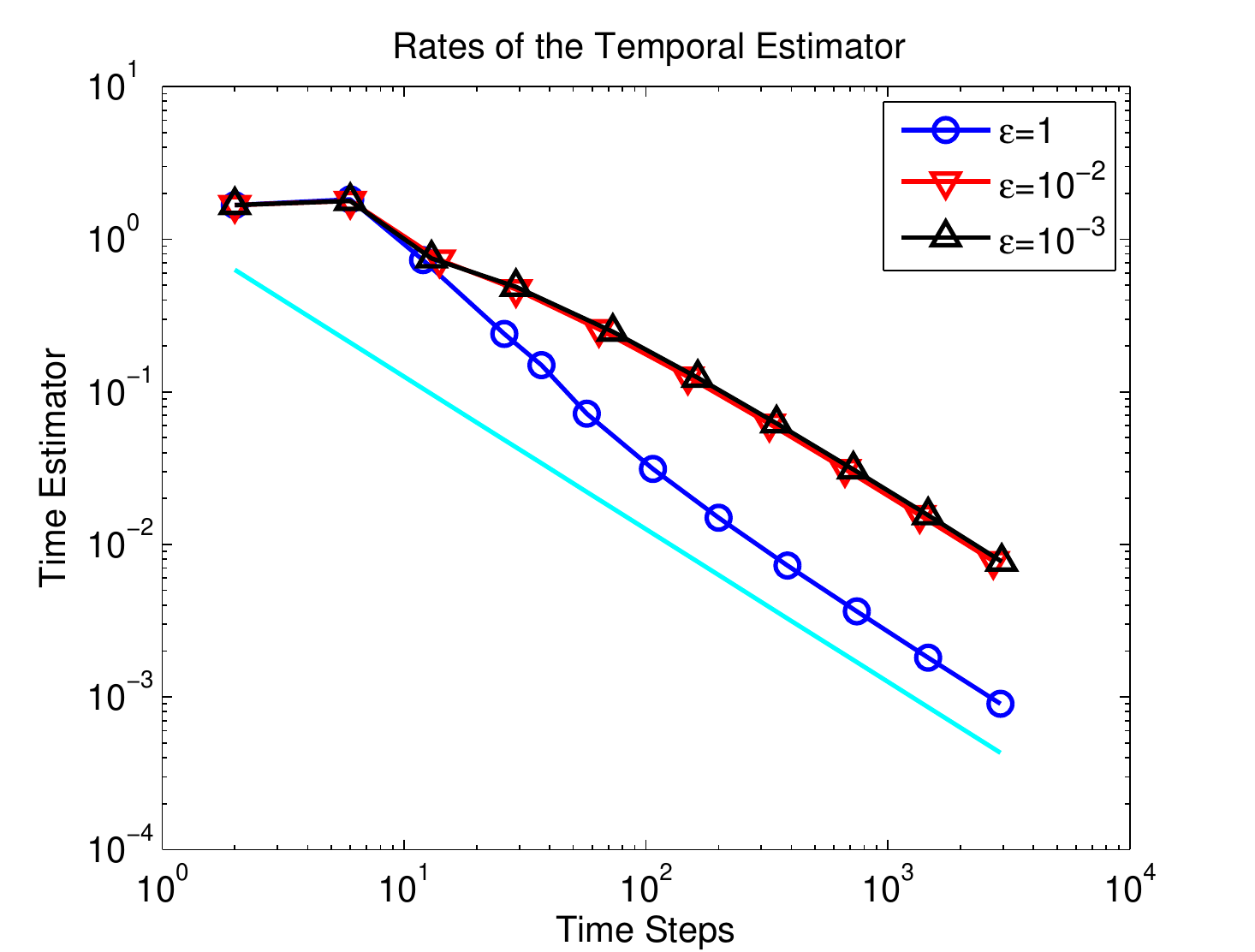}
\caption{Example 2: Spatial and temporal rates.}
\label{ex2rates}
\end{figure}
\begin{figure}
\centering
\includegraphics[scale=0.48]{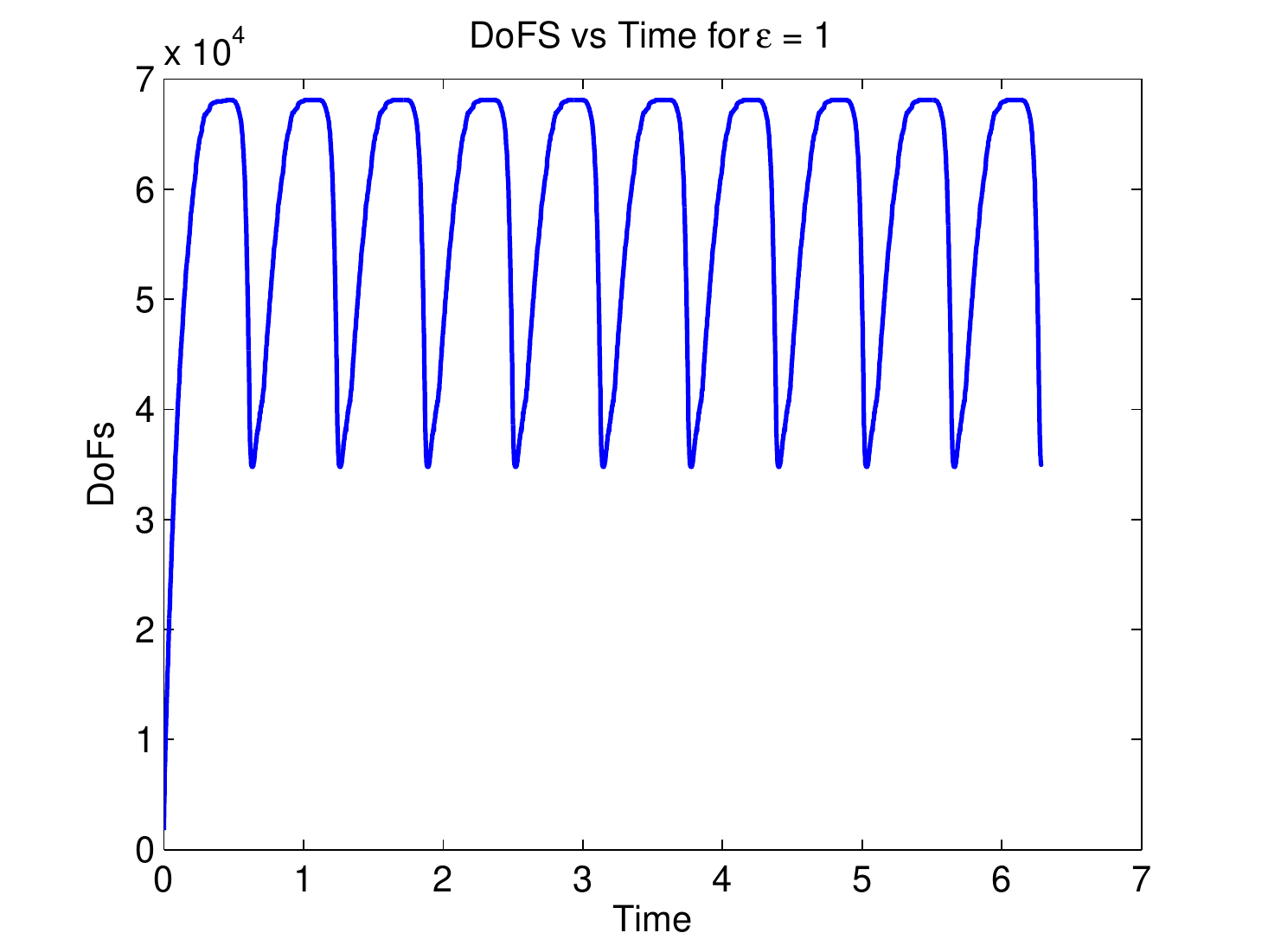} \includegraphics[scale=0.48]{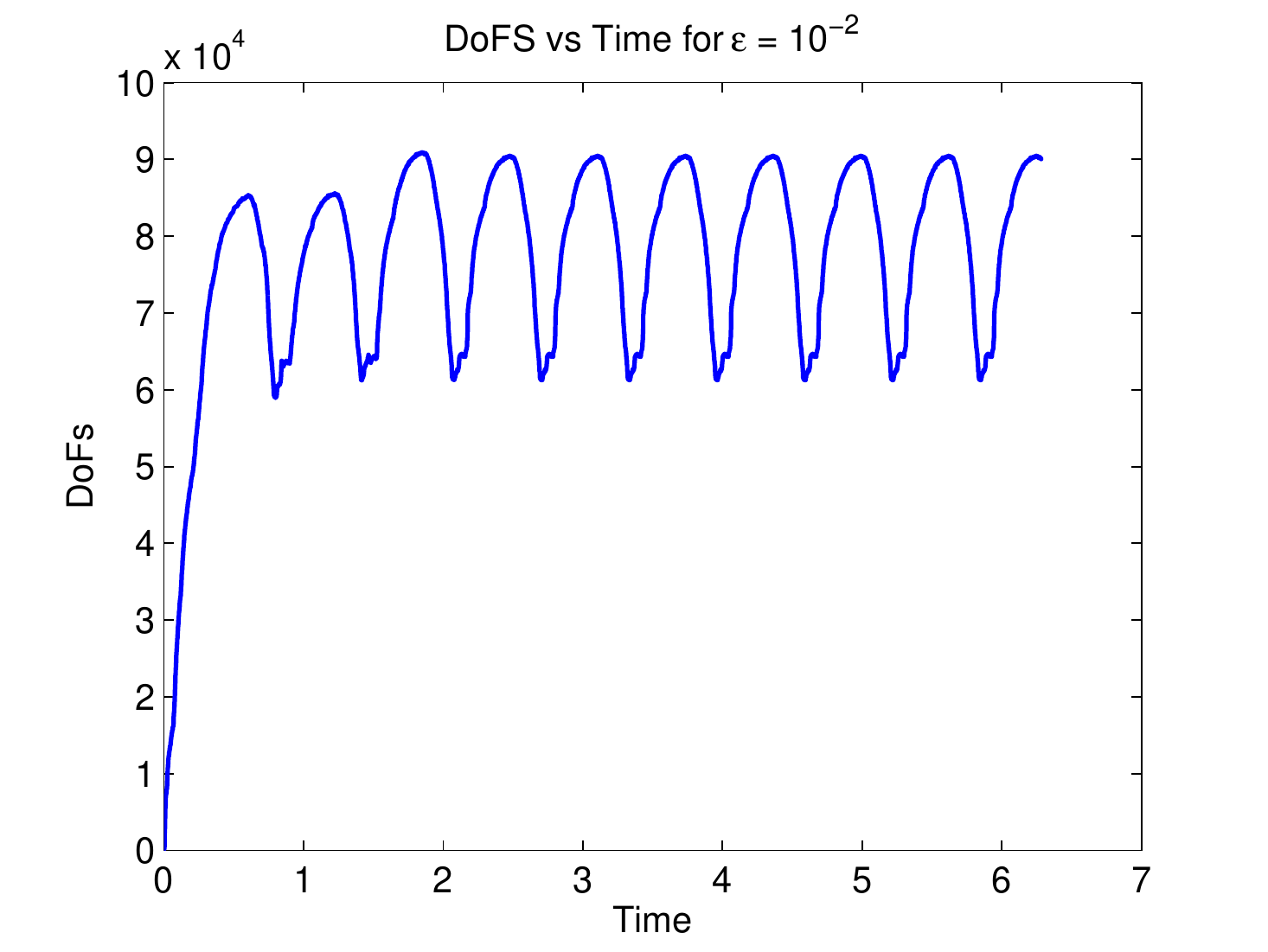}
\caption{Example 2: DoFS vs Time for $\varepsilon=1$ and $\varepsilon=10^{-2}$.}
\label{ex2dofs}
\end{figure}

We set $\Omega=(-1,1)^2$, ${\bf a}=(1,1)^T$, $b=1$, $f=\sin(5t)xy$, $u_0=0$ and $T = 2\pi$. The solution exhibits layers of width $\mathcal{O}(\varepsilon)$ in the proximity of the outflow boundary and is oscillatory in time. The sharpness of the boundary layers depend on time, thus making this a good test of the ability of the algorithm to add and remove degrees of freedom.

As in Example 1, we begin by fixing a temporal threshold while decreasing the spatial threshold to observe the rates of convergence for the space estimator. We then set a spatial threshold small enough to resolve any boundary layers, while reducing the temporal threshold to observe the rates of the time estimator. The results are displayed in Figure \ref{ex2rates}. Optimal rates of convergence are observed for both the space and time estimators.

\begin{figure}
\centering
\includegraphics[scale=0.48]{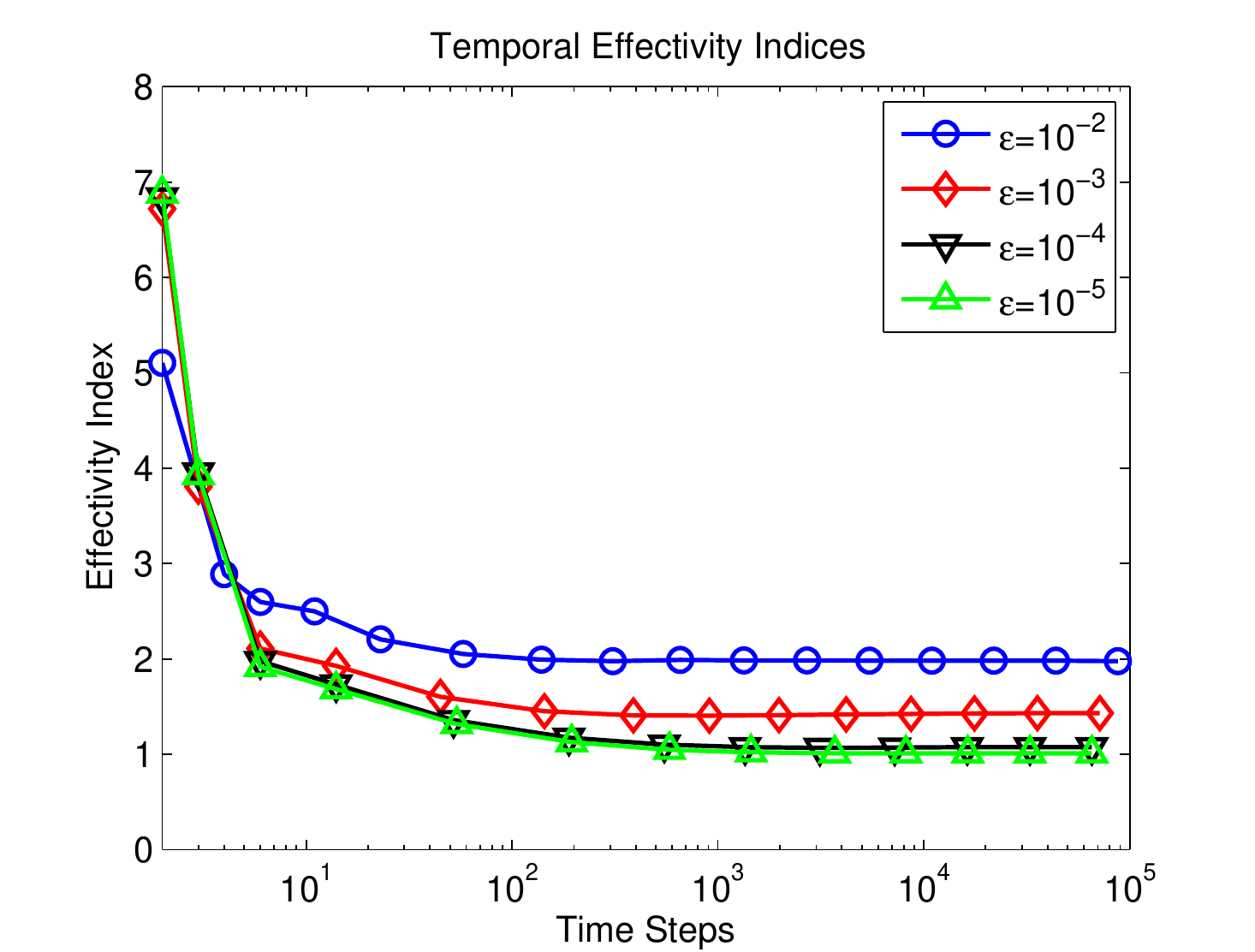} \includegraphics[scale=0.48]{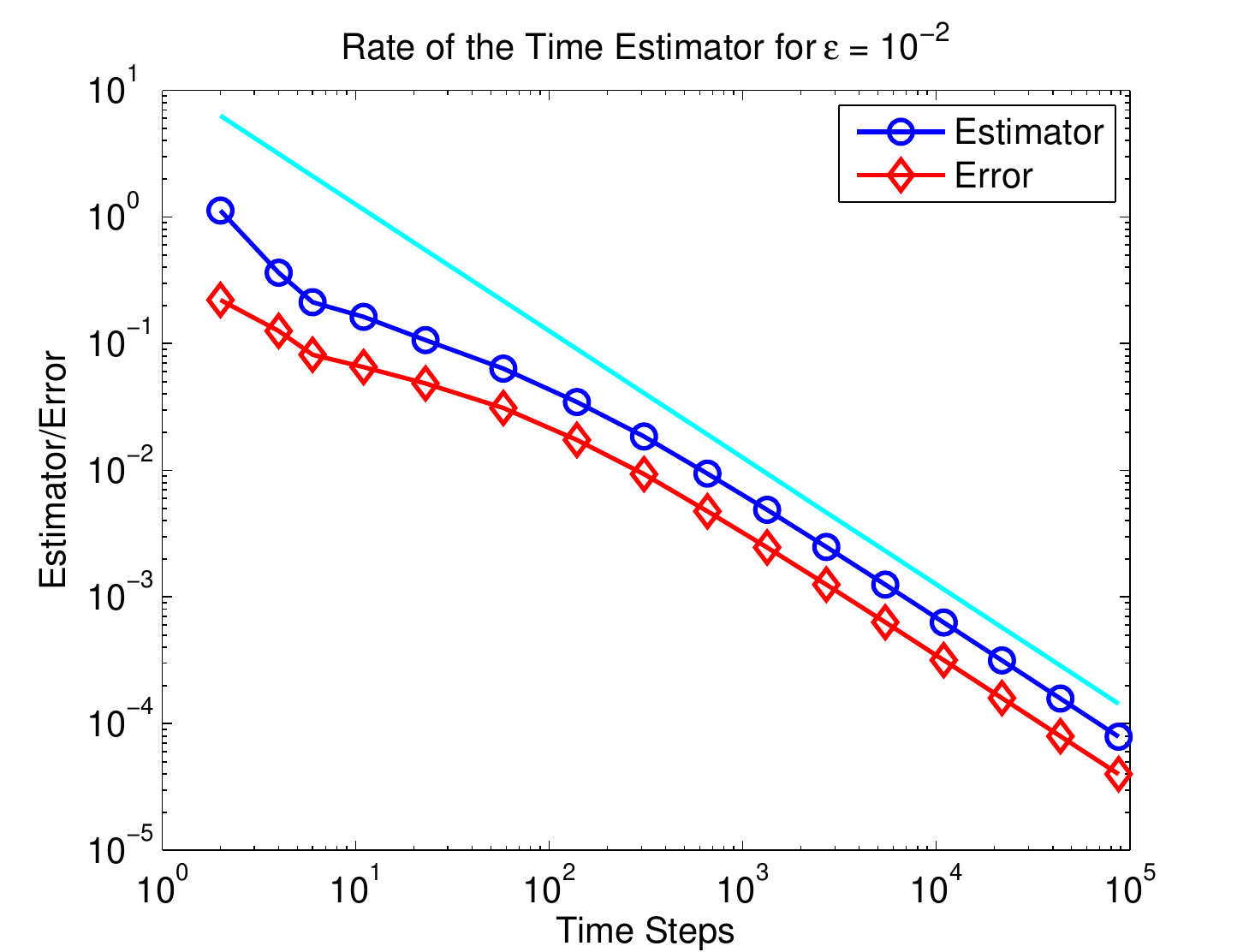}

\caption{Example 3: Temporal effectivity indices and the rate of the time estimator for $\varepsilon=10^{-2}$.}
\label{ex3rates}
\end{figure}

To assess the mesh change driven by the adaptive algorithm we also plot the individual degrees of freedom on each mesh against time for a given spatial and temporal threshold. The results are given in Figure \ref{ex2dofs}. We observe that the adaptive algorithm is adding and removing degrees of freedom at a rate that is in accordance with the oscillating nature of the solution driven by the sinusoidal forcing $f$. 

\end{subsection}

\begin{subsection}{Example 3}

 Let $\Omega=(-2,2)^2$,  $T = 2\pi$, ${\bf a}=(y,-x)^T$, $b=0$, $f=0$ and $u_0=e^{-64(x-0.5)^2}e^{-64y^2}$.
  The PDE convects the initial two dimensional Gaussian profile along the circular wind while diffusing it at a rate depending upon $\varepsilon$. In particular, provided the error at the boundary is sufficiently small, the exact solution to problem \eqref{model_weak} is given by
\begin{equation}
\notag
u(x,y,t)=\frac{1}{1+256\varepsilon t}\exp \bigg(-\frac{64(x-0.5\cos(t))^2}{1+256\varepsilon t} \bigg)\exp \bigg(-\frac{64(y+0.5\sin(t))^2}{1+256\varepsilon t} \bigg).
\end{equation}

To observe the temporal effectivity indices and temporal rates of the problem we first fix a spatial threshold so that the spatial contribution to the error is small and then reduce the temporal threshold; the results given in Figure \ref{ex3rates} show that the temporal effectivity indices are bounded and remain between one and eight for all values of $\varepsilon$ and that the optimal rate of convergence is achieved by both the error and the estimator. Some meshes at various time steps produced by the algorithm for $\varepsilon=10^{-5}$ are displayed in Figure \ref{ex3grids} and show that the adaptive algorithm is adding and removing degrees of freedom efficiently.

\begin{figure}[t]
\centering
\includegraphics[scale=0.56]{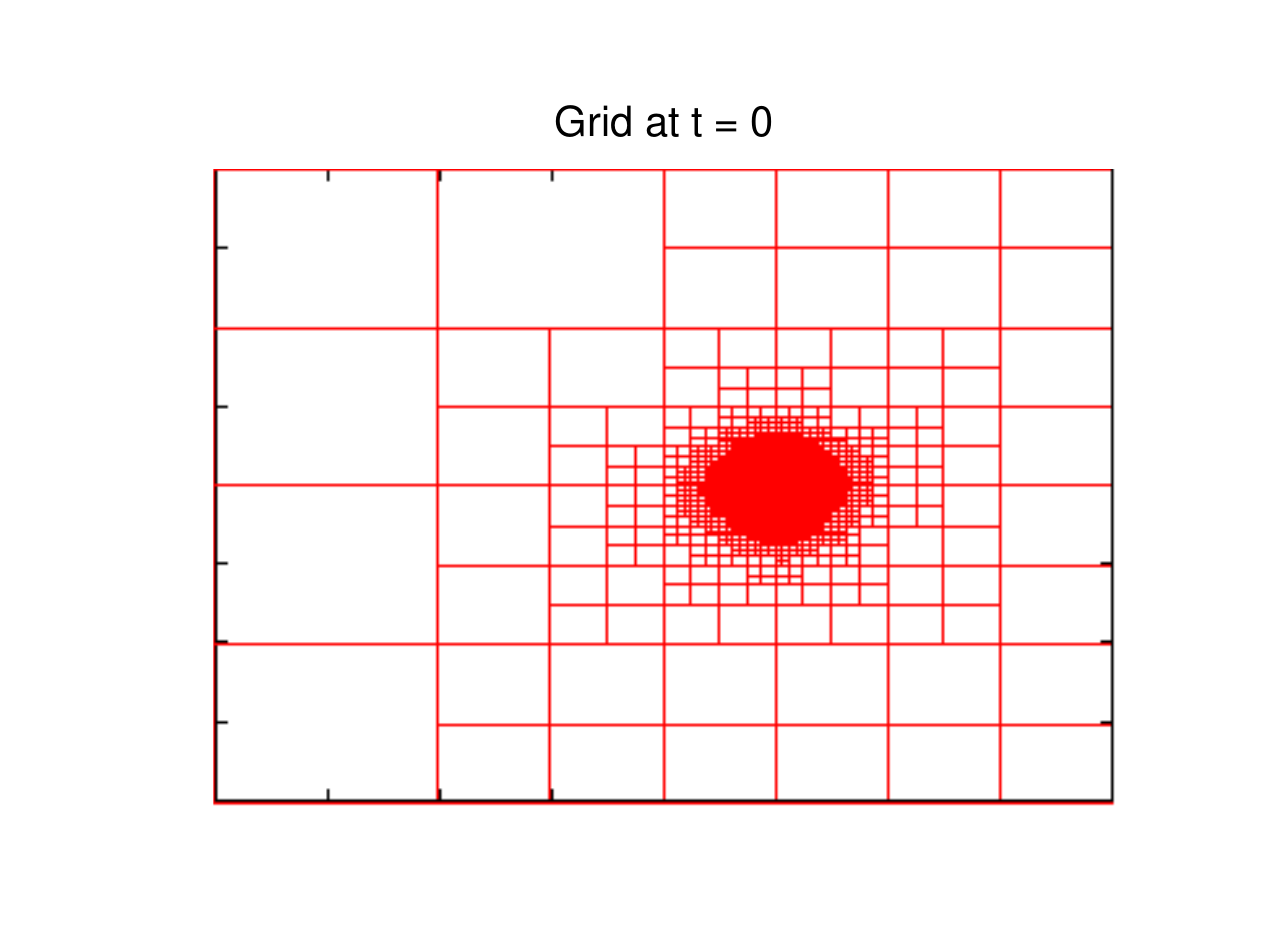} \includegraphics[scale=0.56]{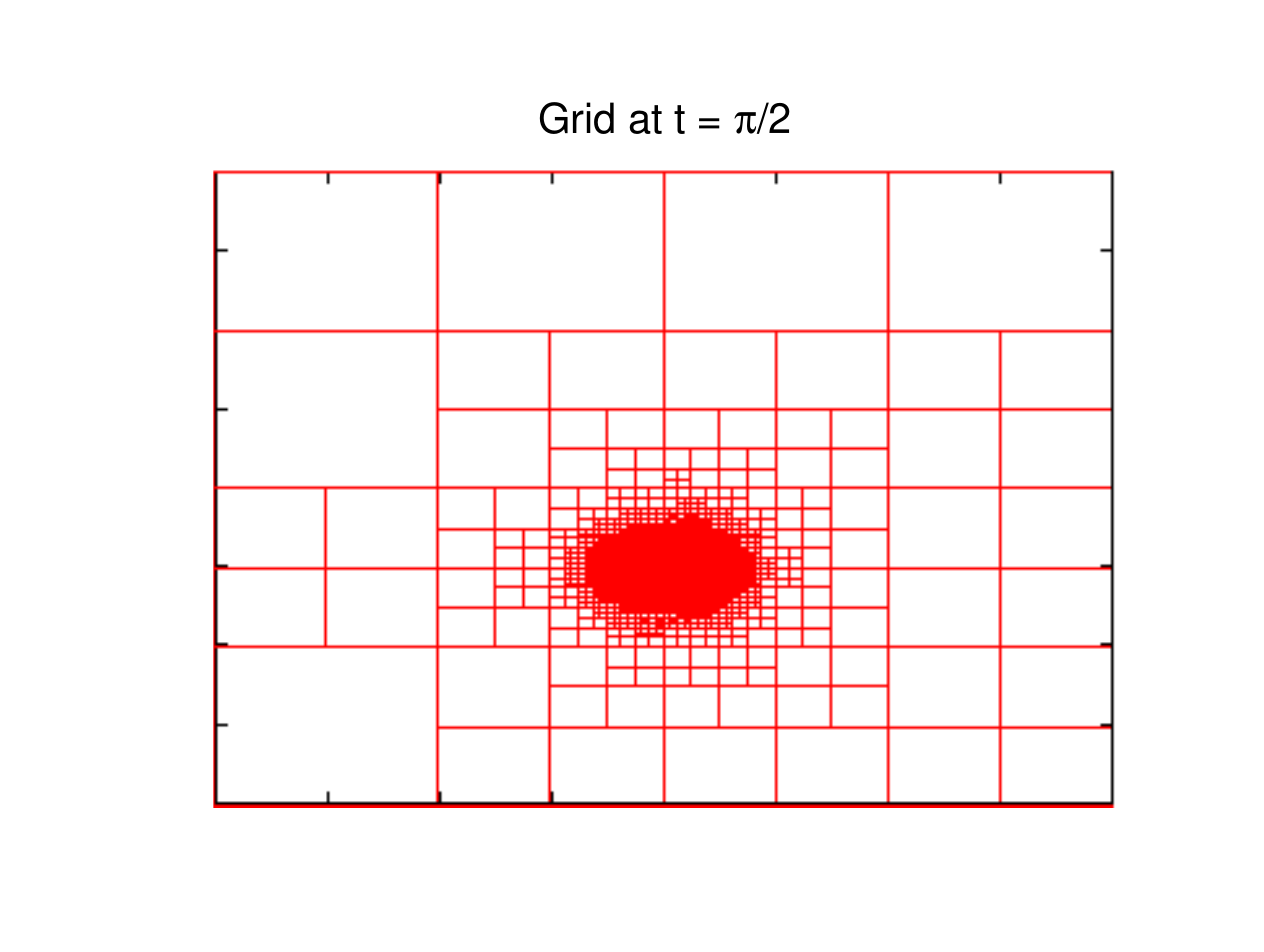}

\includegraphics[scale=0.56]{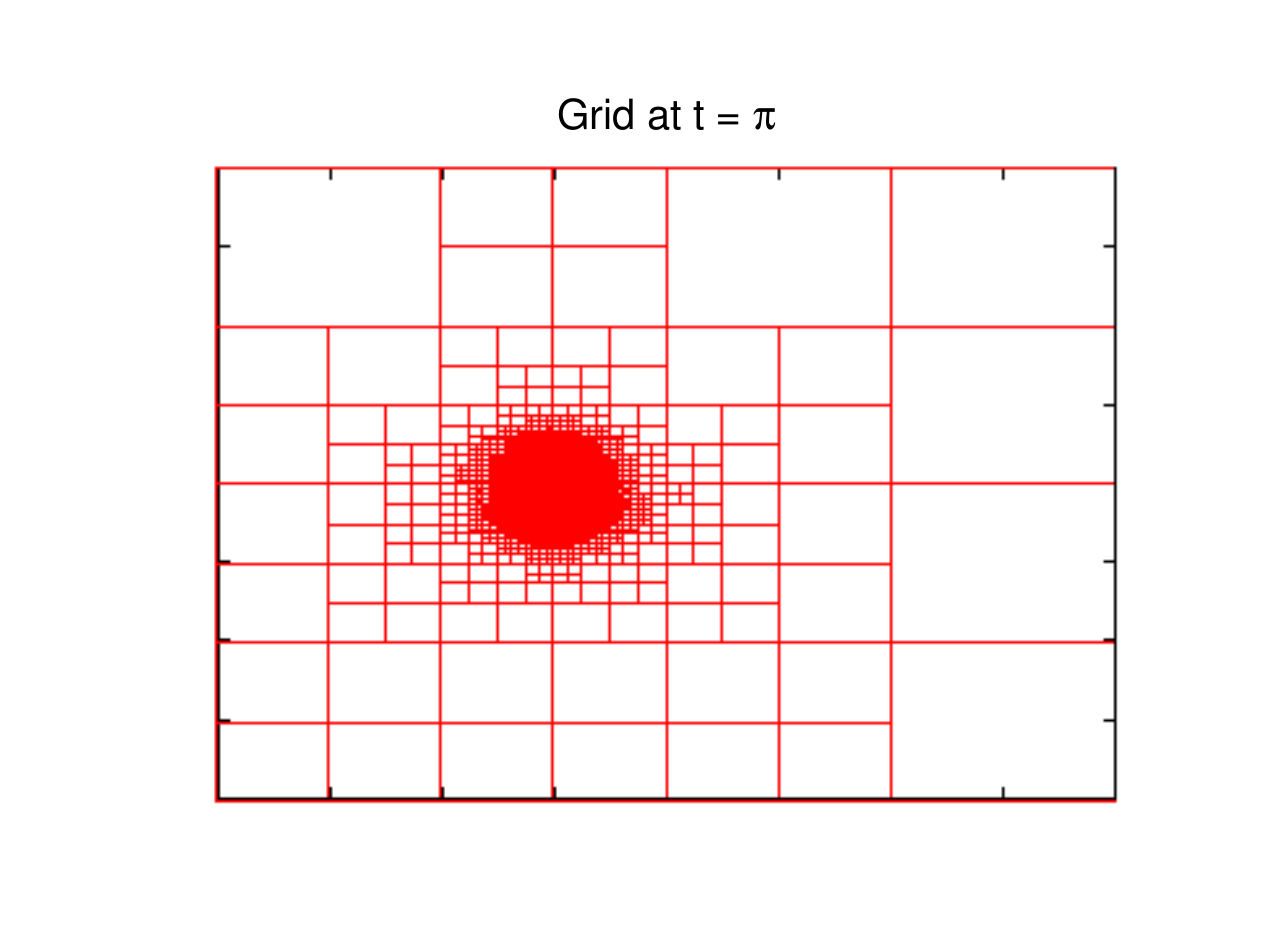} \includegraphics[scale=0.56]{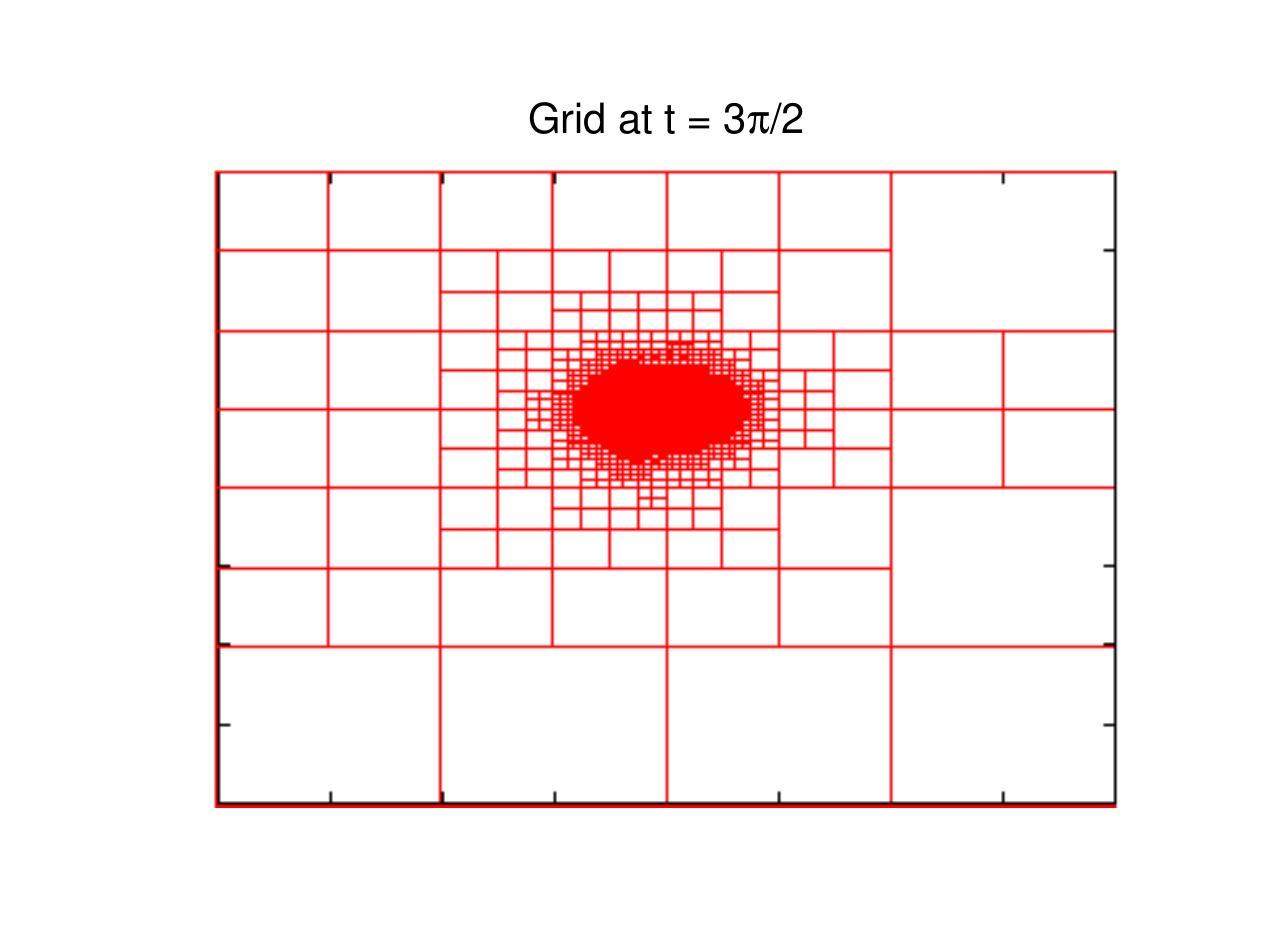}

\caption{Example 3: Grid snapshots.}
\label{ex3grids}
\end{figure}

\end{subsection}
\begin{subsection}{Example 4}

\begin{figure}
\centering
\includegraphics[scale=0.48]{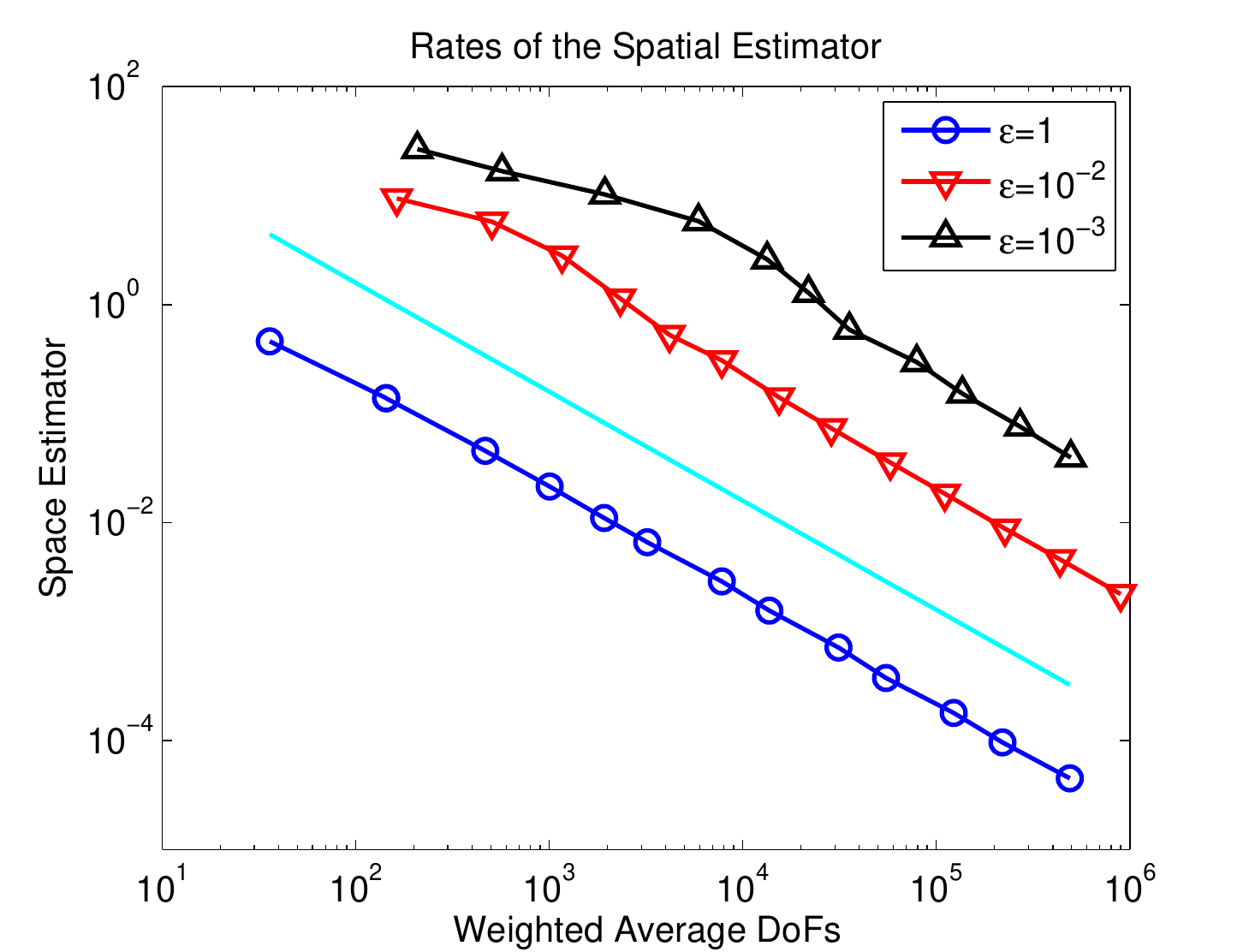} \includegraphics[scale=0.48]{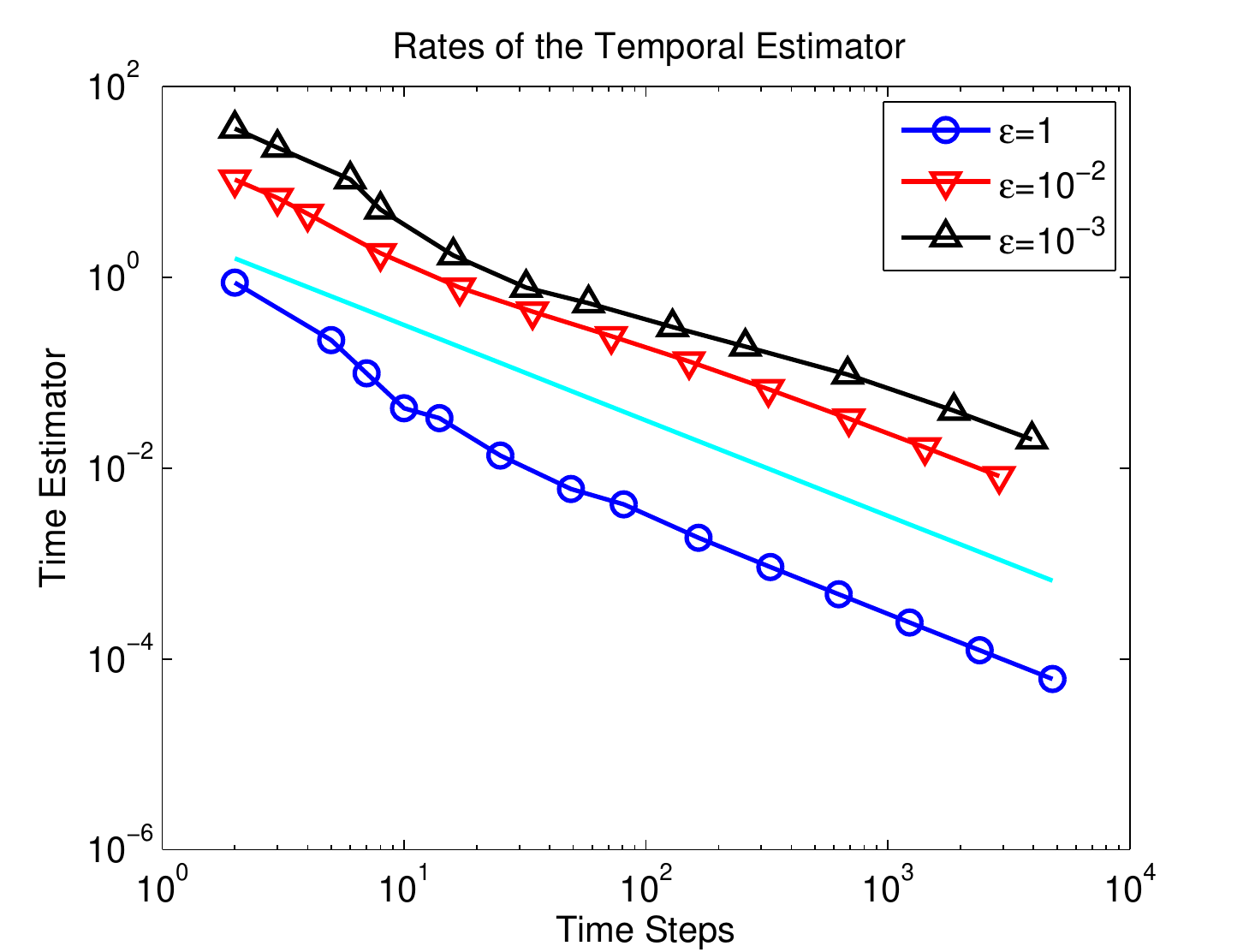}
\caption{Example 4: Spatial and temporal rates.}
\label{ex4rates}
\end{figure}
\begin{figure}
\centering
\includegraphics[scale=0.55]{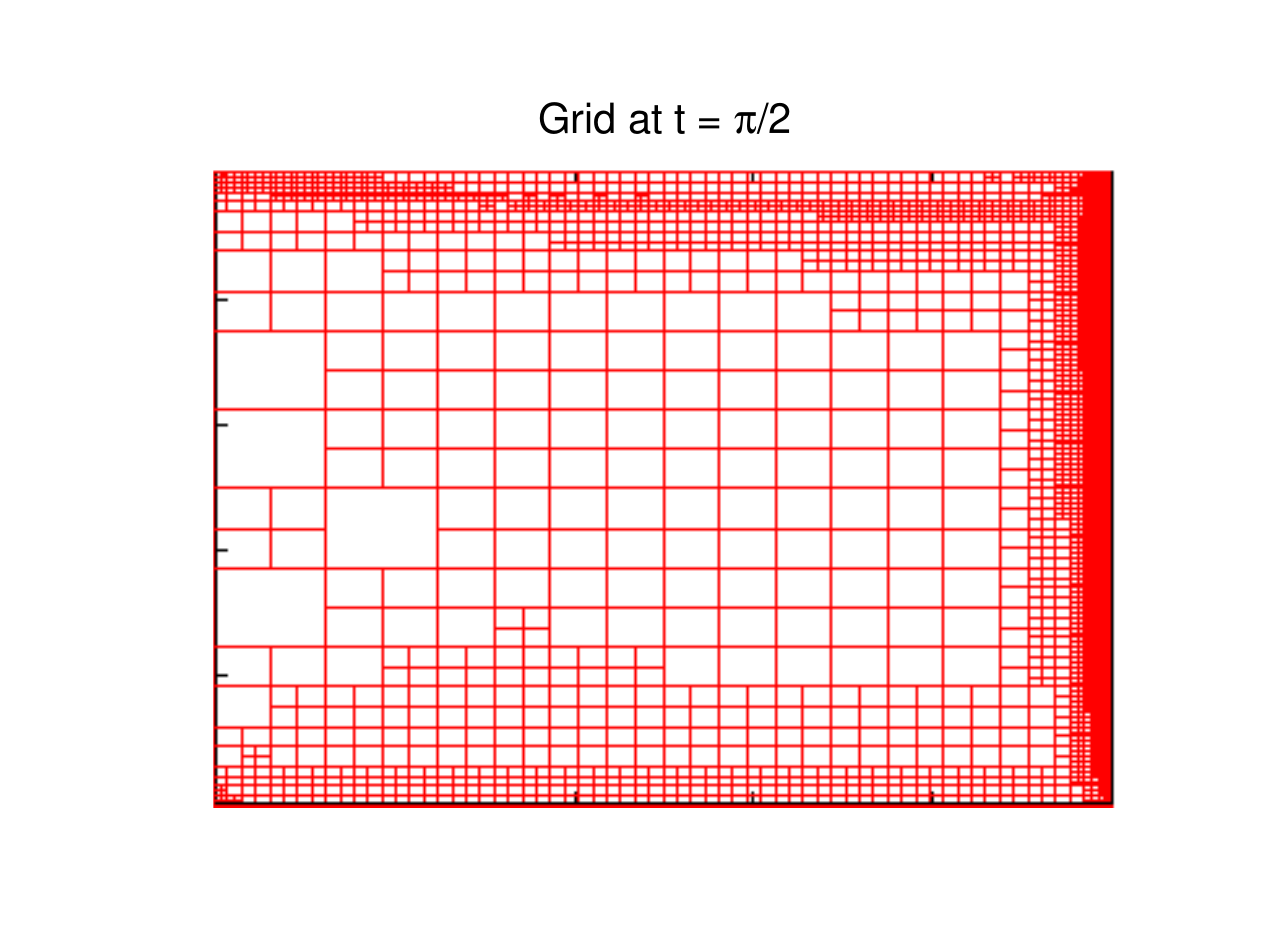} \includegraphics[scale=0.55]{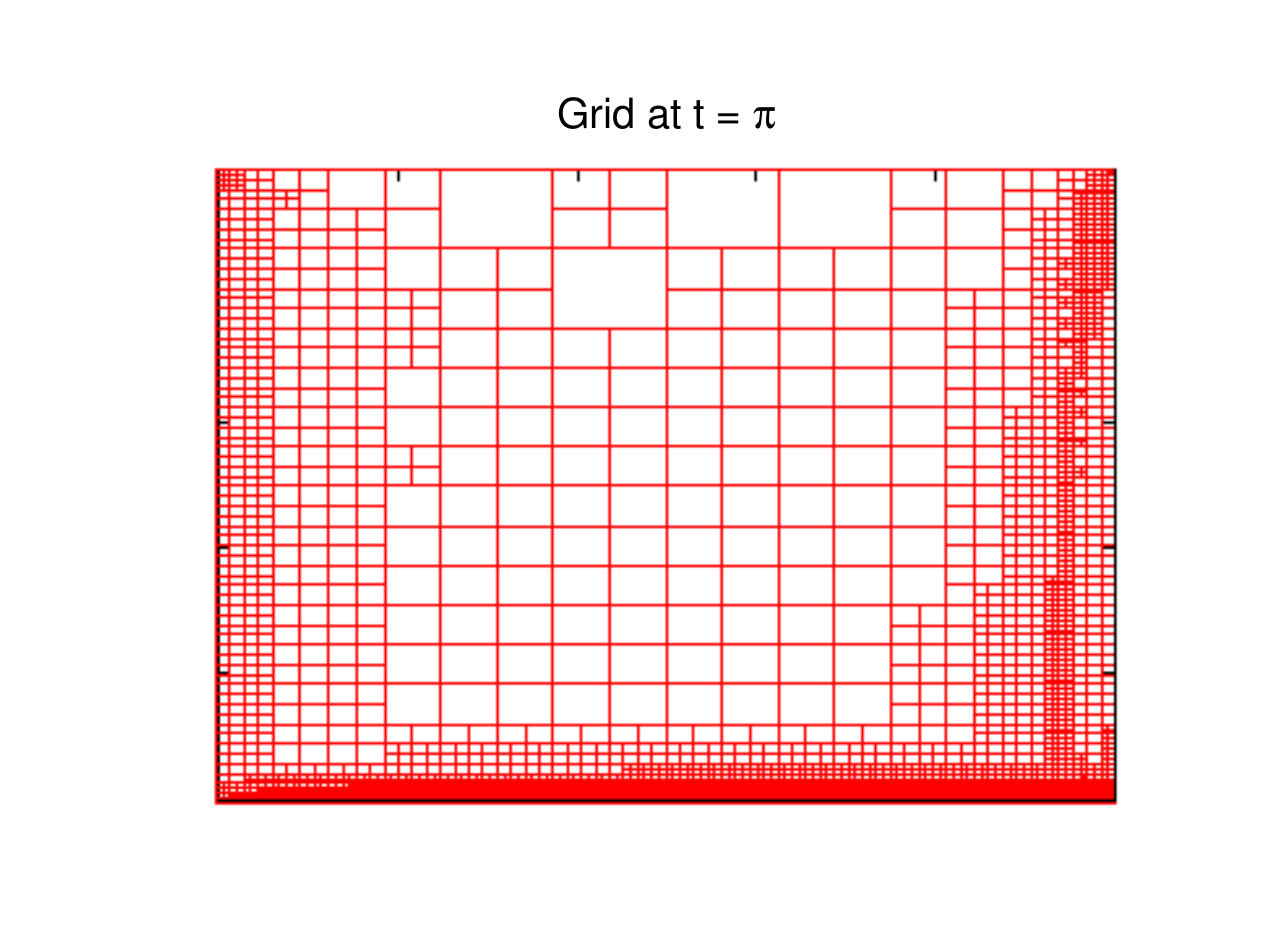}
\includegraphics[scale=0.55]{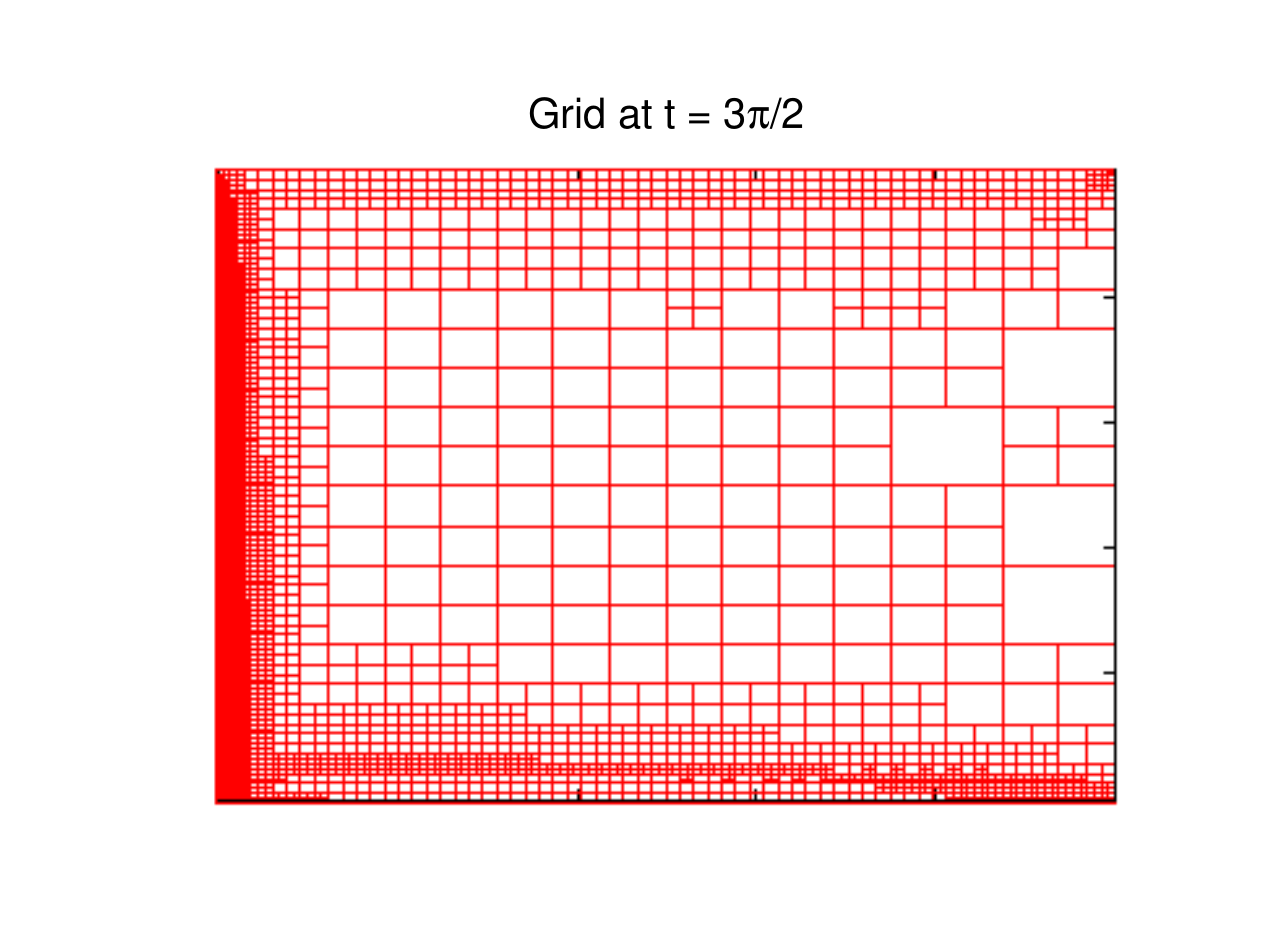} \includegraphics[scale=0.55]{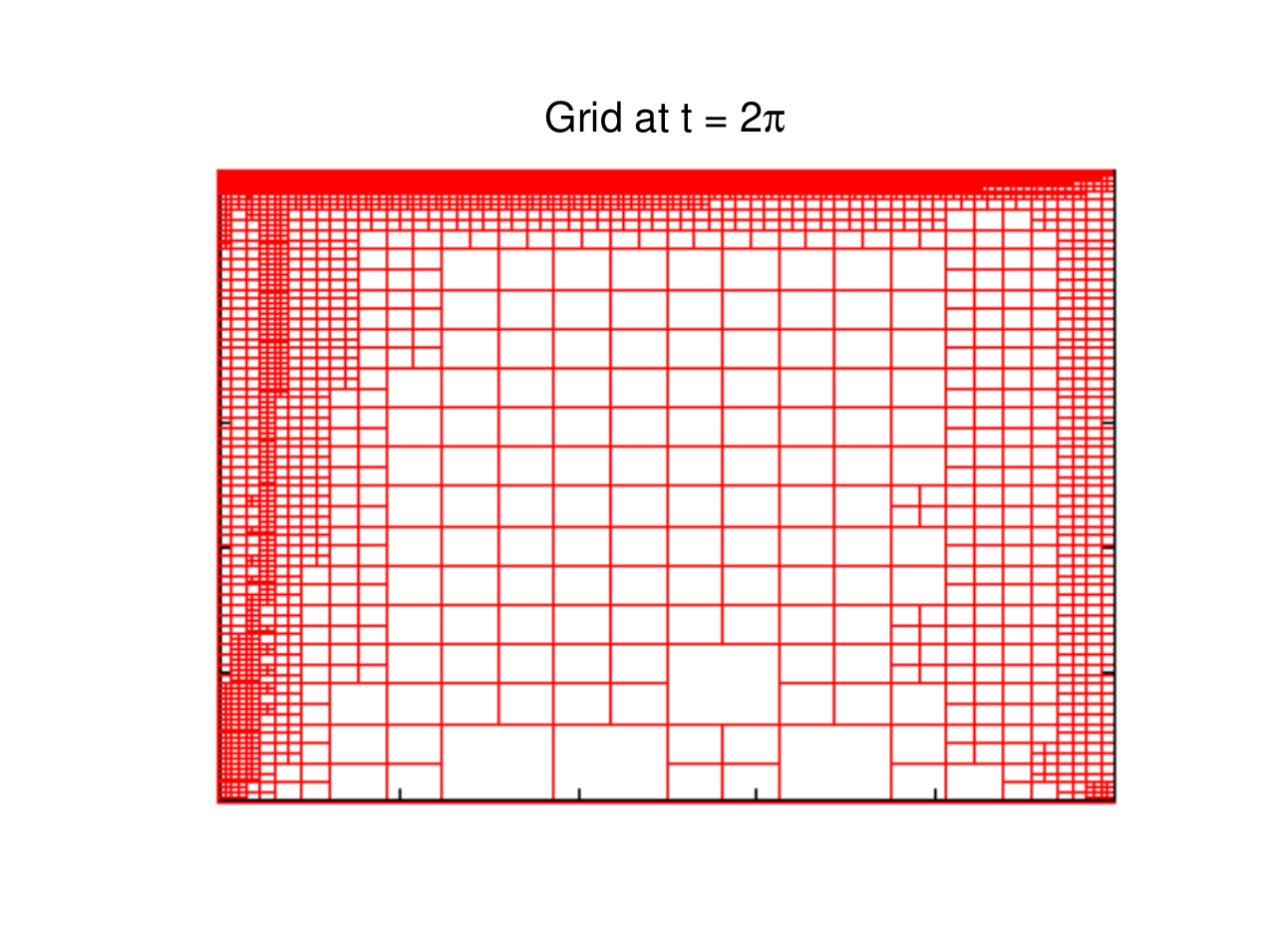}
\caption{Example 4: Grid snapshots.}
\label{ex4grids}
\end{figure}
Let $\Omega=(0,1)^2$, ${\bf a}=(\sin(t),\cos(t))^T$, $b=0$, $f=1$, $ u_0=0$ and $T = 2\pi$. The nature of the solution is rather uniform in time but spatially the solution possesses a moving boundary layer of width $\mathcal{O}(\varepsilon)$ that is driven by the changing nature of the inflow and outflow boundaries. Therefore, this example is well suited to testing the ability of the algorithm to adapt the grid to this moving boundary layer. Grids at various times are shown in  Figure \ref{ex4grids} for $\varepsilon=10^{-2}$.

 As in previous examples, we fix a small temporal threshold and then reduce the spatial threshold to observe the rates of the space estimator. Again, we also fix a spatial threshold small enough to ensure that all boundary layers are sufficiently resolved and then reduce the temporal threshold to observe the rates of the time estimator. These results are given in Figure \ref{ex4rates}. 

Optimal spatial and temporal rates of convergence are observed and the grids produced for $\varepsilon=10^{-2}$ clearly show that the adaptive algorithm is picking up the boundary layers as they move around the domain and that unneeded degrees of freedom are not retained.

\end{subsection}

\end{section}
\begin{section}{Conclusions}
An a posteriori error estimator for the discontinuous Galerkin spatial discretisation of a non-stationary linear convection-diffusion equation was derived. The numerical examples presented clearly indicate that the error estimator is practical and the respective space-time adaptive algorithm works well for the studied problems. As predicted, the spatial effecitivity indices are in an identical range to those observed in \cite{SZ09} and the spatial part of the error estimator appears to be asymptotically robust with respect to $\varepsilon$. Furthermore, the temporal effectivity indices of the studied problems are substantially smaller than those seen in \cite{GLV11} for the heat problem and may even be fully robust with respect to the norm $||\cdot||_*$.
\end{section}

\chapter{A posteriori error estimation \\and blow-up detection for nonlinear ODEs}

\begin{section}{Blow-up in nonlinear ODEs}
This section is devoted to the numerical approximation of the ODE
\begin{equation}
\begin{aligned}
\label{ODE}
\frac{du}{dt} & = f(u), \\
u(0) & = u_0,
\end{aligned}
\end{equation}
where $f$ is a Lipschitz continuous function and $u_0 > 0$. We say that \eqref{ODE} exhibits \emph{blow-up} if the solution $u$ has the property that
\begin{equation}
\begin{aligned}
\notag
\limsup_{t \to T^*} |u(t)| & = \infty,
\end{aligned}
\end{equation}
for some $T^* > 0$. The value $T^*$ is referred to as the \emph{blow-up time} and if $T^* < \infty$ we say the ODE exhibits \emph{finite time blow-up}. Throughout the rest of this chapter, it is assumed that \eqref{ODE} exhibits finite time blow-up. Regarding the numerical approximation of \eqref{ODE}, the following questions are of interest:

\begin{itemize}
\item Using a simple time stepping scheme to approximate \eqref{ODE}, can we construct a residual based a posteriori error estimator for the given method?

\item If the answer to the above is yes, is it possible to use the estimator to drive an adaptive algorithm that successfully converges to $T^*$?

\item If the adaptive algorithm does converge to $T^*$, is it possible to numerically quantify how quickly different time stepping schemes converge to $T^*$?
\end{itemize}

In attempting to answer the above questions, there are some papers in the literature of particular interest. In \cite{SF90}, the authors approximate \eqref{ODE} using a\mbox{ }\mbox{ }\mbox{ } $\theta$-method along with a temporal rescaling of the ODE; this modifies a distribution of uniform time steps so that they better match the blow-up behaviour of the numerical solution. Their numerical solution displays the same asymptotic \mbox{ }  behaviour as the exact solution. 

Similarly to \cite{SF90}, in \cite{H06} the authors also transform the ODE but through an arc length transformation. They use a forward Euler method to approximate the transformed equation and they show that their adaptive algorithm, which is based on their transformation plus a tolerance controlled ODE integrator, converges towards the blow-up time linearly with respect to the total number of time steps. However, in contrast to \cite{SF90}, they restrict themselves to the case $f(u) = u^{p}$, $p > 1$. 

Finally, in \cite{JW14} they prove existence results for numerical approximations to \eqref{ODE} provided that the time step lengths are sufficiently small and that the nonlinearity satisfies a polynomial growth condition. For the specific case \mbox{ } $f(u) = u^{p}$, $p > 1$, they show that a certain selection of time step lengths yields approach to the blow-up time.

\end{section}

\begin{section}{An a posteriori error estimator}

For simplicity, we restrict our attention to 
\begin{equation}
\begin{aligned}
\label{nonlinearform}
f(u) = \sum_{j=0}^{p} c_j u^j,
\end{aligned}
\end{equation}
where $p \geq 2$ is some positive integer and the coefficients satisfy $c_j \geq 0$ ($c_p > 0$) so that the problem is guarenteed to blow-up. In order to approximate \eqref{ODE}, we shall use a generic one-step scheme with right-hand side $f_h$ that approximates $f$. That is, we set $u_h^0 = u_0$ and for $k\geq 0$ with some time step length $\tau_{k+1}$,  we search for $u_h^{k+1}$ such that
\begin{equation}
\begin{aligned}
\label{ODEapprox}
\frac{u_h^{k+1} - u_h^k}{\tau_{k+1}} = f_h\big(u_h^k,u_h^{k+1}\big).
\end{aligned}
\end{equation}
We also recursively define our time $t^{k+1} := t^k + \tau_{k+1}$ with $t^0 := 0$. In order to discuss the error of different time stepping schemes, we need to describe $u_h$ on the interior of the intervals $\big[t^k,t^{k+1}\big]$. Thus, given $t \in \big(t^k,t^{k+1}\big]$, we define $u_h(t)$ to be the linear interpolant with respect to $t$ of the values $u_h^k$ and $u_h^{k+1}$, viz.,
\begin{equation}
\notag
u_h(t):=l_k(t)u_h^k+l_{k+1}(t)u_h^{k+1}.
\end{equation}
It is now possible to construct an error equation for \eqref{ODEapprox} by subtracting \eqref{ODEapprox} from \eqref{ODE}. Then, defining $e := u - u_h$, we obtain
\begin{equation}
\begin{aligned}
\label{ODEerror1}
\frac{de}{dt} = f\big(u\big)- f_h\big(u_h^k,u_h^{k+1}\big).
\end{aligned}
\end{equation}
Adding and subtracting $f(u_h)$ to the right of \eqref{ODEerror1} and defining the residual \mbox{ } $\eta_{k+1} := f \big(u_h \big) - f_h\big(u_h^k,u_h^{k+1}\big)$ we obtain the error equation
\begin{equation}
\begin{aligned}
\label{ODEerror2}
\frac{de}{dt} = \eta_{k+1} + f'(u_h)e + \sum_{j=2}^p \frac{f^{(j)}(u_h)}{j!}e^j,
\end{aligned}
\end{equation}
where $f^{(j)}$ denotes the order $j$ partial derivative of $f$ with respect to $u$. In order to derive a usable error estimator from \eqref{ODEerror2}, we make use of Gronwall's inequality. Application of Gronwall's inequality to \eqref{ODEerror2}  for $t \in \big[t^k,t^{k+1}\big]$ yields
\begin{equation}
\begin{aligned}
\label{ODEbound1}
|e(t)| \leq H_{k+1}(t)G_{k+1}\phi_{k+1},
\end{aligned}
\end{equation}
where
\begin{equation}
\begin{aligned}
\notag
H_{k+1}(t) & := \exp \left (\sum_{j=2}^p \int_{t^k}^{t} \! \bigg\vert\frac{f^{(j)}(u_h)}{j!} \bigg \vert |e|^{j-1} \, ds \right ), \\
G_{k+1} & := \exp \left (\int_{t^k}^{t^{k+1}} \! |f'(u_h)| \, ds \right ), \\
\phi_{k+1} & :=  \big|e \big(t^k \big) \big| + \int_{t^k}^{t^{k+1}} \! |\eta_{k+1}| \, ds.
\end{aligned}
\end{equation}
Note that this is not truly a posteriori yet due to the presence of $H_{k+1}$.  In order to amend this, we use a local continuation argument in the spirit of \cite{B05,GM14,KNS04}. To this end, define the set
\begin{equation}
\begin{aligned}
\notag
I_{k+1} := \bigg \{t \in \big[t^k,t^{k+1}\big] \mbox{ }\bigg | \mbox{ } \max_{s \in [t^k,t]} |e(s)| \leq \delta_{k+1}G_{k+1}\phi_{k+1}\bigg \},
\end{aligned}
\end{equation}
where $\delta_{k+1} > 1$ is a parameter to be chosen. Obviously $t^k \in I_{k+1}$ so $I_{k+1}$ is non-empty and bounded. We denote the maximal value of $t$ that belongs to $I_{k+1}$ by $t^*$ and we assume that $t^* < t^{k+1}$. From \eqref{ODEbound1}, we have that
\begin{equation}
\begin{aligned}
\max_{s \in [t^k,t^*]} |e(s)| \leq H_{k+1}(t^*)G_{k+1}\phi_{k+1}.
\end{aligned}
\end{equation}
By the definition of the set $I_{k+1}$, we have
\begin{equation}
\begin{aligned}
\label{Hbound}
H_{k+1}(t^*) & \leq \exp \left (\sum_{j=2}^p \left (\max_{s \in [t^k,t^*]} |e(s)| \right)^{j-1} \int_{t^k}^{t^{k+1}} \! \bigg \vert \frac{f^{(j)}(u_h)}{j!} \bigg \vert \, ds \right ) 
\\ & \leq \exp\left ( \sum_{j=2}^p \delta_{k+1}^{j-1}G_{k+1}^{j-1}\phi_{k+1}^{j-1}\int_{t^k}^{t^{k+1}} \! \bigg \vert \frac{f^{(j)}(u_h)}{j!}\bigg \vert \, ds \right ).
\end{aligned}
\end{equation}
Therefore,
\begin{equation}
\begin{aligned}
\label{ODEbound2}
\max_{s \in [t^k,t^*]} |e(s)| \leq G_{k+1}\phi_{k+1}\exp\left ( \sum_{j=2}^p \delta_{k+1}^{j-1}G_{k+1}^{j-1}\phi_{k+1}^{j-1}\int_{t^k}^{t^{k+1}} \! \bigg \vert \frac{f^{(j)}(u_h)}{j!}\bigg \vert \, ds \right ).
\end{aligned}
\end{equation}
Now, suppose that the upper bound in \eqref{ODEbound2} is bounded strictly from above by the upper bound of the set $I_{k+1}$, viz.,
\begin{equation}
\begin{aligned}
G_{k+1}\phi_{k+1}\exp\left ( \sum_{j=2}^p \delta_{k+1}^{j-1}G_{k+1}^{j-1}\phi_{k+1}^{j-1}\int_{t^k}^{t^{k+1}} \! \bigg \vert \frac{f^{(j)}(u_h)}{j!}\bigg \vert \, ds \right ) < \delta_{k+1}G_{k+1}\phi_{k+1},
\end{aligned}
\end{equation}
or equivalently,
\begin{equation}
\begin{aligned}
\label{deltainequality}
\exp\left ( \sum_{j=2}^p \delta_{k+1}^{j-1}G_{k+1}^{j-1}\phi_{k+1}^{j-1}\int_{t^k}^{t^{k+1}} \! \bigg \vert \frac{f^{(j)}(u_h)}{j!}\bigg \vert \, ds \right ) < \delta_{k+1},
\end{aligned}
\end{equation}
then $t^*$ cannot be the maximal value of $t$ that belongs to $I_{k+1}$ because we just showed $\displaystyle \max_{s \in [t^k,t^*]} |e(s)|$ satisfies a bound strictly less than that assumed in the set $I_{k+1}$ {\bf--} a contradiction. Therefore, provided \eqref{deltainequality} is satisfied, $I_{k+1} = \big[t^k,t^{k+1}\big]$ and we have our desired error bound once we select $\delta_{k+1}$. Given that we wish to construct the best bound possible, we seek to minimise \eqref{deltainequality}. Taking the limit we can, in fact, just select $\delta_{k+1}$ to be the minimiser of
\begin{equation}
\begin{aligned}
\label{deltaequation}
\sum_{j=2}^p \delta_{k+1}^{j-1}G_{k+1}^{j-1}\phi_{k+1}^{j-1}\int_{t^k}^{t^{k+1}} \! \bigg \vert \frac{f^{(j)}(u_h)}{j!} \bigg \vert \, ds - \log(\delta_{k+1}) = 0, \qquad \delta_{k+1} > 1.
\end{aligned}
\end{equation}
Therefore, providing the solution to \eqref{deltaequation} exists, we have the following error bound
\begin{equation}
\begin{aligned}
\label{ODEbound3}
\big |e\big(t^{k+1}\big)\big| \leq \max_{t \in [t^k,t^{k+1}]} |e(t)| \leq \delta_{k+1}G_{k+1}\phi_{k+1}.
\end{aligned}
\end{equation}

\begin{remark}
The term $\phi_{k+1}$ can be redefined with $\big|e\big(t^k\big)\big|$ estimated using the error estimator from the previous time step without any loss of generality to the argument presented giving us a recursive procedure for estimating the error. 
\end{remark}

\begin{remark}
In practice, the solution to \eqref{deltaequation} is approximated using a Newton method.
\end{remark}

A natural question that arises is whether or not \eqref{deltaequation} can be satisfied practically close to the blow-up time. With the aid of the next lemma, we state a precise condition on the time step lengths $\tau_{k+1}$ which indeed ensures that \eqref{deltaequation} has a root $\delta_{k+1}>1.$
\begin{lemma}
If $\displaystyle \sum_{j=1}^pj C_je^j \le 1$ then $\displaystyle s(x) = \sum_{j=1}^p C_j x^j-\log(x)$ with $C_j> 0$, $j=1$, ..., $p$, $p\in\mathbb{N}$ has a root in $(1,+\infty).$
\end{lemma}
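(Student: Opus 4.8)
The plan is to prove existence of a root by the intermediate value theorem, checking the sign of $s$ at the two natural endpoints $x=1$ and $x=e$. Since $s$ is a polynomial minus $\log(x)$, it is continuous (indeed smooth) on $[1,e]\subset(0,+\infty)$, so it will suffice to show that $s$ changes sign across this interval, or vanishes at its right endpoint.

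First I would evaluate $s$ at $x=1$. Because $\log(1)=0$ and every $C_j>0$, one gets immediately $s(1)=\sum_{j=1}^p C_j>0$. The more delicate evaluation is at $x=e$, where $\log(e)=1$ gives $s(e)=\sum_{j=1}^p C_j e^j-1$. Here the hypothesis enters: since each summand $C_j e^j$ is positive and $j\ge 1$, one has $C_j e^j\le j\,C_j e^j$ termwise, and therefore $\sum_{j=1}^p C_j e^j\le \sum_{j=1}^p j\,C_j e^j\le 1$. This yields $s(e)\le 0$. The key observation is precisely this: the factor $j$ that is present in the hypothesis but absent from $s$ is exactly what bridges the two expressions, and it does so because $j\ge 1$ on the summation range, so no information is lost.

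To conclude, I would split into two cases. If $s(e)<0$, then since $s(1)>0$ and $s$ is continuous on $[1,e]$, the intermediate value theorem produces a root in the open interval $(1,e)\subset(1,+\infty)$. If instead $s(e)=0$, then $e$ itself is a root of $s$, and $e\in(1,+\infty)$. In either case $s$ admits a root in $(1,+\infty)$, as claimed.

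I do not anticipate a genuine obstacle for this lemma; the only real insight is the choice of the test point $x=e$, which is engineered so that the logarithmic term contributes exactly the value $1$ matching the right-hand side of the hypothesis. The one secondary subtlety is the boundary case $s(e)=0$: it must be handled separately so that the root is correctly recorded as lying in $(1,+\infty)$ rather than being discarded as an endpoint, which is why I would phrase the conclusion as existence of a root in $(1,e]$ and note that this set is contained in $(1,+\infty)$.
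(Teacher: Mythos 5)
Your proof is correct and complete: the evaluation $s(1)=\sum_j C_j>0$, the termwise bound $C_je^j\le jC_je^j$ giving $s(e)\le 0$, and the intermediate value theorem (with the boundary case $s(e)=0$ handled) establish a root in $(1,e]\subset(1,+\infty)$. The thesis itself does not reproduce a proof but defers to Lemma 2.2 of the cited reference \cite{CGKM15}; your argument is the natural one for this statement, so there is nothing further to reconcile.
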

\begin{proof}
See Lemma 2.2 in \cite{CGKM15}.
\end{proof}

The above lemma gives a sufficient condition on when \eqref{deltaequation} can be satisfied. In particular, condition \eqref{deltaequation} can always be made to be satisfied provided that the time step length $\tau_{k+1}$ is chosen such that 
$$\sum_{j=2}^p\frac{j-1}{j!}(G_{k+1}\phi_{k+1} e)^{j-1}\int_{t^{k}}^{t^{k+1}} \! {\left|f^{(j)}(u_h) \right|} \, ds \le 1.$$

It is useful to discuss in heuristic terms what $\delta_{k+1}$, $G_{k+1}$ and $\phi_{k+1}$ in \eqref{ODEbound3} represent. Obviously $\phi_{k+1}$ is an approximation to the error on each time interval $\big[t^k,t^{k+1} \big]$, but what about $G_{k+1}$ and $\delta_{k+1}$? Clearly both $G_{k+1}$ and $\delta_{k+1}$ are accumulation factors that represent the contribution of blow-up to the error estimator in some way. To gain some insight on these multiplicative terms, consider $f(u)=u^p$, $p > 1$. Through separation of variables, the solution to \eqref{ODE} is given by 
\begin{equation}
\begin{aligned}
\notag
u(t) = \big(u_0^{1-p} + (1-p)t \big)^{\frac{1}{1-p}}.
\end{aligned}
\end{equation}
Now, suppose that $u_h \approx u$ and consider $G_{k+1}(u)$ given by 
\begin{equation}
\begin{aligned}
\notag
G_{k+1}(u) = \exp \left (\int_{t^k}^{t^{k+1}} \! f'(u) \, ds \right ) =  \exp \left (\int_{t^k}^{t^{k+1}} \! pu^{p-1} \, ds \right ).  
\end{aligned}
\end{equation}
Substitution of the exact solution yields
\begin{equation}
\begin{aligned}
\notag
G_{k+1}(u) & = \exp \left (\int_{t^k}^{t^{k+1}} \! p \big (u_0^{1-p}+(1-p)t \big )^{-1} \, ds \right ) \\
& = \exp \Bigg(\frac{p}{1-p}\log \Bigg (\frac{u_0^{1-p}+(1-p)t^{k+1}}{u_0^{1-p}+(1-p)t^k}  \Bigg ) \Bigg ) \\
& = \Bigg (\frac{u_0^{1-p}+(1-p)t^{k+1}}{u_0^{1-p}+(1-p)t^k}  \Bigg )^{\frac{p}{1-p}} \\
& = \frac{u^p \big(t^{k+1} \big)}{u^p \big(t^k \big)}.
\end{aligned}
\end{equation}
So for $f(u)=u^p$, $G_{k+1}(u)$ measures the blow-up rate of the exact solution on the interval $\big[t^k,t^{k+1} \big]$ and thus $\delta_{k+1}G_{k+1}$ can be viewed as the blow-up rate of the numerical solution. As we performed a Taylor expansion in our error analysis, we infer that $G_{k+1}$ is the linearised numerical blow-up rate on the interval $\big[t^k,t^{k+1} \big]$ and $\delta_{k+1}$ is the higher order part of the numerical blow-up rate on the interval $\big[t^k,t^{k+1} \big]$. With these notions, another way of viewing \eqref{deltainequality} is that the numerical solution ceases to be valid once the (approximate) higher order terms from the Taylor expansion start to become dominant in the error estimator.

\end{section}

\begin{section}{Adaptivity and convergence towards the blow-up time}

Using our knowledge of algorithms utilising a posteriori error estimators for linear problems, we propose Algorithm 4.1 for advancing towards the blow-up time. The basic idea behind the algorithm is to half the time step length and recompute the solution until the residual is below a given input threshold ${\tt tol}$. The algorithm then advances by using the previous (now fixed) time step length as a reference to compute the next approximation. The algorithm continues in this way until \eqref{deltaequation} no longer has a solution; the algorithm then terminates and outputs the total number of time steps $N$ and the final time $T$.

\begin{algorithm} \label{ODEalgorithm1}
  \begin{algorithmic}[1]
     \State {\bf Input:} $f$, $f_h$, $u_0$, $\tau_1$, ${\tt tol}$.
     \State Calculate $u_h^1$ from $u^0_h$.
     \While {$\displaystyle \int_{t^0}^{t^{1}} \! |\eta_{1}| \, ds > {\tt tol}$}
     \State $\tau_1 \leftarrow \tau_1/2$.
     \State Calculate $u_h^1$ from $u^0_h$.
\EndWhile
\State Calculate $\delta_{1}$.
\State Set $k = 0$.
     \While {$\delta_{k+1}$ exists}
     \State $k \leftarrow k+1$.
     \State $\tau_{k+1} = \tau_k$.
     \State Calculate $u_h^{k+1}$ from $u_h^k$.
     \While {$\displaystyle \int_{t^k}^{t^{k+1}} \! |\eta_{k+1}| \, ds > {\tt tol}$}
     \State $\tau_{k+1} \leftarrow \tau_{k+1}/2$.
     \State Calculate $u_h^{k+1}$ from $u_h^k$.
\EndWhile
\State Calculate $\delta_{k+1}$.
\EndWhile
\State {\bf Output:} $k$, $t^{k}$.
  \end{algorithmic}
  \caption{ODE Algorithm 1}
\end{algorithm}

Assuming that the adaptive algorithm outputs successfully, we wish to observe the order with which the adaptive algorithm approaches the blow-up time. To this end, we define the $\lambda$ function
\begin{equation}
\begin{aligned}
\notag
\lambda({\tt tol},N) := \left|T^* - T({\tt tol},N) \right|,
\end{aligned}
\end{equation}
where $T^*$ is the blow-up time of problem \eqref{ODE}.
It is conjectured that
\begin{equation}
\begin{aligned}
\notag
\lambda({\tt tol},N) \propto N^{-r},
\end{aligned}
\end{equation}
where $r$ is the order with which the adaptive algorithm approaches the blow-up time. An educated guess would be that $r$ is the same as the order of the method that we choose to use.

 In order to gain some insight on how $\lambda$ converges, we apply Algorithm 4.1 to problem \eqref{ODE} with $f(u)=u^p$ for $p=2,3$ and $u(0)=1$ under the following time stepping schemes

\begin{equation}
\begin{aligned}
\notag
\mbox{Explicit Euler} \qquad f_h \big(u_h^k,u_h^{k+1}\big)& =f\big(u_h^k\big), \\ 
\mbox{Implicit Euler} \qquad f_h\big(u_h^k,u_h^{k+1}\big)& =f\big(u_h^{k+1}\big), \\ 
\mbox{Improved Euler} \qquad f_h\big(u_h^k,u_h^{k+1}\big)& =\frac{1}{2}\big(f\big(u_h^k\big)+f\big(u_h^k+\tau_{k+1}f\big(u_h^k \big)\big)\big).
\end{aligned}
\end{equation}
The approximate rates of $\lambda$ under Algorithm 4.1 are given in Table \ref{data1}.

\begin{table}[ht]
\caption{ODE Algorithm 1 Results}
\centering
\begin{tabular}{c c c c}
\hline\hline
Method & p = 2 & p = 3 \\
\hline 
Implicit Euler & $r \approx 0.66$ & $r \approx 0.79$ \\
Explicit Euler & $r \approx 1.35$ & $r \approx 1.60$ \\
Improved Euler & $r \approx 1.2$ & $r \approx 1.48$ \\
\hline
\end{tabular}
\label{data1}
\end{table}

The first question that arises is why is the explicit Euler method significantly better than the  implicit Euler method? The answer lies in the way in which we have derived the error estimator. The numerical solution from the explicit Euler method always underestimates the true solution $u$ \cite{SF90}; this means $\delta_{k+1}$ is correcting for the fact that $G_{k+1}$ is underestimating the true blow-up rate {\bf--} our error bound is very tight and this explains the high convergence rate of $\lambda$. For the implicit Euler method, $G_{k+1}$ overestimates the true blow-up rate \cite{SF90} meaning we obtain nothing ``extra" from the error analysis in the way that we do for the explicit Euler method.

The second question is why is improved Euler worse than explicit Euler? Indeed, one would expect a faster approach to the blow-up time with a higher order method. The reason for this lies in a fault with the proposed adaptive algorithm. Indeed, the threshold approach taken to reducing our time step length is good for linear problems. However, we have neglected the presence of $G_{k+1}$ in our error estimator; this factor tells us that each successive interval matters less to the error estimator than previous intervals meaning we need to increase ${\tt tol}$ on each interval. Thus, we propose Algorithm 4.2.

\begin{algorithm} \label{ODEalgorithm2}
  \begin{algorithmic}[1]
     \State {\bf Input:} $f$, $f_h$, $u_0$, $\tau_1$, ${\tt tol}$.
     \State Calculate $u_h^1$ from $u^0_h$.
     \While {$\displaystyle \int_{t^0}^{t^{1}} \! |\eta_{1}| \, ds > {\tt tol}$}
     \State $\tau_1 \leftarrow \tau_1/2$.
     \State Calculate $u_h^1$ from $u^0_h$.
\EndWhile
\State Calculate $\delta_{1}$.
\State ${\tt tol} = G_1*{\tt tol}.$
\State Set $k=0$.
     \While {$\delta_{k+1}$ exists}
     \State $k \leftarrow k+1$.
     \State $\tau_{k+1} = \tau_k$.
     \State Calculate $u_h^{k+1}$ from $u_h^k$.
     \While {$\displaystyle \int_{t^k}^{t^{k+1}} \! |\eta_{k+1}| \, ds > {\tt tol}$}
     \State $\tau_{k+1} \leftarrow \tau_{k+1}/2$.
     \State Calculate $u_h^{k+1}$  from $u_h^k$.
\EndWhile
\State Calculate $\delta_{k+1}$.
\State ${\tt tol} = G_{k+1}*{\tt tol}.$
\EndWhile
\State {\bf Output:} $k$, $t^{k}$.
  \end{algorithmic}
  \caption{ODE Algorithm 2}
\end{algorithm}

\begin{figure}
\centering
\includegraphics[scale=0.56]{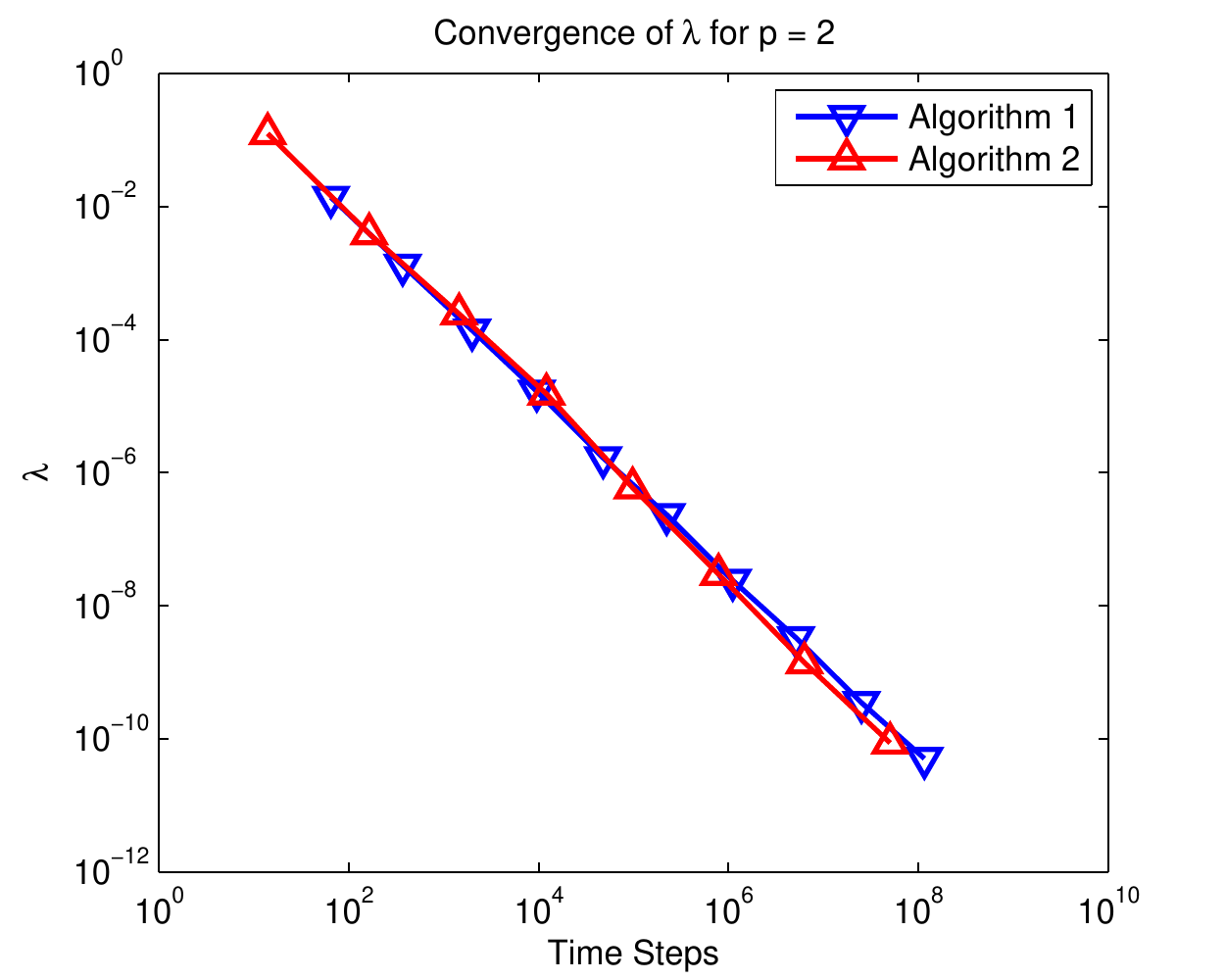} \includegraphics[scale=0.56]{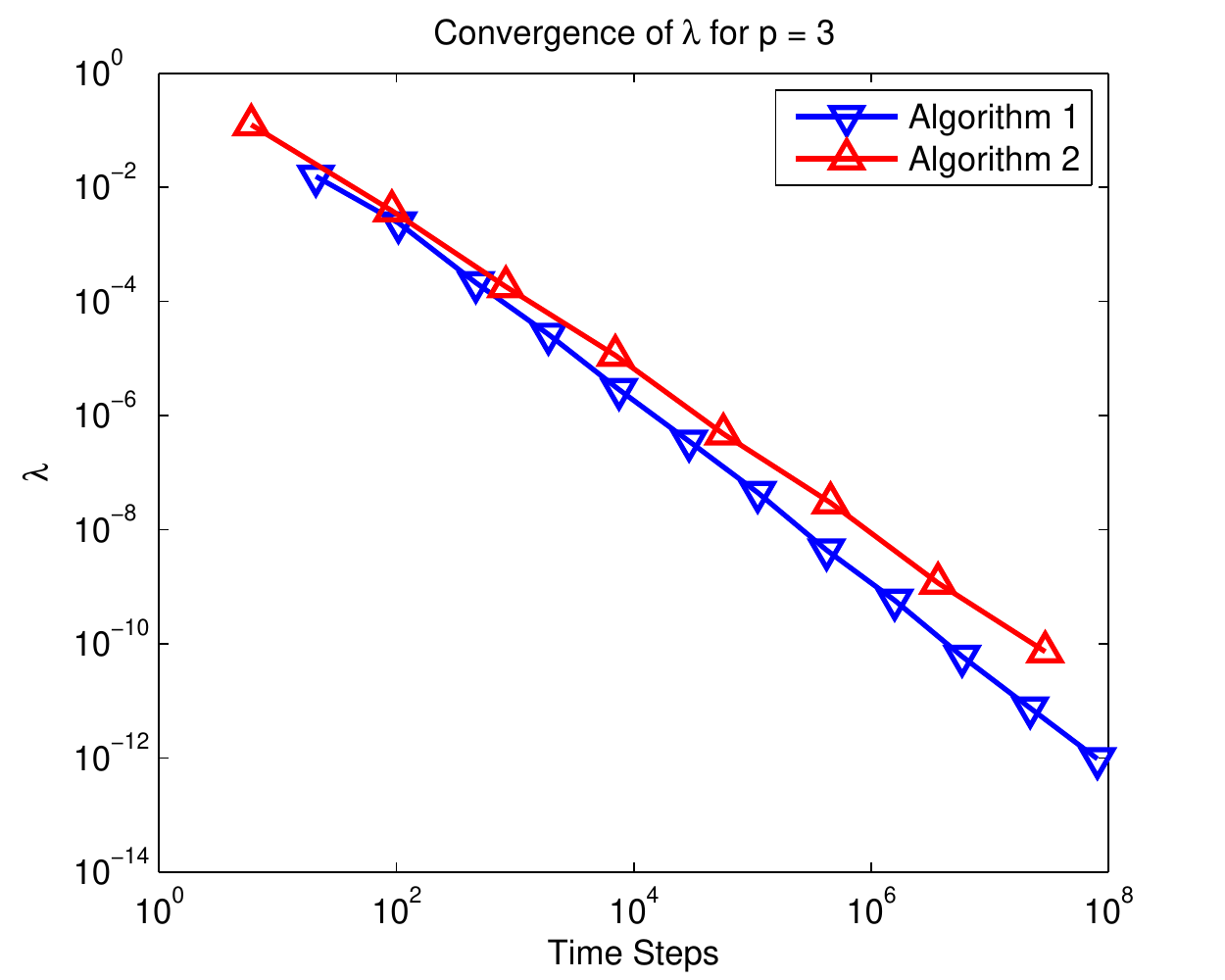}
\caption{Rates of $\lambda$ for explicit Euler under ODE Algorithms 1 and 2.}
\label{graphODEexplicit}
\end{figure}
\begin{figure}
\centering
\includegraphics[scale=0.56]{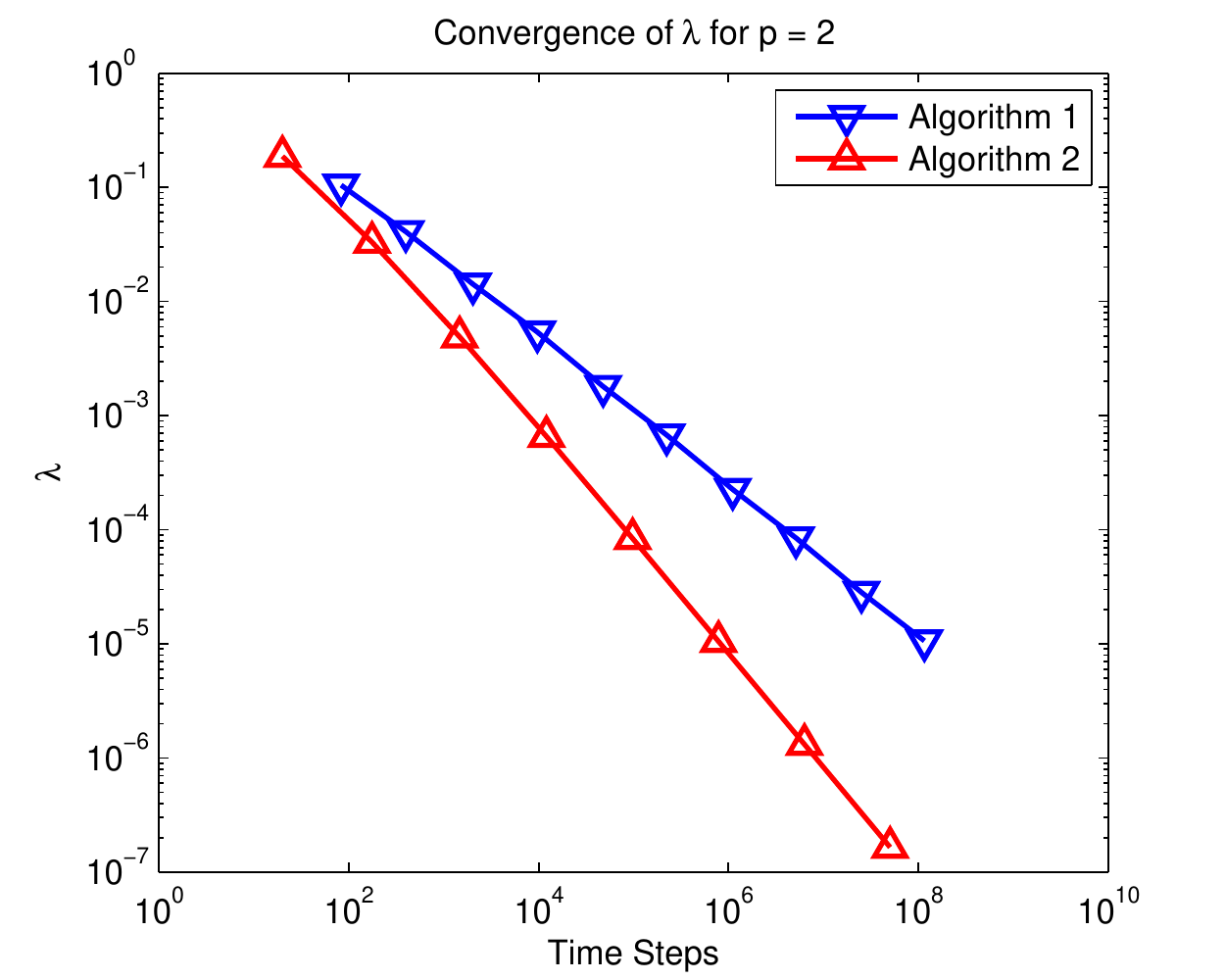} \includegraphics[scale=0.56]{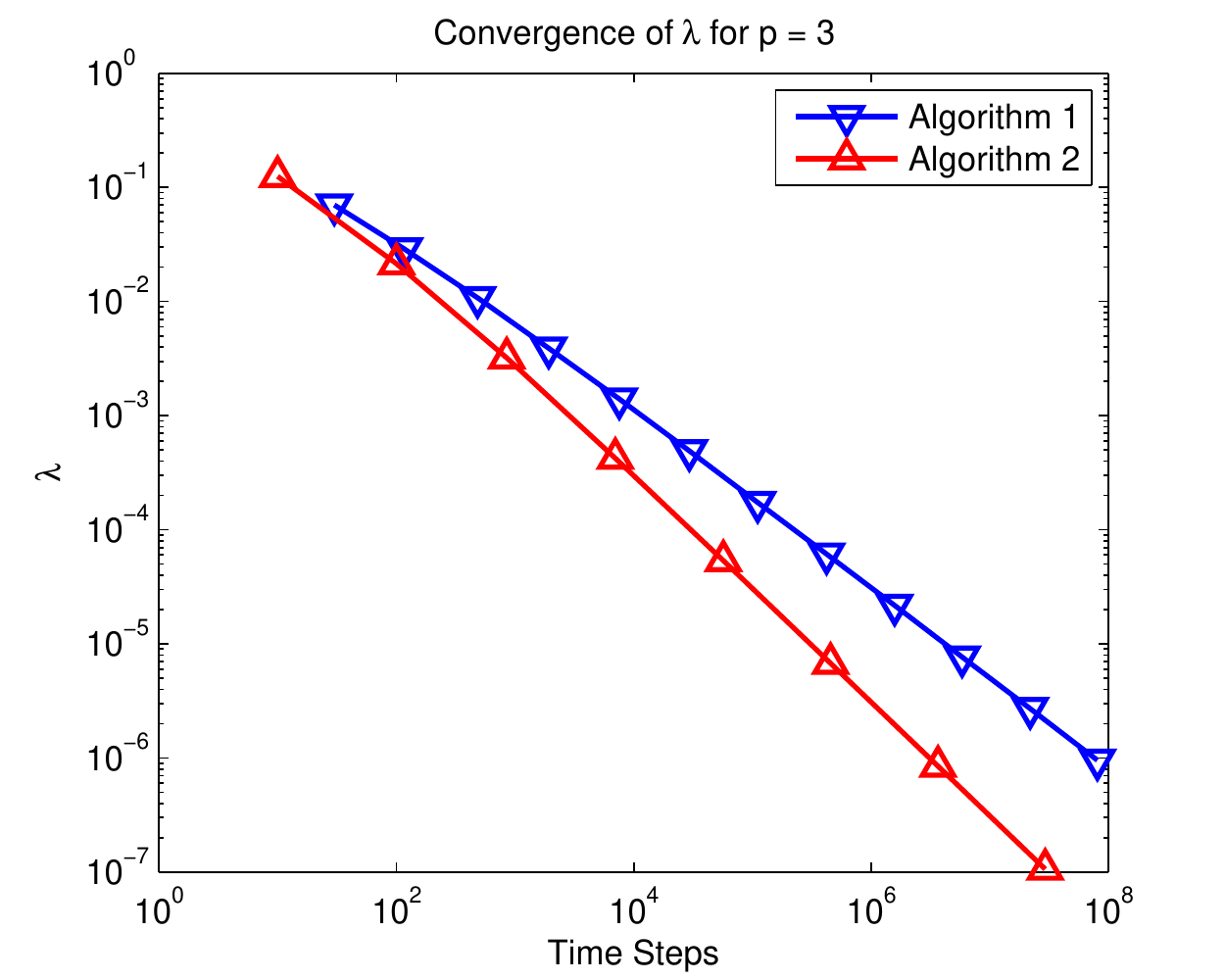}
\caption{Rates of $\lambda$ for implicit Euler under ODE Algorithms 1 and 2.}
\label{graphODEimplicit}
\end{figure}
\begin{figure}
\centering
\includegraphics[scale=0.56]{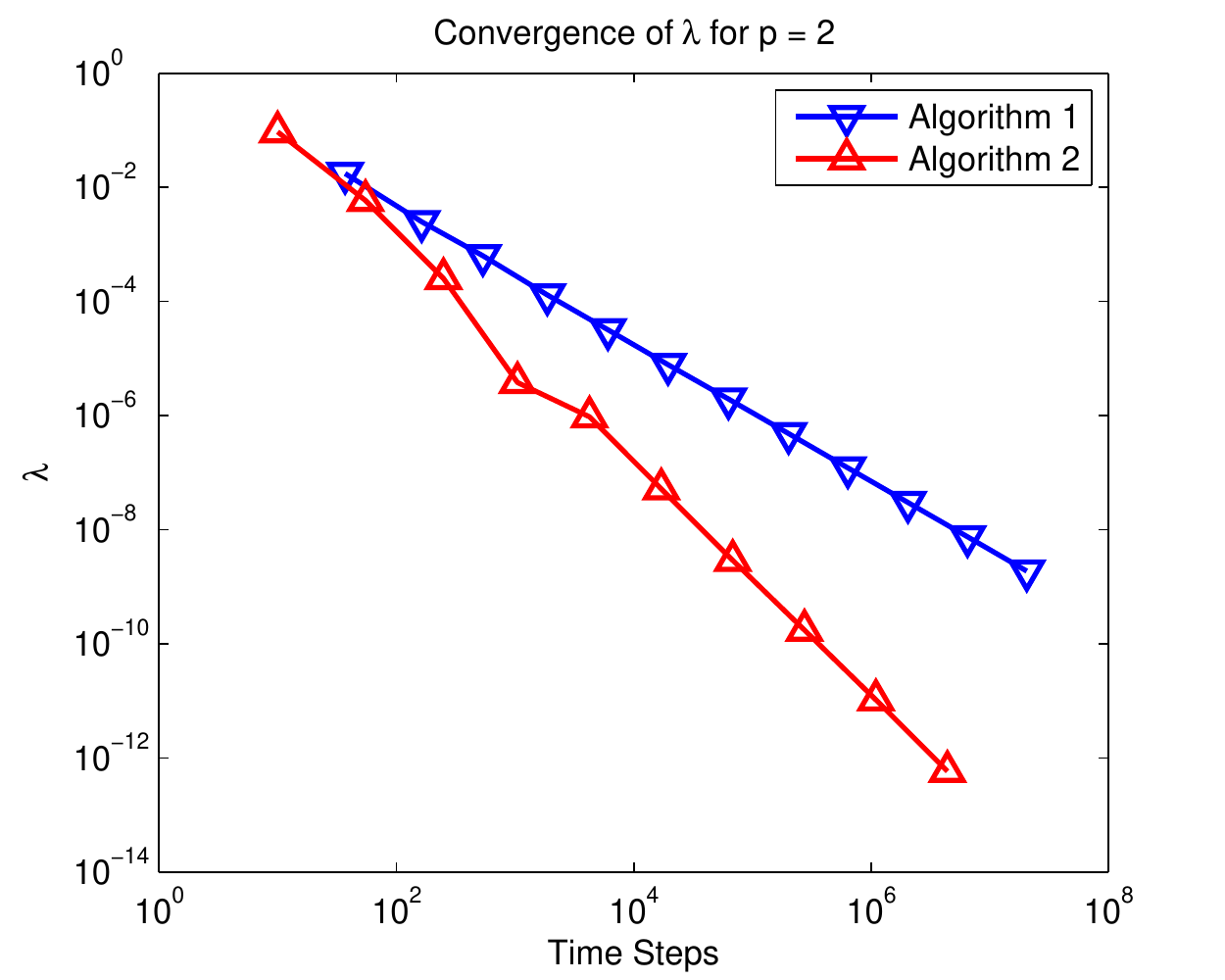} \includegraphics[scale=0.56]{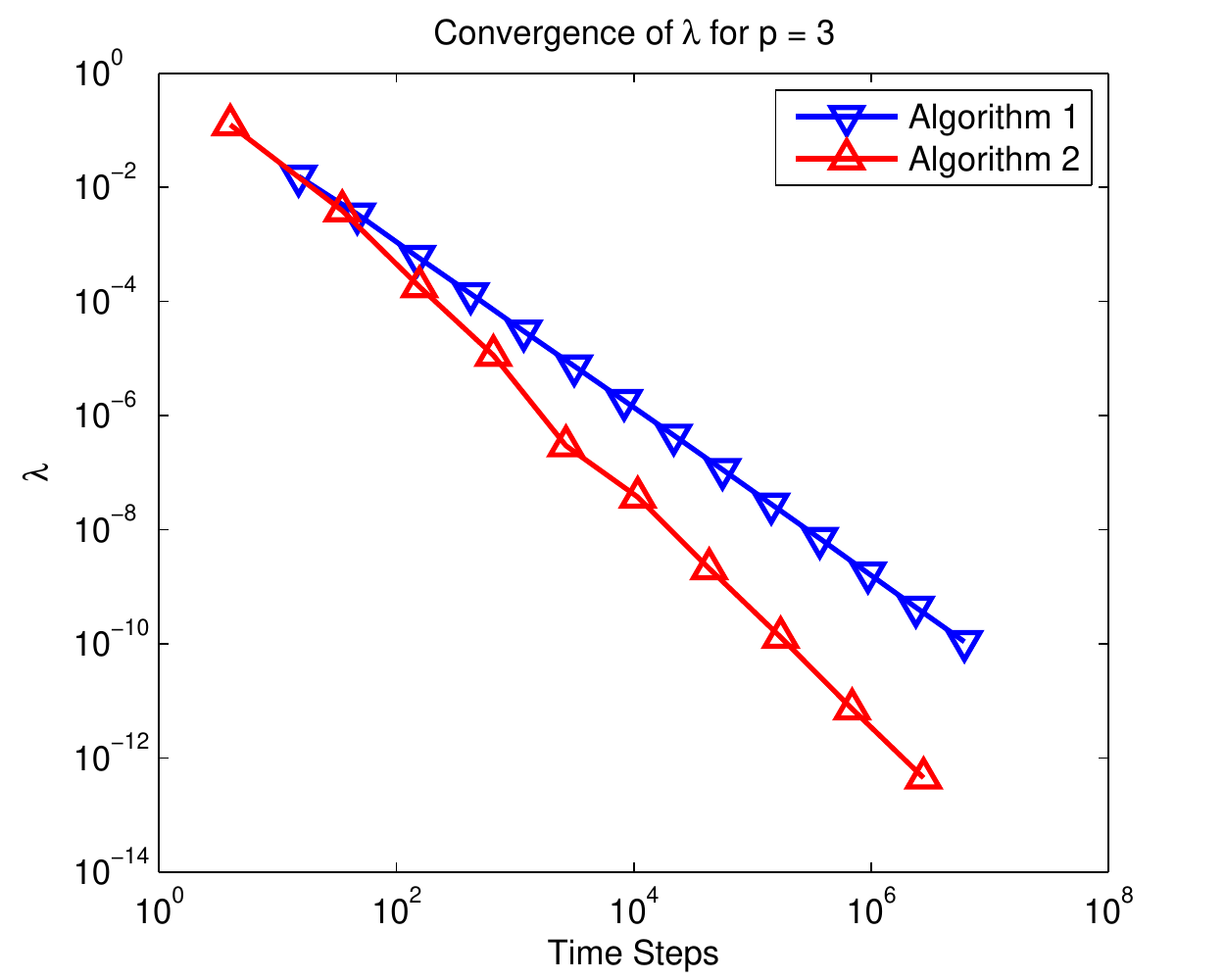}
\caption{Rates of $\lambda$ for improved Euler under ODE Algorithms 1 and 2.}
\label{graphODEimproved}
\end{figure}

The rates of $\lambda$ under Algorithm 4.2 are given in Table \ref{data2} and a more detailed comparison of the convergence of $\lambda$ for the different algorithms under the various time stepping schemes is given in Figures \ref{graphODEexplicit}, \ref{graphODEimplicit} and \ref{graphODEimproved}. These results show that under Algorithm 4.2 we have recovered our conjectured rates for the function $\lambda$ with a slight bonus rate for the explicit Euler method. Note that for $p=3$, Algorithm 4.1 converges faster than Algorithm 4.2 for the explicit Euler method; it is unknown why this is.

\begin{table}[!hbtp]
\caption{ODE Algorithm 2 Results} 
\centering 
\begin{tabular}{c c c c} 
\hline\hline 
Method & p = 2 & p = 3 \\
\hline 
Implicit Euler & $r \approx 1.00$ & $r \approx 1.00$ \\ 
Explicit Euler & $r \approx 1.45$ & $r \approx 1.43$ \\
Improved Euler & $r \approx 2.03$ & $r \approx 2.03$ \\
\hline 
\end{tabular}
\label{data2}
\end{table}
\end{section}

\begin{section}{Conclusions}

We derived an a posteriori error estimator for a class of nonlinear ODEs exhibiting blow-up and applied the estimator in two different adaptive algorithms to try and approximate the blow-up time. Both algorithms converged to the blow-up time in all test cases with Algorithm 4.2 outperforming Algorithm 4.1 in almost all test cases. In particular, we infer that explicit treatment of nonlinearities of the form \eqref{nonlinearform} appears to be advantageous in the context of adaptive algorithms based on rigorous a posteriori bounds.

\end{section}

\chapter{Adaptivity and blow-up detection for nonlinear non-stationary convection-diffusion problems}

\begin{section}{Blow-up in semilinear PDEs}\label{prelim}

For (non-fixed) $T > 0$, we consider the model problem of finding $u:\Omega \times (0,T] \to\mathbb{R}$ such that
\smallskip

\begin{equation}
\begin{aligned}
\label{blowup_model_strong}
\frac{\partial{u}}{\partial{t}} - \varepsilon\Delta{u}+ {\bf a} \cdot \nabla{u}+f(u) &= 0 \qquad & & \text{in }  \Omega \times (0,T]  \mbox{,} \\ 
u &=0 \mbox{ } &&\text{on }  \partial\Omega\times (0,T] \mbox{,} \\ 
u(\cdot,0) &=u_0 \mbox{ } &&\text{in } {\Omega}\mbox{.}
\end{aligned}
\end{equation}
It is assumed that the reaction term $f(u)$ is of the form $f(u) = f_0 - u^2$ although more general nonlinearities can be considered as discussed later in this chapter. We say that \eqref{blowup_model_strong} exhibits \emph{blow-up} if the solution $u$ has the property that
\begin{equation}
\begin{aligned}
\notag
\limsup_{t \to T^*} ||u(t)||_{L^{\infty}(\Omega)} & = \infty,
\end{aligned}
\end{equation}
for some $T^* > 0$. The value $T^*$ is referred to as the \emph{blow-up time} and if $T^* < \infty$ we say the PDE exhibits \emph{finite time blow-up}. If \eqref{blowup_model_strong} exhibits finite time blow-up then we can describe the asymptotic spatial behaviour of the solution $u$ through the \emph{blow-up set}, $\mathcal{B}$, given by 
\begin{equation}
\begin{aligned}
\notag
\mathcal{B} & := \{x \in \Omega \mbox{ } | \mbox{ }  \exists \{x_n,t_n \} \subset \Omega{\times}(0,T^*),  \mbox{ } t_n \rightarrow T^*,  \mbox{ }x_n \rightarrow x,  \mbox{ }u(x_n,t_n) \rightarrow \infty \}. 
\end{aligned}
\end{equation}
Elements of the blow-up set are referred to as \emph{blow-up points}. The asymptotic spatial behaviour of the solution to \eqref{blowup_model_strong} can be described through the blow-up set to be in one of two separate categories \cite{B77,GV02}:
\begin{itemize}
\item Point blow-up {\bf --} $\mathcal{B}$ consists of a finite number of points.

\item Regional blow-up {\bf --} The one-dimensional Hausdorff measure of $\mathcal{B}$ is positive.

\end{itemize}

\begin{figure}
\label{nascentdelta}
\centering
\includegraphics[scale=0.56]{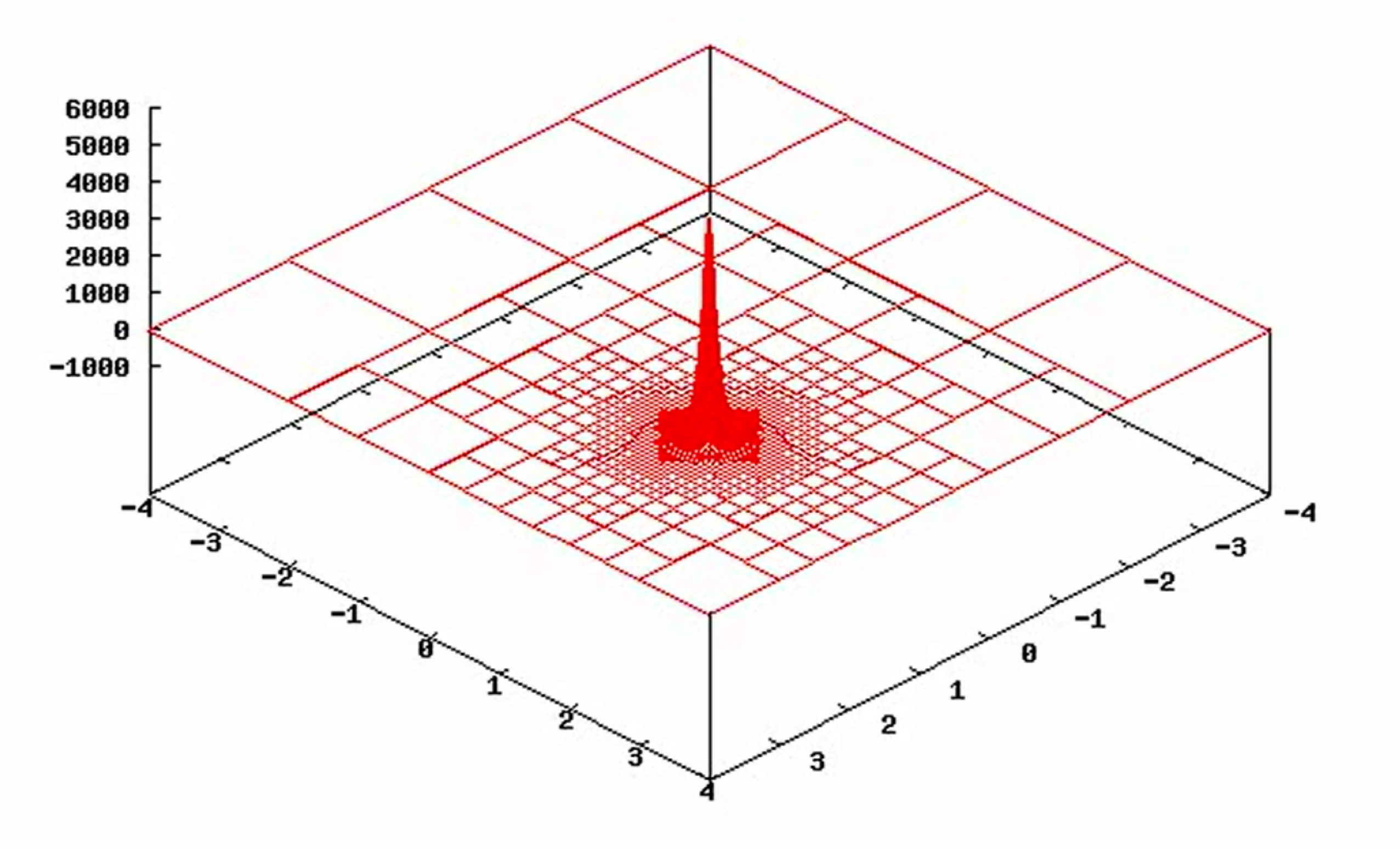}
\caption{Numerical approximation showing a typical solution profile near the blow-up time.}
\end{figure}

For single point blow-up, the solution looks like a nascent delta function close to the blow-up time (see Figure \ref{nascentdelta}); multi point blow-up or regional blow-up can cause even more complicated spatial behaviour near the blow-up time. These demanding and complex spatial and temporal features make the numerical \mbox{ } approximation of \eqref{blowup_model_strong} and related PDEs very difficult and necessitates the \mbox{ }  development of adaptive finite element methods. 

\noindent For $T<T^*$, the weak form of \eqref{blowup_model_strong} reads: find $u\in L^2 \big(0,T;H^1_0(\Omega) \big)\cap H^1 \big(0,T;L^2(\Omega) \big)$ such that for almost every $t \in (0,T]$ we have
\begin{equation}\label{blowup_model_weak}
\bigg(\frac{\partial{u}}{\partial{t}},v \bigg)+B(t;u,v)+(f(t;u),v)=0 \qquad \forall v \in H^1_0(\Omega),
\end{equation}
and the following assumptions are made on the PDE coefficients: $u_0 \in H^1_0(\Omega)$, $0 < \varepsilon \leq 1$, ${\bf a} \in [C(0,T;W^{1, \infty}(\Omega))]^2$ and $f_0 \in C(0,T;L^2(\Omega))$. For simplicity, we assume that $\nabla \cdot {\bf a} = 0$ and throughout the rest of this chapter, it is assumed that \eqref{blowup_model_weak} exhibits finite time blow-up.

\end{section}

\begin{section}{Space-time discretisation}

The numerical approximation of blow-up in nonlinear problems has been discussed in the literature. Solution profiles close to the blow-up time can be obtained through the rescaling algorithm of Berger and Kohn \cite{BK88,NZ14} or the MMPDE method \cite{BHR96,HMR08}. There is also work looking at the numerical approximation of blow-up in the  nonlinear Schr\"{o}dinger equation and its generalisations \cite{ADKM03,CS02,FI03,KMR11,TS92}. Other numerical methods for approximating blow-up in a variety of different nonlinear PDEs can be found in \cite{ABN09,DPMF05,DKKV98,FGR02,NB11}. Finally, in \cite{U00}, the author gives conditions that a numerical method must satisfy in order to asymptotically converge to the blow-up time. Most of these numerical methods rely on some form of theoretically justified rescaling; however, there is no general theory to know whether the resulting numerical approximation is reasonable or not. In this chapter, based on the work contained in \cite{CGKM15}, we shall use a simple numerical scheme to approximate \eqref{blowup_model_weak} and we will seek to perform our rescaling through rigorous a posteriori error estimates.

We consider a full discretisation of problem \eqref{blowup_model_weak} by using a finite difference method to approximate the time derivative, taking the convection-diffusion terms implicitly for stability purposes and the nonlinear reaction term explicitly in view of the conclusions drawn in the previous chapter. To this end, consider a subdivision of $[0,T]$ into time intervals of lengths $\tau_1$, ..., $\tau_n$ such that $\displaystyle \sum_{j=1}^n{\tau_j}=T$ for some $n \geq 1$ then set $t^0 := 0$ and $t^k := \displaystyle \sum_{j=1}^{k}\tau_{j}$. Denote an initial triangulation by $\zeta^0$ and we further associate to each time step $k>0$ a triangulation $\zeta^k$  which is assumed to have been obtained from $\displaystyle \zeta^{k-1}$ by locally refining and coarsening $\zeta^{k-1}$. To each mesh $\zeta^{k}$, we assign the finite element space $V_h^k := V_h(\zeta^k)$ given by~\eqref{eq:FEspace} and we also set $ {\bf a}^k:={\bf a}\big(\cdot,t^k \big)$ and $f^k:=f\big(\cdot,t^k;u_h^k \big)$ for brevity. Finally, for $t \in \big(t^k,t^{k+1}\big]$, we let $\Gamma$ denote the union of all edges in the mesh $\zeta^k \cup \zeta^{k+1}$.

The IMEX dG method then reads as follows. Set $u_h^0$ to be a projection of $u_0$ onto $V_h^0$. For $k=0$, ..., $n-1$, find $u_h^{k+1} \in V_h^{k+1}$ such that
\begin{equation}\label{dg_nonlin}
\bigg(\frac{u_h^{k+1}-u_h^k}{\tau_{k+1}},v_h^{k+1} \bigg)+B\big(t^{k+1};u_h^{k+1},v_h^{k+1} \big)+K_h \big(u_h^{k+1},v_h^{k+1} \big)+\big(f^{k},v_h^{k+1} \big)=0,
\end{equation}
for all  $v_h^{k+1} \in V_h^{k+1}$. We shall take $u_h^0$ to be the orthogonal $L^2$ projection of $u_0$ onto $V_h^0$, although other projections onto $V_h^0$ can also be used.\end{section}
 
\begin{section}{An a posteriori bound for the IMEX dG method}\label{fd_apost_sec}

The a posteriori error estimation of nonlinear parabolic problems has recently attracted attention for a variety of different PDEs \cite{B05, BM11,BBM05,CEGST14,DGN07,GM14,KNS04, V04,V98f,V98e}. With regards to blow-up, a key result is by Kyza and Makridakis \cite{K09,K01} wherein they produce an estimator for the error in the $L^{\infty}(L^{\infty})$ norm for a time semi-discrete approximation to the heat equation with polynomial nonlinearity. We shall use the ideas from \cite{K09,K01} to derive an error estimator for the IMEX dG scheme \eqref{dg_nonlin} in the $L^{\infty}(L^2)$ norm. A key novelty of the proof will be the use of a continuation argument for energy estimates, rather than the semigroup approach used in \cite{K09,K01}.

Before we begin to construct our error bound, we require some additional notation. At each time step $k$, we decompose the dG solution $u_h^k$ into a conforming part $u_{h,c}^k \in H^1_0(\Omega) \cap V_h^k$ and a non-conforming part $u_{h,d}^k \in V_h^k$ such that $u_h^k = u_{h,c}^k + u_{h,d}^k$. Further, given $t \in \big(t^k,t^{k+1} \big]$, we define $u_h(t)$ to be the linear interpolant with respect to $t$ of the values $u_h^k$ and $u_h^{k+1}$, viz.,
\begin{equation}
\notag
u_h(t):=l_k(t)u_h^k+l_{k+1}(t)u_h^{k+1},
\end{equation}
and we define $u_{h,c}(t)$ and $u_{h,d}(t)$ analogously. We can then decompose the error $e:=u-u_h=e_c-u_{h,d}$ where $e_c:=u-u_{h,c}$. It will also be useful to define the elliptic error $\epsilon^{k} := w^{k}-u_h^{k}$ where $w^k$ is given as in Definition \ref{ellipticreconstr2}.

\begin{lemma}\label{blowup_error_eq_simple_fd}
Given $t \in \big(t^k,t^{k+1} \big]$ then for any $v \in H^1_0(\Omega)$:
\begin{equation}\label{error_relation_fd}
\begin{aligned}
\notag
& \bigg(\frac{\partial{e}}{\partial{t}},v \bigg)+B(t;e,v)+(f(t;u)-f(t;u_h),v)=\bigg(-f(t;u_h)-\frac{\partial{u_h}}{\partial{t}},v \bigg)-B(t;u_h,v).
\end{aligned}
\end{equation}
\end{lemma}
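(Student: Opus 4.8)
The plan is to obtain the identity directly from the weak form \eqref{blowup_model_weak}, in exactly the same spirit as Lemma \ref{error_eq_simple_fd} for the linear problem: since \eqref{blowup_model_weak} holds for almost every $t$ and every $v\in H^1_0(\Omega)$, I would fix $t\in\big(t^k,t^{k+1}\big]$ and a test function $v\in H^1_0(\Omega)$, substitute the exact solution into \eqref{blowup_model_weak}, and then subtract off the corresponding quantities built from $u_h$. No new analytical ingredient is needed; the work is purely algebraic rearrangement using the bilinearity of $B$ and the linearity of the $L^2$ inner product.

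Concretely, writing $e=u-u_h$ and expanding,
\begin{equation}
\notag
\bigg(\frac{\partial e}{\partial t},v\bigg)+B(t;e,v)=\bigg(\frac{\partial u}{\partial t},v\bigg)+B(t;u,v)-\bigg(\frac{\partial u_h}{\partial t},v\bigg)-B(t;u_h,v).
\end{equation}
By \eqref{blowup_model_weak} the first two terms on the right equal $-(f(t;u),v)$. To recover the claimed form of the nonlinear contribution I would add and subtract $(f(t;u_h),v)$: moving $(f(t;u)-f(t;u_h),v)$ onto the left-hand side leaves $-(f(t;u_h),v)$ together with the two remaining $u_h$-terms on the right, which is precisely the asserted identity after grouping $-f(t;u_h)-\partial u_h/\partial t$ inside a single inner product.

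The only points requiring care are interpretive rather than substantive. First, on $\big(t^k,t^{k+1}\big]$ the function $u_h$ is the piecewise-linear-in-time interpolant of $u_h^k$ and $u_h^{k+1}$, so $\partial u_h/\partial t$ must be read as the (a.e.\ constant) value $\big(u_h^{k+1}-u_h^k\big)/\tau_{k+1}$, and likewise $f(t;u_h)$ and $B(t;u_h,v)$ are evaluated at the interpolant over the union triangulation $\zeta^k\cup\zeta^{k+1}$. Second, although $u_h$ is generically non-conforming, the pairing $B(t;u_h,v)$ is still well defined because, as noted after \eqref{bilinearformB}, $B$ (unlike $K_h$) makes sense for arguments in $V_h$ tested against $v\in H^1_0(\Omega)$; no interior-penalty or jump term is introduced against a conforming $v$. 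I therefore do not anticipate a genuine obstacle here: the lemma is the exact nonlinear analogue of Lemma \ref{error_eq_simple_fd}, and its role is simply to set up the residual equation whose real difficulty — testing with $v=e_c$ and closing the resulting estimate through the Gagliardo--Nirenberg inequality and the continuation argument — is deferred to the theorem that follows.
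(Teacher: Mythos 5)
Your proposal is correct and is exactly the argument the paper intends: the paper's proof of this lemma consists of the single line ``This follows from \eqref{blowup_model_weak},'' and your derivation --- substituting the exact solution into the weak form, subtracting the $u_h$-terms, and adding and subtracting $(f(t;u_h),v)$ --- is the routine algebra being elided, with the sign bookkeeping done correctly. Your interpretive remarks about $\partial u_h/\partial t$ on the time slab and the well-definedness of $B(t;u_h,v)$ for non-conforming $u_h$ are consistent with the conventions the paper establishes in Chapters 2 and 3.
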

\begin{proof}
This follows from \eqref{blowup_model_weak}. 
\end{proof}

\medskip

\noindent From Lemma \ref{blowup_error_eq_simple_fd} we have
\begin{equation}
\begin{aligned}
&\bigg (\frac{\partial{e}}{\partial{t}},v \bigg)+B\big(t;e,v \big)+\big(f(t;u )-f(t;u_h),v \big)=-\bigg(A^{k+1}+f^{k}+\frac{\partial{u_h}}{\partial{t}},v \bigg)\\&+B\big(t^{k+1};\epsilon^{k+1},v \big)-B\big(t;u_h,v \big)+B\big(t^{k+1};u_h^{k+1},v \big)+\big(f^{k}-f(t;u_h),v \big),
\end{aligned}
\end{equation}
which upon straightforward manipulation gives
\begin{equation}
\begin{aligned}
\label{equation2}
&\bigg(\frac{\partial{e}}{\partial{t}},v \bigg)+B\big(t;e,v \big)+\big(f(t;u )-f (t;u_h ),v \big)=-\bigg(A^{k+1}+f^{k}+\frac{\partial{u_h}}{\partial{t}},v \bigg)\\&+l_{k+1}B\big(t^{k+1};\epsilon^{k+1},v \big)+l_kB\big(t^k;\epsilon^k,v \big)-B\big(t;u_h,v \big)+l_{k+1}B\big(t^{k+1};u_h^{k+1},v\big)\\&+l_kB\big(t^k;u_h^k,v\big)+\big(f^{k}-f \big(t;u_h \big)+l_k\big(A^{k+1}-A^k \big),v \big).
\end{aligned}
\end{equation}
We are now ready to state our a posteriori estimator. The first part of our estimator is the initial condition estimator, $\eta_I$, given by
\begin{equation}
\begin{aligned}
\notag
\eta_I & := \left ( \big|\big|e(0)\big|\big|^2 +\sum_{E \in \mathcal{E}(\zeta^{0})} h_E \big|\big|\big[u_h^0 \big]\big|\big|^2_{L^2(E)} \right )^{1/2}.
\end{aligned}
\end{equation}
Due to the nature of the error bound to be presented, it is easier to separate the remainder of the estimator into two parts. As in Chapter 3, a subscript $S$ denotes parts of the estimator related to estimating space while a subscript $T$ denotes parts of the estimator related to estimating time. In this way, for $t \in \big(t^k,t^{k+1} \big]$, $\eta_A$ is given by
\begin{equation}
\begin{aligned}
\notag
\eta_A & :=   l_k \eta_{S_1,k}+l_{k+1} \eta_{S_1,k+1} + \eta_{S_2,k+1} + \eta_{T_1,k+1},
\end{aligned}
\end{equation}
where
\begin{equation}
\begin{aligned}
\notag
\eta_{S_1,k}  & :=  \left (\sum_{K \in \zeta^{k}} \frac{h_K^2}{\varepsilon}\big|\big|A^{k} + \varepsilon \Delta u_h^{k} - {\bf a}^{k} \cdot \nabla u_h^{k} \big|\big|^2_{L^2(K)}+\sum_{E \in \mathcal{E}(\zeta^{k})} \frac{h_E}{\varepsilon}\big|\big|\big[{\bf a}^{k} u_h^{k}\big]\big|\big|^2_{L^2(E)} \right. \\ & \left.+\sum_{E \in \mathcal{E}(\zeta^{k})} \frac{\gamma \varepsilon}{h_E} \big|\big|\big[u_h^{k}\big]\big|\big|^2_{L^2(E)}+\sum_{E \in \mathcal{E}^{int}(\zeta^{k})} \varepsilon h_E \big|\big|\big[\nabla u_h^{k}\big]\big|\big|^2_{L^2(E)}  \right)^{1/2}, \\
\eta_{S_2,k+1}  & :=  \left (\sum_{K \in \zeta^k \cup \zeta^{k+1}} \frac{h_K^2}{\varepsilon}\bigg|\bigg|f^{k}-I_h^{k+1}f^{k}- \frac{u_h^k - I_h^{k+1}u_h^k}{\tau_{k+1}} \bigg|\bigg|^2_{L^2(K)} \right )^{1/2}, \\ 
\eta_{T_1,k+1} & := {\varepsilon}^{-1/2}\big|\big|l_{k+1}\big({\bf a}^{k+1}-{\bf a} \big)u_h^{k+1}+l_k \big({\bf a}^k-{\bf a} \big)u_h^k \big|\big|,
\end{aligned}
\end{equation}
while $\eta_B$ is given by
\begin{equation}
\begin{aligned}
\notag
\eta_B & := \eta_{S_3,k+1}+\eta_{S_4,k+1}+\eta_{T_2,k+1},
\end{aligned}
\end{equation}
where
\begin{equation}
\begin{aligned}
\notag
\eta_{S_3,k+1}  & := \left (\sum_{K \in \zeta^k \cup \zeta^{k+1}} \sum_{E \subset \tilde{K}_E} \sigma^2_K h_E ||[u_h]||^2_{L^2(E)} \right )^{1/2}, \\
\eta_{S_4,k+1}  & :=  \left ( \sum_{E \subset \Gamma} h_E \left| \left|  \left[\frac{u_h^{k+1}-u_h^k}{\tau_{k+1}} \right]\right| \right|^2_{L^2(E)} \right)^{1/2}, \\
\eta_{T_2,k+1} & := \big|\big|f^k-f\big(t;u_h \big)+l_k \big(A^{k+1}-A^k \big)\big|\big|,
\end{aligned}
\end{equation}
with
\begin{equation}
\begin{aligned}
\notag
\sigma_K := 2||u_h||_{L^{\infty}(K)}+||[u_h]||_{L^{\infty}(\tilde{K}_E)}.
\end{aligned}
\end{equation}
Going back to \eqref{equation2}, the first term on the right can be bounded using Theorem \ref{projectionbounds} and the Cauchy-Schwarz inequality, viz.,
\begin{equation}
\begin{aligned}
\bigg(A^{k+1}+f^{k}+\frac{\partial{u_h}}{\partial{t}},v \bigg) & =\bigg(A^{k+1}+f^{k}+\frac{\partial{u_h}}{\partial{t}},v-I_h^{k+1}v \bigg) \\ &\lesssim \eta_{S_2,k+1}|||v|||.
\end{aligned}
\end{equation}
The next two terms give rise to parts of the space estimator via Theorem \ref{elliptic_apost}:
\begin{equation}
\begin{aligned}
l_{k+1}B\big(t^{k+1};\epsilon^{k+1},v \big)+l_kB\big(t^k;\epsilon^k,v \big) & \lesssim  ( l_k \eta_{S_1,k}+l_{k+1} \eta_{S_1,k+1})|||v|||.
\end{aligned}
\end{equation}
Using the definition of the bilinear form $B$ and the Cauchy-Schwarz inequality, the final four terms give rise to the time estimator:
\begin{equation}
\begin{aligned}
 l_{k+1}B\big(t^{k+1};u_h^{k+1},v \big)+l_kB\big(t^k;u_h^k,v \big) -B\big(t;u_h,v \big) & \leq \eta_{T_1,k+1}|||v|||, \\
\big(f^k-f\big(t;u_h \big)+l_k\big(A^{k+1}-A^k \big),v \big) & \leq \eta_{T_2,k+1}||v||.
\end{aligned}
\end{equation}
Setting $v=e_c$ in \eqref{equation2}, using the results above along with coercivity of the bilinear form $B$ and the Cauchy-Schwarz inequality we obtain
\begin{equation}
\begin{aligned}
& \frac{1}{2}\frac{d}{dt}||e_c||^2+|||e_c|||^2+(f(t;u )-f(t;u_h ),e_c )\lesssim   \bigg (\bigg|\bigg|\frac{\partial u_{h,d}}{\partial t}\bigg|\bigg|+\eta_{T_2,k+1} \bigg)||e_c|| \\ & +  ( l_k\eta_{S_1,k}+ l_{k+1}\eta_{S_1,k+1} + \eta_{S_2,k+1}+  \eta_{T_1,k+1} )|||e_c||| +B(t;u_{h,d},e_c).
\end{aligned}
\end{equation}
Using continuity of the bilinear form $B$ and Theorem \ref{nonconforming_bound} yields
\begin{equation}
\begin{aligned}
\label{equation3}
& \frac{1}{2}\frac{d}{dt}||e_c||^2+|||e_c|||^2+(f(t;u)-f(t;u_h),e_c )\lesssim  (\eta_{S_4,k+1}+\eta_{T_2,k+1})||e_c|| \\ &+ ( l_k\eta_{S_1,k}+ l_{k+1}\eta_{S_1,k+1} + \eta_{S_2,k+1} +  \eta_{T_1,k+1} )|||e_c|||.
\end{aligned}
\end{equation}
Using Young's inequality and the definition of our estimators, we conclude that
\begin{equation}
\begin{aligned}
\label{equation3}
&\frac{1}{2}\frac{d}{dt}||e_c||^2+\frac{1}{2}|||e_c|||^2+(f(t;u)-f(t;u_h),e_c ) \lesssim  \frac{1}{2}\eta^2_A + \eta_B||e_c||.
\end{aligned}
\end{equation}
We must now deal with the nonlinear term. We begin by noting that
\begin{equation}
\begin{aligned}
\label{horriblenonlinear}
& (f(t;u)-f(t;u_h),e_c) = (f(t;e_c-u_{h,d}+u_h)-f(t;u_h),e_c) = T_1 + T_2 ,
\end{aligned}
\end{equation}
where
\begin{equation}
\begin{aligned}
\notag
 T_1 & :=  \big(2u_h u_{h,d},e_c \big) - \big(u_{h,d}^2,e_c \big) , \\
T_2 & :=  -\big(2u_he_c,e_c \big) + \big(2e_c u_{h,d},e_c\big) -  \big(e_c^2,e_c \big).
\end{aligned}
\end{equation}
We can write the contributions to $T_1$ elementwise and then use the Cauchy-Schwarz inequality and Theorem \ref{ncbounds} to conclude that
\begin{equation}
\begin{aligned}
\label{Tbound1}
|T_1| & \leq \left (\sum_{K \in \zeta^k \cup \zeta^{k+1}} \big (2||u_h||_{L^{\infty}(K)}+||u_{h,d}||_{L^{\infty}(K)}  \big )^2||u_{h,d}||^2_{L^2(K)} \right )^{1/2}||e_c|| \\
& \lesssim \eta_{S_3,k+1}||e_c||.
\end{aligned}
\end{equation}
To bound $T_2$, we use H{\"o}lder's inequality along with Theorem \ref{ncbounds} to conclude that
\begin{equation}
\begin{aligned}
\label{Tbound2}
|T_2| & \lesssim \big (2||u_h||_{L^{\infty}(\Omega)} + ||[u_h]||_{L^{\infty}(\Gamma)} \big)||e_c||^2+||e_c||^3_{L^3(\Omega)}.
\end{aligned}
\end{equation}
Combining \eqref{equation3}, \eqref{horriblenonlinear}, \eqref{Tbound1} and \eqref{Tbound2} we obtain 
\begin{equation}
\begin{aligned}
\label{PDEerrorequation}
\frac{d}{dt}||e_c||^2+|||e_c|||^2 & \lesssim  \eta^2_A + 2\eta_B ||e_c|| + 2\sigma_{\Omega}||e_c||^2 + 2||e_c||^3_{L^3(\Omega)},
\end{aligned}
\end{equation}
with 
\begin{equation}
\begin{aligned}
\notag
\sigma_{\Omega} := 2||u_h||_{L^{\infty}(\Omega)} +C||[u_h]||_{L^{\infty}(\Gamma)},
\end{aligned}
\end{equation}
where $C$ is a generic constant. We now note the Gagliardo-Nirenberg inequality which coupled with Young's inequality yields
\begin{equation}
\begin{aligned}
\label{gagnir}
 ||e_c||^3_{L^3(\Omega)} & \leq K||e_c||^2||\nabla{e_c}|| \leq \frac{1}{2}|||e_c|||^2 + \frac{K^2}{2\varepsilon}||e_c||^4.
 \end{aligned}
\end{equation}
Combining these results with the generic constant $C$ yields
\begin{equation}
\begin{aligned}
\label{equation4}
\frac{d}{dt}||e_c||^2 & \leq  C\eta^2_A + 2C\eta_B ||e_c|| + 2\sigma_{\Omega}||e_c||^2 + K^2 \varepsilon^{-1}||e_c||^4.
\end{aligned}
\end{equation}
We now need a way to deal with the $L^2$ norms appearing on the right-hand side. To do this, we use a variant of Gronwall's inequality (see Theorem \ref{Gronwall}). In order to apply this to \eqref{equation4}, we need to introduce some new notation. We define
\begin{equation}
\begin{aligned}
\notag
G_{k+1} & := \exp \Bigg(\int_{t^k}^{t^{k+1}} \! \sigma_{\Omega} \, ds \Bigg ), \\
H_{k+1}(t) & := \exp \Bigg ({K^2}{\varepsilon}^{-1}\int_{t^k}^t \! ||e_c||^2 \, ds  \Bigg).
\end{aligned}
\end{equation}
Then application of Theorem \ref{Gronwall} to \eqref{equation4} for $t \in \big[t^k,t^{k+1}\big]$ yields
\begin{equation}
\begin{aligned}
\label{equation5}
||e_c(t)|| & \leq H_{k+1}(t)G_{k+1}\phi_{k+1},
\end{aligned}
\end{equation}
where
\begin{equation}
\begin{aligned}
\notag
\phi_{k+1} &:= \left (\big|\big|e_c \big(t^k \big)\big|\big|^2 + C\int_{t^k}^{t^{k+1}} \! \eta_A^2 \, ds \right)^{1/2}+ C \int_{t^k}^{t^{k+1}} \! \eta_B \, ds.
\end{aligned}
\end{equation}
We now need to remove the term $H_{k+1}$ from \eqref{equation5} in order to construct a usable estimator. In order to do this, we use a continuation argument. To that end, we define the set
\begin{equation}
\begin{aligned}
\notag
I_{k+1} & := \big \{t \in \big[t^k,t^{k+1} \big] \mbox{ } \big | \mbox{ } ||e_c||_{L^{\infty}(t^k,t;L^2(\Omega))} \leq \delta_{k+1}G_{k+1} \phi_{k+1} \big \},
\end{aligned}
\end{equation}
where $\delta_{k+1} > 1$ should be chosen as small as possible. We know that $I_{k+1}$ is non-empty since $t^k \in I_{k+1}$ and obviously bounded. Denote the maximal value of $t$ in $I_{k+1}$ by $t^*$ and assume that $t^* < t^{k+1}$ then from \eqref{equation5} we have
\begin{equation}
\begin{aligned}
\label{equation6}
||e_c||_{L^{\infty}(t^k,t^*;L^2(\Omega))} & \leq H(t^*)G_{k+1}\phi_{k+1} \\ 
& \leq \exp\Big (K^2 {\varepsilon}^{-1}\tau_{k+1}||e_c||_{L^{\infty}(t^k,t^*;L^2(\Omega))}^2  \Big)G_{k+1}\phi_{k+1} \\ 
& \leq \exp\Big (K^2 {\varepsilon}^{-1}\tau_{k+1}\delta^2_{k+1}G^2_{k+1}\phi^2_{k+1} \Big )G_{k+1}\phi_{k+1}.
\end{aligned}
\end{equation}
Now, suppose that
\begin{equation}
\begin{aligned}
\exp \Big (K^2 {\varepsilon}^{-1}\tau_{k+1}\delta^2_{k+1}G^2_{k+1}\phi^2_{k+1} \Big )G_{k+1}\phi_{k+1} & < \delta_{k+1}G_{k+1} \phi_{k+1},
\end{aligned}
\end{equation}
or equivalently,
\begin{equation}
\begin{aligned}
\label{cond1}
K^2 {\varepsilon}^{-1}\tau_{k+1}\delta^2_{k+1}G^2_{k+1}\phi^2_{k+1} & < \log(\delta_{k+1}),
\end{aligned}
\end{equation}
then $t^*$ cannot be the maximal value of $t$ in $I_{k+1}$ because we just showed that $\displaystyle ||e_c||_{L^{\infty}(t^k,t^*;L^2(\Omega))}$ satisfies a bound strictly less than that assumed in the set $I_{k+1}$ {\bf--} a contradiction. Therefore, providing \eqref{cond1} is satisfied, $I_{k+1} = \big[t^k,t^{k+1}\big]$ and we have our desired error bound once we select $\delta_{k+1}$. Taking the limit, we can select $\delta_{k+1}$ to be the minimiser of
\begin{equation}
\begin{aligned}
\label{cond2}
K^2 {\varepsilon}^{-1}\tau_{k+1}\delta^2_{k+1}G^2_{k+1}\phi^2_{k+1} - \log(\delta_{k+1}) = 0, \qquad \delta_{k+1} > 1.
\end{aligned}
\end{equation}
In order to obtain our error estimator, all that remains is to estimate $\phi_1$. Application of Theorem \ref{ncbounds} and the triangle inequality yields
\begin{equation}
\begin{aligned}
||e_c(0)||^2 & \lesssim ||e(0)||^2 + ||u_{h,d}(0)||^2 \leq C\eta_I^2.
\end{aligned}
\end{equation}
Therefore, if we (re)define $\phi_1$ to be 
\begin{equation}
\begin{aligned}
\notag
\phi_{1} &:= \left (C\eta^2_I + C\int_{t^0}^{t^1} \! \eta_A^2 \, ds \right)^{1/2}+ C\int_{t^0}^{t^{1}} \! \eta_B \, ds,
\end{aligned}
\end{equation}
then we have
\begin{equation}
\begin{aligned}
\big|\big|e_c\big(t^1\big)\big|\big| & \leq ||e_c||_{L^{\infty}(t^0,t^1;L^2(\Omega))} \leq \psi_1,
\end{aligned}
\end{equation}
where $\psi_1 := \delta_1G_1\phi_1$. In the same way, if we (re)define 
\begin{equation}
\begin{aligned}
\notag
\phi_{k+1} & := \left (\psi^2_k + C\int_{t^k}^{t^{k+1}} \! \eta_A^2 \, ds \right)^{1/2}+ C \int_{t^k}^{t^{k+1}} \! \eta_B \, ds, \\
\psi_{k+1} &:= \delta_{k+1}G_{k+1}\phi_{k+1},
\end{aligned}
\end{equation}
then we have
\begin{equation}
\begin{aligned}
\big|\big|e_c\big(t^{k+1}\big)\big|\big| \leq ||e_c||_{L^{\infty}(t^k,t^{k+1};L^2(\Omega))} \leq \psi_{k+1}.
\end{aligned}
\end{equation}
Hence, the following result holds.
\begin{theorem}
\label{MainTheorem}
The error of the IMEX dG discretisation of problem \eqref{blowup_model_weak} satisfies
\begin{equation}
\begin{aligned}
\notag
||e||_{L^{\infty}(0,T;L^2(\Omega))} \lesssim \psi_n + \esssup_{0 \leq t \leq T} \left ( \sum_{E \subset \Gamma} h_E ||[u_h]||^2_{L^2(E)} \right)^{1/2},
\end{aligned}
\end{equation}
providing that the solution to \eqref{cond2} exists for all time steps.
\end{theorem}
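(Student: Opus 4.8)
The plan is to assemble the per-interval estimates already produced by the continuation argument into a single bound over $[0,T]$, and then to account for the non-conforming part of the dG solution. First I would record what the preceding derivation has delivered: conditionally on the solvability of \eqref{cond2}, the bootstrap argument strips the factor $H_{k+1}$ from \eqref{equation5} on each interval and yields $\|e_c\|_{L^\infty(t^k,t^{k+1};L^2(\Omega))} \le \psi_{k+1}$, where $\psi_{k+1} = \delta_{k+1}G_{k+1}\phi_{k+1}$ with $\phi_{k+1}$ defined recursively through $\psi_k$. The first genuine task is to turn these local-in-time bounds into a global one. For this I would establish that the sequence $\{\psi_k\}$ is non-decreasing: since $\sigma_\Omega \ge 0$ we have $G_{k+1}\ge 1$; since $\delta_{k+1}>1$; and since the recursive definition gives $\phi_{k+1} = (\psi_k^2 + C\int_{t^k}^{t^{k+1}}\eta_A^2)^{1/2} + C\int_{t^k}^{t^{k+1}}\eta_B \ge \psi_k$, it follows that $\psi_{k+1} \ge \phi_{k+1} \ge \psi_k$. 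Because the $L^\infty(0,T;L^2(\Omega))$ norm is the maximum of the interval norms, monotonicity gives $\max_{0\le k\le n-1}\psi_{k+1} = \psi_n$, hence $\|e_c\|_{L^\infty(0,T;L^2(\Omega))} \le \psi_n$.

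Second, I would pass from $e_c$ to the true error via the decomposition $e = e_c - u_{h,d}$ and the triangle inequality, $\|e\|_{L^\infty(0,T;L^2(\Omega))} \le \|e_c\|_{L^\infty(0,T;L^2(\Omega))} + \|u_{h,d}\|_{L^\infty(0,T;L^2(\Omega))}$. The non-conforming contribution is controlled by the second estimate of Theorem \ref{ncbounds}: summing $\|u_{h,d}\|^2_{L^2(K)} \lesssim \sum_{E\subset\tilde K_E\setminus\partial\Omega} h_E\|[u_h]\|^2_{L^2(E)}$ over all $K \in \zeta^k\cup\zeta^{k+1}$ and recalling that $\Gamma$ is the union of the edges of this union mesh yields $\|u_{h,d}(t)\|^2 \lesssim \sum_{E\subset\Gamma} h_E\|[u_h]\|^2_{L^2(E)}$ for each $t \in (t^k,t^{k+1}]$; taking the essential supremum over $t \in [0,T]$ produces exactly the second term of the claimed bound. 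Combining with $\|e_c\|_{L^\infty(0,T;L^2(\Omega))} \le \psi_n$ completes the estimate.

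The conceptually delicate step is the continuation (bootstrap) argument that removes $H_{k+1}$, and this is precisely where the conditional hypothesis enters: the whole chain of per-interval bounds is valid only while a root $\delta_{k+1}>1$ of \eqref{cond2} exists, which is why the theorem is stated as holding \emph{provided} \eqref{cond2} is solvable at every time step. That delicate part is already carried out in the text preceding the statement, so the remaining work in the proof proper is the comparatively routine bookkeeping described above — the monotonicity of the $\psi_k$ and the elementwise-to-global summation of the non-conforming jump contributions — together with the observation that the $L^\infty(L^2)$ norm over $[0,T]$ is attained as the largest of the interval norms.
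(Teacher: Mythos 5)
Your proposal is correct and follows essentially the same route as the paper, whose proof is stated in one line as ``follows from the above derivations, the triangle inequality and the bounds in Theorem \ref{ncbounds}''; you have simply supplied the bookkeeping the paper leaves implicit. In particular, your observation that $\psi_{k+1}=\delta_{k+1}G_{k+1}\phi_{k+1}\ge\phi_{k+1}\ge\psi_k$ (using $\delta_{k+1}>1$, $G_{k+1}\ge 1$ and the recursive definition of $\phi_{k+1}$), so that the $L^\infty(0,T;L^2(\Omega))$ norm of $e_c$ is controlled by $\psi_n$ alone rather than $\max_k\psi_k$, is exactly the detail needed to justify the stated form of the bound.
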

\begin{proof}
Follows from the above derivations, the triangle inequality and the bounds in Theorem \ref{ncbounds}.
\end{proof}

The estimator produced above is suboptimal with respect to the mesh-size as it is only spatially optimal in the $L^2 \big(H^1 \big)$ norm. It is possible to conduct a continuation argument for the $L^2 \big(H^1\big)$ norm rather than the $L^{\infty}\big(L^2 \big)$ norm if one desires a spatially optimal error estimator; this is stated for completeness in the theorem below. However, the resulting $\delta$ equation was observed to be more restrictive with regards to how quickly the blow-up time is approached. For this reason, we opt to use the a posteriori error estimator of Theorem~\ref{MainTheorem} in the adaptive algorithm introduced in the next section.

\smallskip

\begin{theorem}
The error of the IMEX dG discretisation of problem \eqref{blowup_model_weak} satisfies
\begin{equation}
\notag
\left( ||e(T)||^2 + \int_0^T \! \varepsilon||\nabla e||^2 \, dt \right)^{1/2} \lesssim \sum_{k=1}^n \psi_k + \esssup_{0 \leq t \leq T} 
\left( \sum_{E \subset \Gamma} h_E ||[u_h]||^2_{L^2(E)} \right)^{1/2}.
\end{equation}
Furthermore, close to the blow-up time where $||e(T)|| = ||e||_{L^{\infty}(0,T;L^2(\Omega))}$ we have
\begin{equation}
\notag
||e||_*  \lesssim \sum_{k=1}^n \psi_k + \esssup_{0 \leq t \leq T} 
\left( \sum_{E \subset \Gamma} h_E ||[u_h]||^2_{L^2(E)} \right)^{1/2},
\end{equation}
where $\psi_k$, $k=1$, ..., $n$, is defined recursively with $\psi_0 = C\eta_I$ and
\begin{equation}
\begin{aligned}
\notag
\phi_{k} & := \left (\psi^2_{k-1} + C\int_{t^{k-1}}^{t^{k}} \! \eta_A^2 \, ds + C \int_{t^{k-1}}^{t^{k}} \! \eta^2_B \, ds \right)^{1/2}, \\
{G}_{k} & := \exp(\tau_k / 2) \exp \left(\int_{t^{k-1}}^{t^{k}} \! \sigma_{\Omega} \, ds \right ), \\
\psi_{k} &:= \delta_{k}{G}_{k}\phi_{k},
\end{aligned}
\end{equation}
provided that $\delta_k > 1$ which is the smallest root of the equation
\begin{equation}
\begin{aligned}
\notag
K\varepsilon^{-1/2}\tau^{1/2}_{k}\delta_{k}G_{k}\phi_{k} - \log(\delta_{k})=0,
\end{aligned}
\end{equation}
exists for all time steps.
\end{theorem}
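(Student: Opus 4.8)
The plan is to run a continuation argument parallel to the one behind Theorem \ref{MainTheorem}, but carried out in the full energy norm rather than in the $L^\infty(L^2)$ norm, which is what upgrades the spatial estimate to optimal order. The starting point is the differential inequality \eqref{PDEerrorequation}, which I would keep in the sharp form $\frac{d}{dt}||e_c||^2 + |||e_c|||^2 \lesssim \eta_A^2 + \eta_B||e_c|| + \sigma_\Omega||e_c||^2 + ||e_c||^3_{L^3(\Omega)}$, crucially \emph{retaining} the energy term $|||e_c|||^2$ on the left instead of spending it to absorb the cubic term as was done in \eqref{gagnir}. The term $\eta_B||e_c||$ is split by Young's inequality into $\tfrac12\eta_B^2$, which feeds the $\int\eta_B^2$ contribution to $\phi_k$, and $\tfrac12||e_c||^2$, a constant multiple of $||e_c||^2$ that is precisely what produces the extra factor $\exp(\tau_k/2)$ in $G_k$.

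Next I would handle the nonlinearity through the Gagliardo--Nirenberg inequality, writing $||e_c||^3_{L^3(\Omega)} \le K||e_c||^2||\nabla e_c||$ and reading $K||\nabla e_c||$ as a time-dependent coefficient of $||e_c||^2$. Grouping this with $\sigma_\Omega||e_c||^2$ and the constant from the $\eta_B$ split, I first drop the (favourable) energy term and apply Theorem \ref{Gronwall} pointwise in $t$ with $u=||e_c||$; this yields $||e_c(t)|| \le H_{k+1}(t)G_{k+1}\phi_{k+1}$, where $G_{k+1}=\exp(\tau_{k+1}/2)\exp(\int_{t^k}^{t^{k+1}}\sigma_\Omega\,ds)$ collects the integrable coefficients, $\phi_{k+1}=(\psi_k^2+C\int\eta_A^2+C\int\eta_B^2)^{1/2}$ collects the forcing, and the nonlinear contribution is isolated in $H_{k+1}(t)=\exp(K\int_{t^k}^t||\nabla e_c||\,ds)$. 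Reinstating $|||e_c|||^2$ and integrating the same inequality over $[t^k,t]$ then controls $\int_{t^k}^t|||e_c|||^2$ by the identical right-hand side, so both pieces of the per-interval energy norm $||e_c||_{*,[t^k,t]}:=(||e_c||_{L^\infty(t^k,t;L^2(\Omega))}^2+\int_{t^k}^t|||e_c|||^2)^{1/2}$ are governed simultaneously.

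The continuation step proceeds exactly as in the derivation of \eqref{cond2}: define $I_{k+1}:=\{t\in[t^k,t^{k+1}]:||e_c||_{*,[t^k,t]}\le\delta_{k+1}G_{k+1}\phi_{k+1}\}$, assume its maximal element $t^*$ satisfies $t^*<t^{k+1}$, and bound the offending factor $H_{k+1}(t^*)$ using the Cauchy--Schwarz inequality in time together with $\varepsilon^{1/2}||\nabla e_c||\le|||e_c|||$, namely $\int_{t^k}^{t^*}||\nabla e_c||\,ds \le \tau_{k+1}^{1/2}\varepsilon^{-1/2}||e_c||_{*,[t^k,t^*]} \le \tau_{k+1}^{1/2}\varepsilon^{-1/2}\delta_{k+1}G_{k+1}\phi_{k+1}$. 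Requiring $H_{k+1}(t^*)<\delta_{k+1}$ reproduces the displayed equation $K\varepsilon^{-1/2}\tau_k^{1/2}\delta_kG_k\phi_k-\log(\delta_k)=0$, and the strict-inequality contradiction forces $I_{k+1}=[t^k,t^{k+1}]$, giving $||e_c||_{*,[t^k,t^{k+1}]}\le\psi_{k+1}$. Summing the squared energy parts over $k$, so that $\int_0^T\varepsilon||\nabla e_c||^2 \le \sum_k\psi_k^2 \le (\sum_k\psi_k)^2$, and combining with $||e_c(T)||\le\psi_n$ gives the conforming estimate; Theorem \ref{ncbounds} then converts $e_c$ back to $e=e_c-u_{h,d}$ and supplies the $\esssup$ jump term. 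The \emph{furthermore} assertion is immediate, since under $||e(T)||=||e||_{L^\infty(0,T;L^2(\Omega))}$ the left-hand side coincides with $||e||_*$.

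The main obstacle, and the reason this bound is stated separately, is the tension between retaining the energy norm and keeping the $\delta$-equation solvable. Running the continuation in $||\cdot||_{*}$ rather than in $L^\infty(L^2)$ turns the quartic term underlying Theorem \ref{MainTheorem} into the cubic $E^3$ handled above, which is what trades the exponent $K^2\varepsilon^{-1}\tau\delta^2G^2\phi^2$ of \eqref{cond1} for the sharper-looking $K\varepsilon^{-1/2}\tau^{1/2}\delta G\phi$. However, verifying that this new scalar equation still possesses a root $\delta_k>1$, by the same convexity argument used for the $\delta$-equation in Chapter 4, imposes a genuinely stronger restriction on $\tau_{k+1}$ than the one in Theorem \ref{MainTheorem}; it is precisely this stronger restriction, rather than any difficulty in the energy bookkeeping, that makes the estimate less convenient for driving the adaptive approach to the blow-up time.
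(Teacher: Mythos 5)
Your proposal is correct and takes essentially the same approach as the paper: the paper's own proof is a one-line remark that the argument is ``completely analogous to that of Theorem \ref{MainTheorem}'' with the continuation argument conducted for the $L^2(H^1)$ norm, and your reconstruction---retaining the energy term $|||e_c|||^2$ instead of spending it on the cubic term, splitting $\eta_B||e_c||$ by Young's inequality to account for the $\exp(\tau_k/2)$ factor in $G_k$ and the squared $\eta_B$ contribution to $\phi_k$, and bounding $H_{k+1}(t^*)$ via the Cauchy--Schwarz inequality in time to arrive at the stated $\delta$-equation---supplies exactly the details the paper omits. Your closing remark that the new $\delta$-equation is the more restrictive ingredient also matches the paper's own commentary preceding the theorem.
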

\begin{proof}
The proof is completely analogous to that of Theorem \ref{MainTheorem} and follows from \eqref{PDEerrorequation} by conducting a continuation argument for the $L^2 \big(H^1 \big)$ norm.
\end{proof}

\begin{remark}
Although we considered a very simple nonlinearity, the continuation argument  in this section can be modified to include any nonlinearity of the form $f(u)=f_0 + f_1u+f_2u^2+f_3u^3$. With a nonlinearity of this form, we would have to deal with the term $||e_c||^4_{L^4(\Omega)}$ in the error equation. From the Gagliardo-Nirenberg inequality, we have
\begin{equation}
\begin{aligned}
\notag
||e_c||^4_{L^4(\Omega)} \leq ||e_c||^2||\nabla e_c||^2.
\end{aligned}
\end{equation}
After application of Gronwall's inequality, we have a term of the form
\begin{equation}
\begin{aligned}
\notag
\exp\Bigg(\int_{t^k}^{t^{k+1}} \! ||\nabla e_c||^2 \, ds \Bigg),
\end{aligned}
\end{equation}
which can be bounded through a continuation argument that uses the $L^2(H^1)$ norm. Any higher order nonlinearities could not be dealt with in this way.

\begin{remark}
 It is also worth noting that although we are primarily interested in looking at blow-up problems, the estimators developed in this section are still perfectly valid if blow-up does not occur.
\end{remark}

\end{remark}

\end{section}
\begin{section}{An adaptive algorithm}\label{adaptive_sec}
The a posteriori bounds presented in the previous section will be used to drive a space-time adaptive algorithm that is designed to approximate the blow-up time of problem \eqref{blowup_model_weak}. The pseudocode of this algorithm is given in Algorithm 5.1.
\begin{algorithm} \label{adaptive_algorithm}
  \begin{algorithmic}[1]
     \State {\bf Input:} $\varepsilon$, ${\bf a}$, $f_0$, $u_0$, $\Omega$,  $\tau_1$, $\zeta^0$, $\gamma$, ${\tt ttol^+}$, ${\tt ttol^-}$, ${\tt stol^+}$, ${\tt stol^-}$.
     \State Calculate $u_h^0$.
 \State Calculate $u_h^1$ from $u_h^0$.
    \While {$\displaystyle \int_{t^0}^{t^1} \! \eta^2_{T_2,1} \, ds > {\tt ttol^+} \text{ OR } \max_K \eta^2_{S_1,1} |_K > {\tt stol^+} $}
    \State Modify $\zeta^0$ by refining all elements such that $\eta^2_{S_1,1} |_K > {\tt stol^+}$ and coarsening all elements such that $\eta^2_{S_1,1} |_K < {\tt stol^-}$.
\If   {$\displaystyle \int_{t^0}^{t^1} \! \eta^2_{T_2,1} \, ds > {\tt ttol^+}$}
   
 \State $\tau_{1} \leftarrow \tau_{1}/2$.

    \EndIf

     \State Calculate $u_h^0$.
 \State Calculate $u_h^1$ from $u_h^0$.
    \EndWhile

\State Calculate $\delta_1$.

 \State Multiply ${\tt ttol^+}$, ${\tt ttol^-}$, ${\tt stol^+}$, ${\tt stol^-}$ by the factor $G_{1}$.

     \State  Set $j = 0$,  $\zeta^1 = \zeta^0$.

\While {$\delta_{j+1}$ exists}
\State $ j \leftarrow j+1$. 
\State $\tau_{j+1} = \tau_j$. 

    \State Calculate $u_h^{j+1}$ from $u_h^j$.
    \If {  $\displaystyle\int_{t^j}^{t^{j+1}} \! \eta^2_{T_2,j+1} \, ds> {\tt ttol^+}$ }
     
 \State $\tau_{j+1} \leftarrow \tau_{j+1}/2$.
\State Calculate $u_h^{j+1}$ from $u_h^j$.
    \EndIf
    \If {  $\displaystyle\int_{t^j}^{t^{j+1}} \! \eta^2_{T_2,j+1} \, ds < {\tt ttol^-}$ }
     
 \State $\tau_{j+1} \leftarrow 2\tau_{j+1}$.
\State Calculate $u_h^{j+1}$ from $u_h^j$.
    \EndIf
    \State Create $\zeta^{j+1}$ from $\zeta^j$ by refining all elements such that $\eta^2_{S_1,j+1} |_K > {\tt stol^+}$ and coarsening all elements such that $\eta^2_{S_1,j+1} |_K < {\tt stol^-}$.
    \State Calculate $u_h^{j+1}$ from $u_h^j$.
    \State Calculate $\delta_{j+1}$.
 \State Multiply ${\tt ttol^+},{\tt ttol^-},{\tt stol^+},{\tt stol^-}$ by the factor $G_{j+1}$.
\EndWhile

\State {\bf Output:} $j$, $t^j$, $\big|\big|u_h\big (t^j \big)\big|\big|_{L^{\infty}(\Omega)}$.
  \end{algorithmic}
  \caption{Space-time adaptivity}
\end{algorithm}

As in Algorithm 3.1, both mesh refinement and coarsening are driven by the term $\eta_{S_1,k+1}$. The size of the elemental contributions to $\eta_{S_1,k+1}$ determines whether the elements are to be refined, coarsened or neither depending on two spatial thresholds ${\tt stol^+}$ and ${\tt stol^-}$. Similarly, $\eta_{T_2,k+1}$ is used to drive temporal refinement and coarsening subject to two temporal thresholds ${\tt ttol^+}$ and ${\tt ttol^-}$ on each time interval. As in Algorithm 4.2, all spatial and temporal thresholds are increased by the factor $G_{k+1}$ on the interval $\big [t^k,t^{k+1} \big]$ after the solution has been calculated. The algorithm then advances by using the previous (now fixed) time step length as a reference to compute the next approximation. The algorithm continues in this way until \eqref{cond2} no longer has a solution and the algorithm then terminates and outputs the total number of time steps, the final time and the $L^{\infty}(L^{\infty})$ norm of the IMEX dG solution.

\end{section}

\begin{section}{Numerical Experiments}

We shall numerically investigate the presented a posteriori bound and the \mbox{ } performance of the adaptive algorithm through an implementation based on the {\tt deal.II} finite element library \cite{BHK07}. All the numerical experiments have been performed using the  high performance computing facility ALICE at the University of Leicester. For all the numerical experiments, we use polynomials of degree five. Finally, we set ${\tt ttol}^- = 0.01*{\tt ttol}^+$ and ${\tt stol}^- = 10^{-6}*{\tt stol}^+$ as our temporal and spatial coarsening parameters.

\begin{remark}
All unknown constants in the error estimators are set equal to one as is standard in a posteriori error analysis. It is believed that this is reasonable despite the fact that condition \eqref{cond1} is technically a strict limitation on whether or not we can continue our computations.
\end{remark}

\begin{subsection}{Example 1}
Let $\Omega = (-4,4)^2$, $\varepsilon = 1$, ${\bf a} = (0,0)^T$, $f_0 = 0$ and $u_0 = 10 e^{-2(x^2+y^2)}$. The initial condition $u_0$ is chosen to be a Gaussian blob centred on the origin that is chosen `large enough' so that the solution exhibits blow-up; the blow-up set consists of a single point corresponding to the centre of the Gaussian. In order to observe how the error estimator behaves asymptotically, we choose a very small spatial threshold so that the spatial contribution to the error and the estimator are small. We then reduce the temporal threshold and see how far we can advance towards the blow-up time. The results are given in Table \ref{blowupdata1}.

\begin{table}[ht]
\caption{Example 1 Results} 
\centering 
\begin{tabular}{c c c c c} 
\hline\hline 
${\tt ttol^+}$ & Time Steps & Estimator & Final Time & $||u_h(T)||_{L^{\infty}(\Omega)}$ \\ 
\hline 
1 & 3 & 9.5 & 0.09375 & 12.244 \\ 
0.125 & 8 & 24.6 & 0.12500 & 14.742 \\
$(0.125)^2$ & 19 & 54.0 & 0.14844 & 18.556 \\
$(0.125)^3$ & 42 & 66.7 & 0.16406 & 23.468 \\
$(0.125)^4$ & 92 & 218.5 & 0.17969 & 32.108 \\
$(0.125)^5$ & 195 & 1142.4 & 0.19043 & 44.217 \\
$(0.125)^6$ & 405 & 1506.0 & 0.19775 & 60.493 \\
$(0.125)^7$ & 832 & 1754.1 & 0.20313 & 83.315 \\
$(0.125)^8$ & 1698 & 5554.2 & 0.20728 & 117.780 \\
$(0.125)^9$ & 3443 & 6020.4 & 0.21014 & 165.833 \\
$(0.125)^{10}$ & 6956 & 33426.7 & 0.21228 & 238.705 \\
$(0.125)^{11}$ & 14008 & 36375.0 & 0.21375 & 343.078 \\
$(0.125)^{12}$ & 28151 & 66012.8 & 0.21478 & 496.885 \\
$(0.125)^{13}$ & 56489 & 157300.0 & 0.21549 & 722.884 \\
\hline 
\end{tabular}
\label{blowupdata1}
\end{table}

We know that asymptotically the solutions to \eqref{blowup_model_weak} behave the same temporally as the solutions to \eqref{ODE} (at least in the case of zero convection) \cite{H11}. This means that if our error estimator is good, we would expect to observe similar rates for $\lambda$ to those seen in Chapter 4. Although we do not know the blow-up time for this problem, we observe from Table \ref{blowupdata1} that 
\begin{equation}
\begin{aligned}
\notag
||u_h||_{L^{\infty}(0,T;L^{\infty}(\Omega))} \propto N^{1/2}.
\end{aligned}
\end{equation}
From \cite{H11}, we know the relationship between the magnitude of the exact solution in the $L^{\infty}(L^{\infty})$ norm and the distance from the blow-up time. Thus, under the assumption that the numerical solution is scaling like the exact solution we get
\begin{equation}
\begin{aligned}
\notag
\lambda({\tt ttol^+},N) \approx ||u||^{-1}_{L^{\infty}(0,T;L^{\infty}(\Omega))} \approx ||u_h||^{-1}_{L^{\infty}(0,T;L^{\infty}(\Omega))}.
\end{aligned}
\end{equation}
Therefore, we conjecture that
\begin{equation}
\begin{aligned}
\notag
\lambda({\tt ttol^+},N) \propto N^{-1/2}.
\end{aligned}
\end{equation}
This is obviously slower than the comparable results in Chapter 4 and a possible explanation for this will be discussed in the conclusions section.

\end{subsection}

\begin{subsection}{Example 2}

Let $\Omega = (-4,4)^2$, $\varepsilon = 1$, ${\bf a} = (1,1)^T$, $f_0 = -1$ and $u_0 = 0$. This numerical example is interesting to study as not much is known about blow-up problems that contain convection. Here, the solution behaves as the solution to a standard convection-diffusion problem early on. As time progresses, the nonlinear term takes over and the solution begins to exhibit blow-up. As in Example 1, we choose to use a small spatial threshold so the spatial contribution to the error and the estimator are negligible. We then reduce the temporal threshold and see how far we can advance towards the blow-up time. The results are given in Table \ref{blowupdata2}.

\begin{table}[ht]
\caption{Example 2 Results} 
\centering 
\begin{tabular}{c c c c c} 
\hline\hline 
${\tt ttol^+}$ & Time Steps & Estimator & Final Time & $||u_h(T)||_{L^{\infty}(\Omega)}$ \\ 
\hline 
1 & 4 & 3.6 & 0.78125 & 0.886 \\ 
0.125 & 10 & 3.6 & 0.97656 & 1.322 \\
$(0.125)^2$ & 54 & 22.0 & 1.31836 & 3.269 \\
$(0.125)^3$ & 119 & 47.5 & 1.41602 & 5.107 \\
$(0.125)^4$ & 252 & 132.1 & 1.48163 & 8.059 \\
$(0.125)^5$ & 520 & 218.4 & 1.51711 & 11.819 \\
$(0.125)^6$ & 1064 & 664.6 & 1.54467 & 18.139 \\
$(0.125)^7$ & 2158 & 1466.1 & 1.56224 & 27.405 \\
$(0.125)^8$ & 4354 & 1421.7 & 1.57402 & 41.374 \\
$(0.125)^9$ & 8792 & 11423.0 & 1.58243 & 64.450 \\
$(0.125)^{10}$ & 17713 & 21497.8 & 1.58770 & 99.190 \\
$(0.125)^{11}$ & 35580 & 21097.1 & 1.59092 & 145.785 \\
$(0.125)^{12}$ & 71352 & 35862.0 & 1.59299 & 211.278 \\
\hline 
\end{tabular}
\label{blowupdata2}
\end{table}
\noindent From Table \ref{blowupdata2}, we draw the conclusion that
\begin{equation}
\begin{aligned}
\notag
||u_h||_{L^{\infty}(0,T;L^{\infty}(\Omega))} \propto N^{1/2}.
\end{aligned}
\end{equation}
Although not much is known about blow-up problems that contain convection, it is reasonable to assume that because the nonlinear term dominates close to the blow-up time, the same relationship between the magnitude of the exact solution in the $L^{\infty}(L^{\infty})$ norm and distance from the blow-up time exists as in Example 1. If this is true, then under the same reasoning as in Example 1 we conclude that
\begin{equation}
\begin{aligned}
\notag
\lambda({\tt ttol^+},N) \propto N^{-1/2}.
\end{aligned}
\end{equation}

\end{subsection}

\begin{subsection}{Example 3}

Let $\Omega = (-8,8)^2$, $\varepsilon = 1$, ${\bf a} = (0,0)^T$, $f_0 = 0$ and the `volcano' type initial condition be given by $u_0 = 10 \big(x^2+y^2 \big){e}^{-0.5(x^2+y^2)}$. The blow-up set for this example is a circle centred on the origin {\bf --} this induces layer type phenomena in the solution around the blow-up set as the blow-up time is approached making this example a good test of the spatial capabilities of the adaptive algorithm. Once more, we choose a small spatial threshold so that the spatial contribution to the error and the estimator are negligible. We then reduce the temporal threshold and see how far we can advance towards the blow-up time. The results are given in Table \ref{blowupdata3}.

\begin{table}[ht]
\caption{Example 3 Results} 
\centering 
\begin{tabular}{c c c c c} 
\hline\hline 
${\tt ttol^+}$ & Time Steps & Estimator & Final Time & $||u_h(T)||_{L^{\infty}(\Omega)}$ \\ 
\hline 
8 & 3 & 15 & 0.06250 & 10.371 \\ 
1 & 10 & 63 & 0.09375 & 14.194 \\
$0.125$ & 36 & 211 & 0.11979 & 21.842 \\
$(0.125)^2$ & 86 & 533 & 0.13412 & 31.446 \\
$(0.125)^3$ & 190 & 971 & 0.14388 & 45.122 \\
$(0.125)^4$ & 404 & 1358 & 0.15072 & 64.907 \\
$(0.125)^5$ & 880 & 5853 & 0.15601 & 98.048 \\
$(0.125)^6$ & 1853 & 10654 & 0.15942 & 146.162 \\
$(0.125)^7$ & 3831 & 21301 & 0.16176 & 219.423 \\
$(0.125)^8$ & 7851 & 143989 & 0.16336 & 332.849 \\
$(0.125)^9$ & 16137 & 287420 & 0.16442 & 505.236 \\
$(0.125)^{10}$ & 32846 & 331848 & 0.16512 & 769.652 \\
$(0.125)^{11}$ & 66442 & 626522 & 0.16558 & 1175.21 \\
\hline 
\end{tabular}
\label{blowupdata3}
\end{table}
\noindent Once again, the data implies that
\begin{equation}
\notag
\|u_h\|_{L^{\infty}(0,T;L^{\infty}(\Omega))} \propto N^{1/2}.
\end{equation}
Arguing as in Example 1, we again conclude that
\begin{equation}
\begin{aligned}
\notag
\lambda({\tt ttol^+},N) \propto N^{-1/2}.
\end{aligned}
\end{equation}

\begin{figure}[h]
\centering
\includegraphics[scale=0.55]{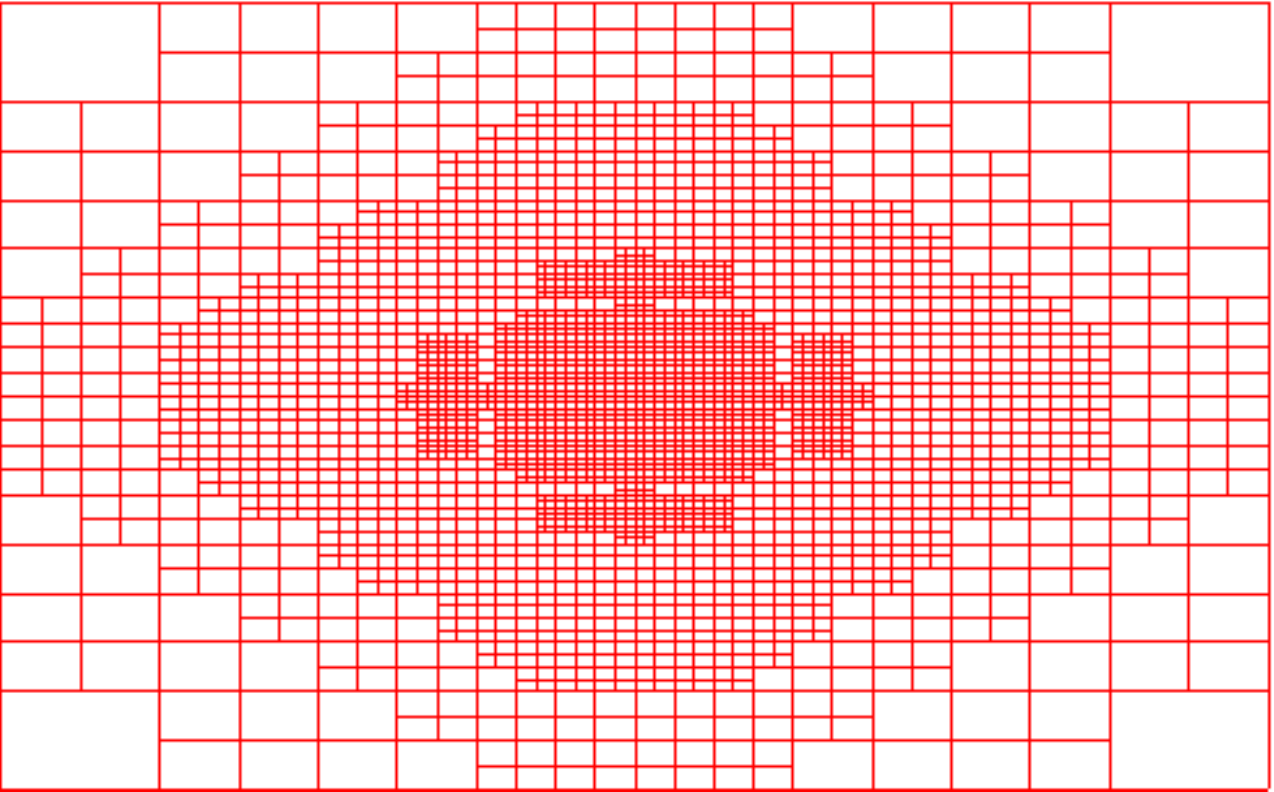} \, \includegraphics[scale=0.55]{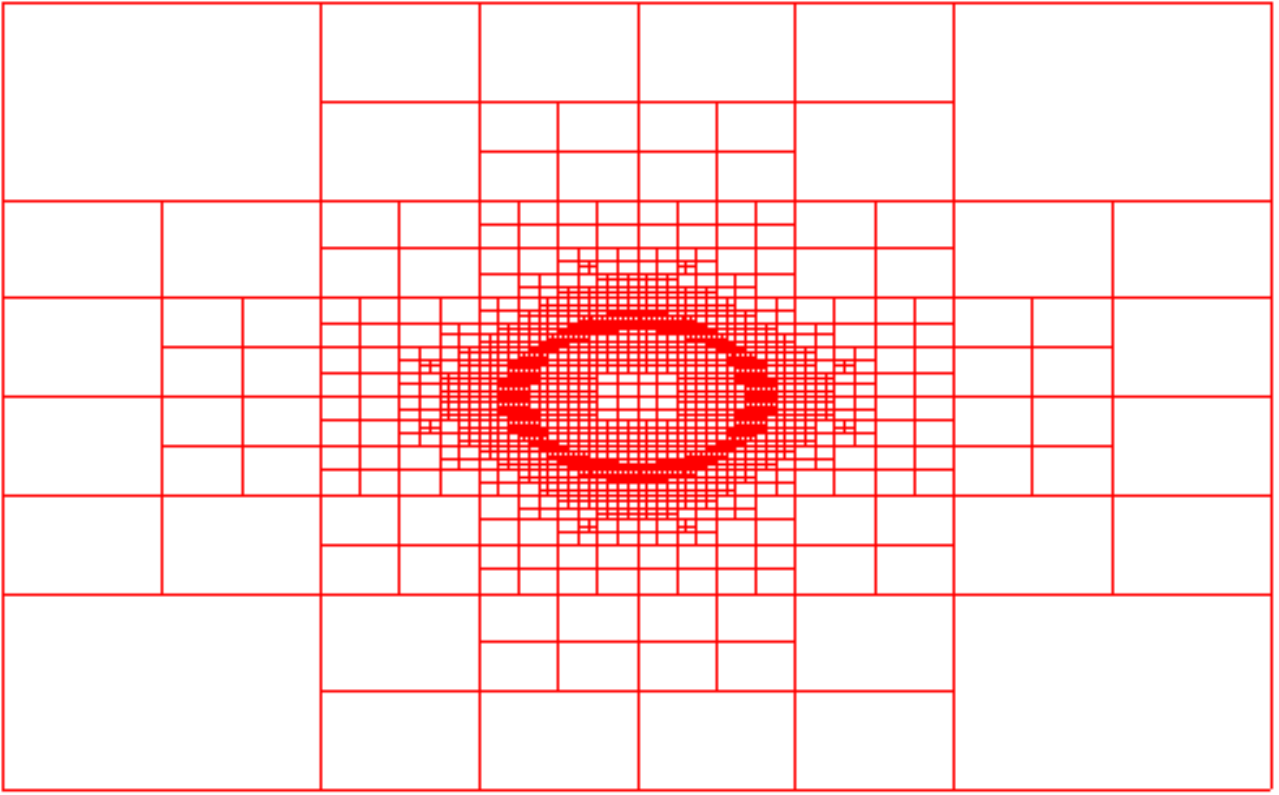}
\caption{Example 3: Initial (left) and final (right) meshes.}
\label{blowupmeshes}
\end{figure}

\noindent The numerical solution at $t=0$ and $t=T$ from the final numerical experiment (${\tt ttol^+} = (0.125)^{11}$) is shown in Figure \ref{blowupprofiles}; the corresponding  meshes are displayed in Figure \ref{blowupmeshes}. The initial mesh has a relatively homogenous distribution of elements which is to be expected since the initial condition is relatively smooth. In the final mesh, elements have been added in the vicinity of the blow-up set and removed elsewhere, notably near the origin. The distribution of elements in the final mesh strongly indicates that the adaptive algorithm is adding and removing elements in an efficient manner.

\begin{figure}
\centering
\includegraphics[scale=0.16]{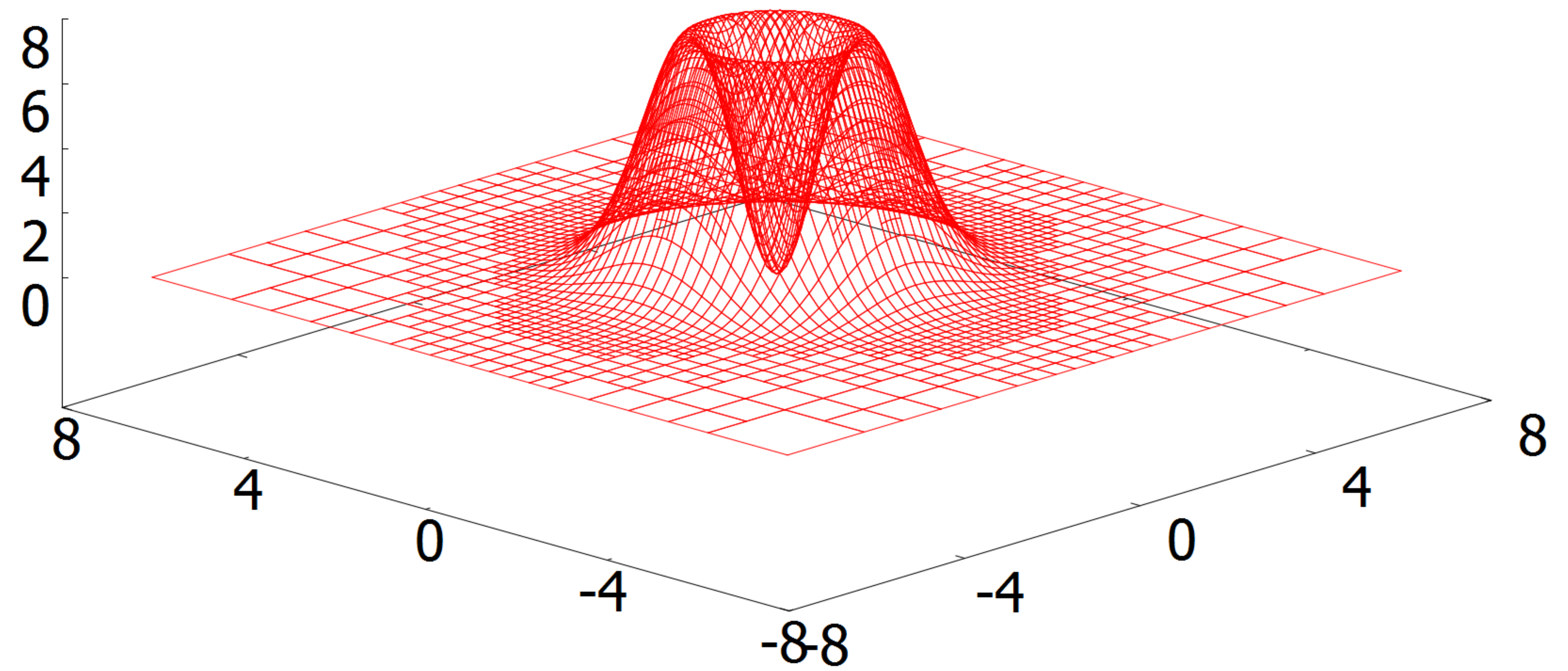} \includegraphics[scale=0.16]{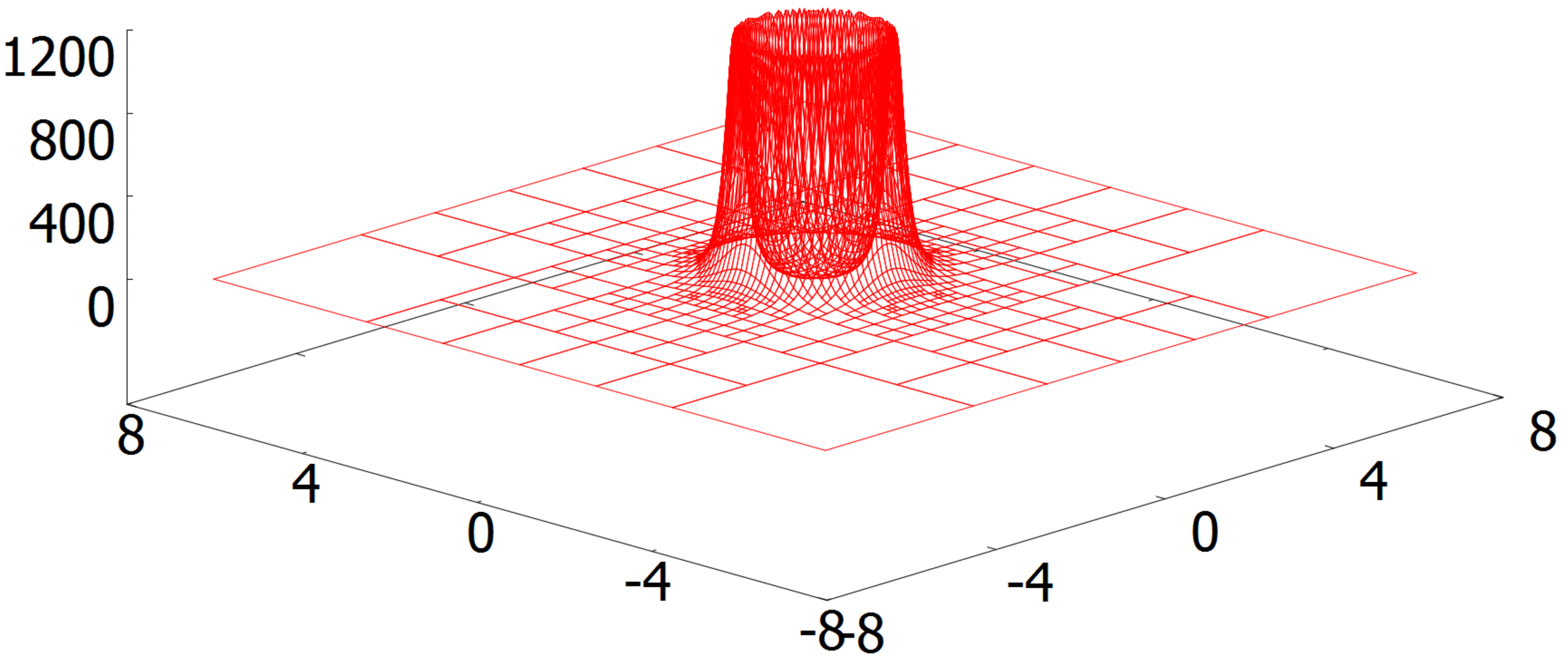}
\caption{Example 3: Initial (left) and final (right) solution profiles.}
\label{blowupprofiles}
\end{figure}
\end{subsection}
\end{section}

\begin{section}{Conclusions}

The error estimator produced performed adequately in both numerical examples but the blow-up time was approached at a much slower rate than expected given the results in Chapter 4. The reason for this lies in the significant differences between the $\delta$ equations \eqref{deltaequation} and \eqref{cond2} which can in turn be traced back to the error equations \eqref{ODEerror2} and \eqref{PDEerrorequation}. For a quadratic nonlinearity, the highest order error term in both of these equations is one power higher than the remainder of the error terms which can all be explicitly bounded by Gronwall's inequality (heuristically, the ODE and PDE error analysis are still `equivalent' at this point). However, in the PDE analysis we have no way of dealing outright with an $L^3$ norm necessitating the use of the Gagliardo-Nirenberg inequality. After application of Gronwall's inequality, a term of the form
\begin{equation}
\begin{aligned}
\exp \Bigg (\int_{t^k}^{t^{k+1}} \! ||\nabla e_c|| \, ds \Bigg),
\end{aligned}
\end{equation}
remains. With no way to estimate this directly through a continuation argument, we are forced to use the Cauchy-Schwarz inequality which destroys a large amount of information (we do something slightly different to get a better $\delta$ equation but the end result is still the same {\bf--} a loss of information caused by the necessity of having norms that are compatible with our continuation argument). If all the norms are the same (as in Chapter 4), there is no loss of information. Therefore, we conjecture that conducting an error analysis for the $L^{\infty}(L^{\infty})$ norm may lead to a recovery of the rates seen in Chapter 4.
\end{section}

\chapter{A posteriori error estimation and adaptivity for a class of nonlinear interface problems}

\begin{section}{Model problem}

We shall consider a model problem that is a simplification of the models given in \cite{CGJ13,CGJ14} which were constructed to model the mass transfer of solutes through a semi-permeable membrane. The derivation of the models in \cite{CGJ13,CGJ14}, based upon the works \cite{F08,KK58,P05,T53,Z02}, shall not be restated here because we are primarily interested in the mathematical model.

The computational domain $\boldsymbol{\Omega}$ is subdivided into two subdomains $\Omega_1$ and $\Omega_2$ such that $\boldsymbol{\Omega} = \Omega \cup \Gamma_i$ where $\Omega := \Omega_1 \cup \Omega_2$ and $\Gamma_i = \partial\Omega\backslash \partial\boldsymbol{\Omega}$ is the \emph{interface} between the two subdomains. To simplify things, we assume that the interface is a non-intersecting piecewise linear curve.

For $T>0$, consider the model problem of finding $u:\Omega\times (0,T] \rightarrow \mathbb{R}$ such that
\begin{equation}
\begin{aligned}
\label{interfacestrong}
\frac{\partial u}{\partial t} - \varepsilon \Delta u + {\bf a} \cdot \nabla u + f(u) & = 0 \qquad & & \text {in } \Omega\times(0,T], \\
u(\cdot,0) & = u_0 && \text{in } \Omega.
\end{aligned}
\end{equation}
We need to augment \eqref{interfacestrong} with suitable boundary conditions. To that end, we split the boundary $\partial\boldsymbol{\Omega} = \bar{\Gamma}_D \cup \bar{\Gamma}_N$ where $\Gamma_D$ is the Dirichlet boundary and $\Gamma_N$ is the Neumann boundary. It is assumed that the intersection of the Dirichlet boundary with each of the subdivision boundaries has positive one-dimensional Hausdorff measure. We then impose boundary conditions for all $t \in (0,T]$:
\begin{equation}
\begin{aligned}
\label{interfaceBC}
u & = 0 \qquad & & \text{on } \Gamma_D, \\
\varepsilon \nabla u \cdot {\bf n} & = g \qquad && \text{on } \Gamma_N \cap \partial\boldsymbol{\Omega}_{out}, \\
(\varepsilon \nabla u  - {\bf a}u)\cdot {\bf n} & = g \qquad && \text{on } \Gamma_N \cap \partial\boldsymbol{\Omega}_{in}.
\end{aligned}
\end{equation}
In addition to boundary conditions, we need to augment \eqref{interfacestrong} with interface conditions. To that end, we require that the following equalities are satisfied across the interface for all $t \in (0,T]$:
\begin{equation}
\begin{aligned}
\label{interfacecond}
(\varepsilon \nabla u - {\bf a}u) \cdot {\bf n} |_{\Omega_1} & = \rho (u|_{\Omega_2} - u|_{\Omega_1})-r(w_1u|_{\Omega_1} + w_2u|_{\Omega_2})({\bf a \cdot n})|_{\Omega_1}, \\
(\varepsilon \nabla u - {\bf a}u) \cdot {\bf n} |_{\Omega_2} & = \rho (u|_{\Omega_1} - u|_{\Omega_2})-r(w_1u|_{\Omega_1} + w_2u|_{\Omega_2})({\bf a \cdot n})|_{\Omega_2},
\end{aligned}
\end{equation}
where $\rho>0$ is the permeability coefficient, $r \in [0,1]$ is the friction coefficient and $w_1,w_2$ are weights that satisfy $w_1+w_2 =1$ and $0 \leq w_1,w_2 \leq 1$. 
The weak form of the model then reads as follows: find $u \in L^2 \big(0,T;H^1_D(\Omega)\big) \cap H^1\big(0,T;L^2(\Omega)\big)$ such that for almost every $t \in (0,T]$ we have
\begin{equation}
\begin{aligned}
\label{interfaceweak}
\bigg(\frac{\partial u}{\partial t},v \bigg) + B(u,v) + (f(u),v) = l(v) \qquad \forall v \in H^1_D(\Omega),
\end{aligned}
\end{equation}
with
\begin{equation}
\begin{aligned}
B(u,v) & := \int_{\Omega} \! (\varepsilon\nabla u - {\bf a}u) \cdot \nabla v \, dx - \int_{\Omega} \! \nabla \cdot {\bf a} \, uv \, dx + \int_{\Gamma_N \cap \partial\boldsymbol{\Omega}_{out}} \! {\bf a \cdot n} \, uv \, ds \\
&+\int_{\Gamma_i} \! \rho[u]\cdot [v] \, ds + \int_{\Gamma_i} \! r\{u \}_w[{\bf a}v] \, ds, \\
l(v) & := \int_{\Gamma_N} \! gv \, ds,
\end{aligned}
\end{equation}
where $\{ u \}_w := w_1u|_{\Omega_1} + w_2u|_{\Omega_2}$ is the weighted average of $u$ across $\Gamma_i$. We make the following assumptions upon the data: $g \in L^2(\Gamma_N)$, $u_0 \in H^1_D(\Omega)$, $0 < \varepsilon \leq 1$, ${\bf a} \in \big[W^{1,\infty}(\boldsymbol{\Omega}) \big]^2$ and $f : \mathbb{R} \rightarrow \mathbb{R}$ must satisfy the growth condition
\begin{equation}
\begin{aligned}
\label{growth}
|f(u)-f(v)| & \leq L|u-v|(1+|u|+|v|)^{\mu} \qquad \forall u,v \in \mathbb{R},
\end{aligned}
\end{equation}
for some $L \geq 0$ and $0 \leq \mu \leq 2$. We finish the section by introducing a coercivity result for the bilinear form $B$.
\begin{theorem}
\label{interfacecoercive}
Let $c_*$ denote the constant in the trace inequality (Theorem \ref{scaledtrace}) and define $\displaystyle \mathcal{A}_i :=||{\bf a}||_{L^{\infty}(\Gamma_i)}$. For any $v \in H^1_D(\Omega)$, the bilinear form $B$ satisfies 
\begin{equation}
\begin{aligned}
\notag
B(v,v) & \geq \frac{3}{4}|||v|||^2 + \bigg ( \frac{1}{2}\essinf_{\Omega}(-\nabla \cdot {\bf a})-c_* \alpha_{rw}\mathcal{A}_i\big(1+4c_*\alpha_{rw}\mathcal{A}_i\varepsilon^{-1} \big) \bigg ) ||v||^2,
\end{aligned}
\end{equation}
where $\displaystyle \alpha_{rw} :=  \frac{r}{2}|w_1-w_2|+\max \bigg\{\bigg |rw_1-\frac{1}{2} \bigg |,\bigg |rw_2-\frac{1}{2} \bigg | \bigg\}$ and
\begin{equation}
\begin{aligned}
\notag
 |||v||| &:= \left (\varepsilon||\nabla v||^2+ \frac{1}{2}\int_{\Gamma_N} \! |{\bf a \cdot n}|v^2 \, ds+\int_{\Gamma_i} \! \rho \, |[v]|^2 \, ds \right )^{1/2}.
\end{aligned}
\end{equation}
\end{theorem}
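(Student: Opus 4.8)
The plan is to compute $B(v,v)$ directly and to integrate the convective contribution by parts \emph{on each subdomain separately}, since a global integration by parts is impossible: $v$ jumps across $\Gamma_i$. I would then show that the resulting volume, exterior-boundary and interface pieces reassemble into $|||v|||^2$, the lower-order term $\tfrac12\essinf_\Omega(-\nabla\cdot{\bf a})||v||^2$, and a single indefinite interface integral that has to be absorbed.

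First I would rewrite the convective volume contribution via ${\bf a}v\cdot\nabla v=\tfrac12{\bf a}\cdot\nabla(v^2)$ and integrate by parts over $\Omega_1$ and $\Omega_2$. Combined with $-\int_\Omega(\nabla\cdot{\bf a})v^2\,dx$ this produces $\tfrac12\int_\Omega(-\nabla\cdot{\bf a})v^2\,dx\ge\tfrac12\essinf_\Omega(-\nabla\cdot{\bf a})||v||^2$, together with boundary integrals $-\tfrac12\int_{\partial\boldsymbol{\Omega}}({\bf a}\cdot{\bf n})v^2\,ds$ and interface integrals. On the exterior boundary the Dirichlet condition $v|_{\Gamma_D}=0$ removes that part, while splitting $\Gamma_N$ into its inflow and outflow portions and combining with the form's own outflow term $\int_{\Gamma_N\cap\partial\boldsymbol{\Omega}_{out}}({\bf a}\cdot{\bf n})v^2\,ds$ yields exactly $\tfrac12\int_{\Gamma_N}|{\bf a}\cdot{\bf n}|v^2\,ds$, the boundary part of $|||\cdot|||$. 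Together with $\varepsilon||\nabla v||^2$ and the permeability term $\int_{\Gamma_i}\rho|[v]|^2\,ds$ already present in $B$, this recovers the full $|||v|||^2$.

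The crux is the interface. Writing $v_1:=v|_{\Omega_1}$, $v_2:=v|_{\Omega_2}$ and $a_n:={\bf a}\cdot{\bf n}_1$ (with ${\bf n}_1$ the outer unit normal of $\Omega_1$ on $\Gamma_i$), the two integration-by-parts interface contributions add to the friction coupling $\int_{\Gamma_i}r\{v\}_w[{\bf a}v]\,ds$ to give a single indefinite term $\mathcal{I}=\int_{\Gamma_i}a_n(v_1-v_2)\big((rw_1-\tfrac12)v_1+(rw_2-\tfrac12)v_2\big)\,ds$. Expanding the integrand and estimating the mixed product $v_1v_2$ by $\tfrac12(v_1^2+v_2^2)$ isolates precisely the factor $\alpha_{rw}=\tfrac r2|w_1-w_2|+\max\{|rw_1-\tfrac12|,|rw_2-\tfrac12|\}$, giving $|\mathcal{I}|\le\alpha_{rw}\mathcal{A}_i(||v_1||^2_{L^2(\Gamma_i)}+||v_2||^2_{L^2(\Gamma_i)})$. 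I expect this bookkeeping to be the main obstacle: getting the $\max$ and the $\tfrac r2|w_1-w_2|$ to combine exactly into $\alpha_{rw}$, rather than a cruder constant, requires keeping the square and cross terms carefully separated.

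To finish, I would apply the scaled trace inequality of Theorem \ref{scaledtrace} on each subdomain to each $||v_i||^2_{L^2(\Gamma_i)}$, choosing the free parameter $\delta=\varepsilon/(4c_*\alpha_{rw}\mathcal{A}_i)$ so that the gradient contribution of $|\mathcal{I}|$ is exactly $\tfrac14\varepsilon||\nabla v||^2$ and the lower-order contribution becomes $c_*\alpha_{rw}\mathcal{A}_i(1+4c_*\alpha_{rw}\mathcal{A}_i\varepsilon^{-1})||v||^2$. Subtracting these from the reconstructed $|||v|||^2+\tfrac12\essinf_\Omega(-\nabla\cdot{\bf a})||v||^2$, and noting that $|||v|||^2-\tfrac14\varepsilon||\nabla v||^2\ge\tfrac34|||v|||^2$ because the remaining boundary and jump terms retain their full (hence $\ge\tfrac34$) weight, produces exactly the stated bound.
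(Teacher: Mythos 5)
Your proposal is correct and follows essentially the same route as the paper's proof: integration by parts of the convective term over the broken domain to recover $|||v|||^2$ plus the zeroth-order term, the identical algebraic expansion of the indefinite interface integral $\int_{\Gamma_i}(r\{v\}_w-\{v\})[{\bf a}v]\,ds$ yielding the constant $\alpha_{rw}$, and the scaled trace inequality with the same choice $\delta=\varepsilon/(4c_*\alpha_{rw}\mathcal{A}_i)$. No substantive differences to report.
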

\begin{proof}

The multidimensional integration by parts formula implies that
\begin{equation}
\begin{aligned}
\notag
\int_{\Omega} \! ({\bf a}v) \cdot \nabla v \, dx + \frac{1}{2} \int_{\Omega} \! \nabla \cdot {\bf a} \, v^2 \, dx = \frac{1}{2}\int_{\Gamma_N} \! {\bf a \cdot n} \, v^2 \, ds + \int_{\Gamma_i} \! \{v\}[{\bf a}v] \, ds.
\end{aligned}
\end{equation}
Application of this to $B(v,v)$ yields
\begin{equation}
\begin{aligned}
\notag
B(v,v) & \geq |||v|||^2 + \frac{1}{2}\essinf_{\Omega}(-\nabla \cdot {\bf a})||v||^2  + \int_{\Gamma_i} \! ( r\{ v \}_w - \{ v \})[{\bf a}v] \, ds.
\end{aligned}
\end{equation}
Denoting $v$ evaluated on $\Omega_1$ by $v_1$ and $v$ evaluated on $\Omega_2$ by $v_2$ then using Young's inequality we obtain
\begin{equation}
\begin{aligned}
\notag
\bigg | \int_{\Gamma_i} \! ( r\{ v \}_w - \{ v \} )[{\bf a}v] \, ds \bigg | & \leq \bigg | rw_1 - \frac{1}{2} \bigg | \int_{\Gamma_i} \! |{\bf a \cdot n} | v_1^2 \, ds + \bigg | rw_2 - \frac{1}{2} \bigg | \int_{\Gamma_i} \! |{\bf a \cdot n}  | v_2^2 \, ds \\
&+r|w_1-w_2| \int_{\Gamma_i} \! |{\bf a \cdot n} | | v_1  |  | v_2  | \, ds \\
& \leq \alpha_{rw}\mathcal{A}_i\bigg(\int_{\Gamma_i} \! v_1^2 \, ds + \int_{\Gamma_i} \! v_2^2 \, ds \bigg ).
\end{aligned}
\end{equation}
Using the trace inequality with $\displaystyle \delta = \frac{1}{4}c_*^{-1}\alpha^{-1}_{rw}\mathcal{A}_i^{-1}{\varepsilon}$ we obtain 
\begin{equation}
\begin{aligned}
\notag
\bigg | \int_{\Gamma_i} \! ( r\{ v \}_w - \{ v \} )[{\bf a}v] \, ds \bigg | & \leq c_*\alpha_{rw}\mathcal{A}_i\big(\delta||\nabla v||^2 + \big(1+\delta^{-1}\big)||v||^2 \big) \\
& \leq \frac{1}{4}|||v|||^2 +c_* \alpha_{rw}\mathcal{A}_i\big(1+4c_*\alpha_{rw}\mathcal{A}_i\varepsilon^{-1} \big) ||v||^2.
\end{aligned}
\end{equation}
Therefore, combining the results
\begin{equation}
\begin{aligned}
\notag
B(v,v)  & \geq |||v|||^2 + \frac{1}{2}\essinf_{\Omega}(-\nabla \cdot {\bf a})||v||^2  - \bigg |\int_{\Gamma_i} \! \big( r\{ v \}_w - \{ v \} \big )[{\bf a}v] \, ds \bigg | \\ & \geq \frac{3}{4}|||v|||^2+ \bigg ( \frac{1}{2}\essinf_{\Omega}(-\nabla \cdot {\bf a})-c_* \alpha_{rw}\mathcal{A}_i\big(1+4c_*\alpha_{rw}\mathcal{A}_i\varepsilon^{-1} \big) \bigg ) ||v||^2.
\end{aligned}
\end{equation}
This completes the proof.
\end{proof}
\end{section}

\begin{section}{Space-time discretisation}

The discontinuous nature of the model on the interface makes a spatial dG discretisation a natural choice. Thus, with the same reasoning as in the previous chapter, we consider an  IMEX dG discretisation of problem \eqref{interfaceweak}.

To that end, consider a subdivision of $[0,T]$ into time intervals of lengths \mbox{ }\mbox{ } $\tau_1$, ..., $\tau_n$ such that $\displaystyle\sum_{j=1}^n{\tau_j}=T$ for some $n \geq 1$ and set $t^0 := 0$ and $\displaystyle t^k := \sum_{j=1}^{k}\tau_{j}$. Denote an initial triangulation by $\zeta^0$ and associate to each time step $k>0$ a triangulation $\zeta^k$  which is assumed to have been obtained from $\zeta^{k-1}$ by locally refining and coarsening $\zeta^{k-1}$. All of the meshes are assumed to be aligned with the interface in the sense that no part of the interface is contained in the interior of any element and aligned with the boundary in the sense that the points of intersection between the Dirichlet and Neumann boundaries, if they exist, must all be at the vertex of an element. To each mesh $\zeta^{k}$, we assign the finite element space $V_h^k := V_h\big(\zeta^k \big)$ given by~\eqref{eq:FEspace} and we set $f^k:=f\big(u_h^k \big)$ for brevity. For each mesh $\zeta^k$, let $\Gamma_{int}$ denote the union of all interior edges that do not lie on the interface. Finally, for $t \in \big(t^{k-1},t^k \big]$, we let $\Gamma$ denote the union of all edges in the mesh $\zeta^{k-1} \cup \zeta^{k}$ that do not lie on the interface or Neumann boundary. 

The IMEX dG method then reads as follows. Set $u_h^0$ to be a projection of $u_0$ onto $V_h^0$. For $k=1$, ..., $n$, find $u_h^{k} \in V_h^{k}$ such that \begin{equation}\label{interfacedG}
\bigg(\frac{u_h^{k}-u_h^{k-1}}{\tau_{k}},v_h^{k} \bigg)+B \big(u_h^{k},v_h^{k} \big)+K_h \big(u_h^{k},v_h^{k} \big)+\big(f^{k-1},v_h^{k} \big)=l\big(v_h^{k} \big),
\end{equation}
for all $v_h^{k} \in V_h^{k}$ where
\begin{equation}
\begin{aligned}
B \big(u_h^{k},v_h^{k} \big) & := \sum_{K \in \zeta^k} \int_K \! \big(\varepsilon \nabla u_h^{k} - {\bf a}u_h^{k} \big) \cdot \nabla v_h^{k} \, dx - \sum_{K \in \zeta^k} \int_K \! \nabla \cdot {\bf a} \, u_h^{k}v_h^{k} \, dx \\
&+\sum_{E \subset \Gamma_D \cup \Gamma_{int}} \int_{E}  \! \frac{\gamma\varepsilon}{h_E} \big[u_h^{k} \big] \cdot \big[v_h^{k} \big] \, ds+\sum_{K \in \zeta^k} \int_{\partial K_{out} \setminus \Gamma_i} \! u_h^{k}\big[{\bf a}v_h^{k} \big] \, ds \\ 
&+ \int_{\Gamma_i} \! \rho \big[u_h^{k} \big] \cdot \big[v_h^{k}\big] \, ds + \int_{\Gamma_i} \! r\big\{ u_h^{k} \big\}_w \big[{\bf a}v_h^{k}\big] \, ds,\\
K_h\big(u_h^{k},v_h^{k} \big) & := - \sum_{E \subset \Gamma_D \cup \Gamma_{int}} \int_{E} \! \big\{ \varepsilon \nabla u_h^{k} \big\} \cdot \big[v_h^{k}\big] + \big\{ \varepsilon \nabla v_h^{k} \big\} \cdot \big[u_h^{k}\big]\, ds.
\end{aligned}
\end{equation}
We shall take $u_h^0$ to be the orthogonal $L^2$ projection of $u_0$ onto $V_h^0$, although other projections onto $V_h^0$ can also be used. 

For $k>0$, the \emph{residual} $R_k$ is defined on the interior and edge of an element $K \in \zeta^k$ as follows
\begin{equation}
\notag
R_{k} := \left \{ 
   \begin{array}{l l}
    -\frac{u_h^k-u_h^{k-1}}{\tau_k}-f^{k-1}+\varepsilon \Delta u_h^k - {\bf a} \cdot \nabla u_h^k & \qquad \text{in } K \\
    0 & \qquad \text{on } \partial K \cap \Gamma_D \\
     g-\varepsilon \nabla u_h^k \cdot {\bf n} & \qquad \text{on } \partial K \cap (\Gamma_N \cap \partial\boldsymbol{\Omega}_{out})\\
     g - \big(\varepsilon \nabla u_h^k-{\bf a}u_h^k \big) \cdot {\bf n} & \qquad \text{on } \partial K \cap (\Gamma_N \cap \partial\boldsymbol{\Omega}_{in}) \\
     \big({\bf a}u_h^k-\varepsilon \nabla u_h^k \big) \cdot {\bf n}_K + \rho \big(u_h^k|_{\Omega_2}-u_h^k|_{\Omega_1} \big) & \qquad \text{on } \partial K \cap \Gamma_i \text{ if } K \subseteq \Omega_1 \\- r\,{\bf a \cdot n}_K\big\{u_h^k \big\}_w  \\
     \big({\bf a}u_h^k-\varepsilon \nabla u_h^k \big) \cdot {\bf n}_K + \rho \big(u_h^k|_{\Omega_1}-u_h^k|_{\Omega_2} \big) & \qquad \text{on } \partial K \cap \Gamma_i  \text{ if } K \subseteq \Omega_2 \\- r \, {\bf a \cdot n}_K \big\{u_h^k \big\}_w \\
     -\frac{\varepsilon}{2}\big[\nabla u_h^k \big] &\qquad \text{on } \partial K \cap \Gamma_{int}
      
   \end{array}
\right.
\end{equation}

\end{section}

\begin{section}{An a posteriori bound for the IMEX dG method}

At each time step $k$, we decompose the dG solution $u_h^k$ into a conforming part $u_{h,c}^k \in H^1_D(\Omega) \cap V_h^k$ and a non-conforming part $u_{h,d}^k \in V_h^k$ such that $u_h^k = u_{h,c}^k + u_{h,d}^k$. Given $t \in \big(t^{k-1},t^{k} \big]$, we define $u_h(t)$ to be the linear interpolant with respect to $t$ of the values $u_h^{k-1}$ and $u_h^{k}$, viz.,
\begin{equation}
\notag
u_h(t):=l_{k-1}(t)u_h^{k-1}+l_{k}(t)u_h^{k}.
\end{equation}
 We define $u_{h,c}(t)$ and $u_{h,d}(t)$ analogously. We can then decompose the error \mbox{ } $e:=u-u_h=e_c-u_{h,d}$ where $e_c:=u-u_{h,c}$. 

\begin{lemma}
\label{interfacelemma}
Given $t \in \big(t^{k-1},t^{k} \big]$ then for any $v \in H^1_D(\Omega)$ we have
\begin{equation}
\begin{aligned}
\notag
\bigg(\frac{\partial e}{\partial t},v \bigg)+B(e,v)+(f(u)-f(u_h),v) = l(v) - \bigg(\frac{\partial u_h}{\partial t}+f(u_h),v \bigg) - B(u_h,v).
\end{aligned}
\end{equation}
\end{lemma}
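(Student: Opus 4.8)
The plan is to derive the stated identity by the same elementary manipulation used for Lemma \ref{error_eq_simple_fd} and Lemma \ref{blowup_error_eq_simple_fd}: exploit the linearity of every term in $e=u-u_h$ and then eliminate the exact-solution contributions using the weak form \eqref{interfaceweak}. This is purely a rearrangement that transfers all dependence on the unknown $u$ onto the computable right-hand side, thereby producing the residual representation that the subsequent a posteriori analysis will estimate term by term.

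First I would fix $t \in \big(t^{k-1},t^{k}\big]$ and an arbitrary $v \in H^1_D(\Omega)$, and write out the left-hand side after substituting $e=u-u_h$. Since the time derivative and the $L^2$ inner product are linear, and the bilinear form $B$ is linear in its first argument,
\begin{equation}
\notag
\bigg(\frac{\partial e}{\partial t},v\bigg) = \bigg(\frac{\partial u}{\partial t},v\bigg) - \bigg(\frac{\partial u_h}{\partial t},v\bigg), \qquad B(e,v) = B(u,v) - B(u_h,v),
\end{equation}
so the left-hand side of the asserted identity equals
\begin{equation}
\notag
\bigg(\frac{\partial u}{\partial t},v\bigg) + B(u,v) + (f(u),v) - \bigg(\frac{\partial u_h}{\partial t},v\bigg) - B(u_h,v) - (f(u_h),v).
\end{equation}

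Next I would invoke \eqref{interfaceweak}, valid for almost every $t\in(0,T]$ and every $v\in H^1_D(\Omega)$, to replace the first three terms by $l(v)$. Regrouping the remaining $u_h$-terms and using linearity of the inner product to combine $(\partial u_h/\partial t,v)$ and $(f(u_h),v)$ into $(\partial u_h/\partial t + f(u_h),v)$ gives exactly the right-hand side. Here $u_h(t)$ is understood as the piecewise-linear-in-time interpolant introduced above, so $\partial u_h/\partial t$ is the value $(u_h^{k}-u_h^{k-1})/\tau_{k}$, constant on the interval; this is why the identity is stated on the single subinterval $\big(t^{k-1},t^{k}\big]$.

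There is no genuine obstacle to overcome: the content of the lemma is this bookkeeping step, and the only points requiring minor care are that the equality holds for almost every $t$ (inherited from the temporal regularity built into the definition of the weak solution) rather than pointwise, and that $f(u_h)$ denotes the reaction evaluated at the time-interpolant $u_h(t)$, consistently with the continuous weak form. The discrepancy between this $f(u_h)$ and the explicitly treated $f^{k-1}=f\big(u_h^{k-1}\big)$ of the scheme \eqref{interfacedG} is deliberately left intact, as it will later surface as a temporal data-approximation contribution to the estimator, exactly as in the preceding chapter.
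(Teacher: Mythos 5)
Your proposal is correct and matches the paper's argument, which simply states that the identity ``follows from \eqref{interfaceweak}'': expanding $e=u-u_h$ by linearity and replacing $\big(\partial u/\partial t,v\big)+B(u,v)+(f(u),v)$ with $l(v)$ via the weak form yields the claim directly. Your additional remarks on the time-interpolant and the deferred treatment of $f^{k-1}$ versus $f(u_h)$ are consistent with how the paper proceeds after the lemma.
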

\begin{proof}
Follows from \eqref{interfaceweak}.
\end{proof}
\noindent From Lemma \ref{interfacelemma}, it follows that
\begin{equation}
\begin{aligned}
& \bigg(\frac{\partial e}{\partial t},v \bigg)+B\big(e,v \big)+\big(f(u)-f(u_h),v \big) =  \big(f^{k-1}-f(u_h),v \big)+B \big(u_h^{k},v \big) \\ &- B \big(u_h,v \big) +l \big(v \big) - \bigg(\frac{\partial u_h}{\partial t}+f^{k-1},v \bigg) - B\big(u_h^{k},v \big).
\end{aligned}
\end{equation}
Finally, we use \eqref{interfacedG} to conclude that for any $v_h^k \in V_h^k$:
\begin{equation}
\begin{aligned}
\label{derpzor}
& \bigg(\frac{\partial e}{\partial t},v \bigg)+B \big(e,v \big)+\big(f (u )-f (u_h ),v \big)  =  \big(f^{k-1}-f(u_h),v \big)+B\big(u_h^{k},v \big)\\& - B \big(u_h,v \big)+ l \big(v-v_h^{k} \big) - \bigg(\frac{\partial u_h}{\partial t}+f^{k-1},v-v_h^{k} \bigg) - B \big(u_h^{k},v-v_h^{k} \big)\\&+K_h \big(u_h^{k},v_h^{k} \big).
\end{aligned}
\end{equation}

We are now ready to state our a posteriori estimator. Due to the nature of the error bound to be presented, it is easier to separate the estimator into two parts. As in previous chapters, a subscript $S$ denotes parts of the estimator related to estimating space while a subscript $T$ denotes parts of the estimator related to estimating time. In this way, for $t \in \big(t^{k-1},t^{k} \big]$, $\eta_A$ is given by
\begin{equation}
\begin{aligned}
\notag
\eta_A & :=  \eta_{S_1,k}+ \eta_{S_2,k}+ \eta_{S_3,k}+ \eta_{S_4,k}+\eta_{T_1,k}+\eta_{T_2,k}+\eta_{T_3,k},
\end{aligned}
\end{equation}
where
\begin{equation}
\begin{aligned}
\notag
\eta_{S_1,k} & := \left( \sum_{K \in \zeta^k} \frac{h_K^2}{\varepsilon}||R_k||^2_{L^2(K)} + \sum_{K \in \zeta^k}\sum_{E \subset \partial K} \frac{h_E}{\varepsilon}||R_k||^2_{L^2(E)} \right. \\
&\left.+ \sum_{E \subset \Gamma_D \cup \Gamma_{int}} \frac{\gamma\varepsilon}{h_E}\big|\big|\big[u_h^k\big]\big|\big|^2_{L^2(E)} + \sum_{E \subset \Gamma_{int}} \frac{h_E}{\varepsilon}\big|\big|\big[{\bf a}u_h^k\big]\big|\big|^2_{L^2(E)} \right)^{1/2}, \\
\eta_{S_2,k} &:= \left( \sum_{E \subset \Gamma} \frac{\gamma\varepsilon}{h_E}||[u_h]||^2_{L^2(E)} + \sum_{E \subset \Gamma} \frac{h_E}{\varepsilon}||[{\bf a}u_h]||^2_{L^2(E)} \right)^{1/2}, \\
\eta_{S_3,k} &:=\left(\sum_{E \subset \Gamma_N \cap \partial\boldsymbol{\Omega}_{out}} \sum_{K \subset \tilde{E}} \sum_{E \subset \tilde{K}_E \cap \Gamma}\mathcal{A}_N ||[u_h]||^2_{L^2(E)} \right)^{1/2},\\
\eta_{S_4,k} &:=\left(\sum_{E \subset \Gamma_i} \sum_{K \subset \tilde{E}} \sum_{E \subset  \tilde{K}_E \cap \Gamma} \alpha_{\rho} ||[u_h]||^2_{L^2(E)}\right)^{1/2},\\ 
\eta_{T_1,k} &:=\left|\left|\varepsilon^{1/2} \nabla\big (u_h^{k}-u_h\big) - {\bf a}\varepsilon^{-1/2}\big(u_h^{k}-u_h\big)\right|\right|, \\
\eta_{T_2,k} &:= \left ( \int_{\Gamma_N \cap \partial \boldsymbol{\Omega}_{out}} \! |{\bf a \cdot n}|\big|u_h^{k}-u_h \big|^2 \, ds \right)^{1/2}, \\
\eta_{T_3,k} &:= \left|\left|\rho^{1/2}\big|\big[u_h^{k}-u_h \big]\big|+r\rho^{-1/2}|{\bf a}|\big|\big\{u_h^{k}-u_h\big\}_w \big| \, \right|\right|_{L^2(\Gamma_i)},
\end{aligned}
\end{equation}
with $\alpha_{\rho} := 2 \rho + 2r^2\rho^{-1}\max\big\{w^2_1,w^2_2 \big\} \mathcal{A}_i^2$ and $\mathcal{A}_N := ||{\bf a}||_{L^{\infty}(\Gamma_N \cap \partial\boldsymbol{\Omega}_{out})}$. Similarly, for $t \in \big(t^{k-1},t^{k} \big]$, $\eta_B$ is given by
\begin{equation}
\begin{aligned}
\notag
\eta_B & := \eta_{S_5,k} + \eta_{S_6,k}+\eta_{T_4,k},
\end{aligned}
\end{equation}
where
\begin{equation}
\begin{aligned}
\notag
\eta_{S_5,k}  & := \left (\sum_{K \in \zeta^{k-1} \cup \zeta^{k}} \sum_{E \subset \tilde{K}_E} \sigma^2_K h_E ||[u_h]||^2_{L^2(E)} \right )^{1/2}, \\
\eta_{S_6,k} &:=  \left ( \sum_{E \subset \Gamma} h_E \left| \left|  \left[\frac{u_h^{k}-u_h^{k-1}}{\tau_{k}} \right]\right| \right|^2_{L^2(E)} \right)^{1/2}, \\
\eta_{T_4,k} &:= \left|\left|f^{k-1} - f \big(u_h \big)-\nabla \cdot {\bf a}\big (u_h^{k} - u_h \big)\right|\right|,
\end{aligned}
\end{equation}
with
\begin{equation}
\begin{aligned}
\notag
\sigma_K := \max\big\{L,2^{\mu-1}L\big\}\big(1+2||u_h||_{L^{\infty}(K)}+||[u_h]||_{L^{\infty}(\tilde{K}_E \cap \Gamma)} \big)^{\mu} + ||\nabla \cdot {\bf a}||_{L^{\infty}(K)}.
\end{aligned}
\end{equation}
The first three terms on the right-hand side of \eqref{derpzor} approximate the temporal part of the error. To begin bounding, we decompose these terms, viz.,
\begin{equation}
\begin{aligned}
\big(f^{k-1}-f (u_h),v \big)+B \big(u_h^{k},v \big)-B \big(u_h,v \big) = T_1+ T_2 + T_3 + T_4,
\end{aligned}
\end{equation}
where
\begin{equation}
\begin{aligned}
\notag
T_1 &:= \sum_{K \in \zeta^{k-1} \cup \zeta^k} \int_{K} \!  \big (\varepsilon \nabla \big (u_h^{k} - u_h \big ) - {\bf a} \big (u_h^{k} - u_h \big) \big ) \cdot \nabla v \, dx, \\
T_2 &:= \sum_{K \in \zeta^{k-1} \cup \zeta^k} \int_{K} \! \big (f^{k-1}- f \big(u_h \big)-\nabla \cdot {\bf a}\big (u_h^{k} - u_h \big)\big)v \, dx, \\
T_3 &:= \int_{\Gamma_N \cap \partial \boldsymbol{\Omega}_{out}} \! {\bf a \cdot n}\big (u_h^{k}-u_h \big)v \, ds, \\
T_4 &:= \int_{\Gamma_i} \! \rho\big [u_h^{k} - u_h \big ] \cdot \big[v \big] \, ds + \int_{\Gamma_i} \! r\big\{u_h^{k}-u_h \big\}_w \big[{\bf a}v \big] \, ds.
\end{aligned}
\end{equation}
Bounding $T_1$ requires a simple application of the Cauchy-Schwarz inequality:
\begin{equation}
\begin{aligned}
|T_1| &  \leq \eta_{T_1,k}|||v|||.
\end{aligned}
\end{equation}
$T_2$ is also bounded by the Cauchy-Schwarz inequality, viz., 
\begin{equation}
\begin{aligned}
|T_2| & \leq \eta_{T_4,k}||v||.
\end{aligned}
\end{equation}
$T_3$ is bounded by the Cauchy-Schwarz inequality as follows
\begin{equation}
\begin{aligned}
|T_3| & \leq \eta_{T_2,k}\left(\int_{\Gamma_N \cap \partial \boldsymbol{\Omega}_{out} } \! |{\bf a \cdot n}|v^2 \, ds \right)^{1/2} \\
& \lesssim \eta_{T_2,k}|||v|||.
\end{aligned}
\end{equation}
Finally, $T_4$ is bounded by the Cauchy-Schwarz inequality, viz., 
\begin{equation}
\begin{aligned}
|T_4| & \leq \eta_{T_3,k}|||v|||.
\end{aligned}
\end{equation}
The remainder of the terms on the right-hand side of \eqref{derpzor} give rise to parts of the space estimator. We start by noting that, through application of the multidimensional integration by parts formula on each element, we have
\begin{equation}
\begin{aligned}
 l \big(v-v_h^{k} \big) - \bigg(\frac{\partial u_h}{\partial t}+f^{k-1},v-v_h^{k} \bigg)- B\big(u_h^{k},v-v_h^{k} \big) = T_5 + T_6,
\end{aligned}
\end{equation}
where
\begin{equation}
\begin{aligned}
\notag
T_5 &:= \sum_{K \in \zeta^{k}} \int_K \! R_k \big(v-v_h^{k} \big) \, dx + \sum_{K \in \zeta^{k}}\sum_{E \subset \partial K} \int_E \! R_k \big(v-v_h^{k} \big) \, dx,\\
T_6 &:= \sum_{E \subset \Gamma_{int}} \int_E \! \big[{\bf a}u_h^{k} \big]\big(v-v_h^{k} \big)  \, ds.
\end{aligned}
\end{equation}
We set $v_h^k \in H^1_D(\Omega) \cap V_h^k$ to be the finite element interpolant from Theorem \ref{hodor63}. Application of the Cauchy-Schwarz inequality together with the interpolation estimates from Theorem \ref{hodor63} yields
\begin{equation}
\begin{aligned}
|T_5| & \leq \left ( \sum_{K \in \zeta^{k}}\frac{h_K^2}{\varepsilon}|| R_k||^2_{L^2(K)} \right)^{1/2} \left ( \sum_{K \in \zeta^{k}}\frac{\varepsilon}{h^2_K}\big|\big|v-v_h^k\big|\big|^2_{L^2(K)} \right)^{1/2} \\&+ \left(\sum_{K \in \zeta^{k}}\sum_{E \subset \partial K} \frac{h_E}{\varepsilon}||R_k ||_{L^2(E)}^2\right)^{1/2}\left(\sum_{K \in \zeta^{k}}\sum_{E \subset \partial K} \frac{\varepsilon}{h_E}\big|\big|v-v_h^k \big|\big|_{L^2(E)}^2\right)^{1/2} \\
& \lesssim \eta_{S_1,k} \left[\left ( \sum_{K \in \zeta^{k}}\varepsilon||\nabla v||^2_{L^2(\tilde{K})} \right)^{1/2} + \left(\sum_{K \in \zeta^{k}}\sum_{E \subset \partial K} \varepsilon||\nabla v ||_{L^2(\tilde{E})}^2\right)^{1/2}\right ] \\
& \lesssim \eta_{S_1,k}|||v|||.
\end{aligned}
\end{equation}
$T_6$ is also bounded through the interpolation estimates of Theorem \ref{hodor63} together with the Cauchy-Schwarz inequality, viz.,
\begin{equation}
\begin{aligned}
|T_6|  & \leq \left ( \sum_{E \subset \Gamma_{int}} \frac{h_E}{\varepsilon}\big|\big|\big[{\bf a}u_h^{k} \big]\big|\big|_{L^2(E)}^2\right )^{1/2} \left ( \sum_{E \subset \Gamma_{int}} \frac{\varepsilon}{h_E}\big|\big|v-v_h^{k}\big|\big|_{L^2(E)}^2 \right )^{1/2}  \\
& \lesssim \eta_{S_1,k}\left ( \sum_{E \subset \Gamma_{int}} {\varepsilon}||\nabla{v}||_{L^2(\tilde{E})}^2 \right )^{1/2} \\
& \lesssim \eta_{S_1,k}|||v|||.
\end{aligned}
\end{equation}
Finally, $K_h\big(u_h^{k},v_h^k\big)$ is bounded through the Cauchy-Schwarz inequality and the inverse estimate along with the shape-regularity of the mesh as follows
\begin{equation}
\begin{aligned}
&\big|K_h\big(u_h^k,v_h^k\big)\big| \leq \sum_{E \subset \Gamma_D \cup \Gamma_{int}} \int_{E} \! \ \varepsilon \big|\nabla v_h^{k} \big| \big|\big[u_h^{k}\big]\big| \, ds \\
& \leq \left (\sum_{E \subset \Gamma_D \cup \Gamma_{int}} \frac{\gamma\varepsilon}{h_E}\big|\big|\big[u^k_h\big]\big|\big|^2_{L^2(E)} \right )^{1/2}\left (\sum_{E \subset \Gamma_D \cup \Gamma_{int}}\varepsilon h_E||\nabla v||^2_{L^2(E)} \right )^{1/2} \\
& \lesssim \eta_{S_1,k}\left (\sum_{E \subset \Gamma_D \cup \Gamma_{int}} \varepsilon ||\nabla v||^2_{L^2(\tilde{E})} \right )^{1/2} \\
& \lesssim \eta_{S_1,k}|||v|||.
\end{aligned}
\end{equation}
Putting together all these results we obtain
\begin{equation}
\begin{aligned}
\bigg(\frac{\partial e_c}{\partial t},v \bigg)+B(e_c,v ) & \lesssim |(f(u)-f (u_h),v )| + \bigg|\bigg ( \frac{\partial u_{h,d}}{\partial t},v \bigg)\bigg| + |B(u_{h,d},v)| \\&+ (\eta_{S_1,k}+\eta_{T_1,k}+\eta_{T_2,k}+\eta_{T_3,k})|||v|||+\eta_{T_4,k}||v||.
\end{aligned}
\end{equation}
To bound $B(u_{h,d},v)$, we note that
\begin{equation}
\begin{aligned}
B(u_{h,d},v) &=  T_7 + T_8 + T_9+T_{10},
\end{aligned}
\end{equation}
where
\begin{equation}
\begin{aligned}
\notag
T_7 & :=  \sum_{K \in \zeta^{k-1} \cup \zeta^k } \int_K \! (\varepsilon \nabla u_{h,d} - {\bf a}u_{h,d}) \cdot \nabla v \, dx, \\
T_8 & :=  - \sum_{K \in \zeta^{k-1} \cup \zeta^k } \int_K \! \nabla \cdot {\bf a} \, u_{h,d}v \, dx,\\
T_9 & := \sum_{K \in \zeta^{k-1} \cup \zeta^k } \int_{\partial K_{out} \cap \Gamma_N} \! {\bf a \cdot n} \, u_{h,d}v \, ds,\\ 
T_{10} & :=  \int_{\Gamma_i} \! \rho \big[u_{h,d} \big] \cdot \big[v\big] \, ds + \int_{\Gamma_i} \! r\big\{ u_{h,d} \big\}_w \big[{\bf a}v\big] \, ds.
\end{aligned}
\end{equation}
To bound $T_7$, we use the Cauchy-Schwarz inequality along with the estimates from Theorem \ref{ncbounds} to conclude that
\begin{equation}
\begin{aligned}
\label{stfu8}
|T_{7}| \lesssim \eta_{S_2,k}|||v|||,
\end{aligned}
\end{equation}
while $T_8$ is bounded through H\"{o}lder's inequality, the Cauchy-Schwarz inequality and Theorem \ref{ncbounds}, viz.,
\begin{equation}
\begin{aligned}
|T_8| & \leq \left( \sum_{K \in \zeta^{k-1} \cup \zeta^k }||\nabla \cdot {\bf a}||^2_{L^{\infty}(K)}||u_{h,d}||^2_{L^2(K)} \right)^{1/2}||v|| \\
& \lesssim \eta_{S_5,k}||v||.
\end{aligned}
\end{equation}
$T_9$ is bounded using H\"{o}lder's inequality, the Cauchy-Schwarz inequality, the trace inequality (with $\delta = h_K$) and the bounds from Theorem \ref{ncbounds} along the shape-regularity of the mesh as follows
\begin{equation}
\begin{aligned}
\label{stfu9}
& |T_9| \lesssim \left(\sum_{K \in \zeta^{k-1} \cup \zeta^k } \int_{\partial K_{out} \cap \Gamma_N} \! |{\bf a \cdot n}||u_{h,d}|^2 \, ds \right)^{1/2}|||v||| \\
& \lesssim \left(\sum_{E \subset \Gamma_N \cap \partial \boldsymbol{\Omega}_{out}} \mathcal{A}_N||u_{h,d}||_{L^2(E)}^2 \right)^{1/2}|||v||| \\
& \lesssim \left(\sum_{E \subset \Gamma_N \cap \partial \boldsymbol{\Omega}_{out}}\sum_{K \subset \tilde{E}} \mathcal{A}_N \Big( h_K||\nabla u_{h,d}||_{L^2(K)}^2 + h^{-1}_K||u_{h,d}||_{L^2(K)}^2 \Big)\right)^{1/2}|||v||| \\
& \lesssim \eta_{S_3,k}|||v|||.
\end{aligned}
\end{equation}
Finally, $T_{10}$ is bounded using H\"{o}lder's inequality, Young's inequality, the \mbox{ }\mbox{ }\mbox{ } Cauchy-Schwarz inequality, the trace inequality and the bounds from Theorem \ref{ncbounds} along the shape-regularity of the mesh, viz.,
\begin{equation}
\begin{aligned}
\label{stfu10}
|T_{10}| & \leq \left(\int_{\Gamma_i} \! \rho |[u_{h,d}]|^2 \, ds + \int_{\Gamma_i} \! r^2\rho^{-1}\mathcal{A}^2_i|\{ u_{h,d}\}_w|^2 \, ds \right)^{1/2}|||v||| \\
& \leq \left(\int_{\Gamma_i} \!\alpha_{\rho}\big(u^2_{h,d} |_{\Omega_1}+ u_{h,d}^2 |_{\Omega_2} \big) \, ds \right)^{1/2}|||v||| \\
& \lesssim \left(\sum_{E \subset \Gamma_i}\sum_{K \subset \tilde{E}}\alpha_{\rho}\Big( h_K||\nabla u_{h,d}||_{L^2(K)}^2 +h^{-1}_K||u_{h,d}||_{L^2(K)}^2\Big) \right)^{1/2}|||v||| \\
& \lesssim \eta_{S_4,k}|||v|||.
\end{aligned}
\end{equation}
To bound the remaining nonconforming term, we use the Cauchy-Schwarz inequality and the bounds from Theorem \ref{ncbounds} as follows
\begin{equation}
\begin{aligned}
\bigg(\frac{\partial u_{h,d}}{\partial t},v \bigg) \leq \bigg | \bigg |\frac{\partial u_{h,d}}{\partial t} \bigg | \bigg | ||v|| \lesssim \eta_{S_6,k}||v||.
\end{aligned}
\end{equation}
Combining these results, using the definition of our estimators and setting $v = e_c$ we obtain
\begin{equation}
\begin{aligned}
\label{stfu}
\frac{1}{2}\frac{d}{dt} \big(||e_c||^2 \big)+B(e_c,e_c) & \lesssim |(f(u)-f (u_h),e_c)|+\eta_A|||e_c|||+\eta_B||e_c||.
\end{aligned}
\end{equation}
We must now bound the nonlinear term. The growth condition \eqref{growth} and the triangle inequality imply that
\begin{equation}
\begin{aligned}
|(f(u)-f(u_h),e_c)| & \leq L \int_{\Omega} \! |e||e_c|(1+|u|+|u_h|)^{\mu} \, dx \\
& \leq L \int_{\Omega} \! (|e_c|+|u_{h,d}|)|e_c|(1+2|u_h|+|u_{h,d}|+|e_c|)^{\mu} \, dx. \\
\end{aligned}
\end{equation}
Thus, using the power mean inequality we have
\begin{equation}
\begin{aligned}
|(f(u)-f(u_h),e_c)| & \leq T_{11}+T_{12}+T_{13}+T_{14}, \\
\end{aligned}
\end{equation}
where
\begin{equation}
\begin{aligned}
\notag
T_{11} &:= \max\big\{L,2^{\mu-1}L \big\} \sum_{K \in \zeta^{k-1} \cup \zeta^{k}} \int_K \! (1+2|u_h|+|u_{h,d}|)^{\mu}|u_{h,d}||e_c| \, dx, \\
T_{12} &:= \max\big\{L,2^{\mu-1}L \big\}\int_{\Omega} \! |u_{h,d}||e_c|^{1+\mu} \, dx, \\
T_{13} &:= \max\big\{L,2^{\mu-1}L \big\}\int_{\Omega} \! (1+2|u_h| + |u_{h,d}|)^{\mu}|e_c|^2 \, dx, \\
T_{14} &:= \max\big\{L,2^{\mu-1}L \big\}||e_c||^{2+\mu}_{L^{2+\mu}(\Omega)}.
\end{aligned}
\end{equation}
To bound $T_{11}$, we use the Cauchy-Schwarz inequality, H\"{o}lder's inequality and Theorem \ref{ncbounds} to conclude that
\begin{equation}
\begin{aligned}
T_{11} \lesssim \eta_{S_5,k}||e_c||,
\end{aligned}
\end{equation}
 while $T_{12}$ is bounded through H\"{o}lder's inequality, Theorem \ref{ncbounds} and $\displaystyle L^p$ embeddings if $0 \leq \mu < 1$ or the Gagliardo-Nirenberg inequality if $1 \leq \mu \leq 2$, viz.,
\begin{equation}
\begin{aligned}
T_{12} & \lesssim \max\big\{L,2^{\mu-1}L \big\}||[u_h]||_{L^{\infty}(\Gamma)}||e_c||^{1+\mu} \qquad \qquad && \text{if } 0 \leq \mu < 1,  \\
T_{12} & \lesssim \max\big\{L,2^{\mu-1}L \big\}||[u_h]||_{L^{\infty}(\Gamma)}||e_c||^2||\nabla e_c||^{\mu - 1} && \text{if } 1 \leq \mu \leq 2.
\end{aligned}
\end{equation}
To bound $T_{13}$, we use H\"{o}lder's inequality and Theorem \ref{ncbounds} as follows
\begin{equation}
\begin{aligned}
T_{13} & \leq \max\big\{L,2^{\mu-1}L \big\}\big(1+2||u_h||_{L^{\infty}(\Omega)} + ||u_{h,d}||_{L^{\infty}(\Omega)} \big)^{\mu}||e_c||^2  \\
& \lesssim \max\big\{L,2^{\mu-1}L \big\}\big(1+2||u_h||_{L^{\infty}(\Omega)} + ||[u_h]||_{L^{\infty}(\Gamma)} \big)^{\mu}||e_c||^2.
\end{aligned}
\end{equation}
Finally, the Gagliardo-Nirenberg inequality implies that
\begin{equation}
\begin{aligned}
T_{14} \lesssim \max\big\{L,2^{\mu-1}L \big\}||e_c||^2||\nabla e_c||^{\mu}.
\end{aligned}
\end{equation}
Let $C$ and $K$ denote generic positive constants and define $\alpha_L := \max\big\{2L,2^{\mu}L \big\}$ for brevity. Applying the above bounds to \eqref{stfu} and using the coercivity of the bilinear form $B$ (Theorem \ref{interfacecoercive}) and Young's inequality yields 
\begin{equation}
\begin{aligned}
\frac{d}{dt} \big(||e_c||^2 \big)+|||e_c|||^2 & \leq C\eta^2_A+C\eta_B||e_c|| +\sigma_1||e_c||^{1+\mu}+ ( \sigma_{\Omega}+\sigma_2||\nabla e_c||^{\mu-1}\\&+K\alpha_L||\nabla e_c||^{\mu}    )||e_c||^2,
\end{aligned}
\end{equation}
where
\begin{equation}
\begin{aligned}
\notag
\sigma_1 & := \left \{ 
   \begin{array}{l l}
   K\alpha_L||[u_h]||_{L^{\infty}(\Gamma)} & \qquad \text{if } 0 \leq \mu < 1  \\
    0 & \qquad \text{if } 1 \leq \mu \leq 2
   \end{array}, 
\right. \\
\sigma_2 & := \left \{ 
   \begin{array}{l l}
   0 & \qquad \text{if } 0 \leq \mu < 1  \\
    K\alpha_L||[u_h]||_{L^{\infty}(\Gamma)} & \qquad \text{if } 1 \leq \mu \leq 2
   \end{array}, 
\right. \\
\sigma_\Omega &:=  \alpha_L\big(1+2||u_h||_{L^{\infty}(\Omega)} + K||[u_h]||_{L^{\infty}(\Gamma)} \big)^{\mu}-\essinf_{\Omega}(-\nabla \cdot {\bf a})
\\&+2c_* \alpha_{rw}\mathcal{A}_i\big(1+4c_*\alpha_{rw}\mathcal{A}_i\varepsilon^{-1} \big).
\end{aligned}
\end{equation}
Another application of Young's inequality yields
\begin{equation}
\begin{aligned}
\label{stfu2}
\frac{d}{dt} \big(||e_c||^2 \big)+|||e_c|||^2 & \leq C\big(\eta^2_A+T\eta^2_B \big) +\sigma_1||e_c||^{1+\mu}+ \bigg(\frac{1}{2T}+ \sigma_{\Omega}\\&+\sigma_2||\nabla e_c||^{\mu-1}+K\alpha_L||\nabla e_c||^{\mu}    \bigg)||e_c||^2.
\end{aligned}
\end{equation}
Application of Gronwall's inequality to \eqref{stfu2} together with the bound
\begin{equation}
\label{stfu11}
||e_c(0)||^2 \lesssim ||e(0)||^2 + \esssup_{0 \leq t \leq T}||u_{h,d}||^2 \lesssim ||e(0)||^2 + \esssup_{0 \leq t \leq T} \sum_{E \subset \Gamma} h_E ||[u_h]||^2_{L^2(E)},
\end{equation}
implies that for any $t \in [0,T]$ we have
\begin{equation}
\begin{aligned}
\label{stfu3}
||e_c(t)||^2_* \leq CH(t) G \bigg ( \phi + \int_0^t \! \sigma_1||e_c||^{1+\mu} \, ds \bigg),
\end{aligned}
\end{equation}
where $||\cdot||_*$ is the $L^2 (H^1) + L^{\infty}(L^2)$ type norm 

\begin{equation}
\begin{aligned}
\notag
||u(t)||_* := \left(||u||^2_{L^{\infty}(0,t;L^2(\Omega))}+\int_0^t \! |||u|||^2 \, ds\right)^{1/2},
\end{aligned}
\end{equation}
and 
\begin{equation}
\begin{aligned}
\notag
H(t) & := \exp \bigg (\int_0^t \! \sigma_2||\nabla e_c||^{\mu-1} \, ds + K\alpha_L \int_0^t \!||\nabla e_c||^{\mu} \, ds \bigg ),  \\
G & := \exp \bigg(\int_0^T \! \sigma_{\Omega} \, ds \bigg),  \\
\phi &: = ||e(0)||^2 + \int_0^T \! \eta^2_A \, ds + T \int_0^T \! \eta^2_B \, ds + \esssup_{0 \leq t \leq T} \sum_{E \subset \Gamma} h_E ||[u_h]||^2_{L^2(E)}.
\end{aligned}
\end{equation}
In order to construct a practical error estimator from \eqref{stfu3}, we employ a continuation argument. To that end, we define the set
\begin{equation}
\begin{aligned}
\notag
I & := \big \{t \in [0,T] \mbox{ } \big | \mbox{ } ||e_c(t)||^2_* \leq \delta G \phi \big \},
\end{aligned}
\end{equation}
where $\delta > C$ is a parameter to be chosen. Clearly $I$ is bounded; furthermore, we know $I$ is non-empty because $0 \in I$. Let $t^*$ denote the maximal value of $t$ in $I$ and assume that $t^* < T$. We proceed as in previous chapters by bounding the remaining error terms in \eqref{stfu3}. Firstly, H\"{o}lder's inequality implies that
\begin{equation}
\begin{aligned}
\int_0^{t^*} \! \sigma_1 ||e_c||^{1+\mu} \, ds \leq ||e_c(t^*)||_*^{1+\mu}\int_0^{T} \! \sigma_1 \, ds \leq (dG\phi ) ^{\frac{1+\mu}{2}}\int_0^{T} \! \sigma_1 \, ds,
\end{aligned}
\end{equation}
while through the Cauchy-Schwarz inequality and $L^p$ embeddings we obtain that
\begin{equation}
\begin{aligned}
\int_0^{t^*} \! \sigma_2 ||\nabla e_c||^{\mu-1} \, ds & \leq \bigg ( \int_0^{T} \! \sigma^2_2 \, ds \bigg)^{1/2}\bigg(\int_0^{t^*} \! ||\nabla e_c||^{2\mu-2} \, ds \bigg)^{1/2} \\
& \leq T^{1-\frac{\mu}{2}} \bigg ( \int_0^{T} \! \sigma^2_2 \, ds \bigg)^{1/2}\bigg(\int_0^{t^*} \! ||\nabla e_c||^2 \, ds \bigg)^{\frac{\mu-1}{2}} \\
& \leq T^{1-\frac{\mu}{2}} \bigg ( \int_0^{T} \! \sigma^2_2 \, ds \bigg)^{1/2}(\delta G \phi)^{\frac{\mu-1}{2}}.
\end{aligned}
\end{equation}
Finally, we use the properties of $L^p$ embeddings to conclude that
\begin{equation}
\begin{aligned}
\int_0^{t^*} \! ||\nabla e_c||^{\mu} \, ds \leq T^{1 - \frac{\mu}{2}} \bigg(\int_0^{t^*} \! ||\nabla e_c||^2 \, ds \bigg)^{\mu/2} \leq T^{1 - \frac{\mu}{2}} (\delta G \phi)^{\mu/2}.
\end{aligned}
\end{equation}
Putting these results into \eqref{stfu3}, we conclude that
\begin{equation}
\begin{aligned}
\label{stfu4}
||e_c(t^*)||^2_* \leq C\psi G \bigg ( \phi + (dG\phi ) ^{\frac{1+\mu}{2}}\int_0^{T} \! \sigma_1 \, ds\bigg),
\end{aligned}
\end{equation}
where
\begin{equation}
\begin{aligned}
\psi:=\exp\left(T^{1-\frac{\mu}{2}} \bigg ( \int_0^{T} \! \sigma^2_2 \, ds \bigg)^{1/2}(\delta G \phi)^{\frac{\mu-1}{2}} + K \alpha_L T^{1 - \frac{\mu}{2}} (\delta G \phi)^{\mu/2}   \right).
\end{aligned}
\end{equation}
Now, suppose that the upper bound in \eqref{stfu4} is strictly less than the upper bound of the set $I$, viz.,
\begin{equation}
\begin{aligned}
C\psi G \bigg ( \phi + (dG\phi ) ^{\frac{1+\mu}{2}}\int_0^{T} \! \sigma_1 \, ds\bigg) < \delta G \phi,
\end{aligned}
\end{equation}
or equivalently,
\begin{equation}
\begin{aligned}
\label{stfu5}
C\psi  \bigg ( \phi + (dG\phi ) ^{\frac{1+\mu}{2}}\int_0^{T} \! \sigma_1 \, ds\bigg) < \delta \phi,
\end{aligned}
\end{equation}
then $t^*$ cannot be the maximal value of $t$ in $I$ because we just showed that $\displaystyle ||e_c(t^*)||^2_*$ satisfies a bound strictly less than that assumed in the set $I$ {\bf--} a contradiction. Therefore, providing \eqref{stfu5} is satisfied, $I = [0,T]$ and we have our desired error bound once we select $\delta$. Taking the limit, we can select $\delta$ to be the minimiser of
\begin{equation}
\begin{aligned}
\label{stfu6}
C\psi \bigg ( \phi + (dG\phi ) ^{\frac{1+\mu}{2}}\int_0^{T} \! \sigma_1 \, ds\bigg) - \delta \phi = 0, \qquad \delta > C.
\end{aligned}
\end{equation}
In order to state the final theorem, we need to extend the energy norm to include functions in $V_h$. To that end, for $t \in \big(t^{k-1}, t^{k} \big]$, we (re)define
\begin{equation}
\begin{aligned}
\notag
 |||v||| &:= \left (\sum_{K \in \zeta^{k-1} \cup \zeta^k} \varepsilon||\nabla v||_{L^2(K)}^2+ \frac{1}{2}\int_{\Gamma_N} \! |{\bf a \cdot n}|v^2 \, ds+\int_{\Gamma_i} \! \rho \, |[v]|^2 \, ds \right. \\ & \left.  +\sum_{E \subset \Gamma} \frac{\gamma\varepsilon}{h_E}||[v]||^2_{L^2(E)} + \sum_{E \subset \Gamma} \frac{h_E}{\varepsilon}||[{\bf a}v]||^2_{L^2(E)}          \right )^{1/2}.
\end{aligned}
\end{equation}
We then have the following result.

\begin{theorem}
The error of the IMEX dG discretisation of problem \eqref{interfaceweak} satisfies
\begin{equation}
\begin{aligned}
\notag
||e(T)||_* &  \lesssim \sqrt{ G\phi },
\end{aligned}
\end{equation}
provided that the solution to \eqref{stfu6} exists.
\end{theorem}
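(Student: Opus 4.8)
The plan is to close the continuation argument that has already been set up in \eqref{stfu3}--\eqref{stfu6} and then transfer the resulting bound from the conforming error $e_c$ to the full error $e$ using the triangle inequality and Theorem \ref{ncbounds}. First I would exploit the contradiction that has been prepared: under the standing assumption $t^* < T$, the estimate \eqref{stfu4} combined with the hypothesis \eqref{stfu6} (equivalently the strict inequality \eqref{stfu5}) forces $||e_c(t^*)||^2_* < \delta G \phi$. Since the map $t \mapsto ||e_c(t)||_*$ is continuous, thanks to $e_c \in H^1\big(0,T;L^2(\Omega)\big) \cap L^2\big(0,T;H^1_D(\Omega)\big)$, this strict inequality persists on a right-neighbourhood of $t^*$, so $t^*$ could be enlarged while remaining in $I$, contradicting its maximality. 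Hence $t^* < T$ is impossible, $I = [0,T]$, and by the very definition of $I$ we obtain $||e_c(t)||^2_* \leq \delta G \phi$ for every $t \in [0,T]$. In particular $||e_c(T)||_* \lesssim \sqrt{G\phi}$, where the conditional constant $\delta$ (the root of \eqref{stfu6}) and the generic constant $C$ are absorbed into $\lesssim$.

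Next I would recover the full error by writing $e = e_c - u_{h,d}$ and applying the triangle inequality in the extended norm, $||e(T)||_* \leq ||e_c(T)||_* + ||u_{h,d}||_*$. The term $||u_{h,d}||_*$ is handled entirely by Theorem \ref{ncbounds}: its $L^\infty\big(0,T;L^2(\Omega)\big)$ contribution is controlled by $\esssup_{0 \leq t \leq T}\sum_{E \subset \Gamma} h_E ||[u_h]||^2_{L^2(E)}$ exactly as in \eqref{stfu11}, while the $\int_0^T |||u_{h,d}|||^2 \, dt$ contribution is controlled by the same gradient-jump and weighted-jump quantities that already appear inside $\eta_A$ through $\eta_{S_2,k}$ and the redefined energy norm. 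All of these quantities are subsumed in $\phi$, and since $G \geq 1$ this gives $||u_{h,d}||^2_* \lesssim \phi \leq G\phi$. Combining the two contributions yields $||e(T)||_* \lesssim \sqrt{G\phi}$, which is the claimed bound.

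The hard part of the whole analysis has in fact been front-loaded into the derivation of \eqref{stfu3}--\eqref{stfu4}, namely the Gronwall step, the Gagliardo--Nirenberg treatment of the superlinear term $T_{14}$, and the $\mu$-dependent splitting into $\sigma_1$ and $\sigma_2$. What remains genuinely delicate in the proof of the theorem itself is justifying the continuation closure rigorously: one must confirm the (absolute) continuity of $t \mapsto ||e_c(t)||_*$ so that the ``maximal $t^*$'' mechanism is legitimate, and one must check that the conditional root $\delta$ of \eqref{stfu6} exists and is independent of $t^*$, so that the bound $\delta G\phi$ is uniform over the entire interval $[0,T]$. As with Theorem \ref{MainTheorem}, the estimate is necessarily \emph{conditional}, holding only when \eqref{stfu6} is solvable; this is the unavoidable price of the nonlinearity.
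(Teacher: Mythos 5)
Your proposal is correct and follows essentially the same route as the paper: the continuation argument is closed exactly as you describe (the paper carries this out in the text immediately preceding the theorem statement rather than inside the proof environment), and the proof itself then consists of the triangle inequality $||e(T)||_* \leq ||e_c(T)||_* + ||u_{h,d}(T)||_*$ followed by bounding $||u_{h,d}(T)||_*$ through Theorem \ref{ncbounds}, with the $L^{\infty}(L^2)$ part controlled as in \eqref{stfu11} and the $L^2(H^1)$ part controlled by the jump quantities already absorbed into $\phi$ via \eqref{stfu8}, \eqref{stfu9} and \eqref{stfu10}. Your additional remarks on the continuity of $t \mapsto ||e_c(t)||_*$ and the uniformity of $\delta$ are sensible technical points that the paper leaves implicit, but they do not change the argument.
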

\begin{proof}
From the triangle inequality, we have
\begin{equation}
\begin{aligned}
\notag
||e(T)||_* &  \leq ||e_c(T)||_* + ||u_{h,d}(T)||_* \lesssim \sqrt{ G\phi } + ||u_{h,d}(T)||_* .
\end{aligned}
\end{equation}
Thus, all that remains is to bound $||u_{h,d}(T)||_*$; the $L^{\infty}(L^2)$ part of this term was bounded in \eqref{stfu11} while the $L^2(H^1)$ part of the term was bounded in \eqref{stfu8}, \eqref{stfu9} and \eqref{stfu10}. Thus,
\begin{equation}
\begin{aligned}
\notag
||u_{h,d}(T)||_* \lesssim  \sqrt{ G\phi }.
\end{aligned}
\end{equation}
This completes the proof.
\end{proof}

\end{section}

\begin{section}{Numerical experiments}

We shall numerically investigate the presented a posteriori bound through an implementation based on the {\tt deal.II} finite element library \cite{BHK07}. In particular, we shall use Algorithm 3.1 from Chapter 3. Spatial refinement and coarsening are driven by the term $\eta_{S_1,k}$ subject to a spatial refinement threshold ${\tt stol}^+$ and a spatial coarsening threshold ${\tt stol}^-$. As in Chapter 3, we define
\begin{equation}
\hat{\eta}_{T,k}^2 := \int_{t^{k-1}}^{t^k} \! \big(\eta_{T_1,k}+\eta_{T_2,k}+\eta_{T_3,k} \big) ^2 \, dt+ T \int_{t^{k-1}}^{t^k} \! \eta^2_{T_4,k} \, dt,
\end{equation}
the sum of which bounds the full time estimator. Temporal refinement is then carried out using $\hat{\eta}_{T,k}$ subject to a temporal threshold ${\tt ttol}$ on each time interval.

In all our numerical experiments, we use polynomials of degree two and an initial  $4\times4$ uniform quadrilateral mesh. Finally, the spatial coarsening threshold is set to ${\tt stol}^- = 0.001*{\tt stol}^+$.

\begin{subsection}{Example 1}
Let $\Omega_1 = (-1,0)\times(-1,1)$, $\Omega_2=(0,1)\times(-1,1)$, ${\bf a} = (1,1)^T$, $f = -1$, $u_0 = 0$ and $T=1$. For the interface parameters, we set $\rho = 0.1$, $r =0.5$, $w_1 = 1$ and $w_2 = 0$. Under this choice of interface parameters, the solution to \eqref{interfaceweak} exhibits both boundary and interface layers of width $\mathcal{O}(\varepsilon)$. Solution profiles and meshes produced by the adaptive algorithm at the final time are given in Figures \ref{interfacemeshes} and \ref{interfacesolution}, respectively. 
The meshes generated by the adaptive algorithm clearly show that the error estimator is picking up both the interface layer and the boundary layer. 

\begin{figure}
\centering
\includegraphics[scale=0.34]{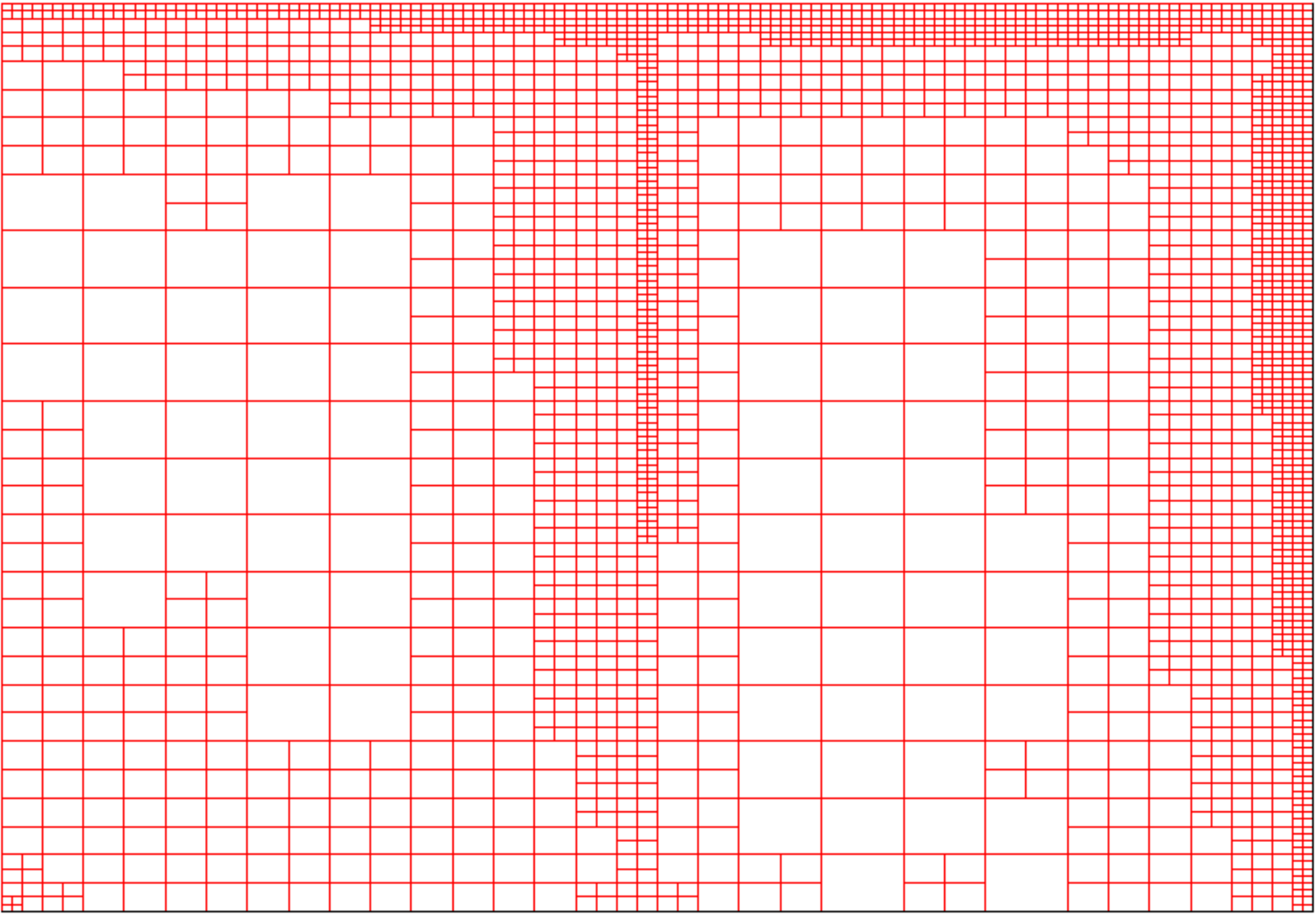} \, \includegraphics[scale=0.34]{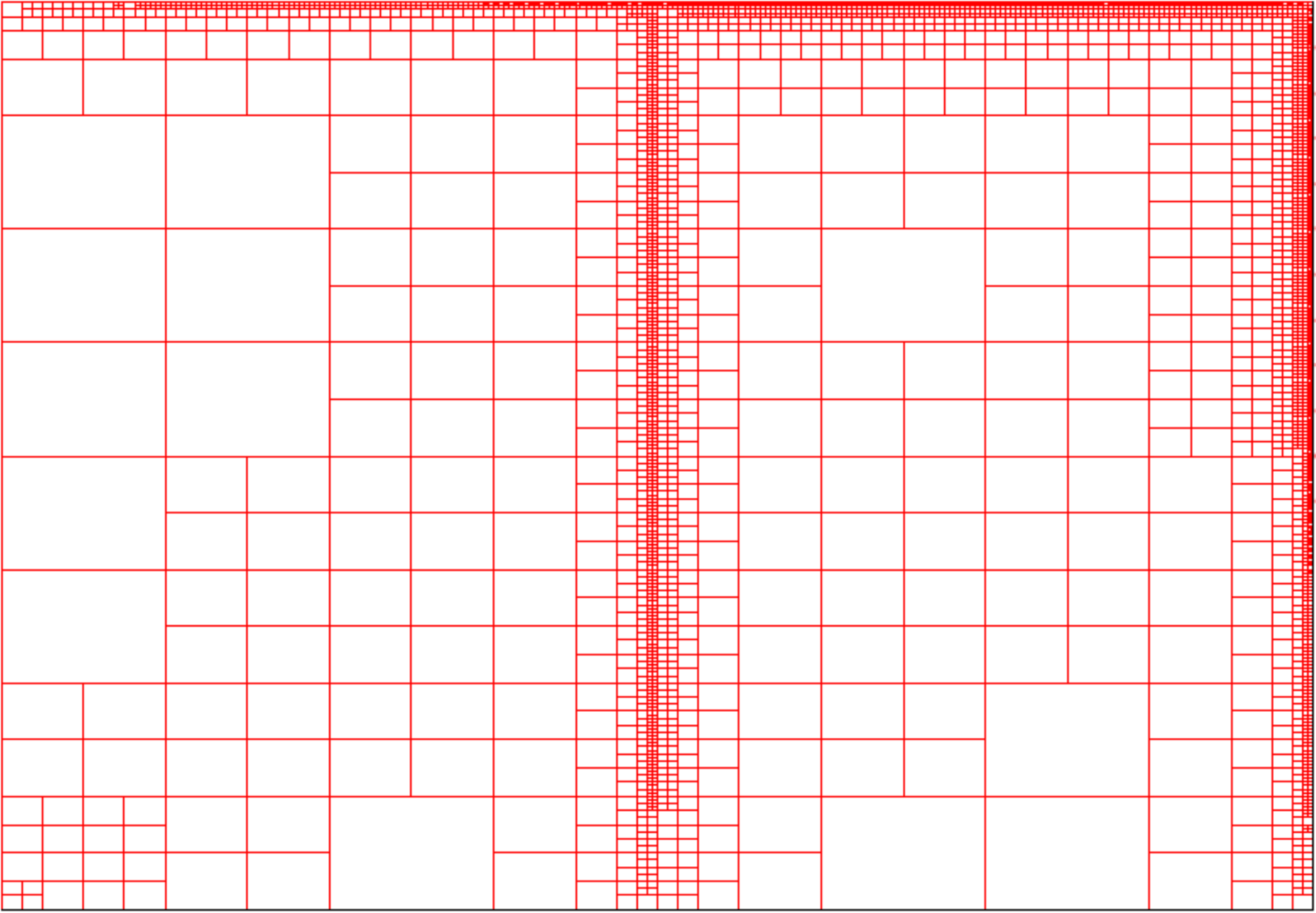}
\caption{Example 1: Meshes produced by the adaptive algorithm for $\varepsilon = 0.1$ (left) and $\varepsilon = 10^{-2}$ (right).}
\label{interfacemeshes}
\end{figure}
\begin{figure}
\centering
\includegraphics[scale=0.23]{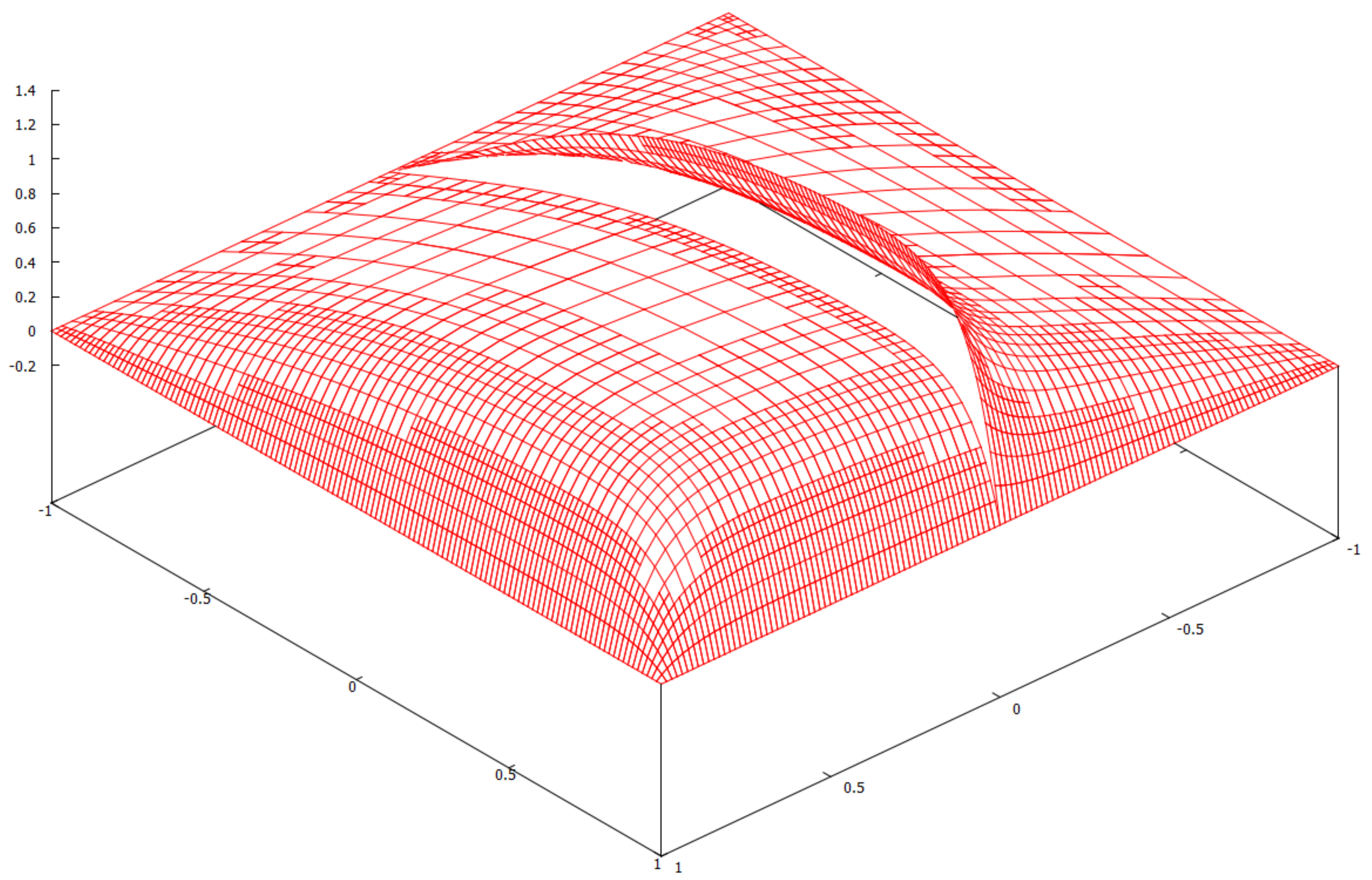} \includegraphics[scale=0.23]{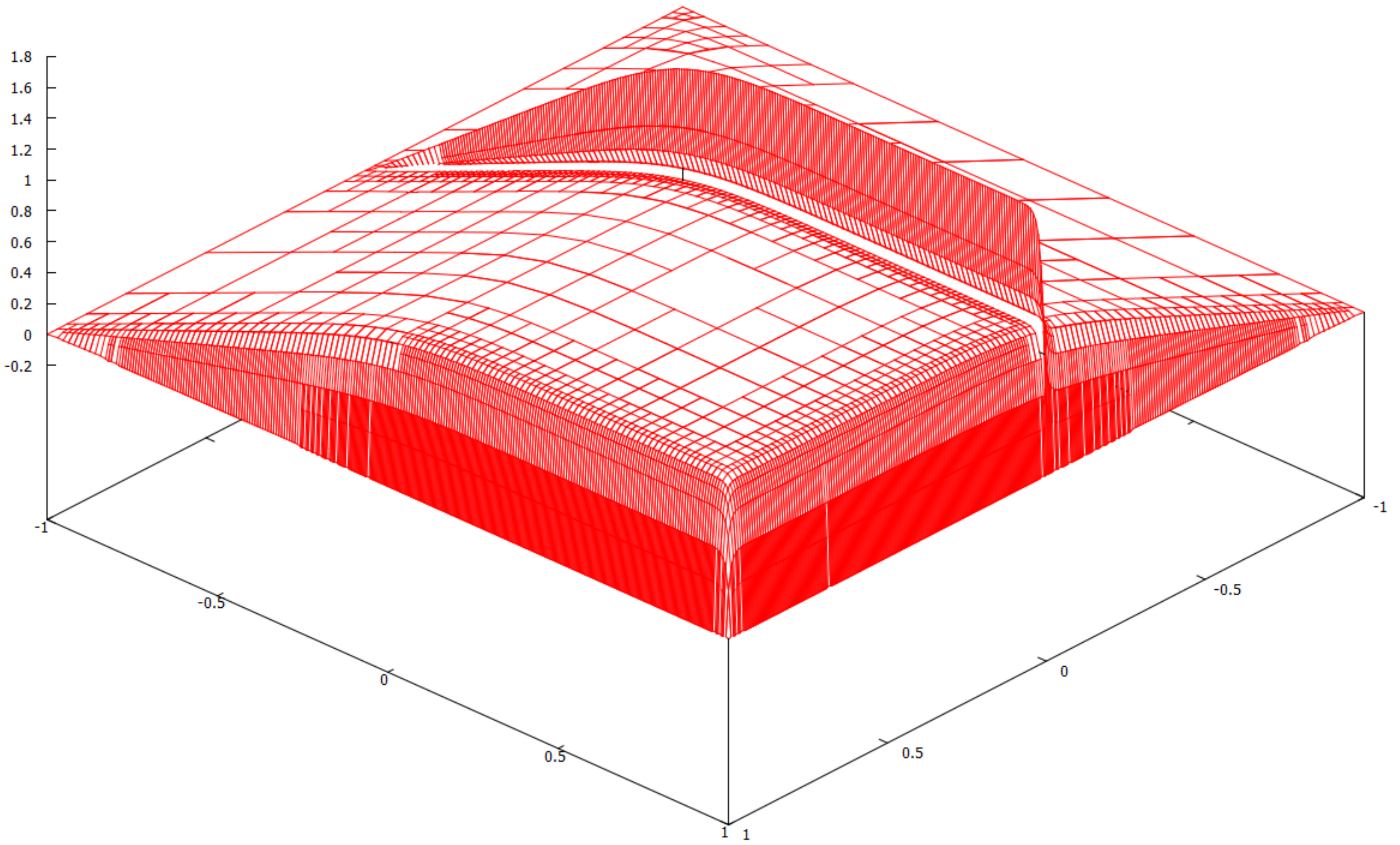}
\caption{Example 1: Solution profiles for $\varepsilon = 0.1$ (left) and $\varepsilon = 10^{-2}$ (right).}
\label{interfacesolution}
\end{figure}

To observe the rates of convergence of the error estimator $\phi$, we begin by fixing a small temporal threshold; the spatial threshold is then reduced to observe the spatial rates of the estimator. We then fix a small spatial threshold so that all layers are sufficiently resolved and reduce the temporal threshold to observe the temporal rates of the estimator. The results, given in Figure \ref{interfacerates}, show that the space and time estimators are of optimal order.

\begin{figure}
\centering
\includegraphics[scale=0.48]{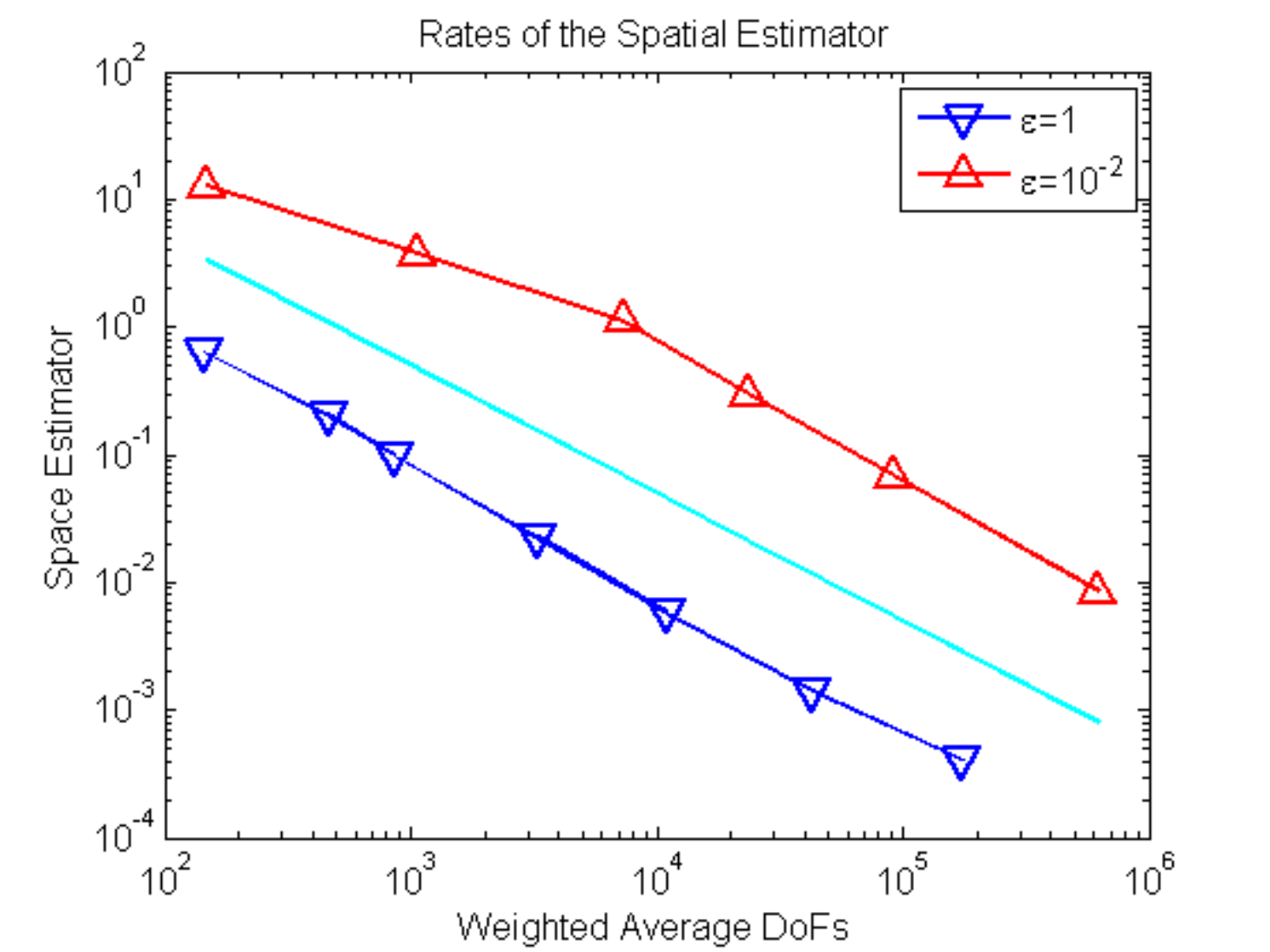} \includegraphics[scale=0.48]{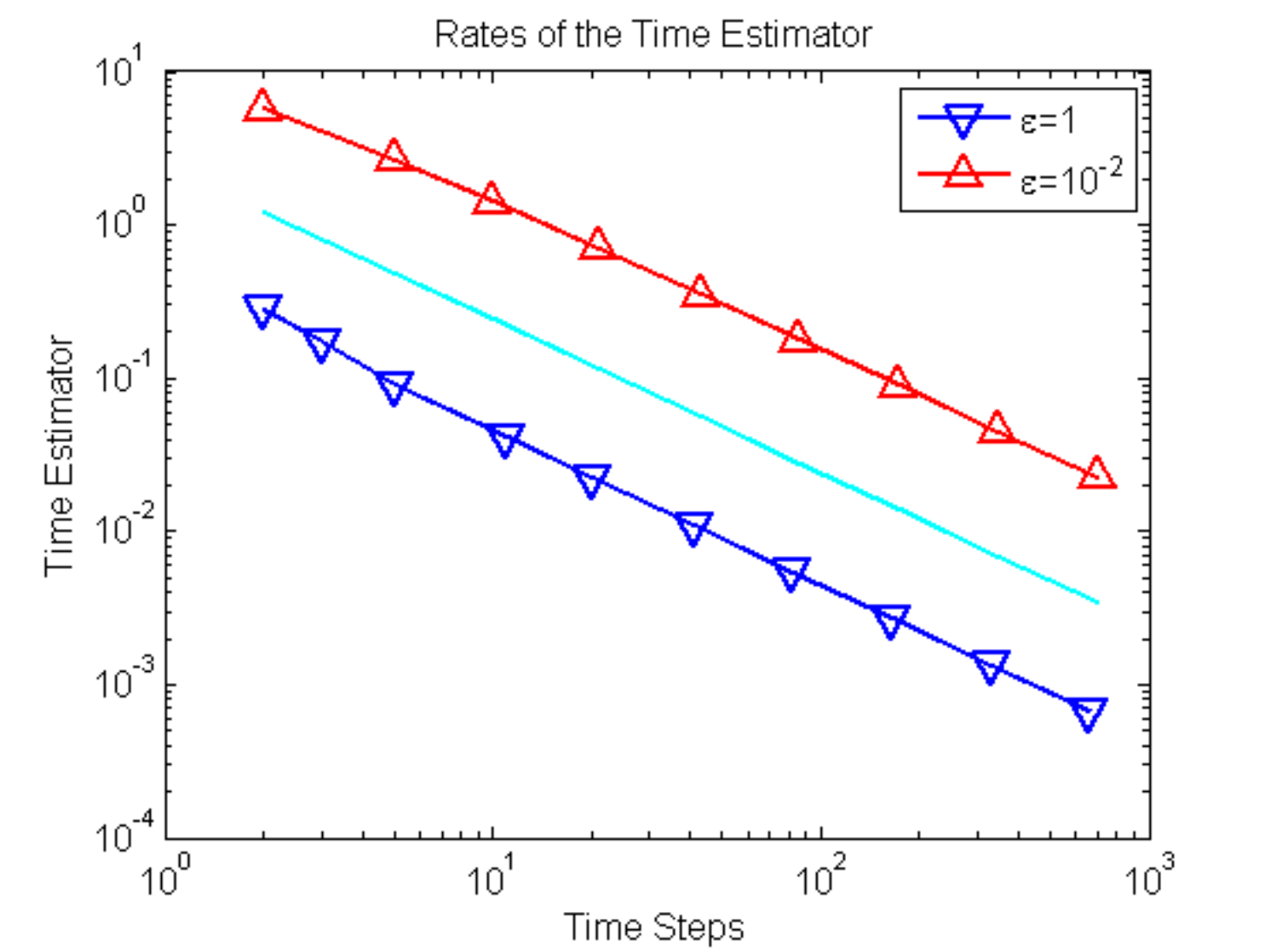}
\caption{Example 1: Spatial and temporal rates.}
\label{interfacerates}
\end{figure}

\end{subsection}

\end{section}

\begin{section}{Conclusions}

We derived an a posteriori error estimator for a nonlinear interface problem that is used to model the flow of solutes through semi-permiable membranes. The error estimator displayed optimal spatial and temporal rates under Algorithm 3.1. Furthermore, the error estimator was able to detect and refine the interface layer without wasting degrees of freedom on the opposite side of the interface. The constant from Gronwall's inequality is of the order $\exp (\varepsilon^{-1})$, which is impractical for the convection-dominated regime. A different treatment of the interface terms in Theorem \ref{interfacecoercive} may yield a tighter error bound, but it is not currently clear how to address this issue.

\end{section}

\chapter{Summary and outlook}

The aim of this work was to advance the understanding of adaptive algorithms for spatial finite element discretisations of parabolic problems {\bf --} this was achieved in two ways. Firstly, in Chapter 3 an adaptive algorithm was proposed that utilised an error estimator derived for a backward Euler dG discretisation of a  linear non-stationary convection-diffusion equation. This adaptive algorithm was applied to test problems in Chapter 3 as well as to a nonlinear interface problem in Chapter 6; in all test cases the error estimators were reduced at the theoretically expected rate with respect to the discretisation parameters. Secondly, adaptive algorithms designed to converge to the blow-up time of an ODE with polynomial nonlinearity were explored in Chapter 4.  This led to the development of an adaptive algorithm in Chapter 5 that was designed to approximate the blow-up time of a semilinear parabolic PDE. The adaptive algorithm was then applied in two numerical experiments and shown to approximate the blow-up time of both problems. We shall now discuss some ways in which the results of this work could be extended on a chapter by chapter basis.

In Chapter 3, we derived an a posteriori error estimator for a backward Euler dG discretisation of a linear non-stationary convection-diffusion equation and developed an adaptive algorithm to utilise the error estimator. There are several ways that the work in this chapter could be extended:
\begin{itemize}
\item The extension of the error estimator to include a variable diffusion coefficient.
\item The extension of the error estimator to higher order time stepping schemes.
\item The proof of lower bounds for the given a posteriori error estimator.
\item A rigorous proof that the adaptive algorithm minimises the spatial and temporal parts of the estimator.
\end{itemize}

In Chapter 4 and Chapter 5, we investigated the numerical approximation of blow-up through a posteriori error estimation and looked into the creation of adaptive algorithms designed to approximate the blow-up time. The work in these chapters could be furthered by:

\begin{itemize}
\item The extension of the error estimators to include more general nonlinearities.
\item The extension of the error estimators to higher order time stepping schemes. In particular, it would be of great interest to study $hp$ time stepping schemes for blow-up problems.
\item Conducting the error analysis for a different norm. In particular, conducting an error analysis for the $L^{\infty}(L^{\infty})$ norm may yield a faster approach to the blow-up time.
\end{itemize}

Finally, in Chapter 6 we derived an a posteriori error estimator for an IMEX dG discretisation of a nonlinear interface problem. The error estimator was then applied to a test problem using the adaptive algorithm from Chapter 3. The work in this chapter could be extended by:

\begin{itemize}
\item Removing or weakening the exponential dependence on $\varepsilon$ from the error estimator, possibly via a spectral estimate.
\item The extension of the error estimator to include variable diffusion, time dependent coefficients and data that is (possibly) discontinuous across the interface.
\item The extension of the error estimator to the full system of equations considered in \cite{CGJ13,CGJ14}.
\end{itemize}

\bibliographystyle{plain}
\bibliography{bibliography}

\end{document}